\newenvironment{conjecture*}[1][]{\textbf{Conjecture #1\hspace{.3em}}}{}
\newenvironment{theorem*}[1]{\textbf{#1}\itshape \hspace{.3em}}{\upshape}
\newenvironment{remark*}[1]{\textbf{#1}\itshape \hspace{.3em}}{\upshape}
\newenvironment{example*}[1]{\textbf{#1}\itshape \hspace{.3em}}{\upshape}
\newenvironment{proof}[1][]{\textbf{Proof #1\hspace{.3em}}}{}
\newenvironment{proofsketch}{\textbf{Proof Sketch\hspace{.3em}}}{}
\newtheorem{definition}{Definition}[section]
\newtheorem{theorem}[definition]{Theorem}
\newtheorem{lemma}[definition]{Lemma}
\newtheorem{corollary}[definition]{Corollary}
\newcounter{kpremark}
\newcounter{proofitem}
\newenvironment{proofresult}[1]{\addtocounter{proofitem}{1}
\leftskip.25in
\rightskip\leftskip
(\theproofitem)\quad #1}
\newcommand{\mod}[1]{\ensuremath{\hspace{.5em}(#1)}}
\newcommand{\newo}{\ensuremath{\mathcal{O}}}
\newcommand{\bbc}{\ensuremath{\mathbb{C}}}
\newcommand{\me}{\mathrm{e}} 
\newcommand{\modd}{\ensuremath{M(\mathit{odd})}}
\begin{document}

\begin{frontmatter}






\title{A Hecke algebra attached to mod 2 modular forms of level 5}
\author{Paul Monsky}

\address{Brandeis University, Waltham MA  02454-9110, USA. monsky@brandeis.edu}

\begin{abstract}
Let $F$ be the element $\sum_{n\ \mathit{odd},\ n>0}x^{n^{2}}$ of $Z/2[[x]]$. Set $G=F(x^{5})$, $D=F(x)+F(x^{25})$. For $k>0$, $(k,10)=1$, define $D_{k}$ as follows. $D_{1}=D$, $D_{3}=D^{8}/G$, $D_{7}=D^{2}G$, $D_{9}=D^{4}G$; furthermore $D_{k+10}=G^{2}D_{k}$.

Using modular forms of level $\Gamma_{0}(5)$ we show that the space $W$ spanned by the $D_{k}$ is stabilized by the formal Hecke operators $T_{p}:Z/2[[x]]\rightarrow Z/2[[x]]$, $p\ne 2$ or $5$. And we determine the structure of the (completed) shallow Hecke algebra attached to $W$. This algebra proves to be a power series ring in $T_{3}$ and $T_{7}$ with an element of square $0$ adjoined. As Hecke module, $W$ identifies with a certain subquotient of the space of mod~2 modular forms of level $\Gamma_{0}(5)$, and our Hecke algebra result parallels findings in level 1 (by J.-L. Nicolas and J.-P. Serre) and in level $\Gamma_{0}(3)$ by us.
\end{abstract}


\end{frontmatter}


\section{Some spaces of mod 2 modular forms of level \bm{$\Gamma_{0}(5)$}}
\label{section1}

Nicolas and Serre \cite{3}, \cite{4} have proved various results about the action of the Hecke algebra on the space of mod~2 modular forms of level 1. In \cite{1} we gave a variant of their results in level $\Gamma_{0}(3)$.  Here we find close analogues in level $\Gamma_{0}(5)$.

We first summarize results from \cite{3}, \cite{4} just as we did at the start of \cite{1}.  There are commuting formal Hecke operations $T_{p}:Z/2[[x]]\rightarrow Z/2[[x]]$, one for each odd prime $p$. Here $T_{p}(\sum c_{n}x^{n}) = \sum c_{pn}x^{n} + \sum c_{n}x^{pn}$. Let $F$ in $Z/2[[x]]$ be $\sum_{n \mathit{odd},\ n>0}x^{n^{2}}$. Using modular forms of level 1, Nicolas and Serre show that the $T_{p}$ stabilize the space spanned by $F, F^{3}, F^{5}, F^{7}, \ldots $ and that the associated (completed) Hecke algebra is a power series ring in $T_{3}$ and $T_{5}$. Indeed they make the space into a faithful $Z/2[[X,Y]]$-module with $X$ and $Y$ acting by $T_{3}$ and $T_{5}$, and show that each $T_{p}$ is multiplication by an element of the maximal ideal $(X,Y)$.

In \cite{1} we took $D$ in $Z/2[[x]]$ to be $\sum_{(n,6)=1,\ n>0}x^{n^{2}}$, so that $D=F(x)+F(x^{9})$. $W1$ had a basis consisting of the $D^{k}$ with $k\equiv 1\mod{6}$ and $W5$ a basis consisting of the $D^{k}$, $k\equiv 5\mod{6}$; $W$ was $W1\oplus W5$. We showed that the $T_{p}$ with $p\equiv 1\mod{6}$ stabilize $W1$ and $W5$, while when $p\equiv 5\mod{6}$, $T_{p}(W1)\subset W5$ and $T_{p}(W5)\subset W1$.  We further showed that $W1$ has a basis $m_{i,j}$ ``adapted to $T_{7}$ and $T_{13}$'' with $m_{0,0}=D$. We deduced that the (completed shallow) Hecke algebra attached to $W$ is a power series ring in $T_{7}$ and $T_{13}$ with an element of square $0$ adjoined. Though $W$ is a space of mod~2 modular forms of level $\Gamma_{0}(9)$, it identifies as Hecke-module with a certain subquotient of the space of odd mod~2 modular forms of level $\Gamma_{0}(3)$.

In the present work we change notation. Now $D=\sum_{(n,10)=1,\ n>0}x^{n^{2}}$, so that $D=F(x)+F(x^{25})$. Let $G=F(x^{5})$. For $k>0$ with $(k,10)=1$ define $D_{k}$ as follows: $D_{1}=D$, $D_{3}=D^{8}/G$, $D_{7}=D^{2}G$ and $D_{9}=D^{4}G$, while $D_{k+10}=G^{2}D_{k}$. Let $W$ be spanned by the $D_{k}$. (As $D_{k}=x^{k}+\cdots $, these are linearly independent over $Z/2$.) Then $W=W_{a}\oplus W_{b}$ where the $D_{k}$, $k\equiv 1, 3, 7, 9\mod{20}$, are a basis of $W_{a}$, and the $D_{k}$, $k\equiv 11, 13, 17, 19\mod{20}$ are a basis of $W_{b}$. We will establish the following analogues to the results of \cite{1}.

\begin{enumerate}
\item[(1)]
The $T_{p}$, $p\ne 5$, stabilize $W$. If $p\equiv 1, 3, 7, 9\mod{20}$, $T_{p}$ stabilizes $W_{a}$ and $W_{b}$, while if $p\equiv 11, 13, 17, 19\mod{20}$, $T_{p}(W_{a})\subset W_{b}$ and $T_{p}(W_{b})\subset W_{a}$.
\item[(2)]
Though $W$ is a space of mod~2 modular forms of level $\Gamma_{0}(25)$, it identifies as Hecke-module with a certain subquotient of the space of odd mod~2 modular forms of level $\Gamma_{0}(5)$.
\item[(3)]
$W_{a}$ admits a basis $m_{i,j}$ adapted to $T_{3}$ and $T_{7}$ in the sense of Nicolas and Serre, with $m_{0,0}=D$. 
\item[(4)]
The (completed shallow) Hecke algebra attached to $W$ is a power series ring in $T_{3}$ and $T_{7}$ with an element of square $0$ adjoined.
\end{enumerate}

The proofs of (1) and (2) occupy sections \ref{section1} and \ref{section2}. Some ingredients are the mod~2 level~5 modular equation $(F+G)^{6}=FG$ of Theorem \ref{theorem1.12} and the relation $D^{15}+G^{4}D^{3}+G^{3}= 0$ of Lemma \ref{lemma2.4}.

The proofs of (3) and (4) resemble those of corresponding results in \cite{1}. We identify $W_{a}$ with a  subspace $V^{\prime}$ of the polynomial ring $Z/2[w]$, making $D_{k}$ correspond to $w^{k}$, and show that under this identification $T_{3}(D_{k})$ corresponds to a certain $P_{k}$ defined in \cite{2}, with $P_{k+80}=w^{80}P_{k}+w^{20}P_{k+20}$. Lemma 5.5 of \cite{2}, which deals with this recursion, gives insight into the action of $T_{3}$. Suppose in particular that $q$ is a power of 2, and let $W_{a}(q)$ be spanned by the $D_{k}$ in $W_{a}$ with $k<40q^{2}$. Using this Lemma 5.5 we find that the kernel of $T_{3}:W_{a}(q)\rightarrow W_{a}(q)$ has dimension at most $2q$. Ideal theory in $Z[\sqrt{-10}]$, developed in section \ref{section4}, then shows that this kernel is a space $DI(q)$ of theta-series attached to binary quadratic forms. A study of the action of $T_{7}$ on $DI(q)$, together with formalism from \cite{1}, leads, in section \ref{section5}, to a proof of (3). And the further study in section \ref{section6} of $T_{11}:W_{a}\rightarrow W_{b}$ and $W_{b}\rightarrow W_{a}$ gives a proof of (4).

We begin our proofs by introducing elements $P$,$E_{4}$, and $B$ of $\bbc [[x]]$. These are the expansions at infinity of classical modular forms of level $\Gamma_{0}(5)$.

\begin{definition}
\label{def1.1}
$P=1+\cdots$, $E_{4}=1+\cdots$ and $B=x+\cdots$ are the expansions at infinity of:
\vspace{-2ex}
\begin{enumerate}
\item[(1)] The normalized weight~2 Eisenstein series for $\Gamma_{0}(5)$.
\item[(2)] The normalized weight~4 Eisenstein series of level~1.
\item[(3)] The normalized weight~4 Eisenstein series, vanishing at infinity, for $\Gamma_{0}(5)$.
\end{enumerate}
\end{definition}

\begin{remark*}{Remark}
One usually views expansions at infinity of modular forms as elements of $\bbc [[q]]$ where $q=\me^{2\pi iz}$. But as in \cite{1} we'll use the letter $x$ rather than the letter $q$.
\end{remark*}

\begin{definition}
\label{def1.2}
$r$ in $Z/2[[x]]$ is $\sum_{n>0}(x^{n^{2}}+x^{2n^{2}}+x^{5n^{2}}+x^{10n^{2}})$.
\end{definition}

Classical formul\ae\ for the coefficients of Eisenstein series show that $P$, $E_{4}$ and $B$ are in $Z[[x]]$ with mod~2 reductions $1$, $1$ and $r$.

\begin{definition}
\label{def1.3}
$C$ in $\bbc[[x]]$ is the expansion at infinity of the normalized weight~4 cusp form $(\eta(z)\eta(5z))^{4}$ for  $\Gamma_{0}(5)$.
\end{definition}

Now the expansion of $\eta(z)$ at infinity is $x^{1/24}$(an element $1-x-x^{2}+\cdots$ of $Z[[x]]$). We deduce:

\begin{lemma}
\label{lemma1.4}
$C$ is in $Z[[x]]$ and is $x-4x^{2}+\cdots$.
\end{lemma}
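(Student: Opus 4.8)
The plan is to unwind the definition of $C$ using the product expansion of $\eta$ that the paper has just recalled. Writing $\eta(z) = x^{1/24}u(x)$ with $u = 1 - x - x^{2} + \cdots$ in $Z[[x]]$, we get $\eta(5z) = x^{5/24}u(x^{5})$, so
\[
\eta(z)\eta(5z) = x^{6/24}\,u(x)u(x^{5}) = x^{1/4}\,v(x),
\]
where $v(x) = u(x)u(x^{5})$ is a product of two elements of $Z[[x]]$ each with constant term $1$; hence $v \in Z[[x]]$ and $v = 1 + \cdots$.

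Raising to the fourth power, $C = \bigl(\eta(z)\eta(5z)\bigr)^{4} = x\cdot v(x)^{4}$. Since $v(x)^{4}$ lies in $Z[[x]]$ with constant term $1$, this already shows $C \in Z[[x]]$ and that $C = x + \cdots$. To pin down the coefficient of $x^{2}$ it suffices to compute $v(x)^{4}$ modulo $x^{2}$: the factor $u(x^{5})$ contributes $1 + O(x^{5})$, so modulo $x^{2}$ we have $v(x) \equiv 1 - x$ and $v(x)^{4} \equiv (1-x)^{4} \equiv 1 - 4x$. Therefore $C = x(1 - 4x + \cdots) = x - 4x^{2} + \cdots$.

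There is essentially no obstacle here; the only point requiring any care is the bookkeeping of the fractional exponent, namely checking that $1/24 + 5/24 = 1/4$ so that the fourth power $\bigl(x^{1/4}v(x)\bigr)^{4} = x\,v(x)^{4}$ is genuinely an integral power series rather than merely a formal $q$-expansion with fractional leading exponent. Once that is observed, integrality and the first two coefficients both follow at once from the known expansion of $\eta$.
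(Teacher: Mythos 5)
Your proof is correct and follows exactly the route the paper intends: the paper simply notes that $\eta(z)=x^{1/24}(1-x-x^{2}+\cdots)$ and says ``we deduce'' the lemma, and your computation of $(\eta(z)\eta(5z))^{4}=x\,\bigl(u(x)u(x^{5})\bigr)^{4}$ with the mod-$x^{2}$ bookkeeping is precisely the omitted deduction.
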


We now show that the mod~2 reduction $\bar{C}$ of $C$ is $r^{2}+r$.

\begin{lemma}
\label{lemma1.5}
Let $n$ be a positive integer. The number of $(a,b)$ in $Z\times Z$ with $a^{2}+5b^{2}=6n$ and $a\equiv b\mod{3}$ is $2$~mod~$4$ if $n$ is either a square or $5$(square), and $0$~mod~$4$ otherwise.
\end{lemma}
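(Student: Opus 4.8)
The statement concerns a count of representations $a^2 + 5b^2 = 6n$ with a congruence condition $a \equiv b \pmod 3$, and asserts that this count is $\equiv 2 \pmod 4$ exactly when $n$ is a square or $5$ times a square, and $\equiv 0 \pmod 4$ otherwise. Since we want a statement modulo $4$, the natural strategy is to organize the solution set into orbits under an obvious symmetry group and check which solutions are fixed. The form $a^2 + 5b^2$ has automorphisms $(a,b) \mapsto (\pm a, \pm b)$, giving a group of order $4$; the condition $a \equiv b \pmod 3$ is preserved by the simultaneous sign change $(a,b)\mapsto(-a,-b)$ but not individually, so I would pass to the group $\{\pm 1\}$ acting by $(a,b) \mapsto (-a,-b)$ and instead keep track of the sign structure more carefully — or better, enlarge the picture using the ideal-theoretic interpretation hinted at later in the paper (ideal theory in $\mathbb{Z}[\sqrt{-10}]$, section~\ref{section4}).

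First I would translate the condition $a \equiv b \pmod 3$ into a statement about the prime $3$. Note $a^2 + 5b^2 \equiv a^2 + 2b^2 \equiv a^2 - b^2 \pmod 3$, so $3 \mid 6n$ forces $a \equiv \pm b \pmod 3$ automatically; the extra constraint $a \equiv b \pmod 3$ (as opposed to $a \equiv -b$) picks out ``half'' of these in a way governed by which prime above $3$ divides $a + b\sqrt{-5}$. So I would reinterpret a solution $(a,b)$ as an element $\alpha = a + b\sqrt{-5}$ of the ring $\mathbb{Z}[\sqrt{-5}]$ of norm $6n$ lying in a fixed one of the two ideals $\mathfrak{p}_3, \bar{\mathfrak{p}}_3$ above $3$; since $6 = 2\cdot 3$ and $2, 3$ both ramify/split appropriately in $\mathbb{Q}(\sqrt{-5})$, the prime $2$ contributes a ramified factor $\mathfrak{p}_2$ with $\mathfrak{p}_2^2 = (2)$ and $\mathfrak{p}_2 = (2, \sqrt{-5})$. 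Thus solutions correspond bijectively to elements of norm $6n$ in the ideal $\mathfrak{p}_2 \mathfrak{p}_3$, up to the unit group $\{\pm 1\}$ — and counting these modulo $4$ amounts to counting, modulo $2$, the ideals of norm $6n$ in a fixed ideal class, i.e.\ reading off a coefficient of an Eisenstein-like theta series for the genus of $x^2 + 5y^2$ (equivalently $2x^2 + 2xy + 3y^2$).

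The key computation is then: the ideals of $\mathbb{Z}[\sqrt{-5}]$ of norm $6n$ that are divisible by $\mathfrak{p}_2\mathfrak{p}_3$ — this is automatic once $6 \mid 6n$, so really we are counting all ideals of norm $6n$ — and we want this count (with multiplicity from genus theory sorted out) modulo $2$. Genus theory for discriminant $-20$: there are two classes, the principal one (forms $x^2+5y^2$) and the non-principal one (forms $2x^2+2xy+3y^2$), and a standard argument with the genus character shows the number of ideals of norm $m$ in each class is computed by a character sum; the parity of the total is controlled by the factorization of $m$ into ramified and split primes. Here $m = 6n$ with $6$ exactly ramified, so the count is odd precisely when $n$ itself has all its ``ambiguous'' behavior concentrated at squares — working this out, $n$ a square gives one genus and $5$(square) gives the other, matching the claimed split, while $n$ with any prime $\equiv \pm 1$ factor to an odd power or any inert prime to an odd power kills the count mod $2$. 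I would make this precise by writing the generating function $\sum_n (\#\{\ldots\}) x^n$ and identifying it, modulo $2$, with $\bar{C} = r^2 + r$ — indeed this lemma is exactly the combinatorial heart of the identity $\bar C = r^2+r$ being set up in Lemma~\ref{lemma1.5}'s surrounding text.

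The main obstacle I anticipate is bookkeeping the factor of $2$: relating ``number of $(a,b)$'' to ``number of ideals'' introduces a factor $2$ from the units $\pm 1$, and the ramified prime $\mathfrak{p}_2$ must be handled carefully (an element of norm $6n$ need not literally be divisible by $\mathfrak{p}_2$ as an element unless one is careful, though the ideal it generates is). So the delicate point is setting up the bijection between $\{(a,b): a^2+5b^2 = 6n,\ a\equiv b \!\!\pmod 3\}$ and a clean ideal-theoretic set whose cardinality modulo $2$ is visibly a Fourier coefficient of a weight-$1$ theta series, and then invoking the classical evaluation of that coefficient. Once the bijection is pinned down, the mod-$4$ claim for $(a,b)$ becomes the mod-$2$ claim for ideals, which is genus theory. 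I would therefore structure the write-up as: (i) reduce mod $4$ to mod $2$ via the sign involution; (ii) set up the ideal correspondence in $\mathbb{Z}[\sqrt{-5}]$, tracking the prime $3$ via the congruence $a\equiv b$; (iii) apply genus theory for discriminant $-20$ to evaluate the resulting count mod $2$, obtaining exactly the squares and $5$-times-squares.
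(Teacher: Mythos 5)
Your plan is viable and genuinely different in presentation from the paper's proof, which is a two-line orbit count: the paper lets the Klein four-group generated by $T:(a,b)\mapsto(-a,-b)$ and the explicit second involution $U:(a,b)\mapsto\left(\frac{2a-5b}{3},\frac{-a-2b}{3}\right)$ act on the solution set; $T$ is free, the fixed points of $U$ and of $TU$ are $\pm(5k,-k)$ and $\pm(k,k)$ (forcing $n=5k^{2}$, resp.\ $n=k^{2}$), and every other orbit has size $4$, which gives the claim immediately. Your ideal-theoretic scheme is in fact the conceptual source of that $U$: writing $\alpha=a+b\sqrt{-5}$, one checks $U(\alpha)=(2-\sqrt{-5})\bar{\alpha}/3$ with $(2-\sqrt{-5})=\mathfrak{p}_{3}^{2}$, so $U$ is conjugation on ideals transported back to elements of $\mathfrak{p}_{3}$. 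The paper's version buys brevity and needs no class-group input; yours explains where the involution comes from and matches the machinery of section \ref{section4}.

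Two points in your sketch need repair before it is a proof. First, $3$ \emph{splits} in $Z[\sqrt{-5}]$ (the discriminant $-20$ is a square mod $3$); it is not part of a ``ramified $6$'' as you assert midway through. This splitting is essential: the condition $a\equiv b\bmod 3$ says exactly that $(\alpha)$ is divisible by the particular prime $\mathfrak{p}_{3}=(3,1+\sqrt{-5})$, and you must then observe that $\mathfrak{p}_{2}\mathfrak{p}_{3}=(1+\sqrt{-5})$ is \emph{principal}, so that dividing it out identifies your solution set, modulo the units $\pm 1$, with the set of principal ideals of norm $n$. Second, the decisive parity evaluation is cleaner via the involution $J\mapsto\bar{J}$ on principal ideals of norm $n$ than via a genus character sum: its fixed points are the ambiguous principal ideals, which are precisely $(m)$ and $(m\sqrt{-5})$, of norms $m^{2}$ and $5m^{2}$. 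Note that both of these lie in the principal class, contrary to your remark that squares and $5\cdot(\mbox{squares})$ land in different genera; it is the norms $2m^{2}$ and $10m^{2}$ that belong to the non-principal class. With those corrections your argument closes up and reproduces the lemma.
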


\begin{proof}
Let $S(n)$ be the set of such $(a,b)$. Then $T:(a,b)\rightarrow (-a,-b)$ and $U:(a,b)\rightarrow \left(\frac{2a-5b}{3},\frac{-a-2b}{3}\right)$ are commuting involutions of $S(a)$. $T$ has no fixed points, while the fixed points of $U\ (\mbox{resp.}\ TU)$ are of the form $(5k,-k)\ (\mbox{resp.}\ (k,k))$. In the first case, $5k^{2}=n$ while in the second $k^{2}=n$. So we have an action of $Z/2\times Z/2$ on $S(n)$ in which all orbits are of size 4 with the following exceptions. When $n=5k^{2}$ there is a size 2 orbit $\pm (5k,-k)$. Where $n=k^{2}$ there is a size 2 orbit $\pm (k,k)$. The result follows.
\qed
\end{proof}

\begin{lemma}
\label{lemma1.6}
Let $n$ be an integer. The number of $(a,b)$ in $Z\times Z$ with $a^{2}+5b^{2}=6n$ and $a\equiv b\equiv 1\mod{6}$ is odd if $n$ is an odd square or $5\cdot$(an odd square), and even otherwise.
\end{lemma}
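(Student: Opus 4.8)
The plan is to reduce Lemma \ref{lemma1.6} to Lemma \ref{lemma1.5} by sieving the congruence condition. In Lemma \ref{lemma1.5} we count pairs $(a,b)$ with $a^{2}+5b^{2}=6n$ and $a\equiv b\mod{3}$; here we want the finer count with $a\equiv b\equiv 1\mod{6}$. The first step is to understand the $\mod 2$ constraints that are automatic. Since $a^{2}+5b^{2}=6n$ is even, $a$ and $b$ have the same parity; if both were even then $4\mid 6n$ forces $a^{2}+5b^{2}\equiv 0\mod 4$, which is compatible, so evenness is not excluded by a parity argument alone and must be handled separately. The cleaner route is to work modulo the group $\{1,-1,2,-2,\ldots\}$ of sign changes and the unit $U$ already exploited in Lemma \ref{lemma1.5}, refining the $Z/2\times Z/2$ action there to track the residues $\mod 6$.

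Concretely, I would introduce the set $S(n)$ of $(a,b)$ with $a^{2}+5b^{2}=6n$, $a\equiv b\mod 3$, and decompose it under the sign involution $T\colon(a,b)\mapsto(-a,-b)$ together with a second sign-type involution, say on the sublattice where $a,b$ are both odd, to isolate the congruence class $a\equiv b\equiv 1\mod 6$. The key observation is that among the residues mod $6$ allowed by $a\equiv b\mod 3$ and $a^2+5b^2\equiv 0\mod 6$, the pairs split into (i) those with $a\equiv b\equiv 1$ or $a\equiv b\equiv 5\mod 6$, interchanged by $T$, and (ii) those with $a\equiv b\equiv 0\mod 6$ (both $a,b$ divisible by $6$) or the remaining even cases, and (iii) mixed cases like $a\equiv 2,b\equiv 5$. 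One shows the count in Lemma \ref{lemma1.5} equals twice the count we want plus an even contribution from the leftover classes, so the parity (odd versus even) of the Lemma \ref{lemma1.5} count transfers to an analogous dichotomy: writing $N_1(n)$ for the count in Lemma \ref{lemma1.6} and using that $T$ pairs the class $(1,1)\mod 6$ with $(5,5)\mod 6$, I expect $N_1(n)$ to be odd exactly when the relevant ``size-2 orbit'' representative $(k,k)$ or $(5k,-k)$ actually has $k$ odd, i.e. when $n$ is an odd square or $5\cdot(\text{odd square})$.

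The main obstacle is bookkeeping: one must verify that the substitution $U\colon(a,b)\mapsto\bigl(\frac{2a-5b}{3},\frac{-a-2b}{3}\bigr)$ of Lemma \ref{lemma1.5} respects (or predictably permutes) the finer residues mod $6$, so that the orbit structure under $\langle T,U\rangle$ still has all orbits of size $4$ except the two size-$2$ orbits, and that exactly one of these size-$2$ orbits lands in the class $a\equiv b\equiv 1\mod 6$ precisely when $n$ is an odd square, and the other lands there precisely when $n$ is $5$ times an odd square. A short check of $U$ on $(k,k)$ gives $U(k,k)=(-k,-k)=T(k,k)$, and on $(5k,-k)$ gives $U(5k,-k)=\bigl(\frac{10k+5k}{3},\frac{-5k+2k}{3}\bigr)=(5k,-k)$, consistent with Lemma \ref{lemma1.5}; the extra content is tracking $k\bmod 2$ and the image residues mod $3$ of the non-fixed points, which I would organize in a small table rather than spell out here. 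Once the orbit count is in hand, the parity statement of Lemma \ref{lemma1.6} follows immediately by counting $S(n)$ modulo $4$ and peeling off the size-$2$ orbits that lie in the desired congruence class.
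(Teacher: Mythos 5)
Your overall strategy --- reduce to Lemma \ref{lemma1.5} by sorting the solutions of $a^{2}+5b^{2}=6n$, $a\equiv b\pmod 3$, into residue classes mod $6$ and using the sign change $T$ to pair the class $(1,1)$ with $(5,5)$ --- is exactly the paper's route. But as written the argument has a genuine gap at its central step. You assert that the Lemma \ref{lemma1.5} count equals twice the count you want ``plus an even contribution from the leftover classes,'' and that the parity then transfers. This cannot work: the Lemma \ref{lemma1.5} count is \emph{always} even (it is $2$ or $0$ mod $4$), so its parity carries no information. What you must extract is the count mod $4$, and for that ``even'' is not enough --- you need the leftover contribution to be $\equiv 0\pmod 4$. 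The paper gets this by observing that the only leftover class is $a\equiv b\equiv 3\pmod 6$, and that this class is stable under all four sign changes $(\pm a,\pm b)$ (since $a\equiv -b\pmod 3$ holds there), so its count is a multiple of $4$. Your proposal never isolates this class or supplies the four-fold symmetry; without it, a leftover contribution of $2$ mod $4$ would flip the answer.

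Two smaller but real defects in the bookkeeping. First, the list of residue classes you propose to sieve is wrong: for odd $n$ one has $6n\equiv 2\pmod 4$, which forces $a$ and $b$ both odd (both even gives $a^{2}+5b^{2}\equiv 0\pmod 4$, mixed parity gives an odd value), and then $a\equiv b\pmod 3$ with both odd forces $a\equiv b\pmod 6$ with common residue $1$, $3$, or $5$; the ``mixed'' and ``both even'' classes you worry about simply do not occur. Second, the case of even $n$ is not addressed at all: there the claimed count is $0$ because $a\equiv b\equiv 1\pmod 6$ makes $a,b$ odd and hence $a^{2}+5b^{2}\equiv 2\pmod 4$, incompatible with $4\mid 6n$. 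Finally, the detour through re-examining the action of $U$ on residues mod $6$ and ``peeling off size-$2$ orbits in the desired class'' is unnecessary and problematic, since the class $a\equiv b\equiv 1\pmod 6$ is not stable under $T$; the clean argument uses only the equality of the $(1,1)$ and $(5,5)$ counts plus the mod-$4$ statement of Lemma \ref{lemma1.5}.
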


\begin{proof}
We may assume $n\ge 0$. Since $a^{2}+5b^{2}$ is $2$~mod~$4$, there are no such pairs when $n$ is even. For fixed odd $n>0$ the pairs of Lemma \ref{lemma1.5} come in three types according as $a\equiv b\equiv 1\mod{6}$, $a\equiv b\equiv 5\mod{6}$, or $a\equiv b\equiv 3\mod{6}$.  There are as many pairs of the first type as there are of the second. So in view of Lemma \ref{lemma1.5} it suffices to show that the number of pairs of the third type is a multiple of 4.  But these pairs come in sets of 4, $(\pm a, \pm b)$.
\qed
\end{proof}

\begin{lemma}
\label{lemma1.7}
The mod~2 reduction $\bar{C}$ of $C$ is $r^{2}+r$.
\end{lemma}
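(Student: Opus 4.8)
The plan is to compute the mod~2 reduction of $C = x - 4x^2 + \cdots$ by identifying its coefficients with counts of representations by the quadratic form $a^2 + 5b^2$, and then to match these counts against the coefficients of $r^2 + r$. Since $C = (\eta(z)\eta(5z))^4$ is a weight~4 cusp form for $\Gamma_0(5)$, and the space of such forms is one-dimensional, $C$ must be a suitable combination of theta-series of the quadratic forms of discriminant $-20$. Concretely, the two reduced forms in that genus are $x^2 + 5y^2$ and $2x^2 + 2xy + 3y^2$, and a classical identity expresses $C$ in terms of their associated weight~2 (not 4) theta series — or, more usefully here, one works directly with the identity $C = \tfrac14\sum_{a^2+5b^2 = n,\ a\equiv b\bmod 3}\bigl(\text{sign factors}\bigr) x^n$ of the sort that Lemmas~\ref{lemma1.5} and~\ref{lemma1.6} were set up to exploit. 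So first I would pin down the exact classical formula giving the $n$-th coefficient of $C$ as (a quarter of) the number of $(a,b)\in Z\times Z$ with $a^2 + 5b^2 = 6n$ and $a\equiv b\equiv 1\bmod 6$, perhaps up to an elementary correction; this is presumably where the normalization $C = x - 4x^2 + \cdots$ gets used, to fix the constant.

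Granting such a formula, the mod~2 reduction is immediate from Lemma~\ref{lemma1.6}: the coefficient of $x^n$ in $\bar C$ is $1$ exactly when $n$ is an odd square or $5$ times an odd square, and $0$ otherwise. So
\[
\bar C = \sum_{m\ \mathit{odd},\ m>0}\bigl(x^{m^2} + x^{5m^2}\bigr).
\]
It then remains to check that $r^2 + r$ has this same expansion. From Definition~\ref{def1.2}, $r = \sum_{n>0}(x^{n^2} + x^{2n^2} + x^{5n^2} + x^{10n^2})$, so over $Z/2$ we have $r^2 = \sum_{n>0}(x^{2n^2} + x^{4n^2} + x^{10n^2} + x^{20n^2})$, and hence $r^2 + r = \sum_{n>0}(x^{n^2} + x^{5n^2}) + \sum_{n>0}(x^{4n^2} + x^{20n^2})$ after the terms $x^{2n^2}$ and $x^{10n^2}$ cancel. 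Writing each $n$ as $2^a m$ with $m$ odd and regrouping, the exponents $n^2$ together with the exponents $4n^2 = (2n)^2$ run over exactly the even-index squares, leaving precisely $\sum_{m\ \mathit{odd}}x^{m^2}$; likewise $5n^2$ together with $20n^2$ leave $\sum_{m\ \mathit{odd}}x^{5m^2}$. This yields $r^2 + r = \sum_{m\ \mathit{odd},\ m>0}(x^{m^2} + x^{5m^2}) = \bar C$.

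The main obstacle is the first step: establishing the precise representation-number formula for the coefficients of $C$, including getting the constant and the congruence conditions exactly right so that Lemma~\ref{lemma1.6} applies verbatim. One clean route is to use the factorization of the relevant $L$-function, or the explicit basis of the one-dimensional space $S_4(\Gamma_0(5))$ in terms of Hecke eigenforms with CM by $\newq(\sqrt{-5})$ (equivalently $\newq(\sqrt{-20})$), so that $a_n(C)$ becomes a Hecke Grössencharacter sum; one then rewrites that sum as the $(a,b)$-count appearing in Lemma~\ref{lemma1.6}. An alternative, staying closer to the $\eta$-product, is to expand $(\eta(z)\eta(5z))^4$ via the Jacobi triple product to get a double sum over two integer variables, and then change variables to bring it into the $a^2 + 5b^2 = 6n$ shape — the factor $6$ and the $\bmod 3$ conditions in Lemma~\ref{lemma1.5} strongly suggest this is the intended path. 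Either way, once the coefficient formula is in hand the reduction mod~2 and the comparison with $r^2 + r$ are the routine computations sketched above.
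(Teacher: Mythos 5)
Your overall architecture matches the paper's: reduce the coefficient of $x^n$ in $\bar C$ to the parity of the count in Lemma~\ref{lemma1.6}, apply that lemma, and then check that $r^2+r=\sum_{m\ odd}(x^{m^2}+x^{5m^2})$. The last two steps are carried out correctly (the cancellation argument for $r^2+r$ is exactly right). But the first step, which you yourself flag as ``the main obstacle,'' is a genuine gap, and the two routes you propose for filling it are off the mark. The CM route rests on a false premise: the unique newform $(\eta(z)\eta(5z))^4$ in $S_4(\Gamma_0(5))$ does not have CM by $\newq(\sqrt{-5})$ (e.g.\ $a_{11}\ne 0$ although $11$ is inert), and its integral coefficients are \emph{not} representation numbers of a binary form, with or without a factor of $\tfrac14$ and sign corrections. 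No such exact integral formula exists, and none is needed.

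The paper's point is that the identity you want is a mod~2 identity only, and it drops out of Euler's pentagonal number theorem with no signs or normalizing constants. From $\eta(z)=\sum_{a\equiv 1\mod{6}}\pm x^{a^2/24}$ one gets $\bar\eta=\sum_{a\equiv 1\mod{6}}x^{a^2/24}$ in $Z/2[[x^{1/24}]]$, and fourth powers mod~2 are computed by $x\mapsto x^4$, so $\overline{\eta(z)^4}=\sum_{a\equiv 1\mod{6}}x^{a^2/6}$ and $\overline{\eta(5z)^4}=\sum_{b\equiv 1\mod{6}}x^{5b^2/6}$. Multiplying, the coefficient of $x^n$ in $\bar C$ is precisely the mod~2 count of pairs $(a,b)$ with $a^2+5b^2=6n$ and $a\equiv b\equiv 1\mod{6}$, which is exactly the quantity Lemma~\ref{lemma1.6} evaluates. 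So your second route is the intended one, but you should work mod~2 from the outset rather than seeking an exact coefficient formula for $C$ over $Z$; as written, the proposal does not contain a proof of the key identity.
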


\begin{proof}
Using the familiar expansion of $\eta(z)$ at infinity we find that $\bar{C}=\left(\sum_{a\equiv 1\mod{6}}x^{a^{2}/6}\right)\left(\sum_{b\equiv 1\mod{6}}x^{5b^{2}/6}\right)$. The coefficient of $x^{n}$ in this element of $Z/2[[x]]$ is the mod~2 reduction of the number of $(a,b)$ in $Z\times Z$ with $a^{2}+5b^{2}=6n$ and $a\equiv b\equiv 1\mod{6}$. So by Lemma \ref{lemma1.6}, $\bar{C}=\sum_{n\ \mathit{odd},\ n>0}(x^{n^{2}}+x^{5n^{2}})$. This is precisely $r^{2}+r$.
\qed
\end{proof}

\begin{definition}
\label{def1.8}
If $k\ge 0$ and even, $M_{k}$ consists of those $f$ in $Z/2[[x]]$ for which there is a weight $k$ modular form of level $\Gamma_{0}(5)$ whose expansion at infinity lies in $Z[[x]]$ and reduces to $f$.
\end{definition}

Using multiplication by $E_{4}$ we see that $M_{0}\subset M_{4}\subset M_{8}\subset \ldots$

\begin{definition}
\label{def1.9}
$M=\cup M_{4m}$ is ``the space of mod~2 modular forms of level $\Gamma_{0}(5)$.'' $\modd$ consists of the odd elements of $M$, i.e.\ those elements lying in $x\cdot Z/2[[x^{2}]]$.
\end{definition}

Note that $M$ is a subring of $Z/2[[x]]$. Since the reductions of $E_{4}$ and $B$ ane $1$ and $r$, $M\supset Z/2[r]$/ Since $r^{2}+r=\sum_{n\ \mathit{odd},\ n>0}(x^{n^{2}}+x^{5n^{2}})$ is odd, each $r^{2k}(r^{2}+r)$ is in $\modd$.

\begin{theorem}
\label{theorem1.10}
Fix $m\ge 0$ and suppose $0\le i \le 2m$. Then there is a weight $4m$ modular form $u_{i}$ of level $\Gamma_{0}(5)$ whose expansion at infinity has the following properties. It is $x^{i}+\ldots$, lies in $Z[[x]]$, and reduces to $r^{i}$.
\end{theorem}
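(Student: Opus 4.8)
\textbf{Proof proposal for Theorem \ref{theorem1.10}.}
The plan is to produce the forms $u_i$ by a downward induction on $i$, starting from the top $i = 2m$ and descending to $i = 0$, using the weight-$4$ cusp form $C$ and its reduction $\bar C = r^2 + r$ established in Lemma \ref{lemma1.7} as the engine that lets us change the bottom of the expansion. The base of the induction is $i = 2m$: here one wants a weight $4m$ form with integral $q$-expansion $x^{2m} + \cdots$ reducing to $r^{2m}$. Since $B \in Z[[x]]$ reduces to $r$ and $B = x + \cdots$, the form $B^{2m}$ works if $\operatorname{ord}_x B = 1$, which follows from Definition \ref{def1.1}(3) (it is the normalized weight-$4$ Eisenstein series \emph{vanishing at infinity}, so $B = x + \cdots$). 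Actually, to get the cleaner statement with \emph{all} of $u_{2m}, \dots, u_0$, the more robust base point to set up is via $C$: note $C^{m} \in Z[[x]]$ is $x^m + \cdots$ and reduces to $(r^2+r)^m = r^{2m} + (\text{lower-degree-in-}r\text{ terms})$, so $C^m$ is ``$r^{2m} + \cdots$'' modulo things built from smaller powers of $r$.

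The inductive step is the heart of the argument. Suppose by (a suitable formulation of the) induction hypothesis that for a given $i$ with $0 \le i < 2m$ we already have integral weight $4m$ forms realizing $r^j$ for every $j$ with $i < j \le 2m$; I want to manufacture $u_i$. Pick any weight $4m$ form $h$ of level $\Gamma_0(5)$ with integral expansion reducing to $r^i$ — for instance $E_4^{m-?}$ times a power of $B$ and $C$ chosen to have the right weight, concretely $h = C^{\,i}E_4^{\,m-i}$ has weight $4i + 4(m-i) = 4m$, integral expansion, and reduces to $(r^2+r)^i = r^{2i} + \cdots$; that's the wrong power, so instead take $h = B^{\,i} E_4^{\,m-i}$, which reduces to exactly $r^i$ and has weight $4m$. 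Now $h = x^i + (\text{higher order})$ is \emph{not} yet normalized as $x^i + \cdots$ with vanishing intermediate coefficients — wait, that's automatic once the leading term is $x^i$. The real content: one must arrange that $h$ reduces to $r^i$ \emph{and} begins $x^i + \cdots$; $B^i E_4^{m-i}$ does both since $B = x+\cdots$ and $E_4 = 1 + \cdots$. So in fact $u_i := B^{\,i} E_4^{\,m-i}$ already does the job for each $0 \le i \le 2m$, and no induction is needed at all — the constraint $i \le 2m$ is exactly what is needed for the exponent $m - i$ on $E_4$... no: $i \le 2m$ allows $m - i$ to be negative.

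Let me restructure: the honest proof uses the modular curve $X_0(5)$, which has genus $0$, with Hauptmodul $t$; the graded ring of modular forms is generated in low weight, and $B/E_4$ or $C/E_4^{?}$ gives a degree-one rational parameter. Concretely, set $t = B/(\text{a weight-}4\text{ form nonvanishing at infinity})$, say $t = B/P^2$ or $t = C/(\text{suitable})$; then $t = x + \cdots$ is a uniformizer. The key point is that for $0 \le i \le 2m$, the function $t^i$ has a pole of order at most $i \le 2m$ at the unique other cusp (or wherever), so $t^i \cdot (\text{weight }4m\text{ form})$ — specifically $t^i E_4^{m}$ interpreted correctly — is a \emph{holomorphic} weight $4m$ modular form whose expansion is $x^i + \cdots$. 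One checks integrality because $t, E_4$ (or $P$) lie in $Z[[x]]$ with the relevant form being a unit in $Z[[x]]$, so division stays in $Z[[x]]$; and one checks the reduction is $r^i$ using $\bar t = \bar B/\bar P^2 = r/1 = r$ by Lemma \ref{lemma1.7}'s neighbors and the computation of the reductions $1,1,r$ of $P, E_4, B$. The main obstacle is the bookkeeping on $X_0(5)$: verifying that the pole order of $t^i$ against $E_4^m$ at the bad cusp does not exceed what holomorphy permits precisely when $i \le 2m$ — this is where the dimension $\dim M_{4m}(\Gamma_0(5)) = 2m + 1$ and the valence formula enter, and it is the step I would expect to write out most carefully, together with confirming that $t$ can be chosen in $Z[[x]]$ as a ratio of integral forms one of which is a unit power series.
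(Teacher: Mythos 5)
There is a genuine gap here, and you essentially flag it yourself. Your only rigorous construction, $u_i = B^{\,i}E_4^{\,m-i}$, covers exactly the range $0\le i\le m$, whereas the entire content of the theorem is the range $m< i\le 2m$: already for $m=1$, $i=2$ one must exhibit a holomorphic weight~$4$ form of level $\Gamma_0(5)$ that is $x^2+\cdots$, has integral expansion, and reduces to $r^2$, and you never produce such a form. Your fallback --- a Hauptmodul $t=B/P^2$ and the forms $t^{\,i}E_4^{\,m}$ --- is left unverified at precisely the decisive point (you yourself call it ``the step I would expect to write out most carefully''), and in fact it cannot work as written: $E_4$ is a level-one form, hence nonvanishing at every cusp of $\Gamma_0(5)$ (its zeros on $X_0(5)$ lie over $\rho$), so $E_4^{\,m}$ cannot absorb the pole that $t^{\,i}$ acquires away from $\infty$, and $t^{\,i}E_4^{\,m}$ fails to be holomorphic for $i\ge 1$. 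The observation $\bar t=r$ is fine but moot until a holomorphic integral form is actually in hand.

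For comparison, the paper's proof first reduces to $m=1$ by writing $i$ as a sum of $m$ integers from $\{0,1,2\}$ and multiplying weight-$4$ solutions (products preserve integrality, leading exponents add, reductions multiply), and then settles the one hard case $i=2$ by integral linear algebra in the $4$-dimensional space of weight-$4$ forms: $E_4-P^2-228B=36x^2+\cdots$ and $B-C=13x^2+\cdots$, and since $\gcd(36,13)=1$ the combination $4(E_4-P^2-228B)-11(B-C)$ is $x^2+\cdots$; its mod~$2$ reduction is $\bar B+\bar C=r+(r^2+r)=r^2$ by Lemma~\ref{lemma1.7}. This explicit use of the cusp form $C$ and of Lemma~\ref{lemma1.7} to force leading coefficient $1$ at $x^2$ is the ingredient your argument is missing; without it, or some verified substitute for the upper half of the range of $i$, the theorem is not proved.
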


\begin{proof}
It suffices to prove this when $m=1$, so the weight is $4$. For $i=0$ we take $P^{2}$ whose expansion reduces to $1$, while for $i=1$ we take $B$ whose expansion reduces to $r$. Now $E_{4}=1+240x+2160x^{2}+\cdots$, $P^{2}=(1 +6x+18x^{2}+\cdots)^{2}$ and $B=x+9x^{2}+\cdots$ are expansions at infinity of weight $4$ forms of level $\Gamma_{0}(5)$, and $E_{4}-P^{2}-228B=36x^{2}+\cdots$. Also $B-C=13x^{2}+\cdots$, so a $Z$-linear combination of these last two expansions such as $4(\mathit{first})-11(\mathit{second})$ is $x^{2}+\cdots$. And the mod~2 reduction of this linear combination is $\bar{B}-\bar{C}$ which is $r^{2}$ by Lemma \ref{lemma1.7}.
\qed
\end{proof}

\begin{theorem}
\label{theorem1.11}
The $r^{i}$, $0\le i \le 2m$, are a basis of $M_{4m}$ over $Z/2$. It follows that $M=Z/2[r]$. Furthermore the $r^{2k}(r^{2}+r)$ are a basis of $\modd$ over $Z/2$.
\end{theorem}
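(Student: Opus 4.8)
The plan is to prove the three assertions in sequence, with the main work being the linear independence of the $r^{i}$ in $M_{4m}$. First, Theorem \ref{theorem1.10} already supplies, for each $i$ with $0\le i\le 2m$, an element $r^{i}$ of $M_{4m}$ (realized by the form $u_{i}$), so the span of $r^{0},\ldots,r^{2m}$ is contained in $M_{4m}$. To see these are linearly independent over $Z/2$, note that $r=x+\cdots$ begins with $x$, so $r^{i}=x^{i}+\cdots$ has $x$-adic valuation exactly $i$; hence $r^{0},r^{1},\ldots,r^{2m}$ have pairwise distinct leading terms and no nontrivial $Z/2$-combination can vanish. The reverse inclusion $M_{4m}\subset \mathrm{span}(r^{0},\ldots,r^{2m})$ is the crux: given $f\in M_{4m}$, lift it to a weight-$4m$ form with integral expansion, and observe that the complex vector space of weight-$4m$ modular forms for $\Gamma_{0}(5)$ has dimension $2m+1$ (the genus of $X_{0}(5)$ is $0$ and there are two cusps, so $\dim M_{4m}(\Gamma_{0}(5),\bbc)=4m+1-(2m)=2m+1$ by Riemann--Roch; I would cite the standard dimension formula). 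Since the $2m+1$ forms $u_{0},\ldots,u_{2m}$ have $q$-expansions with valuations $0,1,\ldots,2m$, they are linearly independent over $\bbc$ and hence form a $\bbc$-basis of the full space. Therefore our lift of $f$ is a $\bbc$-linear combination $\sum a_{i}u_{i}$; an echelon/triangularity argument on $q$-expansions (the $u_{i}$ form a triangular system with unit leading coefficients) forces the $a_{i}$ to be $p$-adic integers at $2$, in fact integers, so reducing mod $2$ gives $f\in\mathrm{span}(r^{i})$. This establishes the first claim.

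For $M=Z/2[r]$: by Definition \ref{def1.9}, $M=\bigcup_{m}M_{4m}$, and by the first part $M_{4m}=\bigoplus_{i=0}^{2m}Z/2\cdot r^{i}$; taking the union over $m$ gives exactly the set of all polynomials in $r$ over $Z/2$, i.e. $Z/2[r]$. (The inclusion $M\supseteq Z/2[r]$ was already noted in the text using $\bar E_{4}=1$ and $\bar B=r$; the first part gives the reverse inclusion.)

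For the basis of $\modd$: recall $r^{2}+r=\sum_{n\ \mathit{odd},\ n>0}(x^{n^{2}}+x^{5n^{2}})$ is odd, so each $r^{2k}(r^{2}+r)$ lies in $x\cdot Z/2[[x^{2}]]$, hence in $\modd$. These elements are linearly independent since $r^{2k}(r^{2}+r)=x^{2k+1}+\cdots$ has valuation $2k+1$, and as $k$ ranges over $\ge 0$ these valuations run through all positive odd integers. Conversely, given an odd $f\in M=Z/2[r]$, write $f=\sum_{j} c_{j}r^{j}$; I would argue that the odd part of $f$ must already be a $Z/2$-combination of the odd polynomials $r,r^{3},r^{5},\ldots$, and then use the identity $r^{2k+1}=r^{2k}(r^{2}+r)+r^{2k+1}\cdot? $ — more cleanly, observe that $\{r^{2k},\,r^{2k}(r^{2}+r):k\ge 0\}$ is another $Z/2$-basis of $Z/2[r]$ obtained from $\{r^{2k},r^{2k+1}\}$ by an invertible (unitriangular) change of basis, and that $r^{2k}\in Z/2[[x^{2}]]$ is even while $r^{2k}(r^{2}+r)$ is odd; splitting $f$ into its even and odd parts (which individually lie in $M$ since $M$ is closed under the Frobenius-type operations, or simply because the even/odd decomposition respects the basis) shows the odd part is a unique $Z/2$-combination of the $r^{2k}(r^{2}+r)$.

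The step I expect to be the main obstacle is the surjectivity $M_{4m}\subset\mathrm{span}(r^{i})$, specifically the integrality/rationality bookkeeping: one must know that an integral-expansion modular form lying in the $\bbc$-span of the $u_{i}$ has $2$-integral (indeed integral) coordinates, so that reduction mod $2$ is well-defined and lands in the $Z/2$-span of the $r^{i}$. This is handled by the triangularity of the $u_{i}$ with leading coefficient $1$: solving for the coordinates $a_{i}$ recursively from the $q$-expansion expresses each $a_{i}$ as an integer combination of expansion coefficients of $f$ and the $u_{j}$, hence $a_{i}\in Z$.
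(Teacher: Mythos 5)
Your proposal is correct and follows essentially the same route as the paper: use the $u_{i}$ of Theorem \ref{theorem1.10}, whose expansions have valuations $0,1,\dots,2m$, observe via the classical dimension formula that they are a $\bbc$-basis of the weight-$4m$ forms, deduce by triangularity that any form with integral expansion has integral coordinates so its reduction lies in the span of the $r^{i}$, and then obtain the odd part by the unitriangular change of basis from $\{r^{2i},r^{2i+1}\}$ to $\{r^{2i},r^{2i}(r^{2}+r)\}$. The only difference is that you spell out the integrality bookkeeping that the paper compresses into ``examining the expansions.''
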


\begin{proof}
Theorem \ref{theorem1.10} shows that the $r^{i}$, $0\le i \le 2m$, lie in $M_{4m}$. Now the $u_{i}$, $0\le i \le 2m$, of Theorem \ref{theorem1.10} are linearly independent over $\bbc$. Classical dimension formul\ae\ then show that over $\bbc$ they span the space of weight $4m$ modular forms of level $\Gamma_{0}(5)$. Suppose $u$ lies in this space and that the expansion of $u$ at infinity lies in $Z[[x]]$. Writing $u$ as a $\bbc$-linear combination of the $u_{i}$ and examining the expansions we see that $u$ is a $Z$-linear combination of the $u_{i}$. So the reduction of $u$ is a $Z/2$-linear combination of $r^{i}$, $0\le i \le 2m$, giving the first two results. It follows also that the $r^{2i}$ with $0\le i \le m$ together with the $r^{2i}(r^{2}+r)$ with $0\le i \le m-1$ are a $Z/2$-basis of $M_{4m}$. We conclude that the $r^{2i}(r^{2}+r)$ with $0\le i \le m-1$ are a basis for the subspace of $M_{4m}$ consisting of odd power series.
\qed
\end{proof}

Recall now that $F$ in $Z/2[[x]]$ is $\sum_{n\ \mathit{odd},\ n>0} x^{n^{2}}$, while $G=F(x^{5})$. We have seen that $r^{2}+r=F+G$.

\begin{theorem}
\label{theorem1.12}\hspace{2em}\\
\vspace{-4ex}
\begin{enumerate}
\item[(1)] $F$ and $G$ lie in $M_{12}$.
\item[(2)] $G=r^{5}(r+1)$, $F=r(r+1)^{5}$, and $(F+G)^{6}=FG$.
\end{enumerate}
\end{theorem}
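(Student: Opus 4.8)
The plan is to identify $F$ and $G$ with mod~$2$ reductions of genuine weight-$12$ modular forms of level $\Gamma_{0}(5)$ --- which gives (1) --- and then to use the explicit basis of $M_{12}$ furnished by Theorem~\ref{theorem1.11} to pin $F$ and $G$ down as polynomials in $r$; once that is done, (2), and in particular the modular equation, come out by pure algebra in $Z/2[r]$.

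First I would recall the classical identity $\bar{\Delta}=F$, where $\Delta=\eta(z)^{24}$ is the normalized weight-$12$ cusp form of level $1$. This follows in one line: $\Delta=x\prod_{n\ge 1}(1-x^{n})^{24}$, so modulo~$2$, $\Delta\equiv x\bigl(\prod(1-x^{n})^{3}\bigr)^{8}$; by Jacobi's identity $\prod(1-x^{n})^{3}=\sum_{k\ge 0}(-1)^{k}(2k+1)x^{k(k+1)/2}$, which reduces mod~$2$ to $\sum_{k\ge 0}x^{k(k+1)/2}$; raising to the eighth power (Frobenius) gives $\sum_{k\ge 0}x^{4k(k+1)}$, and since $4k(k+1)+1=(2k+1)^{2}$ we obtain $\Delta\equiv\sum_{k\ge 0}x^{(2k+1)^{2}}=F\mod{2}$. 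Now $\Delta$, being a level-$1$ form, is in particular a weight-$12$ modular form for $\Gamma_{0}(5)$ with $Z$-integral expansion reducing to $F$, so $F\in M_{12}$; and $\Delta(5z)$ is a weight-$12$ modular form for $\Gamma_{0}(5)$ with expansion $\sum_{n>0}\tau(n)x^{5n}$ reducing to $F(x^{5})=G$, so $G\in M_{12}$. This proves~(1).

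For~(2), Theorem~\ref{theorem1.11} tells us the $r^{i}$ with $0\le i\le 6$ form a $Z/2$-basis of $M_{12}$, so I can write $F=\sum_{i=0}^{6}c_{i}r^{i}$. Using $r=x+x^{2}+x^{4}+x^{5}+x^{8}+x^{9}+\cdots$ (whence $r^{i}=x^{i}+\cdots$) together with $F=x+x^{9}+\cdots$, matching the coefficients of $x^{0},x,\ldots,x^{6}$ --- a finite check, the cutoff being exactly the degree bound $6$ from $M_{12}$ --- forces $c_{1}=c_{2}=c_{5}=c_{6}=1$ and the other $c_{i}=0$, i.e.\ $F=r+r^{2}+r^{5}+r^{6}=r(r+1)^{5}$ (using $1+r+r^{4}+r^{5}=(1+r)(1+r^{4})=(1+r)^{5}$ over $Z/2$). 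Then, invoking the already-established equality $F+G=\bar{C}=r^{2}+r$, I get
\[
G=(r^{2}+r)+r(r+1)^{5}=r(r+1)\bigl(1+(r+1)^{4}\bigr)=r(r+1)\cdot r^{4}=r^{5}(r+1),
\]
and finally $(F+G)^{6}=(r^{2}+r)^{6}=r^{6}(r+1)^{6}=\bigl(r(r+1)^{5}\bigr)\bigl(r^{5}(r+1)\bigr)=FG$.

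The only genuinely substantive ingredients --- the ``hard part,'' though both are soft --- are the classical fact $\bar{\Delta}=F$ and the structure theorem $M_{12}=\langle 1,r,\ldots,r^{6}\rangle$ of Theorem~\ref{theorem1.11}. Everything afterward is elementary: a finite coefficient comparison and manipulation in the polynomial ring $Z/2[r]$, with the modular equation $(F+G)^{6}=FG$ dropping out formally from $F=r(r+1)^{5}$ and $G=r^{5}(r+1)$.
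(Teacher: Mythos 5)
Your proof is correct and follows essentially the same route as the paper: both identify $F$ and $G$ as the mod~$2$ reductions of $\Delta(z)$ and $\Delta(5z)$ to get (1), and both then use the basis of $M_{12}$ from Theorem~\ref{theorem1.11} together with $F+G=r^{2}+r$ to pin down the polynomials in $r$. The only (harmless) difference is that the paper exploits the oddness of $G$ to work in the three-dimensional span of $r^{2}+r$, $r^{4}+r^{3}$, $r^{6}+r^{5}$ and reads off $G=r^{6}+r^{5}$ from its leading term $x^{5}$, whereas you solve the triangular $7\times 7$ coefficient system for $F$ in all of $M_{12}$; both computations check out.
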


\begin{proof}
Let $\Delta$ be Ramanujan's weight~12 level~1 cusp form. Then $F$ and $G$ are the reductions of the expansions at infinity of $\Delta(z)$ and $\Delta(5z)$, and so lie in $M_{12}$. Theorem \ref{theorem1.11} shows that $G$ is a $Z/2$-linear combination of $r^{2}+r$, $r^{4}+r^{3}$ and $r^{6}+r^{5}$. As $G=x^{5}+\cdots$ it can only be $r^{6}+r^{5}$. Then $F=r^{6}+r^{5}+r^{2}+r=r(r+1)^{5}$. The final result is immediate.
\qed
\end{proof}

\begin{theorem}
\label{theorem1.13}
$\modd$ is spanned by the $F^{i}G^{j}$ with $i+j$ odd.
\end{theorem}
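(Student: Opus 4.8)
\begin{proofsketch}
The plan is to realize the span $\Sigma$ of the monomials $F^{i}G^{j}$ with $i+j$ odd as $N\cap\modd$, where $N=Z/2[F,G]$ is the subring of $Z/2[[x]]$ generated by $F$ and $G$, and then to show that in fact $\modd\subseteq N$, whence $\Sigma=\modd$.

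First I would record the parity of the building blocks: $F$ and $G$ are odd power series, since the exponents $n^{2}$ and $5n^{2}$ occurring in them have $n$ odd and so are odd. Hence $F^{i}G^{j}$ is an odd power series when $i+j$ is odd and an even power series when $i+j$ is even. As $F,G\in M$ by Theorem \ref{theorem1.12} and $M$ is a ring, each $F^{i}G^{j}$ with $i+j$ odd lies in $\modd$, so $\Sigma\subseteq\modd$. Conversely, any element of $N$ is a \emph{finite} sum $\sum c_{ij}F^{i}G^{j}$, and applying the (linear) ``extract odd part'' operator annihilates exactly the terms with $i+j$ even; so an odd power series lying in $N$ already lies in $\Sigma$. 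This gives $\Sigma=N\cap\modd$, reducing everything to the inclusion $\modd\subseteq N$.

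To understand $N$ I would use the relation $F+G=r^{2}+r$ recorded before Theorem \ref{theorem1.12}, and put $s=r^{2}+r$, so that $r^{2}=r+s$ and $M=Z/2[r]$ is free over $Z/2[s]$ on $\{1,r\}$. Plugging the formulas $F=r(r+1)^{5}$ and $G=r^{5}(r+1)$ of Theorem \ref{theorem1.12} into this basis (reducing powers of $r$ via $r^{2}=r+s$) yields $F=sr+(s^{3}+s^{2}+s)$ and $G=sr+(s^{3}+s^{2})$, and hence $N=Z/2[F,G]=Z/2[s,sr]$. Moreover $(sr)^{2}=s^{2}r^{2}=s^{2}(r+s)=s\cdot(sr)+s^{3}$, so $sr$ is integral of degree $2$ over $Z/2[s]$ and $N$ is spanned over $Z/2[s]$ by $1$ and $sr$. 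In particular, for any $g=a(s)+r\,b(s)$ in $Z/2[r]$ we have $sg=s\,a(s)+(sr)\,b(s)\in N$; that is, $s\cdot Z/2[r]\subseteq N$.

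Finally, Theorem \ref{theorem1.11} tells us that $\modd$ has $Z/2$-basis the elements $r^{2k}(r^{2}+r)=s\,r^{2k}$, so $\modd=s\cdot Z/2[r^{2}]\subseteq s\cdot Z/2[r]\subseteq N$. Combined with $\Sigma=N\cap\modd$ this gives $\Sigma=\modd$. The only step needing genuine care is the bookkeeping that expresses $F$ and $G$ as $Z/2[s]$-linear combinations of $1$ and $r$ — equivalently, seeing that the identity $FG=(F+G)^{6}$ of Theorem \ref{theorem1.12} forces $sr$ to satisfy a monic quadratic over $Z/2[s]$; everything else is formal.
\end{proofsketch}
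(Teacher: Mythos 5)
Your argument is correct, but it takes a genuinely different route from the paper's. The paper proves the statement by a direct induction on the basis $r^{2k+2}+r^{2k+1}$ of \modd{} from Theorem \ref{theorem1.11}: the base cases are $r^{2}+r=F+G$ and $r^{4}+r^{3}=(F+G)^{3}+G$, and the induction step is the identity $(F^{2}+G^{2})(r^{2k+2}+r^{2k+1})+r^{2k+4}+r^{2k+3}=r^{2k+6}+r^{2k+5}$, which visibly preserves the property of being a sum of $F^{i}G^{j}$ with $i+j$ odd. You instead characterize the span in question as the set of odd elements of $N=Z/2[F,G]$ (using that $F$ and $G$ are odd power series, so the odd-part projector kills exactly the monomials with $i+j$ even --- note that this projector argument is insensitive to any linear relations among the $F^{i}G^{j}$, so it is sound), and then prove $\modd\subseteq N$ structurally by computing $N=Z/2[s,sr]=Z/2[s]\oplus Z/2[s]\cdot sr$ with $s=r^{2}+r$, whence $\modd=s\cdot Z/2[r^{2}]\subseteq s\cdot Z/2[r]\subseteq N$. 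Both routes rest on the same two inputs, the basis of Theorem \ref{theorem1.11} and the formulas of Theorem \ref{theorem1.12}; the paper's recursion is shorter and more self-contained, while your version buys the slightly stronger and cleaner byproduct that \modd{} is precisely the set of odd power series lying in $Z/2[F,G]$, and it explains conceptually (rather than by parity bookkeeping in an induction) why the even-total-degree monomials never intrude.
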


\begin{proof}
$r^{2}+r=F+G$, while $r^{4}+r^{3}=(r^{2}+r)^{3}+r^{6}+r^{5}=(F+G)^{3}+G$. Furthermore, $(r^{4}+r^{2})(r^{2k+2}+r^{2k+1})+r^{2k+4}+r^{2k+3}=r^{2k+6}+r^{2k+5}$. Since $r^{4}+r^{2}=F^{2}+G^{2}$, an induction on $k$ shows that each $r^{2k+2}+r^{2k+1}$ is a sum of $F^{i}G^{j}$ with $i+j$ odd.
\qed
\end{proof}

Suppose now that $p$ is prime, $p\ne 2$ or $5$. Then we have commuting formal Hecke operators $T_{p}:Z/2[[x]]\rightarrow Z/2[[x]]$ with $T_{p}$ taking $\sum c_{n}x^{n}$ to $\sum c_{pn}x^{n}+\sum c_{n}x^{pn}$.

\begin{theorem}
\label{theorem1.14}
The $T_{p}$ stabilize $M$ and $\modd$. In fact they stabilize the space spanned by the $r^{i}$ with $0\le i \le 2m$, as well as the space spanned by the $r^{2i}(r^{2}+r)$, $0\le i \le m-1$.
\end{theorem}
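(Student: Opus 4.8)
The plan is to reduce everything to the classical theory of Hecke operators on modular forms of level $\Gamma_{0}(5)$, mimicking the level~1 argument of Nicolas and Serre. Fix a weight $4m$ and a prime $p\ne 2,5$. Classically $T_{p}$ maps the space of weight $4m$ modular forms of level $\Gamma_{0}(5)$ to itself (since $p\nmid 5$), and on expansions at infinity it acts by $\sum a_{n}x^{n}\mapsto \sum(a_{np}+p^{4m-1}a_{n/p})x^{n}$, where $a_{n/p}$ is read as $0$ unless $p\mid n$. This formula has integer coefficients, and a form is determined by its expansion, so $T_{p}$ carries the set of weight $4m$ forms of level $\Gamma_{0}(5)$ whose expansion lies in $Z[[x]]$ into itself.

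First I would reduce the displayed formula mod~2. Since $p$ is odd, $p^{4m-1}\equiv 1\mod{2}$, so the mod~2 reduction of the classical $T_{p}$ on weight $4m$ forms is exactly the formal operator $\sum c_{n}x^{n}\mapsto \sum c_{np}x^{n}+\sum c_{n}x^{pn}$, i.e.\ the $T_{p}$ of the theorem. Now take $f$ in $M_{4m}$ and lift it to a weight $4m$ form $g$ of level $\Gamma_{0}(5)$ with $g$ in $Z[[x]]$ reducing to $f$; then $T_{p}g$ is another such form and its reduction is $T_{p}f$, whence $T_{p}f$ lies in $M_{4m}$. By Theorem \ref{theorem1.11}, $M_{4m}$ is precisely the span of the $r^{i}$ with $0\le i\le 2m$, so this is the assertion that $T_{p}$ stabilizes that span; letting $m$ grow, $T_{p}$ stabilizes $M$.

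For the statements about odd power series I would note that the formal $T_{p}$ preserves oddness: if $c_{n}=0$ for every even $n$ and $n$ is even, then $np$ is even and, when $p\mid n$, $n/p$ is even (as $p$ is odd), so the $x^{n}$-coefficient of $T_{p}(\sum c_{n}x^{n})$ vanishes. Hence $T_{p}$ carries $M\cap (x\cdot Z/2[[x^{2}]])=\modd$ into itself, and indeed it stabilizes the subspace of odd elements of each $M_{4m}$, which by Theorem \ref{theorem1.11} is the span of the $r^{2i}(r^{2}+r)$ with $0\le i\le m-1$.

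I do not expect a real obstacle: the argument is a transport of structure along reduction mod~2. The only point that wants care is the identification of the mod~2 reduction of the classical $T_{p}$ with the formal $T_{p}$ — in particular checking that the integral cross term $p^{4m-1}a_{n/p}$ becomes exactly $\sum c_{n}x^{pn}$ after reduction — but this is the same verification made by Nicolas and Serre in level~1 and by us in \cite{1} in level $\Gamma_{0}(3)$, and can simply be cited.
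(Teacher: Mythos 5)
Your proposal is correct and follows essentially the same route as the paper: reduce the classical weight-$4m$ Hecke operator on $\Gamma_{0}(5)$-forms with integral expansions mod~2, observe that $p^{4m-1}\equiv 1\pmod{2}$ identifies its reduction with the formal $T_{p}$, and invoke Theorem \ref{theorem1.11} to identify $M_{4m}$ with the span of the $r^{i}$. Your explicit remark that $T_{p}$ preserves oddness, which handles the span of the $r^{2i}(r^{2}+r)$, is a detail the paper leaves implicit but is the right way to finish.
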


\begin{proof}
We may assume $m>0$. Consider the $Z$-submodule of $Z[[x]]$ consisting of those elements of $Z[[x]]$ that are expansions at infinity of weight $4m$ modular form of level $\Gamma_{0}(5)$. This module is stabilized by each classical Hecke operator $T_{p}:\sum c_{n}x^{n}\rightarrow \sum c_{pn}x^{n}+p^{4m-1}\sum c_{n}x^{pn}$, $p\ne 5$. Reducing mod~2 and using Theorem \ref{theorem1.11} we get the result.
\qed
\end{proof}

\begin{remark*}{Remark}
\label{remark1.15}
Multiplication by $G^{2}$ stabilizes $\modd$. Since $(F+G)^{6}=FG$, Theorem \ref{theorem1.13} shows that $\modd$, viewed as $Z/2[G^{2}]$-module, is spanned by $F$, $F^{3}$, $F^{5}$, $G$, $F^{2}G$ and $F^{4}G$. As $F$ has degree 6 over $Z/2(G)$, a basis of $\modd$ as $Z/2[G^{2}]$-module is $\{G, F, F^{2}G, F^{3}, F^{4}G, F^{5}\}$.
\end{remark*}

\begin{definition}
\label{def1.16}\hspace{2em}\\
\vspace{-4ex}
\begin{enumerate}
\item[(1)] $N2\subset \modd$ has $Z/2[G^{2}]$-basis $\{G, F, F^{2}G, F^{3}, F^{4}G\}$.
\item[(2)] $N1\subset N2$ has $Z/2[G^{2}]$-basis $\{G\}$.
\end{enumerate}
\end{definition}

\begin{definition}
\label{def1.17}
$J_{1}$, $J_{3}$, $J_{5}$, $J_{7}$ and $J_{9}$ are $F$, $F^{8}\!/G$, $G$, $F^{2}G$ and $F^{4}G$.
\end{definition}

$J_{1}$, $J_{5}$, $J_{7}$ and $J_{9}$ are evidently in $N2$. Since $(F+G)^{8}=FG(F+G)^{2}$, $F^{8}=G^{8}+FG(F+G)^{2}$, and $J_{3}=G^{7}+F(F+G)^{2}$ is also in $N2$. Now $G$, $F$, $F^{2}G$, $F^{3}$ and $F^{4}G$ evidently generate the same $Z/2[G^{2}]$-module as do $J_{5}$, $J_{1}$, $J_{7}$, $J_{3}$ and $J_{9}$. Consequently:

\begin{theorem}
\label{theorem1.18}\hspace{2em}\\
\vspace{-4ex}
\begin{enumerate}
\item[(1)] $J_{1}$, $J_{3}$, $J_{5}$, $J_{7}$ and $J_{9}$ are a $Z/2[G^{2}]$-basis of $N2$, while $J_{1}$, $J_{3}$, $J_{7}$, $J_{9}$ are a $Z/2[G^{2}]$-basis of $N2/N1$.
\item[(2)] Define $J_{k}$, $k$ odd, $k>0$, by taking $J_{k+10}$ to be $G^{2}J_{k}$. Then the $J_{k}$ with $(k,10)=1$ are a $Z/2$-basis of $N2/N1$.
\end{enumerate}
\end{theorem}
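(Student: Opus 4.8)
The plan is to first establish part (1) directly from the setup, then derive part (2) as a formal consequence via a degree count.

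For part (1): I already know from the text preceding the statement that $J_1, J_3, J_5, J_7, J_9$ all lie in $N2$, and that the five elements $G, F, F^2G, F^3, F^4G$ (which form the given $Z/2[G^2]$-basis of $N2$ by Definition \ref{def1.16}) generate the same $Z/2[G^2]$-module as $J_5, J_1, J_7, J_3, J_9$. So I would make this last claim precise: expand $J_3 = G^7 + F(F+G)^2 = G^7 + F^3 + F^2G$ using $(F+G)^2 = F^2+G^2$, and observe that the change-of-basis matrix from $\{G,F,F^2G,F^3,F^4G\}$ to $\{J_5,J_1,J_7,J_3,J_9\}$ over $Z/2[G^2]$ is unitriangular once the basis elements are ordered by degree in $F$ — indeed $J_5=G$, $J_1=F$, $J_7=F^2G$, $J_3=F^3 + F^2G + G^7$, $J_9=F^4G$. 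Since this matrix has determinant $1$, it is invertible over $Z/2[G^2]$, so the $J_k$ ($k=1,3,5,7,9$) form a $Z/2[G^2]$-basis of $N2$. Passing to the quotient $N2/N1$, where $N1$ has basis $\{G\}=\{J_5\}$, kills exactly $J_5$ and the term $G^7 = G^2\cdot G^2\cdot G^2 \cdot G \in N1$ inside $J_3$; hence $J_1, J_3, J_7, J_9$ descend to a $Z/2[G^2]$-basis of $N2/N1$.

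For part (2): by definition $J_{k+10}=G^2 J_k$, so the set $\{J_k : (k,10)=1,\ k>0\}$ is obtained from $\{J_1,J_3,J_7,J_9\}$ by multiplying by all powers $G^{2m}$, $m\ge 0$. Concretely, $J_k = G^{2m}J_{k_0}$ where $k_0\in\{1,3,7,9\}$ is the residue of $k$ and $m = (k-k_0)/10$. Since $\{J_1,J_3,J_7,J_9\}$ is a $Z/2[G^2]$-basis of $N2/N1$ by part (1), and $Z/2[G^2]$ has $Z/2$-basis $\{G^{2m} : m\ge 0\}$, the products $G^{2m}J_{k_0}$ ($m\ge 0$, $k_0\in\{1,3,7,9\}$) form a $Z/2$-basis of $N2/N1$. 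But that product set is exactly $\{J_k : (k,10)=1,\ k>0\}$, which gives the claim.

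The only point requiring care — and the main (mild) obstacle — is verifying in part (1) that the reduction of $J_3$ really does lie in $N2$ and that, modulo $N1$, it contributes the term $F^3$ with coefficient $1$ and no contribution outside the span of $\{J_1,J_3,J_7,J_9\}$; this rests on the identity $(F+G)^8 = FG(F+G)^2$ already recorded in the text (a consequence of $(F+G)^6=FG$ from Theorem \ref{theorem1.12}), together with the fact that $G^7\in N1$. Everything else is linear algebra over the polynomial ring $Z/2[G^2]$, using that $F$ has degree $6$ over $Z/2(G)$ as noted in Remark \ref{remark1.15}.
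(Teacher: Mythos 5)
Your argument is correct and follows essentially the same route as the paper, which proves this theorem in the paragraph immediately preceding its statement: one checks $J_{3}=G^{7}+F(F+G)^{2}\in N2$ via $(F+G)^{8}=FG(F+G)^{2}$, observes that $\{G,F,F^{2}G,F^{3},F^{4}G\}$ and $\{J_{5},J_{1},J_{7},J_{3},J_{9}\}$ generate the same $Z/2[G^{2}]$-module, and obtains part (2) by expanding the $Z/2[G^{2}]$-basis of $N2/N1$ into a $Z/2$-basis using the powers $G^{2m}$. One small algebra slip: $F(F+G)^{2}=F(F^{2}+G^{2})=F^{3}+FG^{2}$, not $F^{3}+F^{2}G$, so in fact $J_{3}=F^{3}+G^{2}\cdot F+G^{6}\cdot G$; this does not affect your unitriangularity argument, since the extra terms still lie in the $Z/2[G^{2}]$-span of $F$ and $G$.
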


\begin{remark*}{Remarks}
\hspace{2em}\\
\vspace{-4ex}
\begin{enumerate}
\item[(1)] The space spanned by the $F^{k}$. $k$ odd and $>0$, is stabilized by the $T_{p}$ with $p\ne 2$. It follows that $N1$ is stabilized by the $T_{p}$ with $p\ne 2$ or $5$.
\item[(2)] One may describe $N2$ more elegantly as follows. $Z/2(F,G)$ is a degree 6 field extension of $Z/2(G)$, and we have a trace map, $Z/2(F,G)\rightarrow Z/2(G)$. Using the identity $(F+G)^{6}=FG$ we find that $F^{i}$, $0\le i\le 4$, have trace $0$, while $F^{5}$ has trace $G$. So $N2$ consists of those elements of $\modd$ of trace $0$. We'll see in the next section that the $T_{p}$, $p\ne 2$ or $5$ stabilize not only $\modd$ and $N1$, but also $N2$.
\end{enumerate}
\end{remark*}

\section{The spaces \bm{$W$}, \bm{$W_{a}$} and \bm{$W_{b}$}. A decomposition of \bm{$N2/N1$}}
\label{section2}

\begin{definition}
\label{def2.1}
$H=F(x^{25})=G(x^{5})$.
\end{definition}

As we noted in section \ref{section1}, $D=\sum_{(n,10)=1,\ n>0}x^{n^{2}}$ is $F+H$.

\begin{definition}
\label{def2.2}
$pr: Z/2[[x]]\rightarrow Z/2[[x]]$ takes $\sum c_{n}  x^{n}$ to $\sum_{(n,5)=1}  c_{n} x^{n}$.
\end{definition}

Since $G$ lies in $Z/2[[x^{5}]]$, $pr$ is $Z/2[G]$-linear and $pr(N1)=0$. The effect of $pr$ on the $Z/2[G^{2}]$-basis $\{J_{1}, J_{3}, J_{7}, J_{9}\}$ of $N2/N1$ is easily described: 

\begin{lemma}
\label{lemma2.3}
$pr$ takes $J_{1}, J_{3}, J_{7}, J_{9}$ to $D, D^{8}\!/G, D^{2}G, D^{4}G$.
\end{lemma}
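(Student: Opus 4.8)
The plan is to compute $pr(J_k)$ for each of the four relevant residues by unwinding the definitions of $J_k$ in terms of $F$ and $G$, and using the fact that $pr$ is $Z/2[G]$-linear. Since $J_1 = F$, $J_5 = G$, $J_7 = F^2G$, $J_9 = F^4G$, and $J_3 = F^8/G$, and since $G \in Z/2[[x^5]]$, the operator $pr$ pulls every power of $G$ out as a scalar; so what remains in each case is to identify $pr$ applied to a power of $F$. The key observation is that $pr(F^k)$ should be computable from $pr(F)$, or more precisely that $pr$ interacts well with the structure $F = D + H$ where $H = F(x^{25}) \in Z/2[[x^5]]$ (indeed $H \in Z/2[[x^{25}]]$).

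First I would record that $pr(F) = D$: this is immediate from $F = D + H$, since $H \in Z/2[[x^{25}]] \subset Z/2[[x^5]]$ has all its exponents divisible by $5$, so $pr$ kills $H$, while every exponent $n^2$ with $n$ odd and $(n,5)=1$ survives. This gives $pr(J_1) = pr(F) = D$. For $J_5 = G \in Z/2[[x^5]]$ we get $pr(J_5) = 0$, consistent with $pr(N1) = 0$ (and this is why $J_5$ does not appear in the statement). Next, for the products: $pr(J_7) = pr(F^2 G) = G\cdot pr(F^2)$ and $pr(J_9) = pr(F^4 G) = G \cdot pr(F^4)$ by $Z/2[G]$-linearity. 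Now in characteristic $2$, $F^2 = (D+H)^2 = D^2 + H^2$ with $H^2 \in Z/2[[x^5]]$, so $pr(F^2) = D^2$; similarly $pr(F^4) = D^4$. Hence $pr(J_7) = D^2 G$ and $pr(J_9) = D^4 G$, as claimed.

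The remaining and slightly less mechanical case is $J_3 = F^8/G$. Here I would first write $pr(J_3) = pr(F^8 \cdot G^{-1})$; but $G^{-1}$ is not a power series, so I cannot directly invoke $Z/2[G]$-linearity. Instead I would use the identity from just before Theorem \ref{theorem1.18}, namely $F^8 = G^8 + FG(F+G)^2$, so that $J_3 = F^8/G = G^7 + F(F+G)^2$. Now $pr$ applies cleanly: $pr(G^7) = 0$ since $G^7 \in Z/2[[x^5]]$, and $pr(F(F+G)^2) = pr(F(F^2 + G^2)) = pr(F^3) + G^2 pr(F)$. Again $F^3 = (D+H)^3$ and one expands; but more efficiently, $F^3 = F \cdot F^2 = (D+H)(D^2+H^2) = D^3 + D H^2 + H D^2 + H^3$, and the three terms involving $H$ all have every exponent divisible by $5$ (as $H, H^2, H^3 \in Z/2[[x^5]]$ and $D H^2$, $H D^2$, $H^3$ each contain a factor lying in $Z/2[[x^5]]$ times something — wait, $DH^2$ has $D$ not in $Z/2[[x^5]]$; but $H^2 \in Z/2[[x^{25}]]$ multiplies $D$, shifting all $D$-exponents by multiples of $25$, still... no). Let me instead argue directly: $pr(F^3) = pr((D+H)^3)$; since $pr$ is a ring-theoretic "projection away from multiples of $5$" it is \emph{not} multiplicative in general, so I should reduce honestly. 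The clean route: $pr(F^3) = pr(F \cdot F^2) = pr(F^2 \cdot (D + H)) = pr(F^2 D) + pr(F^2 H)$; since $F^2 H = (D^2 + H^2)H = D^2 H + H^3 \in Z/2[[x^5]]$ (every term has an $H$-factor), $pr(F^2 H) = 0$; and $pr(F^2 D) = pr((D^2 + H^2) D) = pr(D^3) + pr(H^2 D) = D^3 + 0 = D^3$, using that $H^2 D \in Z/2[[x^5]]$ (the $H^2$ factor forces all exponents into $5\mathbb{Z}$, indeed $25\mathbb{Z}$). Hence $pr(F^3) = D^3$, and therefore $pr(J_3) = D^3 + G^2 D$. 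Finally I would check this equals $D^8/G$: multiplying by $G$, we need $D^3 G + D G^3 = D^8$, i.e.\ $D^3 G^3(D^{-1}... )$ — equivalently, $DG(D^2 + G^2) = D^8$, i.e.\ $DG(D+G)^2 = D^8$. This is exactly the level-$5$ modular equation $(F+G)^6 = FG$ of Theorem \ref{theorem1.12} applied with $F$ replaced by $D$; but $D = F(x) + F(x^{25})$ and $G = F(x^5)$, and one checks $(D+G)^6 = (F + H + G)^6$, while $DG = (F+H)F(x^5)$ — so the needed identity $D^8 + D^4G + D^3 = 0$...

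The main obstacle I anticipate is precisely this last verification: that $pr(F^8/G) = D^3 + G^2 D$ really coincides with the expression $D^8/G$, which amounts to establishing the relation $D^{15} + G^4 D^3 + G^3 = 0$ — exactly the content flagged as Lemma \ref{lemma2.4} in the introduction. (Dividing $D^{15} + G^4 D^3 + G^3 = 0$ by $G^3$ gives $D^{15}/G^3 + G D^3 + 1 = 0$; and $(D^8/G)^2 \cdot D^{-1}\cdot G = $ ... one sees $D^8/G = D^3 + D G^2$ is equivalent to $D^8 = D^3 G + D G^3 = DG(D+G)^2$, and raising to a suitable power and substituting chases back to the modular equation.) So the honest structure of the proof is: the first three cases ($J_1, J_7, J_9$) follow by pure bookkeeping with $H \in Z/2[[x^5]]$ and $\operatorname{char} 2$; the fourth case reduces, via $F^8/G = G^7 + F(F+G)^2$, to the identity $D^8/G = D^3 + DG^2$, whose verification is deferred to (or extracted from) the modular equation of Theorem \ref{theorem1.12} together with the relation among $D$, $G$ recorded as Lemma \ref{lemma2.4}. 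I would present the first three cases in full and then remark that the $J_3$ computation follows once Lemma \ref{lemma2.4} is in hand, or reorganize to prove the needed $D$-relation first.
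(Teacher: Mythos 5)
Your treatment of $J_{1}$, $J_{7}$, $J_{9}$ is correct and is essentially the paper's argument: for $2$-power exponents, $F^{2^{k}}=(D+H)^{2^{k}}=D^{2^{k}}+H^{2^{k}}$ with $H^{2^{k}}\in Z/2[[x^{5}]]$, and the bookkeeping goes through. But your handling of $J_{3}$ contains a genuine error. You route through the odd power $F^{3}$, and there the Frobenius splitting fails: $F^{3}=D^{3}+D^{2}H+DH^{2}+H^{3}$, and the cross terms $D^{2}H$ and $DH^{2}$ do \emph{not} lie in $Z/2[[x^{5}]]$ --- their exponents are (a multiple of $25$) plus (an exponent of $D$ or $D^{2}$, which is prime to $5$), hence prime to $5$, so $pr$ fixes them rather than killing them. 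You actually notice this for $DH^{2}$ mid-argument, but then commit the same mistake in your ``clean route'' when you assert $F^{2}H=D^{2}H+H^{3}\in Z/2[[x^{5}]]$ (``every term has an $H$-factor'') and $H^{2}D\in Z/2[[x^{5}]]$: a product of something in $Z/2[[x^{5}]]$ with something not in $Z/2[[x^{5}]]$ generally has exponents prime to $5$. The correct value is $pr(F^{3})=D^{3}+D^{2}H+DH^{2}$, so $pr(J_{3})=D^{3}+DG^{2}+DH(D+H)$, not $D^{3}+DG^{2}$. Indeed your target identity $D^{8}/G=D^{3}+DG^{2}$, i.e.\ $D^{8}+GD^{3}+G^{3}D=0$, cannot hold: it is a degree-$8$ polynomial relation for $D$ over $Z/2(G)$, whereas Lemma \ref{lemma2.4} and Theorem \ref{theorem2.7} show $D$ has degree $15$ over $Z/2(G)$. (Concretely, $x^{72}$ occurs in $D^{8}$ but not in $GD^{3}+G^{3}D$.) So the final verification you defer to Lemma \ref{lemma2.4} would not close; the statement you are trying to verify is false.

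The paper's proof of the $J_{3}$ case avoids odd powers of $F$ entirely and needs no modular equation for $D$: by Frobenius, $F^{8}=D^{8}+H^{8}$, so $F^{8}/G=D^{8}/G+H^{8}/G$. The identity $F^{8}=G^{8}+FG(F+G)^{2}$ transported by $x\mapsto x^{5}$ gives $H^{8}/G=G^{7}+H(G+H)^{2}\in Z/2[[x^{5}]]$, which $pr$ kills; and $D^{8}/G$ is a power series all of whose exponents are prime to $5$, so $pr$ fixes it. The fix for your write-up is therefore to apply $pr$ to $D^{8}/G$ directly rather than to the re-expansion $G^{7}+F(F+G)^{2}$, whose odd-degree term $F^{3}$ is what produces the surviving cross terms you dropped.
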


\begin{proof}
$pr(J_{3}) = pr(F^{8}\!/G) = pr((H^{8}+D^{8})\!/G) = pr(D^{8}\!/G)$. Since all exponents appearing in $D$ are prime to 5, the same holds for all exponents appearing in $D^{8}\!/G$ and $pr(D^{8}\!/G)=D^{8}\!/G$. The other results have similar proofs.
\qed
\end{proof}

\begin{lemma}
\label{lemma2.4}
$D^{15}+G^{4}D^{3}+G^{3}=0$.
\end{lemma}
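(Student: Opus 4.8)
The plan is to work entirely inside $Z/2[[x]]$, exploiting the level-5 modular equation $(F+G)^{6}=FG$ from Theorem~\ref{theorem1.12} applied twice: once relating $F$ and $G$, and once relating $G$ and $H$ (since $H=G(x^{5})$ is obtained from $G$ by the same $x\mapsto x^{5}$ substitution that takes $F$ to $G$, the identity $(G+H)^{6}=GH$ also holds). First I would express everything in terms of $D=F+H$ and $G$, eliminating $F$ and $H$ individually. From $(F+G)^{6}=FG$ we get $F$ as (a branch of) an algebraic function of $G$ of degree $6$; similarly $(G+H)^{6}=GH$ gives $H$ in terms of $G$. Adding, $D=F+H$ should satisfy a polynomial relation over $Z/2[G]$ obtained by eliminating $F$ (equivalently $H$) from the pair of equations $(F+G)^{6}=FG$ and $(D+F+G)^{6}=G(D+F)$ — the second being the rewriting of $(G+H)^{6}=GH$ using $H=D+F$.

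The cleanest route: set $f=F$, so $H=D+f$, and we have the two equations
\[
(f+G)^{6}=fG,\qquad (D+f+G)^{6}=G(D+f).
\]
From the first, $f^{6}+G^{6}+(f^{4}G^{2}+f^{2}G^{4})=fG$ in characteristic $2$ — actually more usefully, work with $s=f+G$, so $s^{6}=fG=s^{2}+sG+G^{2}$ is wrong; rather $fG=(f+G)^{2}+$ lower — let me instead just keep $s^{6}=fG$ and $f=s+G$, giving $s^{6}=(s+G)G=sG+G^{2}$. Likewise put $t=H+G=D+f+G=D+s$; then $t^{6}=HG=(t+G)G=tG+G^{2}$. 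So both $s$ and $t$ are roots of the \emph{same} polynomial $\phi(T)=T^{6}+TG+G^{2}\in Z/2[G][T]$, and $t=s+D$, i.e. $D=s+t$ is the sum of two distinct roots of $\phi$. Therefore $D$ is a root of the degree-$30$ resolvent $\prod_{i<j}(T-(s_{i}+s_{j}))$, but I expect the relevant factor to have much lower degree. The key computation is $\phi(s)+\phi(t)=0$: expanding $s^{6}+t^{6}+(s+t)G = (s+t)^{6}+\text{(cross terms)}+(s+t)G$. In characteristic $2$, $s^{6}+t^{6}=(s^{2}+t^{2})^{3}=(s+t)^{6}+\dots$; more precisely $s^{6}+t^{6}=(s+t)(s^{5}+s^{4}t+\cdots+t^{5})$. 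Using $\phi(s)=\phi(t)=0$ directly: $s^{6}=sG+G^{2}$, $t^{6}=tG+G^{2}$, so $s^{6}+t^{6}=(s+t)G=DG$. Also $s^{6}+t^{6}=(s^{3}+t^{3})^{2}=((s+t)(s^{2}+st+t^{2}))^{2}=D^{2}(D^{2}+st)^{2}=D^{2}(D^{4}+s^{2}t^{2})$ (since $s^{2}+st+t^{2}=(s+t)^{2}+st=D^{2}+st$). Hence $D^{6}+D^{2}(st)^{2}=DG$, so $D^{2}(st)^{2}=D^{6}+DG$, giving $(st)^{2}=D^{4}+G/D$ — clumsy; better multiply through: $D\cdot(st)^{2}=D^{5}+G$, so $st=\sqrt{(D^{5}+G)/D}$ only formally. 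To avoid division I instead record $st$ from $\phi$: since $s,t$ are two of the roots, and $\phi(T)=T^{6}+GT+G^{2}$ has all roots with product $G^{2}$ and pairwise products controlled by the coefficients; I would extract $st$ by noting $\phi(T)/((T-s)(T-t))$ has $Z/2[G]$-coefficients only if $st$ and $s+t=D$ are integral over $Z/2[G]$, which they are. Writing $\phi(T)=(T^{2}+DT+e)(T^{4}+DT^{3}+\alpha T^{2}+\beta T+\gamma)$ with $e=st$ and matching coefficients in $Z/2[G]$ gives a small system; the constant term gives $e\gamma=G^{2}$ and the linear term gives $D\gamma+e\beta=G$, etc. Solving this system expresses $e=st$ as a rational function of $D,G$; substituting back into $D\cdot e^{2}=D^{5}+G$ (derived above) clears denominators and yields exactly $D^{15}+G^{4}D^{3}+G^{3}=0$ after simplification.

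The main obstacle is the bookkeeping in the coefficient-matching: one must verify that the quadratic factor $T^{2}+DT+st$ really does divide $\phi(T)=T^{6}+GT+G^{2}$ over $Z/2[G][D]/(\text{the sought relation})$, and then check the resulting polynomial identity is precisely $D^{15}+G^{4}D^{3}+G^{3}$ and not merely a multiple of it. A safer alternative, which I would actually carry out, is to avoid the factorization entirely: I have $s^{6}=sG+G^{2}$ and $t^{6}=tG+G^{2}$ with $t=s+D$; expand $(s+D)^{6}=sG+DG+G^{2}$ using the binomial theorem in characteristic $2$ (only the terms $\binom{6}{k}$ with $k$ even survive, i.e. $k=0,2,4,6$), obtaining $s^{6}+s^{4}D^{2}+s^{2}D^{4}+D^{6}=sG+DG+G^{2}$; subtract $s^{6}=sG+G^{2}$ to get $s^{4}D^{2}+s^{2}D^{4}+D^{6}=DG$, i.e. $D(s^{4}D+s^{2}D^{3}+D^{5}+G)=0$, so $s^{4}D+s^{2}D^{3}+D^{5}+G=0$. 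This is a quadratic in $s^{2}$: $D\,(s^{2})^{2}+D^{3}\,s^{2}+(D^{5}+G)=0$. Combined with the original $s^{6}=sG+G^{2}$, i.e. $(s^{2})^{3}=sG+G^{2}$ — I then eliminate $s$. From the quadratic, $s^{2}$ lies in a quadratic extension of $Z/2(D,G)$; compute $s^{6}=(s^{2})^{3}$ as a $Z/2(D,G)$-linear combination of $1$ and $s^{2}$ using the relation, then the equation $s^{6}=sG+G^{2}$ forces the coefficient of $s$ (which cannot be expressed via $s^{2}$) to match, and since $s^{6}$ has no genuine $s$-term, we need $G=0$ — contradiction, unless $s$ itself is expressible, i.e. unless we instead eliminate using the \emph{norm}. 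Concretely: take the norm of $s^{4}D+s^{2}D^{3}+D^{5}+G=0$ over the degree-$6$ extension $Z/2(s)/Z/2(G)$ — since $D$ is symmetric under that Galois action? No, $D$ is not. Cleanest of all: from $D(s^{2})^{2}=D^{3}s^{2}+D^{5}+G$ solve $s^{2}=\frac{D^{3}s^{2}+D^{5}+G}{D s^{2}}$ — circular. I will therefore just substitute: from the quadratic, $(s^{2})^{2}=D^{2}s^{2}+D^{4}+G/D$, multiply the original sextic relation $s^{6}=sG+G^{2}$ by suitable powers and reduce $s^{4},s^{6}$ repeatedly via the quadratic to land in $\{1,s,s^{2},s^{3}\}$-span, then the vanishing of the $s^{3},s^{2},s^{1}$ coefficients each give relations among $D,G$, the nontrivial one being $D^{15}+G^{4}D^{3}+G^{3}=0$. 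I expect the $s^{0}$ and $s^{2}$ coefficients to give the trivial $0=0$ and the $s$-coefficient to give the claimed identity. $\qed$
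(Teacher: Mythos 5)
Your proposal follows essentially the same route as the paper: both apply the modular equation twice to get $(F+G)^{6}=FG$ and $(G+H)^{6}=GH$, rewrite the latter in terms of $D$ and $s=F+G$ to obtain $Ds^{4}+D^{3}s^{2}+D^{5}+G=0$ (the paper's relation (1), with $A=s^{2}$), and then eliminate against the sextic $s^{6}+Gs+G^{2}=0$ (the paper uses its square $A^{6}+G^{2}A+G^{4}=0$ together with an explicit multiplier). Your final reduction, which you leave at ``I expect,'' does close, though not quite in the form you predict: reducing $s^{6}=(s^{2})^{3}$ via the quadratic in $s^{2}$ gives $s^{6}=(G/D)s^{2}+D^{6}+DG$, comparison with $s^{6}=Gs+G^{2}$ yields the single quadratic $s^{2}+Ds+c=0$ with $c=st=D^{2}+DG+D^{7}/G$, and substituting this back into $Ds^{4}+D^{3}s^{2}+D^{5}+G=0$ collapses that quartic to $Dc^{2}+D^{5}+G=0$, which after multiplication by $G^{2}$ is exactly $D^{15}+G^{4}D^{3}+G^{3}=0$.
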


\begin{proof}
$(F+G)^{6}= FG$. Replacing $x$ by $x^{5}$ we find that $(G+H)^{6}= GH$. So $(D+F+G)^{6}= G(D+F)$. Adding $FG$ to both sides, expanding in powers of $D$ and dividing by $D$ we find that $D^{5}+(F+G)^{2}D^{3}+ (F+G)^{4}D=G$. So if we set $A=(F+G)^{2}$, then:

\begin{proofresult}
$DA^{2}+D^{3}A + (D^{5}+G)=0$.
\end{proofresult}

On the other hand, $A^{3}=(F+G)^{6}=FG$. So $A^{6}=(A+G^{2})G^{2}$, and:

\begin{proofresult}
$A^{6}+G^{2}A + G^{4}=0$.
\end{proofresult}

We can now eliminate $A$ from (1) and (2) to get our result. Explicitly, when we multiply (1) by $DA^{4}+D^{3}A^{3}+GA^{2}+D^{7}A$, (2) by $D^{2}$, and add, all terms involving $A^{6}$, $A^{5}$, $A^{4}$ or $A^{3}$ drop out, and we find that $(D^{10}+D^{5}G+G^{2})(A^{2}+D^{2}A) + D^{2}G^{4}=0$. Let $B=A^{2}+D^{2}A$. Then $(D^{10}+D^{5}G+G^{2})B=D^{2}G^{4}$. Also, (1) tells us that $DB=D^{5}+G$. So $(D^{10}+D^{5}G+G^{2})(D^{5}+G)=D^{3}G^{4}$, as desired.
\qed
\end{proof}

Multiplying by $D^{5}\!/G^{4}$ gives:

\begin{corollary}
\label{corollary2.5}
$(D^{5}\!/G)^{4}+D^{5}\!/G=D^{8}$.
\end{corollary}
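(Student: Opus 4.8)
The plan is to derive this directly from Lemma~\ref{lemma2.4}. First note that $D = x + \cdots$ and $G = x^{5} + \cdots$ are nonzero elements of $Z/2[[x]]$, hence invertible in the Laurent series field $Z/2((x))$; in fact $D^{5}/G$ is a unit of $Z/2[[x]]$ and $D^{8}/G$ again lies in $Z/2[[x]]$, so every expression occurring below is meaningful.

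Lemma~\ref{lemma2.4} gives $D^{15} + G^{4}D^{3} + G^{3} = 0$. Multiplying this relation through by $D^{5}/G^{4}$ turns it into $D^{20}/G^{4} + D^{8} + D^{5}/G = 0$. Since we work in characteristic~$2$, this is the same as $(D^{5}/G)^{4} + D^{5}/G = D^{8}$, which is exactly the assertion.

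I expect no genuine obstacle here: all the content already sits in Lemma~\ref{lemma2.4}, and the corollary merely repackages that identity in the shape that will be convenient in later sections, namely as the statement that $D^{5}/G$ is a root of $T^{4} + T + D^{8}$ over $Z/2[D,G,G^{-1}]$. The only point that deserves a word — that dividing by $G$ (and by $D$) is legitimate — is taken care of by the observation in the first paragraph that $D$ and $G$ are units in $Z/2((x))$.
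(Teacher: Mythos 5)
Your proof is correct and is exactly the paper's argument: the corollary is obtained by multiplying the identity of Lemma~\ref{lemma2.4} by $D^{5}\!/G^{4}$. The extra remarks about invertibility of $D$ and $G$ in $Z/2((x))$ and the fact that $D^{5}\!/G$ is a unit of $Z/2[[x]]$ are accurate but not something the paper bothers to spell out.
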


\begin{definition}
\label{def2.6}
When $(i,10)=1$, $D_{i}=pr(J_{i})$.
\end{definition}

As we've seen, $D_{1}$, $D_{3}$, $D_{7}$ and $D_{9}$ are $D$, $D^{8}\!/G$, $D^{2}G$ and $D^{4}G$. And since $pr$ is $Z/2[G^{2}]$-linear, $D_{i+10}=G^{2}D_{i}$.

\begin{theorem}
\label{theorem2.7}
The $D_{i}$ are linearly independent over $Z/2$. So $pr$ maps $N2/N1$ bijectively to the space $W$ spanned by the $D_{i}$. (Recall that the $J_{k}$ with $(k,10)=1$ are a $Z/2$-basis of $N2/N1$.)
\end{theorem}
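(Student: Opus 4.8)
The claim is that the $D_i$, $(i,10)=1$, are linearly independent over $Z/2$, so that $pr$ restricted to $N2/N1$ is injective (hence bijective onto $W$, since it already maps the basis $\{J_k\}$ onto the spanning set $\{D_k\}$). The natural approach is to compare $x$-adic orders. Each $D_k$ is a power series beginning $x^k + \cdots$: this is clear for $D_1 = D = x + \cdots$, and for $D_3 = D^8/G$ we compute $x^8/x^5 = x^3$, for $D_7 = D^2 G = x^2 \cdot x^5 = x^7$, for $D_9 = D^4 G = x^4 \cdot x^5 = x^9$; and then $D_{k+10} = G^2 D_k = x^{10}D_k + \cdots$ raises the leading exponent by $10$, so inductively $D_k = x^k + \cdots$ for all $k$ with $(k,10)=1$. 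Since the leading exponents $k$ are pairwise distinct, any nontrivial $Z/2$-linear combination $\sum_{k\in S} D_k$ (with $S$ finite, nonempty) has leading term $x^{\min S} \ne 0$, so the sum is nonzero. That gives linear independence.

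For the second sentence: $pr$ is $Z/2$-linear, and by Definition~\ref{def2.6} together with the relation $D_{i+10} = G^2 D_i = pr(G^2 J_i) = pr(J_{i+10})$ (using that $pr$ is $Z/2[G^2]$-linear and $pr(G^2 J_i) = G^2\, pr(J_i)$ since $G\in Z/2[[x^5]]$), the map $pr$ sends the $Z/2$-basis $\{J_k : (k,10)=1\}$ of $N2/N1$ precisely to the family $\{D_k : (k,10)=1\}$. A $Z/2$-linear map sending a basis to a linearly independent family is an isomorphism onto its span, which here is exactly $W$ by definition. Hence $pr : N2/N1 \to W$ is a bijection.

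I do not expect any real obstacle here: the content is entirely the order computation, which is routine once one records the leading terms of $D$, $G$ and the four generators $D_1, D_3, D_7, D_9$ and invokes the recursion $D_{k+10} = G^2 D_k$. The one point to be careful about is that $pr$ annihilates $N1$ (noted just before Lemma~\ref{lemma2.3}, since $N1$ has $Z/2[G^2]$-basis $\{G\}\subset Z/2[[x^5]]$), so that $pr$ genuinely descends to $N2/N1$ and the counting of basis elements is correct.
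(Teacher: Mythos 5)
Your proof is correct, but it takes a different route from the one the paper records for Theorem \ref{theorem2.7}. You argue via leading exponents: each $D_{k}$ is $x^{k}+\cdots$ (using that $Z/2[[x]]$ is a discrete valuation ring so that $D^{8}/G=x^{3}+\cdots$ is a genuine power series, and that $G^{2}=x^{10}+\cdots$ propagates the leading term through the recursion), and distinct leading exponents force linear independence. This is exactly the observation the paper makes parenthetically in section \ref{section1} (``As $D_{k}=x^{k}+\cdots$, these are linearly independent over $Z/2$''), so it is certainly in the spirit of the text; but the official proof of Theorem \ref{theorem2.7} instead invokes Lemma \ref{lemma2.4}: since $D^{15}+G^{4}D^{3}+G^{3}=0$ and $G$ is transcendental over $Z/2$, $D$ has degree $15$ over $Z/2(G)$, so $D,D^{2},D^{4},D^{8}$ are linearly independent over that field, whence $D_{1},D_{3},D_{7},D_{9}$ are linearly independent over $Z/2[G^{2}]$ — which is equivalent to $Z/2$-independence of the whole family because $D_{k+10}=G^{2}D_{k}$. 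Your argument is more elementary and self-contained for this particular statement; the paper's field-theoretic argument buys more, since the same degree computation is reused later (e.g.\ in the proof of Theorem \ref{theorem2.17}) to handle elements such as $D^{5}+G$ and $D^{10}G+G^{3}$, whose leading terms cancel and so are not accessible by an order-of-vanishing argument. Your closing remarks — that $pr$ kills $N1$ and carries the basis $\{J_{k}\}$ onto the independent family $\{D_{k}\}$, hence is bijective onto $W$ — match the paper's deduction of the second assertion.
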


\begin{proof}
$G$ is transcendental over $Z/2$. It follows from Lemma \ref{lemma2.4} that $D$ has degree 15 over $Z/2(G)$, so that $D$, $D^{8}$, $D^{2}$ and $D^{4}$ are linearly independent over this field. Consequently, $D_{1}=D$, $D_{3}=D^{8}\!/G$, $D_{7}=D^{2}G$ and $D_{9}=D^{4}G$ are linearly independent over $Z/2[G^{2}]$, giving the result.
\qed
\end{proof}

\begin{remark*}{Remark}
$W$ does not consist of mod~2 modular forms of level~5. In fact the elements of $W$ ``are of level 25.''
\end{remark*}

We shall see that the $T_{p}$ ($p\ne 2$ or $5$ as usual) stabilize both $W$ and $N2$. To this end we use a real Dirichlet character $\chi$, of modulus 20, to write $W$ as a direct sum of $Z/2[G^{4}]$-submodules, $W_{a}$ and $W_{b}$. Then we show that the $T_{p}$ with $\chi(p)=1$ stabilize $W_{a}$ and $W_{b}$, while those with  $\chi(p)=-1$ take $W_{a}$ to $W_{b}$ and $W_{b}$ to $W_{a}$.

\begin{definition}
\label{def2.8}
$\chi$ is the mod~20 Dirichlet character taking $1,3,7,9$ to $1$, and taking $11,13,17,19$ to $-1$.
\end{definition}

\begin{definition}
\label{def2.9}
$W_{a}\ (\mbox{resp.}\ W_{b})$ is the subspace of $W$ spanned by the $D_{i}$ with $\chi(i)=1\ (\mbox{resp.}\ -1)$.
\end{definition}

Evidently $W_{a}$ and $W_{b}$ are $Z/2[G^{4}]$-submodules of $W$ with bases $\{D_{1},D_{3},D_{7},D_{9}\}$ and $\{D_{11},D_{13},D_{17},D_{19}\}$. And $W=W_{a}\oplus W_{b}$.

\begin{lemma}
\label{lemma2.10}
If $x^{n}$ appears in $D_{k}$, $n$ is $k$ or $9k\bmod{40}$.  In particular when $x^{n}$ appears in $D_{k}$, $\chi(n)=\chi(k)$.
\end{lemma}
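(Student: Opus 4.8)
\begin{proofsketch}
The plan is to prove the sharper claim that every $n$ with $x^{n}$ occurring in $D_{k}$ satisfies $n\equiv k$ or $9k\bmod{40}$; the assertion $\chi(n)=\chi(k)$ then follows at once, since $40$ is a multiple of $20$, so $n\equiv k$ or $9k\bmod{20}$, and $\chi(9k)=\chi(9)\chi(k)=\chi(k)$ because $\chi$ is completely multiplicative with $\chi(9)=1$. (The modulus $40$ is the natural one: $9^{2}\equiv 1\bmod{40}$, and ``$n\equiv k$ or $9k\bmod{40}$'' is the same as ``$n\equiv k\bmod{8}$ and $n\equiv\pm k\bmod{5}$''.)

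First I would record the exponent patterns of the ingredients. Working in characteristic $2$, $D^{2^{a}}=\sum_{(m,10)=1,\,m>0}x^{2^{a}m^{2}}$; for such $m$ one has $m^{2}\equiv 1\bmod{8}$ and $m^{2}\equiv\pm 1\bmod{5}$, so $m^{2}\bmod{40}\in\{1,9\}$, and hence the exponents of $D,D^{2},D^{4},D^{8}$ lie respectively in $\{1,9\},\{2,18\},\{4,36\},\{8,32\}\bmod{40}$. For $G=F(x^{5})=\sum x^{5m^{2}}$ (sum over positive odd $m$), the fact that $m^{2}-1\equiv 0\bmod{8}$ gives $G/x^{5}\in 1+x^{40}Z/2[[x^{40}]]$; this set is a group under multiplication, so $G^{j}/x^{5j}\in 1+x^{40}Z/2[[x^{40}]]$ for every $j\in Z$ (reading $G^{-1}$ in the fraction field of $Z/2[[x]]$). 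Consequently multiplying by $G^{j}$ simply adds $5j$ to every exponent mod $40$.

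Next I would induct along $D_{k+10}=G^{2}D_{k}$, starting from the base cases $k\in\{1,3,7,9\}$. There $D_{1}=D$ has exponents in $\{1,9\}$; $D_{3}=D^{8}/G$ in $\{8,32\}-5=\{3,27\}=\{3,9\cdot 3\}$; $D_{7}=D^{2}G$ in $\{2,18\}+5=\{7,23\}=\{7,9\cdot 7\}$; $D_{9}=D^{4}G$ in $\{4,36\}+5=\{9,1\}=\{9,9\cdot 9\}$, all $\bmod{40}$. For the inductive step: if the exponents of $D_{k}$ lie in $\{k,9k\}\bmod{40}$, those of $G^{2}D_{k}$ lie in $\{k+10,\,9k+10\}\bmod{40}$, and since $9(k+10)=9k+90\equiv 9k+10\bmod{40}$ this is precisely $\{k+10,\,9(k+10)\}\bmod{40}$. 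As every $k$ with $(k,10)=1$, $k>0$ equals $k_{0}+10j$ for a unique $k_{0}\in\{1,3,7,9\}$ and $j\ge 0$, the induction reaches every $D_{k}$.

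There is no real obstacle here --- the whole argument is bookkeeping of congruences mod $40$ --- but the one point deserving a little care is the division by $G$ (hence $D_{3}=D^{8}/G$): one must confirm that inverting $G/x^{5}\in 1+x^{40}Z/2[[x^{40}]]$ stays inside $Z/2[[x^{40}]]$ and that $D^{8}/G$ genuinely lies in $Z/2[[x]]$, so that passing to $D_{3}$ lowers each exponent by exactly $5\bmod{40}$. Everything else is the finite check of the four base cases together with the congruence $9(k+10)\equiv 9k+10\bmod{40}$.
\end{proofsketch}
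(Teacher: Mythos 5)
Your proposal is correct and follows essentially the same route as the paper: reduce to the base cases $k\in\{1,3,7,9\}$ via $D_{k+10}=G^{2}D_{k}$ (all exponents of $G^{2}$ being $10\bmod{40}$), then check those cases using $m^{2}\equiv 1$ or $9\bmod{40}$ for $(m,10)=1$ and the fact that the exponents of $G$ are $5\bmod{40}$. Your extra care about the division by $G$ and the congruence $9(k+10)\equiv 9k+10\bmod{40}$ only makes explicit what the paper leaves implicit.
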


\begin{proof}
Since $D_{k+10}=G^{2}D_{k}$ and all exponents appearing in $G^{2}$ are $10\bmod{40}$, it suffices to prove the lemma for $k$ in$\{1,3,7,9\}$. Suppose for example $k=9$, so that $D_{k}=D^{4}G$. If $(i,10)=1$, $i^{2}\equiv 1$ or $9\bmod{40}$. So all exponents appearing in $D$ are $1$ or $9\bmod{40}$, and all exponents in $D^{4}G$ are $4+5=9$ or $36+5=41\bmod{40}$. The other cases are handled similarly.
\qed
\end{proof}

\begin{definition}
\label{def2.11}
\hspace{2em}\\
\vspace{-4ex}
\begin{enumerate}
\item[] $p_{a}:Z/2[[x]]\rightarrow Z/2[[x]]$ takes $\sum c_{n}x^{n}$ to $\sum_{\chi(n)=1} c_{n}x^{n}$.
\item[] $p_{b}:Z/2[[x]]\rightarrow Z/2[[x]]$ takes $\sum c_{n}x^{n}$ to $\sum_{\chi(n)=-1} c_{n}x^{n}$.
\end{enumerate}
\end{definition}

$p_{a}$ and $p_{b}$ are evidently $Z/2[G^{4}]$-linear. Furthermore:

\begin{lemma}
\label{lemma2.12}
$p_{a}(G^{2}f)$ and $p_{b}(G^{2}f)$ are $G^{2}p_{b}(f)$ and $G^{2}p_{a}(f)$.
\end{lemma}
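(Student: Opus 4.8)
The plan is to note that every exponent occurring in $G^{2}$ lies in the single residue class $10\bmod 40$, and then to see how the character $\chi$ behaves under a shift by such an exponent. Since $G=F(x^{5})=\sum_{n\ \mathit{odd},\ n>0}x^{5n^{2}}$, we have $G^{2}=\sum_{n\ \mathit{odd},\ n>0}x^{10n^{2}}$; and $n$ odd forces $n^{2}\equiv 1\bmod 4$, so each exponent $10n^{2}$ is $\equiv 10\bmod 40$. This is the same observation already used in the proof of Lemma \ref{lemma2.10}. In particular every exponent $j$ of $G^{2}$ is divisible by $10$ and satisfies $j\equiv 10\bmod 20$.

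First I would record the key congruence: if $(i,20)=1$ then $\chi(i+10)=-\chi(i)$. Write $\chi=\chi_{4}\chi_{5}$, the product of the nontrivial character mod $4$ and the quadratic character mod $5$. Since $10\equiv 0\bmod 5$ we have $\chi_{5}(i+10)=\chi_{5}(i)$, while since $i$ is odd, $\chi_{4}(i+10)=\chi_{4}(i+2)=-\chi_{4}(i)$ because adding $2$ swaps the residues $1$ and $3$ mod $4$. Multiplying gives $\chi(i+10)=-\chi(i)$. Moreover, if $(i,20)\ne 1$ then $i$ is even or divisible by $5$; as any exponent $j$ of $G^{2}$ is divisible by $10$, the integer $i+j$ is again even or divisible by $5$, so $\chi(i+j)=0$.

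Now write $f=\sum c_{n}x^{n}$ and expand $G^{2}f=\sum_{j}\sum_{n}c_{n}x^{j+n}$, the outer sum running over the exponents $j$ of $G^{2}$. Since every such $j$ is $\equiv 10\bmod 20$, the previous paragraph shows that a term $c_{n}x^{j+n}$ has $\chi(j+n)=-\chi(n)$ when $(n,20)=1$ and $\chi(j+n)=0$ otherwise. Hence $p_{a}$, which keeps exactly the terms with $\chi=1$, retains from $G^{2}f$ precisely the terms with $\chi(n)=-1$; that is, $p_{a}(G^{2}f)=\sum_{j}x^{j}\sum_{\chi(n)=-1}c_{n}x^{n}=G^{2}p_{b}(f)$. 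The identity $p_{b}(G^{2}f)=G^{2}p_{a}(f)$ follows in the same way.

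There is no genuine obstacle here; the content is entirely the congruence $\chi(i+10)=-\chi(i)$ for $(i,20)=1$ together with the location $10\bmod 40$ of the exponents of $G^{2}$. The one point that deserves a line of care is checking that terms $c_{n}x^{n}$ of $f$ with $(n,20)\ne 1$ contribute nothing to $p_{a}(G^{2}f)$ or $p_{b}(G^{2}f)$, which is exactly the divisibility remark at the end of the second paragraph.
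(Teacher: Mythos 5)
Your proof is correct and follows essentially the same route as the paper's: both arguments reduce to the observation that the exponents of $G^{2}$ lie in $10\bmod 20$ together with the key congruence $\chi(k+10)=-\chi(k)$. You simply spell out the verification of that congruence (via $\chi=\chi_{4}\chi_{5}$) and the handling of exponents with $(n,20)\ne 1$, which the paper leaves implicit.
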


\begin{proof}
We may assume that all the exponents appearing in $f$ are congruent to some fixed $k\bmod{20}$. Then the exponents in $G^{2}f$ are congruent to $k+10$, and we use the fact that $\chi(k+10)=-\chi(k)$.
\qed
\end{proof}

\begin{lemma}
\label{lemma2.13}
\hspace{2em}\\
\vspace{-4ex}
\begin{enumerate}
\item[] If $\chi(k)=1$, $p_{a}(J_{k})=D_{k}$ and $p_{b}(J_{k})=0$.
\item[] If $\chi(k)=-1$, $p_{a}(J_{k})=0$ and $p_{b}(J_{k})=D_{k}$.
\end{enumerate}
\end{lemma}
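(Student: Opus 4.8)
The statement to prove is Lemma~\ref{lemma2.13}: that $p_a$ and $p_b$ sort the basis elements $J_k$ of $N2/N1$ according to the character $\chi$, sending $J_k$ to $D_k$ or to $0$. The plan is to reduce everything to the four ``base'' cases $k\in\{1,3,7,9\}$ and then invoke Lemma~\ref{lemma2.10}, which already controls the exponents appearing in each $D_k$.

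First I would record that by definition $D_k = pr(J_k)$, and that $pr$ only removes monomials whose exponent is divisible by $5$, so the monomials surviving in $D_k$ are exactly those monomials $x^n$ of $J_k$ with $(n,5)=1$. Now apply Lemma~\ref{lemma2.10}: every exponent $n$ actually occurring in $D_k$ satisfies $\chi(n)=\chi(k)$. Hence if $\chi(k)=1$ then $D_k$ is supported entirely on exponents $n$ with $\chi(n)=1$, so $p_a(D_k)=D_k$ and $p_b(D_k)=0$; if $\chi(k)=-1$ the roles of $p_a$ and $p_b$ swap. It therefore suffices to show $p_a(J_k)=D_k$ when $\chi(k)=1$ (and symmetrically for $\chi(k)=-1$), since then $p_a(J_k)=p_a(D_k)=D_k$ follows, together with $p_b(J_k)=p_b(D_k)=0$.

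So the real content is: $p_a(J_k)=p_a(pr(J_k))$ when $\chi(k)=1$, i.e.\ the monomials of $J_k$ that $pr$ kills (those with $5\mid n$) are exactly the monomials that $p_a$ kills (those with $\chi(n)=-1$), \emph{as far as their contribution to $p_a(J_k)$ is concerned}. Concretely I would argue: $p_a(J_k)$ picks out the monomials of $J_k$ with $\chi(n)=1$; since $\chi$ is a character mod $20$, $\chi(n)=1$ forces $(n,5)=1$, so those monomials already survive $pr$, i.e.\ they appear in $D_k$; conversely a monomial of $J_k$ appearing in $D_k$ has $\chi(n)=\chi(k)=1$ by Lemma~\ref{lemma2.10}, so it is kept by $p_a$. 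Thus $p_a(J_k)=D_k$. Then $p_b(J_k)=p_b(pr(J_k))+p_b(J_k-pr(J_k))$; the first term is $p_b(D_k)=0$ by the exponent support of $D_k$, and the second term involves only exponents divisible by $5$, for which $\chi(n)=0\ne -1$, so $p_b$ kills them too. The case $\chi(k)=-1$ is handled by the symmetric argument, or by multiplying the $\chi(k)=1$ case by $G^2$ and applying Lemma~\ref{lemma2.12} together with $\chi(k+10)=-\chi(k)$.

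The main obstacle, such as it is, is purely bookkeeping: one must be careful that $p_a$ and $p_b$ are defined on \emph{all} of $Z/2[[x]]$ and only see the exponent $n$, so that monomials with $5\mid n$ are silently discarded by both (since $\chi(n)=0$), whereas $pr$ discards exactly those. Keeping straight the interplay between ``$pr$ kills $5\mid n$'' and ``$p_a,p_b$ kill according to $\chi(n)=\mp1$ and are both blind to $5\mid n$'' is the only place an error could creep in; there is no hard estimate or structural input beyond Lemmas~\ref{lemma2.10} and~\ref{lemma2.12}.
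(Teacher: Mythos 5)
Your proposal is correct and follows essentially the same route as the paper: the paper's proof likewise observes that $p_{a}(J_{k})=p_{a}(D_{k})$ and $p_{b}(J_{k})=p_{b}(D_{k})$ (because the monomials $pr$ discards have exponent divisible by $5$, hence $\chi(n)=0$, and are invisible to both projections) and then invokes Lemma~\ref{lemma2.10}. Your write-up just spells out this bookkeeping in more detail.
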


\begin{proof}
Since $pr(J_{k})=D_{k}$, $p_{a}(J_{k})=p_{a}(D_{k})$ while $p_{b}(J_{k})=p_{b}(D_{k})$.  Lemma \ref{lemma2.10} then gives the result.
\qed
\end{proof}

As $Z/2[G^{4}]$-module, $N2/N1$ has basis $\{J_{1},J_{3},J_{7},J_{9},J_{11},J_{13},J_{17},J_{19}\}$. Evidently $N2/N1=N2a\oplus N2b$, where $N2a$ and $N2b$ are the $Z/2[G^{4}]$-submodules with bases $\{J_{1},J_{3},J_{7},J_{9}\}$ and $\{J_{11},J_{13},J_{17},J_{19}\}$. The $J_{k}$ with $\chi(k)=1$ are a $Z/2$-basis of $N2a$; those with $\chi(k)=-1$ are a $Z/2$-basis of $N2b$. Lemma \ref{lemma2.13} now shows that $p_{a}$ maps $N2/N1$ onto $W_{a}$ with kernel $N2b$, while $p_{b}$ maps $N2/N1$ onto $W_{b}$ with kernel $N2a$.

Suppose now that $f$ is in $\modd/N1$. We'll show that if $p_{b}(f)=0$, then $f$ is in $N2a$, while if $p_{a}(f)=0$, $f$ is in $N2b$.  So in either case, $f$ is in $N2/N1$. This is key to showing that the $T_{p}$ stabilize $N2$.

\begin{lemma}
\label{lemma2.14}
$p_{a}(F(F+G)^{4})=p_{a}(r^{8}G)$ and $p_{b}(F(F+G)^{4})=p_{b}(r^{8}G)$.
\end{lemma}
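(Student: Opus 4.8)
The plan is to reduce both sides to concrete power series in $F$ and $G$ using the relations from Section~\ref{section1}, and then compare the exponents modulo $40$ to see that $p_a$ (equivalently $p_b$) does the same thing to each. First I would rewrite $r^8 G$ in terms of $F$ and $G$: since $r^2+r=F+G$ and $G=r^5(r+1)$, $F=r(r+1)^5$ (Theorem~\ref{theorem1.12}), one has $r^8=(r^2)^4=(F+G+r)^4$, but more usefully $r^{16}+r^8 = (r^2+r)^8 = (F+G)^8$, so $r^8$ satisfies a clean quadratic over $Z/2[F,G]$. Actually the cleanest route is: $r^8G$ and $F(F+G)^4$ are both \emph{odd} elements of $M$ (they lie in $\modd$), so by Theorem~\ref{theorem1.13} each is a sum of monomials $F^iG^j$ with $i+j$ odd; the claim is that these two elements have the \emph{same image} under $p_a$, not that they are equal. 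So the real content is a statement about which exponents $n$ (mod the relevant modulus) carry nonzero coefficients.

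The key step is therefore to show that $F(F+G)^4 + r^8G$ lies in the kernel of $p_a$ \emph{and} in the kernel of $p_b$ simultaneously — i.e.\ that it is $0$. Wait: that cannot be literally true unless $F(F+G)^4 = r^8G$ as power series, which one should simply check. Using $F = r(r+1)^5$, $G = r^5(r+1)$, and $F+G = r^2+r = r(r+1)$, I compute $F(F+G)^4 = r(r+1)^5 \cdot r^4(r+1)^4 = r^5(r+1)^9$, while $r^8 G = r^8 \cdot r^5(r+1) = r^{13}(r+1)$. These are not equal, so the lemma is genuinely about $p_a$ killing the difference $\delta := r^5(r+1)^9 + r^{13}(r+1) = r^5(r+1)\bigl[(r+1)^8 + r^8\bigr] = r^5(r+1)(r^2+r)^4 \cdot(\text{something})$ — more precisely $(r+1)^8+r^8 = \sum \binom{8}{k}r^k$ over $Z/2$, which is $r^8+r^4\cdot(\ldots)+1$; in $Z/2$, $(r+1)^8 = r^8+1$, so $(r+1)^8+r^8 = 1$?? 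No: $(r+1)^8 = r^8+1$ in characteristic $2$, hence $(r+1)^8 + r^8 = 1$, giving $\delta = r^5(r+1) = G$. So the claim reduces to $p_a(G) = 0$, equivalently $p_b(G)=0$, which is immediate since $G \in Z/2[[x^5]]$ and every exponent appearing in $G$ is divisible by $5$, hence killed by both $p_a$ and $p_b$ (these project onto exponents coprime to $5$, as $\chi$ has modulus $20$).

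So the proof I would write is: expand $F(F+G)^4 = r^5(r+1)^9$ and $r^8G = r^{13}(r+1)$ via Theorem~\ref{theorem1.12}(2), subtract to get $r^5(r+1)^9 + r^{13}(r+1) = r^5(r+1)\bigl((r+1)^8+r^8\bigr) = r^5(r+1)\cdot 1 = r^5(r+1) = G$, and then observe that $G = F(x^5)$ has all exponents divisible by $5$, so $p_a(G) = p_b(G) = 0$; since $p_a$ and $p_b$ are $Z/2$-linear this yields $p_a(F(F+G)^4) = p_a(r^8G)$ and likewise for $p_b$. The only place to be careful is the binomial identity $(r+1)^8 = r^8+1$ in $Z/2[[x]]$ (Frobenius applied three times), and keeping track that $p_a,p_b$ are defined via a character of modulus $20$ but still annihilate everything supported on multiples of $5$ because $\chi(n)$ is only nonzero when $(n,20)=1$, in particular $\gcd(n,5)=1$; so exponents divisible by $5$ contribute to neither projection. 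I expect the main (very minor) obstacle is simply making sure the reader sees why $p_a$ and $p_b$ both kill $G$ despite being "finer" than the level-$5$ projection $pr$ — but this is exactly the content of the definitions of $\chi$, $p_a$, $p_b$.
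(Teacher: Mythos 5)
Your proof is correct and is essentially the paper's own argument: the paper likewise computes $F(F+G)^{4}=r(r+1)^{5}(r^{2}+r)^{4}=(r+1)^{8}(r^{6}+r^{5})=r^{8}G+G$ and then applies $p_{a}$ and $p_{b}$, which kill the extra $G$ because all its exponents are divisible by $5$ and hence have $\chi(n)=0$. Your identity $F(F+G)^{4}+r^{8}G=r^{5}(r+1)\bigl((r+1)^{8}+r^{8}\bigr)=G$ is exactly this computation, just arranged as a difference.
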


\begin{proof}
$F(F+G)^{4}=r(r+1)^{5}(r^{2}+r)^{4}=(r+1)^{8}(r^{6}+r^{5})=r^{8}G+G$. Now apply $p_{a}$ and $p_{b}$.
\qed
\end{proof}

\begin{lemma}
\label{lemma2.15}
Let $S=p_{a}(r^{8}G)$, $T=p_{b}(r^{8}G)$. Then $T=D^{5}+G$, $S=D^{10}\!/G + G$.
\end{lemma}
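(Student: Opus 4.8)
The plan is to compute $p_{a}(r^{8}G)$ and $p_{b}(r^{8}G)$ by rewriting $r^{8}G$ so that the effect of $p_{a}$ and $p_{b}$ can be read off from the residues mod $20$ of the exponents, and then to recognize the surviving terms using the relations of Lemma \ref{lemma2.4} and Theorem \ref{theorem1.12}. I will use repeatedly that $p_{a}$ and $p_{b}$ kill $Z/2[[x^{5}]]$ (exponents there are divisible by $5$, where $\chi$ vanishes), that $(n,10)=1$ forces $n^{2}\equiv 1$ or $9\mod{20}$, and that $25n^{2}\equiv 5\mod{20}$ for odd $n$.

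Since $r^{8}G=F(F+G)^{4}+G$ (Lemma \ref{lemma2.14}) and $p_{a}(G)=p_{b}(G)=0$, I first expand $F(F+G)^{4}$. As $F=D+H$ and $F+G=D+(G+H)$, expanding in characteristic $2$ gives
\[
F(F+G)^{4}=(D+H)\bigl(D^{4}+(G+H)^{4}\bigr)=D^{5}+DG^{4}+DH^{4}+HD^{4}+HG^{4}+H^{5},
\]
with $HG^{4},H^{5}\in Z/2[[x^{5}]]$. A routine exponent count shows that $DG^{4}$, $DH^{4}$ and $HD^{4}$ have all exponents $\equiv 1$ or $9\mod{20}$ (so $\chi=1$ on them), whereas $D^{5}=D(x^{4})D(x)$ has exponents $4a^{2}+b^{2}$ with $(a,10)=(b,10)=1$, namely $\equiv 5,13$ or $17\mod{20}$, none with $\chi=1$. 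Hence $p_{a}(D^{5})=0$ and $p_{b}$ annihilates $DG^{4},DH^{4},HD^{4}$, so $S=DG^{4}+DH^{4}+HD^{4}$ and $T=p_{b}(D^{5})$.

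For $T$: among the exponents of $D^{5}$ the ones with $\chi=-1$ are precisely those $\equiv 13,17\mod{20}$, which are precisely those prime to $5$; thus $p_{b}(D^{5})$ equals $D^{5}$ minus its part supported on exponents divisible by $5$. The relation $D^{5}+(F+G)^{2}D^{3}+(F+G)^{4}D=G$ from the proof of Lemma \ref{lemma2.4} may, since $(F+G)^{2}+(G+H)^{2}=D^{2}$ (as $D=F+H$), be rewritten $D^{5}+(G+H)^{2}D^{3}+(G+H)^{4}D=G$, where now $(G+H)^{2},(G+H)^{4}\in Z/2[[x^{5}]]$; since $D^{3}=D(x^{2})D(x)$ has all exponents prime to $5$ (they are $2a^{2}+b^{2}\not\equiv 0\mod{5}$), this forces $D^{5}+G$ to have all exponents prime to $5$, i.e.\ the divisible-by-$5$ part of $D^{5}$ is exactly $G$. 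Hence $T=p_{b}(D^{5})=D^{5}+G$. For $S$: put $B=(G+H)^{2}$, so $B^{3}=GH$ (replace $x$ by $x^{5}$ in $(F+G)^{6}=FG$) and $B^{2}=G^{4}+H^{4}$; the relation above is $D^{5}+BD^{3}+B^{2}D=G$, whence $DG^{4}+DH^{4}=B^{2}D=G+D^{5}+BD^{3}$ and $S=G+D^{5}+BD^{3}+HD^{4}$. It remains to verify $D^{5}+BD^{3}+HD^{4}=D^{10}\!/G$, equivalently (multiply by $G$, use $GH=B^{3}$) $D^{10}=GD^{5}+GBD^{3}+B^{3}D^{4}$: substituting $GD^{5}=G^{2}+GBD^{3}+GB^{2}D$ turns the right side into $G^{2}+GB^{2}D+B^{3}D^{4}$, while squaring the relation gives $D^{10}=G^{2}+B^{2}D^{6}+B^{4}D^{2}$, and equating these and cancelling $B^{2}D$ returns exactly $D^{5}+BD^{3}+B^{2}D=G$. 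So $S=D^{10}\!/G+G$.

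The step I expect to be the main obstacle is this last identification of $S$. Once exponents are tracked mod $20$ and mod $5$, the projection arguments yielding $T$ and the shape of $S$ are mechanical, but converting $DG^{4}+DH^{4}+HD^{4}$ into $D^{10}\!/G+G$ forces one to squeeze precisely the right algebraic identity out of the level-$5$ modular equation together with Lemma \ref{lemma2.4}; the verification above is only superficially circular, and must be arranged so it genuinely rests on $D^{5}+(G+H)^{2}D^{3}+(G+H)^{4}D=G$ (equivalently $D^{15}+G^{4}D^{3}+G^{3}=0$) and not on what we are trying to prove.
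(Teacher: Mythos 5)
Your proof is correct, but it takes a genuinely different route from the paper's. The paper starts from $r=\sum_{k\ge 0}(F+G)^{2^{k}}$, so that $r^{8}G=G\bigl((F+G)^{8}+(F+G)^{16}+\cdots\bigr)$ and $pr(r^{8}G)=G(D^{8}+D^{16}+D^{32}+\cdots)$; it then sorts the terms $GD^{2^{k}}$ by their exponents mod $40$, finding that the odd $k$ contribute to $T$ and the even $k$ to $S$. Thus $T/G=D^{8}+D^{32}+D^{128}+\cdots$ visibly satisfies $u^{4}+u=D^{8}$, and since Corollary \ref{corollary2.5} says $D^{5}/G$ satisfies the same Artin--Schreier equation, the two differ by a constant (equal to $1$ by inspection of leading terms); finally $S=G(D^{16}+D^{64}+\cdots)=T^{2}/G$, so $S$ comes for free. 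You instead stay with the finite expression $F(F+G)^{4}$ from Lemma \ref{lemma2.14}, expand it via $F=D+H$ into six monomials, sort those by exponents mod $20$ and mod $5$, and identify the two surviving pieces by direct manipulation of relation (1) from the proof of Lemma \ref{lemma2.4}, rewritten using $(F+G)^{2}=D^{2}+(G+H)^{2}$. I have checked your exponent counts (the residues $1,9$ versus $5,13,17$ mod $20$, and the fact that $D$ and $D^{3}$ have all exponents prime to $5$) and the algebra for $S$: the difference $GD^{5}+GBD^{3}+B^{3}D^{4}+D^{10}$ does factor as $B^{2}D\cdot(D^{5}+BD^{3}+B^{2}D+G)=0$, so the verification is not circular. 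What the paper's route buys is economy: $T$ is a one-line appeal to Corollary \ref{corollary2.5} and $S$ is immediate from $T$; what yours buys is that everything is a finite polynomial identity in $D$, $G$, $H$ with no infinite lacunary sums and no ``differ by a constant'' step, at the price of the extra computation for $S$. One cosmetic point: your local variable $B=(G+H)^{2}$ collides with the $B$ of Definition \ref{def1.1} and with the $B$ used in the proof of Lemma \ref{lemma2.4}; rename it if this is to sit alongside the paper's text.
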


\begin{proof}
Since $r+r^{2}=F+G$, $r^{8}G=G((F+G)^{8}+(F+G)^{16}+(F+G)^{32}+\cdots )$. So $pr(r^{8}G)=G(D^{8}+D^{16}+D^{32}+\cdots)$. Now all exponents $n$ appearing in any of $GD^{8}$, $GD^{32}$, $GD^{128}$, \ldots are $13$ or $37\bmod{40}$, while those in any of $GD^{16}$, $GD^{64}$, \ldots are $21$ or $69\bmod{40}$. Applying $p_{b}$ to our identity we find that $T=G(D^{8}+D^{32}+D^{128}+\cdots)$. Then $(T/G)^{4}+(T/G)=D^{8}$. So by Corollary \ref{corollary2.5}, $T/G$ and $D^{5}\!/G$ differ by a constant, and comparing expansions we see that the constant is $1$. So $T=D^{5}+G$. Finally $S=G(D^{16}+D^{64}+D^{256}+\cdots )=T^{2}\!/G$.
\qed
\end{proof}

\begin{lemma}
\label{lemma2.16}
\hspace{2em}\\
\vspace{-4ex}
\begin{enumerate}
\item[(1)] \parbox{2.5in}{$p_{a}(F(F+G)^{4})=D^{10}\!/G + G$,} $p_{b}(F(F+G)^{4})=D^{5}+G$. 
\item[(2)] \parbox{2.5in}{$p_{a}(FG^{2}(F+G)^{4})=D^{5}G^{2}+G^{3}$,} $p_{b}(FG^{2}(F+G)^{4})=D^{10}G+G^{3}$.
\end{enumerate}
\end{lemma}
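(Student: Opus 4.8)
The plan is to obtain both statements by directly combining identities already in hand, so essentially no fresh computation is needed; the real work was already done in Lemmas \ref{lemma2.14} and \ref{lemma2.15}.

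For part (1), I would simply chain Lemma \ref{lemma2.14} with Lemma \ref{lemma2.15}. Lemma \ref{lemma2.14} says $p_{a}(F(F+G)^{4}) = p_{a}(r^{8}G) = S$ and $p_{b}(F(F+G)^{4}) = p_{b}(r^{8}G) = T$, and Lemma \ref{lemma2.15} evaluates $S = D^{10}\!/G + G$ and $T = D^{5}+G$. Substituting gives exactly the two formulas asserted in (1).

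For part (2), the key observation is that $FG^{2}(F+G)^{4} = G^{2}\cdot\bigl(F(F+G)^{4}\bigr)$, so I can apply Lemma \ref{lemma2.12}, which interchanges $p_{a}$ and $p_{b}$ when a factor of $G^{2}$ is pulled out. Hence $p_{a}(FG^{2}(F+G)^{4}) = G^{2}\,p_{b}(F(F+G)^{4})$ and $p_{b}(FG^{2}(F+G)^{4}) = G^{2}\,p_{a}(F(F+G)^{4})$. Feeding in the values from part (1) yields $G^{2}(D^{5}+G) = D^{5}G^{2}+G^{3}$ and $G^{2}(D^{10}\!/G + G) = D^{10}G + G^{3}$ respectively, which are the claimed identities.

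There is really no obstacle here; the only point requiring care is the bookkeeping in part (2) — keeping straight which of $p_{a}$, $p_{b}$ is carried to which, and making sure Lemma \ref{lemma2.12} is invoked with the factor $G^{2}$ (which flips the two projections) rather than $G^{4}$ (which would fix each). The substantive input behind this lemma lies further back: Lemma \ref{lemma2.14} rests on $F = r(r+1)^{5}$ and $G = r^{5}(r+1)$ from Theorem \ref{theorem1.12}, and Lemma \ref{lemma2.15} on the relation $(D^{5}\!/G)^{4}+D^{5}\!/G=D^{8}$ of Corollary \ref{corollary2.5} together with a congruence-mod-$40$ analysis of the exponents occurring in the powers of $D$.
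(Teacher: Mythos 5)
Your proof is correct and is precisely the paper's argument: the paper's entire proof reads "Lemmas \ref{lemma2.14} and \ref{lemma2.15} give (1), and Lemma \ref{lemma2.12} yields (2)," and your write-up simply spells out that chaining, including the correct application of Lemma \ref{lemma2.12} with the factor $G^{2}$ swapping $p_{a}$ and $p_{b}$.
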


\begin{proof}
Lemmas \ref{lemma2.14} and \ref{lemma2.15} give (1), and Lemma \ref{lemma2.12} yields (2).
\qed
\end{proof}

We saw in the remark following the proof of Theorem \ref{theorem1.14} that a basis of $\modd$ as $Z/2[G^{2}]$-module is  $\{G, F, F^{2}G, F^{3}, F^{4}G, F^{5}\}$. The last five of those elements then give a  basis of $\modd/N1$. It follows that another $Z/2[G^{2}]$-basis of
$\modd/N1$ is $\{J_{1},J_{3},J_{7},J_{9},F(F+G)^{4}\}$. Then a $Z/2[G^{4}]$-basis of $\modd/N1$ is  $\{J_{1},J_{3},J_{7},J_{9},J_{11},J_{13},J_{17},J_{19},F(F+G)^{4},FG^{2}(F+G)^{4}\}$.


\begin{theorem}
\label{theorem2.17}
The kernels of $p_{b}:\modd/N1\rightarrow Z/2[[x]]$ and $p_{a}:\modd/N1\rightarrow Z/2[[x]]$ are $N2a$ and $N2b$ where $N2a$ and $N2b$ have $Z/2[G^{4}]$-module bases  $\{J_{1},J_{3},J_{7},J_{9}\}$ and $\{J_{11},J_{13},J_{17},J_{19}\}$.
\end{theorem}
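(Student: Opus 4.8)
The plan is to compute both kernels directly, using the $Z/2[G^{4}]$-basis of $\modd/N1$ exhibited just above, namely $\{J_{1},J_{3},J_{7},J_{9},J_{11},J_{13},J_{17},J_{19},F(F+G)^{4},FG^{2}(F+G)^{4}\}$, together with the $Z/2[G^{4}]$-linearity of $p_{a}$ and $p_{b}$. Consider first $p_{b}$. By Lemma \ref{lemma2.13}, $p_{b}(J_{k})=0$ for $k\in\{1,3,7,9\}$ and $p_{b}(J_{k})=D_{k}$ for $k\in\{11,13,17,19\}$; by Lemma \ref{lemma2.16}, $p_{b}(F(F+G)^{4})=D^{5}+G$ and $p_{b}(FG^{2}(F+G)^{4})=D^{10}G+G^{3}$. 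The first four basis vectors lie in $\ker p_{b}$ and generate $N2a$ over $Z/2[G^{4}]$, so $N2a\subseteq\ker p_{b}$ at once. For the reverse inclusion, writing a general element of $\modd/N1$ in this basis and applying $p_{b}$, it is enough to show that the six images
\[
D_{11},\quad D_{13},\quad D_{17},\quad D_{19},\quad D^{5}+G,\quad D^{10}G+G^{3}
\]
are linearly independent over $Z/2[G^{4}]$; then a vanishing $p_{b}$-image forces the coefficients of $J_{11},J_{13},J_{17},J_{19},F(F+G)^{4},FG^{2}(F+G)^{4}$ to vanish, so the element lies in $N2a$.

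To prove this independence I would work inside the field $Z/2(D,G)$. Recall $G$ is transcendental over $Z/2$ and, by Lemma \ref{lemma2.4} (cf.\ the proof of Theorem \ref{theorem2.7}), $D$ has degree $15$ over $Z/2(G)$, so $1,D,D^{2},\dots,D^{14}$ is a $Z/2(G)$-basis of $Z/2(D,G)$. Since $D_{i+10}=G^{2}D_{i}$ and $D_{1},D_{3},D_{7},D_{9}=D,D^{8}/G,D^{2}G,D^{4}G$, the first four of our six elements are $G^{2}D$, $GD^{8}$, $G^{3}D^{2}$, $G^{3}D^{4}$, hence are supported on the distinct basis vectors $D^{1},D^{8},D^{2},D^{4}$ with nonzero coefficients, while $D^{5}+G$ is supported on $\{1,D^{5}\}$ and $D^{10}G+G^{3}$ on $\{1,D^{10}\}$. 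As the exponents $1,2,4,5,8,10$ are pairwise distinct, reading off the coefficients of $D^{1},D^{2},D^{4},D^{5},D^{8},D^{10}$ in any vanishing $Z/2(G)$-linear combination of the six elements shows that all six coefficients are $0$. So the six elements are independent over $Z/2(G)$, and therefore over the subring $Z/2[G^{4}]$, which gives $\ker p_{b}=N2a$.

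The statement for $p_{a}$ follows by the same scheme with the roles interchanged. Lemmas \ref{lemma2.13} and \ref{lemma2.16} give $p_{a}(J_{k})=D_{k}$ for $k\in\{1,3,7,9\}$, $p_{a}(J_{k})=0$ for $k\in\{11,13,17,19\}$, $p_{a}(F(F+G)^{4})=D^{10}/G+G$, and $p_{a}(FG^{2}(F+G)^{4})=D^{5}G^{2}+G^{3}$. Thus $N2b\subseteq\ker p_{a}$, and one must check that $D_{1}=D$, $D_{3}=D^{8}/G$, $D_{7}=D^{2}G$, $D_{9}=D^{4}G$, together with $D^{10}/G+G$ (supported on $\{1,D^{10}\}$) and $D^{5}G^{2}+G^{3}$ (supported on $\{1,D^{5}\}$), are independent over $Z/2(G)$; this is the identical coordinate computation, giving $\ker p_{a}=N2b$.

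I do not anticipate a real difficulty. The only slightly delicate point is the linear independence of the six power-series images under $p_{b}$ (and of the analogous six under $p_{a}$), but after transporting everything into $Z/2(D,G)$ and expanding in the basis $1,D,\dots,D^{14}$ this collapses to the remark that the powers of $D$ occurring have distinct exponents. The inputs used are the values of $p_{a},p_{b}$ on the basis of $\modd/N1$ (Lemmas \ref{lemma2.13} and \ref{lemma2.16}), their $Z/2[G^{4}]$-linearity, and the degree of $D$ over $Z/2(G)$ coming from Lemma \ref{lemma2.4}.
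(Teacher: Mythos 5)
Your proposal is correct and follows essentially the same route as the paper: both use the ten-element $Z/2[G^{4}]$-basis of $\modd/N1$, evaluate $p_{b}$ (resp.\ $p_{a}$) on it via Lemmas \ref{lemma2.13} and \ref{lemma2.16}, and reduce the reverse inclusion to the linear independence over $Z/2(G)$ of the six nonzero images, which follows from $D$ having degree $15$ over $Z/2(G)$ by Lemma \ref{lemma2.4}. Your expansion in the basis $1,D,\dots,D^{14}$ just makes explicit the independence step the paper states more tersely.
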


\begin{proof}
Consider the 10 element $Z/2[G^{4}]$-module basis of $\modd/N1$ given in the sentence preceding Theorem \ref{theorem2.17}. $p_{b}$ annihilates $J_{1}$, $J_{3}$, $J_{7}$ and $J_{9}$ and sends the last 6 basis elements to $D_{11}=DG^{2}$, $D_{13}=D^{8}G$, $D_{17}=D^{2}G^{3}$, $D_{19}=D^{4}G^{3}$, $D^{5}+G$ and $D^{10}G+G^{3}$; see Lemma \ref{lemma2.16}. As we've seen, $D$ has degree 15 over $Z/2(G)$, and so $D$, $D^{8}$, $D^{2}$, $D^{4}$, $D^{5}+G$ and $D^{10}+G^{2}$ are linearly independent over this field. So no non-trivial $Z/2[G^{4}]$-linear combination of $D_{11}$, $D_{13}$, $D_{17}$, $D_{19}$, $D^{5}+G$ and $D^{10}+G^{3}$ is zero, and the result for $p_{b}$ follows. The result for $p_{a}$ is proved similarly.
\qed
\end{proof}

\begin{corollary}
\label{corollary2.18}
\hspace{2em}\\
\vspace{-4ex}
\begin{enumerate}
\item[(1)] If $\chi(p)=1$, $T_{p}$ stabilizes $N2a$ and $N2b$.
\item[(2)] If $\chi(p)=-1$, $T_{p}$ maps $N2a$ to $N2b$ and $N2b$ to $N2a$.
\end{enumerate}
\end{corollary}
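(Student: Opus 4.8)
The plan is to deduce Corollary \ref{corollary2.18} from Theorem \ref{theorem2.17} together with Theorem \ref{theorem1.14} and the multiplicativity of $p_a$ and $p_b$ with respect to $G^2$ recorded in Lemma \ref{lemma2.12}. The guiding principle is that $N2a$ and $N2b$ are cut out inside $\modd/N1$ by the vanishing of $p_b$ and $p_a$ respectively, and that the $T_p$ ``almost commute'' with $p_a$ and $p_b$ — exactly commuting when $\chi(p)=1$ and swapping the two when $\chi(p)=-1$.

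First I would make precise the commutation relation between $T_p$ and the projectors $p_a,p_b$. For a formal power series $f=\sum c_n x^n$, $T_p f$ has $x^n$-coefficient $c_{pn}+c_{n/p}$ (the second term absent when $p\nmid n$). Since $\chi$ is a Dirichlet character of modulus $20$ and $p\nmid 20$ (as $p\ne 2,5$), we have $\chi(pn)=\chi(p)\chi(n)$ and $\chi(n/p)=\chi(p)^{-1}\chi(n)=\chi(p)\chi(n)$ (recall $\chi$ is real, so $\chi(p)^{-1}=\chi(p)$). Hence an exponent $n$ survives in $p_a(T_p f)$ precisely when $\chi(pn)=1$, equivalently $\chi(n)=\chi(p)$; tracking which coefficients of $f$ this reads off, one gets $p_a T_p = T_p p_a$ if $\chi(p)=1$ and $p_a T_p = T_p p_b$ if $\chi(p)=-1$, and symmetrically for $p_b$. (This is the key step, and essentially the only substantive computation; everything else is formal.)

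Next I would recall from Theorem \ref{theorem1.14} that $T_p$ stabilizes $\modd$, and hence, since the $T_p$ stabilize $N1$ as well — this is Remark (1) following Theorem \ref{theorem1.18}, the space spanned by the $F^k$, $k$ odd, being $T_p$-stable for $p\ne 2$, so $N1 = Z/2[G^2]\cdot G$ is $T_p$-stable for $p\ne 2,5$ — each $T_p$ descends to an operator on $\modd/N1$. Now the argument is immediate. Suppose $\chi(p)=1$ and $f\in N2a$, i.e.\ $f\in\modd/N1$ with $p_b(f)=0$. Then $p_b(T_p f) = T_p(p_b f) = 0$, so by Theorem \ref{theorem2.17} $T_p f\in N2a$; the same with $a,b$ interchanged gives $T_p(N2b)\subset N2b$. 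If instead $\chi(p)=-1$ and $f\in N2a$, then $p_a(T_p f) = T_p(p_b f) = 0$, so $T_p f\in N2b$ by Theorem \ref{theorem2.17}; and symmetrically $T_p(N2b)\subset N2a$.

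I do not anticipate a genuine obstacle here: the content was already absorbed into Theorem \ref{theorem2.17} (the hard linear-independence input over $Z/2(G)$) and into the bookkeeping of Lemmas \ref{lemma2.10}--\ref{lemma2.13}. The one place to be careful is the bookkeeping with the index shift $n\mapsto n/p$ in the definition of $T_p$: one must check that $\chi$ behaves multiplicatively on both $pn$ and $n/p$, which works because $\chi$ is real and $p$ is a unit mod $20$. It may be cleanest to phrase the whole thing as the single identity $p_{\varepsilon}\circ T_p = T_p\circ p_{\varepsilon\cdot\chi(p)}$ for $\varepsilon\in\{a,b\}$ (with the convention that the subscript is multiplied inside $\{\pm1\}$), and then read off both cases of the corollary at once.
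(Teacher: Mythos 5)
Your proof is correct and follows essentially the same route as the paper: the paper likewise deduces the corollary from Theorem \ref{theorem2.17} together with the multiplicativity of $\chi$, phrasing the key step directly in terms of the exponents $n$ appearing in a representative of $f$ (these have $\chi(n)=-1$ or $0$ for $f$ in $N2b$, and applying $T_{p}$ multiplies these values by $\chi(p)$) rather than as your operator identity $p_{\varepsilon}\circ T_{p}=T_{p}\circ p_{\varepsilon\cdot\chi(p)}$. There is no substantive difference.
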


\begin{proof}
For example suppose $f$ is in $N2b$ with $\chi(p)=-1$. Then the exponents, $n$, appearing in (a representative of) $f$ have $\chi(n)=-1$ or $0$. Since $\chi$ is multiplicative, the exponents appearing in $T_{p}$ applied to this representative have $\chi(n)=1$ or $0$. In other words, $T_{p}(f)$ is an element of $\modd/N1$ in the kernel of $p_{b}$. By Theorem \ref{theorem2.17}, $T_{p}(f)$ is in $N2a$. The other cases are treated similarly.
\qed
\end{proof}

\begin{theorem}
\label{theorem2.19}
If $p\ne 2$ or $5$, $T_{p}$ stabilizes $N2$ and $W$.
\end{theorem}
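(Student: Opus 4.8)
The plan is to assemble this from the structural results already in hand, together with one small new observation; there are two assertions, stability of $N2$ and stability of $W$, and I would treat them in that order. For $N2$: by Theorem \ref{theorem1.14} the operator $T_{p}$ preserves \modd, and by Remark (1) following Theorem \ref{theorem1.18} it preserves $N1$, so $T_{p}$ descends to an operator $\bar T_{p}$ on the quotient \modd$/N1$, compatible with the projection $q\colon \modd \to \modd/N1$. Since $(p,20)=1$ we have $\chi(p)=\pm 1$, so Corollary \ref{corollary2.18} applies: in case $\chi(p)=1$ it carries $N2a$ into $N2a$ and $N2b$ into $N2b$, and in case $\chi(p)=-1$ it carries $N2a$ into $N2b$ and $N2b$ into $N2a$. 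Either way, since $N2/N1=N2a\oplus N2b$, the operator $\bar T_{p}$ stabilizes the subspace $N2/N1$ of \modd$/N1$. Because $N1\subset N2$, the $q$-preimage of $N2/N1$ is exactly $N2$; hence for $f\in N2$ we get $q(T_{p}f)=\bar T_{p}(qf)\in N2/N1$, so $T_{p}f\in N2$.

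For $W$: the key point, which I would verify by a brief look at coefficients, is that the prime-to-$5$ projection $pr$ commutes with $T_{p}$ whenever $p\neq 5$. Writing $f=\sum c_{n}x^{n}$, one finds that $pr(T_{p}f)$ and $T_{p}(pr\,f)$ both equal
$$\sum_{(n,5)=1}\bigl(c_{pn}+[\,p\mid n\,]\,c_{n/p}\bigr)x^{n},$$
the only facts used being $(pn,5)=1\iff (n,5)=1$ and, when $p\mid n$, $(n/p,5)=1\iff (n,5)=1$, each valid precisely because $5\nmid p$. Granting this, Theorem \ref{theorem2.7} identifies $W$ with $pr(N2)$ (recall $pr(N1)=0$ and $pr$ maps $N2/N1$ bijectively onto $W$), so, using $T_{p}(N2)\subset N2$ from the previous paragraph,
$$T_{p}(W)=T_{p}\bigl(pr(N2)\bigr)=pr\bigl(T_{p}(N2)\bigr)\subset pr(N2)=W.$$

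I do not expect a real obstacle: the substantive work has already been done in Theorems \ref{theorem2.7} and \ref{theorem2.17} and in Corollary \ref{corollary2.18}. The only genuinely new ingredient is the commutation $pr\circ T_{p}=T_{p}\circ pr$ for $p\neq 5$, and that is immediate from the formula for $T_{p}$ together with the observation that multiplication by $p$ does not affect divisibility by $5$. If anything needs care it is simply the bookkeeping in passing between \modd, the quotient \modd$/N1$, and $W$, and the point that $N1\subset N2$ so that $N2$ is the full preimage of $N2/N1$.
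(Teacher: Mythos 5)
Your proposal is correct and follows essentially the same route as the paper: reduce mod $N1$, invoke Corollary \ref{corollary2.18} on the decomposition $N2/N1=N2a\oplus N2b$ to get $T_{p}(N2)\subset N2$, and then push forward by $pr$ using $W=pr(N2)$. The commutation $pr\circ T_{p}=T_{p}\circ pr$ for $p\neq 5$, which you verify explicitly, is used by the paper in the same step (and invoked again later as ``$pr$ preserves the action of $T_{p}$''), so making it explicit is a small but welcome addition rather than a departure.
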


\begin{proof}
If $h$ is in $N2$, the image of $h$ in $N2/N1$ is the sum of an element of $N2a$ and an element of $N2b$. By Corollary \ref{corollary2.18}, the same holds for the image of $T_{p}(h)$ in $\modd/N1$. So this image is in $N2/N1$ and $h$ is in $N2$. Suppose $f$ is in $W$. Then $f=pr(h)$ with $h$ in $N2$. Then $T_{p}(h)$ is in $N2$, and $T_{p}(f)=pr(T_{p}(h))$ is in $W$.
\qed
\end{proof}

\begin{corollary}
\label{corollary2.20}
Suppose $p\ne 2$ or $5$, and $(n,10)=1$. Then $T_{p}(D_{n})$ is a sum of distinct $D_{k}$. In such a decomposition each $k$ is either $pn$ or $9pn\bmod{40}$, and in particular $\chi(k)=\chi(p)\chi(n)$.
\end{corollary}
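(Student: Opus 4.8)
The plan is to deduce Corollary \ref{corollary2.20} from the structural results already in hand: Theorem \ref{theorem2.19} (that $T_p$ stabilizes $W$), Lemma \ref{lemma2.10} (that the exponents occurring in $D_k$ are $\equiv k$ or $9k \bmod{40}$), and Corollary \ref{corollary2.18} together with the observation in the paragraph before Definition \ref{def2.8} that the formal operator $T_p$ sends $\sum c_n x^n$ to $\sum c_{pn} x^n + \sum c_n x^{pn}$. First I would note that since $T_p$ stabilizes $W$ and the $D_k$ (for $(k,10)=1$) are a $Z/2$-basis of $W$, the element $T_p(D_n)$ is a (necessarily finite, by degree considerations on each graded piece) $Z/2$-linear combination of $D_k$'s; over $Z/2$ a linear combination of basis elements is just a sum of distinct basis elements, so $T_p(D_n)$ is a sum of distinct $D_k$.

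Next I would pin down which $k$ can occur. The key point is a congruence bookkeeping argument modulo $40$. By Lemma \ref{lemma2.10}, every exponent $m$ appearing in $D_n$ satisfies $m \equiv n$ or $9n \bmod{40}$. Applying the formal $T_p$: an exponent of $T_p(D_n)$ is either of the form $m$ where $pm$ is an exponent of $D_n$, or of the form $pm$ where $m$ is an exponent of $D_n$. In the first case $pm \equiv n$ or $9n$, so $m \equiv p^{-1}n$ or $9p^{-1}n \bmod{40}$ (note $p$ is invertible mod $40$ since $p \ne 2,5$); in the second case $pm \equiv pn$ or $9pn \bmod{40}$. Using that $9^2 = 81 \equiv 1 \bmod{40}$, the four residues $p^{-1}n, 9p^{-1}n, pn, 9pn$ mod $40$ — since $p^{-1} \equiv p \cdot p^{-2}$ and one checks $p^{-2} \equiv 1$ or $9 \bmod{40}$ for every $p$ coprime to $10$ (the squares mod $40$ among units are just $1$ and $9$) — collapse to exactly $\{pn, 9pn\} \bmod{40}$. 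Hence every exponent appearing in $T_p(D_n)$ is $\equiv pn$ or $9pn \bmod{40}$. Now if $D_k$ occurs in the decomposition of $T_p(D_n)$, then $D_k$'s leading exponent is $k$, and this exponent must appear in $T_p(D_n)$ (no cancellation among distinct $D_k$'s at the bottom of each $D_k$), so $k \equiv pn$ or $9pn \bmod{40}$. Finally, since $\chi$ is a mod-$20$ character and $9 \equiv -11 \equiv 9 \bmod{20}$ with $\chi(9)=1$, both $pn$ and $9pn$ have $\chi$-value $\chi(p)\chi(n)$, giving $\chi(k) = \chi(p)\chi(n)$ — consistent with, and in fact re-deriving, Corollary \ref{corollary2.18}.

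The main obstacle I anticipate is the step asserting that no cancellation can hide a $D_k$, i.e.\ justifying that if $T_p(D_n) = \sum D_{k}$ over a set of indices and some $k_0$ is in that set, then the exponent $k_0$ genuinely appears in the power series $T_p(D_n)$. This is clean because $D_k = x^k + (\text{higher order})$, so among the $D_k$ occurring, the one of smallest index contributes its leading term uniquely; peeling off smallest indices one at a time (an induction on the index set) shows each $k$ in the set is an exponent of $T_p(D_n)$. Alternatively, and perhaps more cleanly, one argues directly on a graded piece: restrict attention to $D_k$ with $k$ in a fixed residue class mod $40$ and bounded degree, where $T_p$ acts and the $D_k$ remain a basis; the transition matrix being over $Z/2$, each basis vector appearing in the image does so with coefficient $1$, and its leading exponent $k$ is forced to be an exponent of the image. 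Once this is granted, the congruence computation mod $40$ is entirely routine, and the $\chi$-statement is immediate from multiplicativity of $\chi$ and $\chi(9)=1$.

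\begin{proof}
By Theorem \ref{theorem2.19}, $T_{p}(D_{n})$ lies in $W$, hence is a $Z/2$-linear combination of the $D_{k}$ with $(k,10)=1$; as we work over $Z/2$, it is a sum of distinct such $D_{k}$. Suppose $D_{k}$ occurs in this sum. Since each $D_{k}=x^{k}+\cdots$, looking at the smallest exponent among the $D_{k}$ that occur and then removing it and inducting, we see that $x^{k}$ actually appears in the power series $T_{p}(D_{n})$ for every $k$ in the sum. Now $T_{p}(D_{n})=\sum c_{m}x^{m/p}+\sum c_{m}x^{pm}$ where $D_{n}=\sum c_{m}x^{m}$ (the first sum over $m$ with $p\mid m$). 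By Lemma \ref{lemma2.10}, each exponent $m$ with $c_{m}\ne 0$ satisfies $m\equiv n$ or $9n\bmod{40}$. In the second sum the exponents are $pm\equiv pn$ or $9pn\bmod{40}$. In the first sum the exponents are $m/p$, where $pm\equiv n$ or $9n$, i.e.\ $m/p\equiv p^{-2}n$ or $9p^{-2}n\bmod{40}$ (here $p$ is a unit mod $40$ as $p\ne 2,5$). The units mod $40$ have squares lying in $\{1,9\}$, and $9^{2}\equiv 1\bmod{40}$, so $p^{-2}\equiv 1$ or $9$, and in either case $\{p^{-2}n,9p^{-2}n\}=\{n\cdot 1, n\cdot 9\}\cdot p \cdot p^{-1}$ rearranges to $\{pn,9pn\}\bmod{40}$ once we also multiply through by the unit $p^{2}$; more directly, $p^{-2}n\cdot p^{2}=n$ and $9p^{-2}n\cdot p^{2}=9n$, while $pn,9pn$ are $n,9n$ times $p$, and since $p^{2}\equiv 1$ or $9$, multiplying $n$ or $9n$ by $p$ lands in $\{pn,9pn\}$ as well. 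Hence every exponent of $T_{p}(D_{n})$ is $\equiv pn$ or $9pn\bmod{40}$. In particular $k\equiv pn$ or $9pn\bmod{40}$. Finally, since $\chi$ has modulus $20$ and $\chi(9)=1$, both $pn$ and $9pn$ have $\chi$-value $\chi(p)\chi(n)$, so $\chi(k)=\chi(p)\chi(n)$.
\qed
\end{proof}
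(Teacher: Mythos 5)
Your overall strategy --- $T_p(D_n)\in W$ by Theorem \ref{theorem2.19}, hence a sum of distinct $D_k$, followed by a mod-$40$ exponent computation using Lemma \ref{lemma2.10} and the fact that squares of units mod $40$ lie in $\{1,9\}$ --- is exactly the paper's, and those parts are correct. The gap is in the step where you pass from ``every exponent of $T_p(D_n)$ is $\equiv pn$ or $9pn\bmod{40}$'' to ``every index $k$ in the decomposition satisfies this congruence.'' Your claim that $x^k$ genuinely appears in the power series $T_p(D_n)$ for every $D_k$ occurring in the sum is false, and the peeling induction does not establish it: after removing the smallest $D_{k_0}$ you learn that $x^{k_1}$ appears in $T_p(D_n)-D_{k_0}$, not in $T_p(D_n)$ itself, and $D_{k_0}$ may well contain $x^{k_1}$. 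The paper's own data furnishes a counterexample: $T_3(D_{63})=D_{61}+D_{29}+D_{21}$ (Theorem \ref{theorem2.26}), while $D_{21}=G^4D=x^{21}+x^{29}+\cdots$ and $D_{29}=x^{29}+\cdots$, so the coefficient of $x^{29}$ in $T_3(D_{63})$ is $0$ even though $D_{29}$ occurs in the decomposition. (This same cancellation is precisely why the proof of Theorem \ref{theorem2.26} has to inspect two coefficients, $x^{21}$ and $x^{29}$, to distinguish $D_{21}$ from $D_{21}+D_{29}$.) Your proposed ``graded piece'' alternative rests on the same false assertion.

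The correct finish --- and what the paper does --- is to argue with residue classes rather than individual leading exponents. Split the occurring $D_k$ into those with $k\equiv pn$ or $9pn\bmod{40}$ and the rest; if $k$ lies outside these two classes then so does $9k$ (since $81\equiv 1\bmod{40}$), so by Lemma \ref{lemma2.10} the sum of the ``bad'' $D_k$ is supported entirely on exponents outside the classes $pn,9pn$, while it also equals $T_p(D_n)$ minus the ``good'' part and hence is supported inside those classes. It is therefore $0$, and linear independence of the $D_k$ forces the bad set to be empty. (Your induction could also be patched: if $x^{k_1}$ is cancelled, it is cancelled by a term of some earlier $D_{k_j}$ already known to be good, whence $k_1\equiv k_j$ or $9k_j\bmod{40}$ and $k_1$ is still in the right classes --- but as written the intermediate claim you rely on is wrong.)
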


\begin{proof}
Since $D_{n}$ is in $W$, so is $T_{p}(D_{n})$, giving the first result. Also the exponents appearing in $T_{p}(D_{n})$ are all congruent to $pn$ or $9pn\bmod{40}$ (by Lemma \ref{lemma2.10} and the definition of $T_{p}$) while those appearing in a $D_{k}$ are $k$ or $9k\bmod{40}$. It follows easily that those $D_{k}$ appearing in the sum for which $k$ is neither congruent to $pn$ nor to $9pn\bmod{40}$ sum to $0$. So there are no such $k$.
\qed
\end{proof}

\begin{remark*}{Remark}
Since $N2/N1=N2a\oplus N2b$, $pr\hspace{-.4em}:\hspace{-.3em}N2/N1 \rightarrow Z/2[[x]]$ is 1--1 with image $W_{a}\oplus W_{b}=W$. So $pr$ gives an identification of the subquotient $N2/N1$ of $\modd$ with $W$. Now the $T_{p}$, $p\ne 2$ or $5$, stabilize $N2/N1$ and $W$, and the identification preserves the action of $T_{p}$. Corollary \ref{corollary2.20} and this identification are the quoted results (1) and (2) at the beginning of section \ref{section1}.
\end{remark*}

We show next that when $p\equiv 3$ or $7\mod{10}$, each $k$ in Corollary \ref{corollary2.20} is $<n$. (This is also true when $p\equiv 1$ or $9$, but this will only be proved in the final section.)

\begin{definition}
\label{def2.21}
$v_{2}=r^{2}+r$, $v_{4}=r^{4}+r^{3}$, $v_{6}=r^{6}+r^{5}$, $v_{10}=r^{10}+r^{9}+r^{8}+r^{7}$, $v_{12}=r^{12}+r^{11}+r^{10}+r^{9}$.
\end{definition}

Note that $v_{2}=F+G$, $v_{4}=(F+G)^{3}+G$, $v_{6}=G$, $v_{10}=(F+G)^{2}G$ and $v_{12}=(F+G)^{4}G+(F+G)G^{2}$. So $v_{2}$, $v_{4}$, $v_{6}$, $v_{10}$ and $v_{12}$ generate the same $Z/2[G^{2}]$-submodule of $\modd$ as do $G$, $F$, $F^{2}G$, $F^{3}$ and $F^{4}G$; that is to say they generate $N2$. Since $G^{2}=r^{12}+r^{10}$, $G^{2s}v_{j}$ is an element of $N2$ whose degree in $r$ is $12s+j$.

\begin{lemma}
\label{lemma2.22}
$T_{p}(D_{10m+3})$ and $T_{p}(D_{10m+7})$ are sums of $D_{k}$ with $k\le 10m+7$. 
\end{lemma}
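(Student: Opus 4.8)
The plan is to reduce the claim about $T_p(D_{10m+3})$ and $T_p(D_{10m+7})$ to a statement about the Hecke action on the level-5 modular forms $J_k \in N2/N1$, where degree in $r$ gives us a usable filtration. Recall from the discussion after Lemma~\ref{lemma2.4} that $pr$ identifies $N2/N1$ with $W$ as Hecke modules, taking $J_k$ to $D_k$. So it suffices to show that $T_p(J_{10m+3})$ and $T_p(J_{10m+7})$, as elements of $N2/N1$, are sums of $J_k$ with $k \le 10m+7$. I would work instead with the basis $\{v_2, v_4, v_6, v_{10}, v_{12}\}$ and its $Z/2[G^2]$-multiples from Definition~\ref{def2.21}: the key point recorded there is that $G^{2s}v_j$ has degree exactly $12s+j$ in $r$. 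Since $M = Z/2[r]$ and the $T_p$ stabilize $M$ (Theorem~\ref{theorem1.14}), the operator $T_p$ for $p \equiv 3$ or $7 \bmod 10$ (so $p \ge 3$ in the relevant small cases, but more importantly $p \equiv 3, 7 \bmod{10}$ means $p$ is... ) — here the essential input is that $T_p$ acting on $Z/2[r]$ does not raise the degree in $r$ by more than a factor related to $p$; more precisely, for the formal Hecke operator one needs a bound of the shape $\deg_r T_p(f) \le$ (something controlled).

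The cleaner route, which I would actually pursue, is to bound degrees directly in the variable $x$. By Lemma~\ref{lemma2.10}, the exponents in $D_k$ are $\equiv k$ or $9k \bmod{40}$, and the smallest exponent in $D_k$ is exactly $k$ (since $D_k = x^k + \cdots$). From the formula $T_p(\sum c_n x^n) = \sum c_{pn}x^n + \sum c_n x^{pn}$, the smallest exponent that can appear in $T_p(D_{10m+3})$ is at least... well, the $\sum c_{pn}x^n$ term can produce exponents as small as $\lceil (10m+3)/p \rceil$, which is the source of the ``lowering.'' So from Corollary~\ref{corollary2.20}, $T_p(D_n)$ is a sum of distinct $D_k$ with each $k \equiv pn$ or $9pn \bmod{40}$; combined with the fact that the smallest exponent occurring is $\ge \lceil n/p\rceil$ — no wait, that only bounds $k$ from below. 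I need an \emph{upper} bound on the $k$'s appearing, i.e. I must show no large $D_k$ sneaks in.

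So here is the real plan. The upper bound must come from the modular-forms side: $J_{10m+3}$ and $J_{10m+7}$ lie in $N2$, which by Definition~\ref{def1.16} and Theorem~\ref{theorem1.18} has a $Z/2[G^2]$-basis, and the point is to pin down which $Z/2[G^2]$-multiples of $v_2, v_4, v_6, v_{10}, v_{12}$ are needed to express $J_{10m+3}$ and $J_{10m+7}$ — namely those of $r$-degree $\le 12(\text{something})$. Precisely: $J_{10m+3} = G^{2m}J_3 = G^{2m}(F^8/G) = G^{2m-1}F^8$, and one computes its degree in $r$ using $\deg_r F = 6$, $\deg_r G = 6$, giving $\deg_r J_{10m+3} = 6(2m-1) + 48 = 12m + 42$, and similarly $\deg_r J_{10m+7} = \deg_r(G^{2m}F^2G) = 12m+18$. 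The classical Hecke operator $T_p$ on weight-$k$ forms of level $\Gamma_0(5)$ sends the $\bbc[[x]]$-span of these expansions to itself, and reducing mod 2 (Theorem~\ref{theorem1.14}) $T_p$ preserves the span of $r^0, \ldots, r^{2m'}$ in each weight $4m'$; choosing the weight large enough that $J_{10m+3}$ lives there, we get $\deg_r T_p(J_{10m+3}) \le \deg_r J_{10m+3}$ — \emph{this is the crucial inequality}: $T_p$ does not increase degree in $r$. Hence $T_p(J_{10m+3})$, written in the $\{G^{2s}v_j\}$ basis, involves only terms of $r$-degree $\le 12m+42$. The final step is bookkeeping: translate ``$r$-degree $\le 12m+42$'' back into ``$J_k$ with $k \le 10m+7$'' by matching up degrees — $J_k$ for $k = 10\ell + j$ ($j \in \{1,3,7,9\}$) has $r$-degree $12\ell + \deg_r J_j$, and one checks the arithmetic so that degree $\le 12m+42$ together with the congruence constraint $\chi(k) = \chi(p)\chi(n)$ from Corollary~\ref{corollary2.20} and $k \equiv pn$ or $9pn \bmod 40$ forces $k \le 10m+7$ for both $n = 10m+3$ and $n = 10m+7$.

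The main obstacle is making the degree comparison airtight, because the dictionary between $\deg_r$ of $J_k$ and the index $k$ is not monotone across the five basis elements $v_2, v_4, v_6, v_{10}, v_{12}$ — e.g. $J_3 = F^8/G$ has much larger $r$-degree than $J_7 = F^2G$ despite $3 < 7$. So a naive ``degree $\le X$ implies index $\le Y$'' fails and one must use the extra congruence information from Corollary~\ref{corollary2.20} (that $k \equiv pn, 9pn \bmod 40$) to exclude the offending large-$k$, small-degree-ish cases, or equivalently track separately each residue class $j \in \{1,3,7,9,11,13,17,19\} \bmod 20$. Handling the two cases $p \equiv 3 \bmod 10$ and $p \equiv 7 \bmod 10$, which send the residue class of $n$ to different target classes, is where the computation gets fiddly but remains routine once the degree bound $\deg_r T_p(f) \le \deg_r f$ is in hand. \qed
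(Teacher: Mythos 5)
Your overall strategy is the paper's: transfer the problem to the $J_k$ side via $pr$, use Theorem \ref{theorem1.14} to see that $T_p$ does not raise the degree in $r$, and decompose the result in the basis $\{G^{2s}v_i\}$ of $N2$, whose members have pairwise distinct $r$-degrees $12s+i$. You also correctly diagnose the difficulty: the dictionary between $\deg_r J_k$ and the index $k$ is not monotone. But there is a genuine gap in how you propose to resolve it. Your degree bound is the \emph{raw} degree $\deg_r J_{10m+3}=12m+42$, and from $\deg_r h\le 12m+42$ the $\{G^{2s}v_i\}$ decomposition allows terms such as $G^{2(m+1)}v_{10}$ (degree $12m+22$), which contributes $J_{10m+17}$, or $G^{2(m+3)}v_{4}$, which contributes $J_{10m+37}$ --- indices well beyond $10m+7$. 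The congruence constraint of Corollary \ref{corollary2.20} only confines $k$ to two residue classes mod $40$, and for suitable $p$ and $m$ those classes do contain such offending indices; so the ``fiddly but routine'' bookkeeping you defer to cannot actually exclude them. The inequality you need is $12m+10$, not $12m+42$.

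The missing idea is to reduce modulo $N1$ \emph{before} taking degrees. Since $pr(N1)=0$ and $N1$ is $T_p$-stable, you may replace $J_{10m+3}$ and $J_{10m+7}$ by any representatives mod $N1$; the identities $J_7=F^2G\equiv v_{10}$ and $J_3+J_7\equiv (F+G)^3=v_4\pmod{N1}$ (so $J_3\equiv v_4+v_{10}$, of degree $10$ rather than $42$) give representatives $G^{2m}(v_4+v_{10})$ and $G^{2m}v_{10}$ of degree $12m+10$. Theorem \ref{theorem1.14} then bounds $\deg_r$ of (a representative of) $T_p(J_{10m+3})$ by $12m+10$, which forces $s\le m$ in every $G^{2s}v_i$ appearing (and $s<m$ when $i=12$), and the translation $v_2\equiv J_1$, $v_4\equiv J_3+J_7$, $v_6\equiv 0$, $v_{10}\equiv J_7$, $v_{12}\equiv J_9+J_{11}$ then yields only $J_k$ with $k\le 10m+7$ --- with no appeal to the congruences of Corollary \ref{corollary2.20} at all.
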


\begin{proof}
$J_{7}=F^{2}G\equiv v_{10}\bmod{N1}$. And $J_{3}=F^{8}\!/G=(F+G)^{8}\!/G+G^{7}=F(F+G)^{2}+G^{7}$. So mod $N1$, $J_{3}+J_{7}\equiv (F+G)^{3}\equiv v_{4}$. It follows that $J_{10m+3}=G^{2m}J_{3}$ and $J_{10m+7}=G^{2m}J_{7}$ are each congruent mod $N1$ to polynomials in $r$ of degree $\le 12m+10$. By Theorem \ref{theorem1.14} the same is true of $T_{p}(J_{10m+3})$ and $T_{p}(J_{10m+7})$. (Note also that $J_{1}=F\equiv v_{2}\bmod{N1}$ while $J_{9}+J_{11}=F^{4}G+FG^{2}\equiv v_{12}$.) Now take an $h$ of degree $\le 12m+10$ in $r$ which is congruent mod $N1$ to $T_{p}(J_{10m+3})$ (or to $T_{p}(J_{10m+7})$). Since $J_{10m+3}$ and $J_{10m+7}$ are in $N2$, so is $h$. Write $h$ as a sum of distinct $G^{2s}v_{i}$ with each $i$ in $\{2,4,6,10,12\}$. The degree in $r$ of $G^{2s}v_{i}$ is $12s+i$. These degrees are distinct, and since the degree of $h$ is $\le 12m+10$, the $s$ appearing in those $G^{2s}v_{i}$ with $i=12$ are all $<m$, while the remaining $s$ are all $\le m$. Since $v_{2}$, $v_{4}$, $v_{6}$, $v_{10}$ and $v_{12}$ are congruent mod $N1$ to $J_{1}$, $J_{3} +J_{7}$, $0$, $J_{7}$ and $J_{9}+J_{11}$, and each of $10m+1$, $10m+7$, $10m+7$, $10(m-1)+11$ is $\le 10m+7$, each $G^{2s}v_{i}$ appearing in the sum for $h$ is, mod $N1$, a sum of $J_{k}$ with $k\le 10m+7$. Applying $pr$ to the identity $h=\sum G^{2s}v_{i}$, noting that $pr$ preserves the action of $T_{p}$ and that $pr(J_{k})$ is either $D_{k}$ or $0$, we get the result.
\qed
\end{proof}

\begin{lemma}
\label{lemma2.23}
$T_{p}(D_{10m+9})$ and $T_{p}(D_{10m+11})$ are sums of $D_{k}$ with $k\le 10m+11$. 
\end{lemma}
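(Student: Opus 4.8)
The plan is to imitate the proof of Lemma~\ref{lemma2.22}, working in $\modd/N1$. Recall that the $G^{2s}v_i$ with $i\in\{2,4,6,10,12\}$ and $s\ge 0$ are a $Z/2$-basis of $N2$, and that $G^{2s}v_i$ has degree $12s+i$ in $r$, these degrees being pairwise distinct. So the idea is to bound the degree in $r$ of $T_p(J_{10m+9})$ and $T_p(J_{10m+11})$ modulo $N1$, read off which $J_k$ can occur, and then transfer the conclusion to the $D_k$ by applying $pr$.

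First I would locate $J_{10m+9}$ and $J_{10m+11}$ modulo $N1$. From the proof of Lemma~\ref{lemma2.22} we have $J_1=F\equiv v_2$ and $J_9+J_{11}=F^4G+FG^2\equiv v_{12}$ mod $N1$; since moreover $J_{11}=G^2J_1$, this gives $J_{10m+11}=G^{2m+2}F\equiv G^{2m+2}v_2$ and $J_{10m+9}=G^{2m}J_9\equiv G^{2m}v_{12}+G^{2m+2}v_2$ mod $N1$. Both right-hand members lie in $N2$ and have degree at most $12m+14$ in $r$. By Theorem~\ref{theorem1.14} the span of the $r^i$ with $0\le i\le 12m+14$ is $T_p$-stable; by the Remark following Theorem~\ref{theorem1.18}, $T_p$ stabilizes $N1$; and by Theorem~\ref{theorem2.19}, $T_p$ stabilizes $N2$. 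Hence each of $T_p(J_{10m+9})$ and $T_p(J_{10m+11})$ is congruent mod $N1$ to an element $h\in N2$ whose degree in $r$ is at most $12m+14$.

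Next I would write $h$ as a sum of distinct $G^{2s}v_i$. The inequality $12s+i\le 12m+14$ forces $s\le m+1$ when $i=2$ and $s\le m$ when $i\in\{4,6,10,12\}$. Using $G^{2s}J_k=J_{10s+k}$ together with the congruences $v_2\equiv J_1$, $v_4\equiv J_3+J_7$, $v_6\equiv 0$, $v_{10}\equiv J_7$, $v_{12}\equiv J_9+J_{11}$ mod $N1$, one gets $G^{2s}v_2\equiv J_{10s+1}$, $G^{2s}v_4\equiv J_{10s+3}+J_{10s+7}$, $G^{2s}v_{10}\equiv J_{10s+7}$ and $G^{2s}v_{12}\equiv J_{10s+9}+J_{10s+11}$. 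With the bounds on $s$, every $J_k$ occurring in $h$ has $k\le 10m+11$, the bound being attained e.g.\ by $G^{2(m+1)}v_2\equiv J_{10m+11}$. Thus $T_p(J_{10m+9})$ and $T_p(J_{10m+11})$ are, mod $N1$, sums of $J_k$ with $k\le 10m+11$. Applying $pr$ — which commutes with $T_p$, annihilates $N1$, sends $J_{10m+9}\mapsto D_{10m+9}$ and $J_{10m+11}\mapsto D_{10m+11}$, and sends each $J_k$ to $D_k$ or to $0$ — then yields the lemma.

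The step needing care is the degree bookkeeping. In the $J_3,J_7$ case of Lemma~\ref{lemma2.22} the relevant elements were congruent mod $N1$ to polynomials of degree $\le 12m+10$ in $r$ and every $J_k$ occurring stayed at $k\le 10m+7$; here $J_9$ and $J_{11}$ unavoidably involve the degree-$14$ term $G^2v_2$ (adding elements of $N1$ to $G^2v_2$ does not lower its degree in $r$ below $11$), so the bound rises to $12m+14$ and $s=m+1$ becomes admissible in the $v_2$-component of $h$. One must then check that this sole extra value contributes exactly $J_{10m+11}$ and nothing of larger index, so that the bound $k\le 10m+11$ asserted by the lemma is attained but not exceeded.
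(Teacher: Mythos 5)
Your proposal is correct and follows the paper's proof essentially verbatim: you bound the $r$-degree of $J_{10m+9}$ and $J_{10m+11}$ mod $N1$ by $12m+14$ using $J_{11}\equiv G^{2}v_{2}$ and $J_{9}+J_{11}\equiv v_{12}$, transfer the bound to $T_{p}$ via Theorem \ref{theorem1.14}, decompose $h$ into distinct $G^{2s}v_{i}$ to get $s\le m+1$ for $i=2$ and $s\le m$ otherwise, and push down to the $D_{k}$ via $pr$. The only difference is that you spell out the bookkeeping the paper delegates to ``arguing as in the proof of Lemma \ref{lemma2.22},'' and you do it correctly.
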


\begin{proof}
$J_{11}=G^{2}J_{1}\equiv G^{2}v_{2}\bmod{N1}$, while $J_{9}+J_{11}\equiv v_{12}$. It follows that $J_{10m+9}$ and $J_{10m+11}$ are congruent mod $N1$ to polynomials in $r$ of degree $\le 12m+14$. By Theorem \ref{theorem1.14}, the same is true of $T_{p}(J_{10m+9})$ and $T_{p}(J_{10m+11})$. Now take an $h$ of degree $\le 12m+14$ in $r$ which mod $N1$ is $T_{p}(J_{10m+9})$ (or $T_{p}(J_{10m+11})$). $h$ is in $N2$ and we write it as a sum of distinct $G^{2s}v_{i}$, $i$ in $\{2,4,6,10,12\}$. Arguing as in the proof of Lemma \ref{lemma2.22} we find that the $s$ appearing in the $G^{2s}v_{i}$ with $i=2$ are $\le m+1$, while the remaining $s$ are $\le m$, and we continue as in the proof of Lemma \ref{lemma2.22}.
\qed
\end{proof}

\begin{theorem}
\label{theorem2.24}
Suppose $p\equiv 3$ or $7\mod{10}$. When we write $T_{p}(D_{n})$ as a sum of distinct $D_{k}$, each $k<n$.
\end{theorem}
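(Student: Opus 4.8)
The plan is to reduce Theorem \ref{theorem2.24} to Lemmas \ref{lemma2.22} and \ref{lemma2.23} by an induction on $n$, exploiting that multiplication by $G^{2}$ shifts the index of a $D_{k}$ by $10$. Write $n = 10m + j$ with $j \in \{1,3,7,9\}$ if $\chi(n)=1$ and $j \in \{11,13,17,19\}$ if $\chi(n)=-1$; in all cases $(n,10)=1$. By Corollary \ref{corollary2.20}, since $p \equiv 3$ or $7 \bmod 10$ we have $\chi(p) = -1$, so the indices $k$ occurring in $T_p(D_n)$ all satisfy $\chi(k) = -\chi(n)$; in particular $k \ne n$, so it suffices to rule out $k > n$.

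First I would dispose of the base cases $n \in \{3, 7, 9, 11\}$ (i.e. $m = 0$, together with $n=11$ which is $10\cdot 1 + 1$ — handled via the shift below, or directly). For $n = 3$ and $n = 7$, Lemma \ref{lemma2.22} with $m = 0$ gives that $T_p(D_3)$ and $T_p(D_7)$ are sums of $D_k$ with $k \le 7$; combined with $\chi(k) = -\chi(n) = -1$, the only admissible indices below $8$ with $\chi = -1$ are none for $D_3$ except... here I must be slightly careful: $k\le 7$ and $\chi(k)=-1$ forces $k$ to be one of $11,13,17,19$ shifted down, impossible, UNLESS $T_p(D_3)$ or $T_p(D_7)$ is zero. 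So in fact $T_p(D_3) = T_p(D_7) = 0$, trivially satisfying ``each $k < n$.'' Similarly Lemma \ref{lemma2.23} with $m=0$ handles $n = 9$ and $n = 11$: $T_p(D_9), T_p(D_{11})$ are sums of $D_k$ with $k \le 11$ and $\chi(k) = -\chi(n)$; for $n=9$ this forces $k = 11$, but $11 > 9$, so I still need an argument — this is where the real content lies, and I expect it to be the main obstacle.

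To handle the remaining indices $k$ that Lemmas \ref{lemma2.22}, \ref{lemma2.23} leave in play (namely $k = n+2$ or $k=n+4$ type slippage, e.g. $D_{10m+7}$ appearing in $T_p(D_{10m+3})$, or $D_{10m+11}$ in $T_p(D_{10m+9})$), I would sharpen the degree bookkeeping. In the proof of Lemma \ref{lemma2.22}, $T_p(J_{10m+3})$ is represented by an $h \in N2$ of degree $\le 12m+10$ in $r$, written as a sum of distinct $G^{2s}v_i$ with $\deg_r(G^{2s}v_i) = 12s + i$. The key extra observation is that the Hecke operator $T_p$ with $p \equiv 3,7 \bmod 10$ strictly lowers the relevant weight/degree: since $J_{10m+3} \equiv v_4 + v_{10}$-type combination has degree exactly $12m+10$ but its top $v_6 = G$ piece is absent (as $J_{10m+3} \bmod N1$ involves only $v_2, v_4, v_{10}, v_{12}$, never $v_6$), and since $T_p$ on a modular form that is ``new at no bad prime'' kills the leading term mod the appropriate filtration — more concretely, one checks that $c_{pn'} = 0$ for $n'$ the top exponent — the degree of $h$ drops to $\le 12m + 6$. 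Re-running the $G^{2s}v_i$ decomposition with this improved bound forces all indices $k$ in the final $pr$-image to satisfy $k \le 10m+3 < n$ in the cases that matter, and likewise $k \le 10m+9$ in Lemma \ref{lemma2.23}'s cases. The cleanest way to get this degree drop, and the step I expect to fight with, is to observe directly from $T_p(\sum c_n x^n) = \sum c_{pn} x^n + \sum c_n x^{pn}$ that if $f \in \modd/N1$ has all exponents $\le N$ then $T_p(f)$ has all exponents $\le \max(N/p, pN)$ — useless as stated — so instead I would track exponents $n$ with $(n,5)=1$ via $pr$: in $W$, $\deg D_k = k$, and $T_p(D_k) = \sum c_{pk} D_{k'} + (\text{higher})$, but the ``higher'' part $D_k^{(x \mapsto x^p)}$ contributes exponents $\equiv 0 \bmod p$, hence those $D_{k'}$ have $k' \equiv 0$, i.e. $5 \mid k'$ or $k' \ge pk$; matching against $\chi(k') = -\chi(k)$ and the mod-$40$ constraint of Corollary \ref{corollary2.20} eliminates the $k' \ge pk$ branch entirely when $p \ge 3$, leaving only $k'$ from the $c_{pk}$-term, which by the degree bound on the modular-forms side is $< n$. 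Finishing then is a matter of assembling these constraints with the explicit small-index check, and an induction using $D_{k+10} = G^2 D_k$ together with Lemma \ref{lemma2.12} to propagate from $m$ to $m+1$.
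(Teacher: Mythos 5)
There is a genuine error at the very first step, and it derails the rest of the argument. You assert that $p\equiv 3$ or $7\bmod{10}$ forces $\chi(p)=-1$. But $\chi$ (Definition \ref{def2.8}) is a character modulo $20$, not modulo $10$: $\chi(3)=\chi(7)=1$ while $\chi(13)=\chi(17)=-1$, so the hypothesis of Theorem \ref{theorem2.24} does not determine $\chi(p)$ at all. Consequently your constraint ``$\chi(k)=-\chi(n)$'' on the indices $k$ appearing in $T_{p}(D_{n})$ is false for $p=3$, and your conclusion $T_{p}(D_{3})=T_{p}(D_{7})=0$ is contradicted by Theorem \ref{theorem2.26}, which gives $T_{3}(D_{3})=D_{1}$. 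The constraint you actually need from Corollary \ref{corollary2.20} is the congruence $k\equiv pn$ or $9pn\bmod{40}$, read modulo $10$: since $p\equiv 3$ or $7\bmod{10}$, multiplication by $p$ (equivalently by $9p$) interchanges the residue classes $\{1,9\}$ and $\{3,7\}$ modulo $10$. That single observation, combined with the bounds of Lemmas \ref{lemma2.22} and \ref{lemma2.23}, already finishes the proof, and is exactly what the paper does: if $n=10m+3$ or $10m+7$, every $k$ is $\le 10m+7$ and $\equiv 1$ or $9\bmod{10}$, hence $\le 10m+1<n$; if $n=10m+9$ or $10m+11$, every $k$ is $\le 10m+11$ and $\equiv 3$ or $7\bmod{10}$, hence $\le 10m+7<n$; and $T_{p}(D_{1})=T_{p}(D)=0$ is handled separately.

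Because you miscomputed the congruence class of $k$, you perceive a residual ``slippage'' (e.g.\ $k=11$ surviving for $n=9$) and then try to close it by sharpening the degree bookkeeping inside the proof of Lemma \ref{lemma2.22}. That second half does not hold together: the claimed drop of the $r$-degree of $h$ to $12m+6$, and the assertion that the $x\mapsto x^{p}$ part of $T_{p}$ contributes only indices divisible by $5$ or at least $pk$, are neither justified nor usable (indices divisible by $5$ cannot occur among the $D_{k'}$ at all, and the two parts of $T_{p}$ cannot be separated after re-expanding in the $D_{k'}$). No induction on $m$ and no refinement of the lemmas is needed once the mod-$10$ consequence of Corollary \ref{corollary2.20} is used correctly; the extra machinery should be deleted.
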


\begin{proof}
Suppose $n=10m+3$ or $10m+7$. By Lemma \ref{lemma2.22}, $T_{p}(D_{n})$ is a sum of distinct $D_{k}$, $k\le 10m+7$. By Corollary \ref{corollary2.20}, each $k\equiv pn$ or $9pn\mod{10}$ and so is $1$ or $9$ mod~10. So no $k$ can be $10m+3$ or $10m+7$. If  $n=10m+9$ or $10m+11$, $T_{p}(D_{n})$ is, by Lemma \ref{lemma2.23}, a sum of $D_{k}$, $k\le 10m+11$. Then each $k\equiv pn$ or $9pn\mod{10}$ and so is $3$ or $7$ mod~10. So no $k$ is $10m+9$ or $10m+11$. Finally, $T_{p}(D_{1})=T_{p}(D)=0$.
\qed
\end{proof}

We shall write down a linear recursion satisfied by the $T_{3}(D_{n})$, $n\equiv 1,3,7,9\mod{20}$. This recursion, together with some initial condition results, proved with the help of Theorem \ref{theorem2.24} when $p=3$, allow us to relate $T_{3}(D_{n})$ to a polynomial $P_{n}$ appearing in section 5 of \cite{2}.

\begin{lemma}
\label{lemma2.25}
For $u$ in $Z/2[[x]]$, $T_{3}(G^{16}u)=G^{16}T_{3}(u)+G^{4}T_{3}(G^{4}u)$.
\end{lemma}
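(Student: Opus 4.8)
\medskip\noindent\textbf{Proof plan.}\quad
I would work with the factorization $T_{3}=U_{3}+V_{3}$ of the formal Hecke operator, where $U_{3}(\sum c_{n}x^{n})=\sum c_{3n}x^{n}$ and $V_{3}(\sum c_{n}x^{n})=\sum c_{n}x^{3n}$; recall that $V_{3}(x)=x^{3}$, that $V_{3}$ is a ring homomorphism, that $U_{3}\circ V_{3}$ is the identity, that $U_{3}(V_{3}(a)\cdot b)=a\cdot U_{3}(b)$, and that in characteristic $2$ each of $U_{3}$, $V_{3}$, $T_{3}$ commutes with squaring. A first step is to observe that $T_{3}(G^{4})=0$ and $T_{3}(G^{16})=0$: writing $G^{4}=\sum_{n\ \mathrm{odd}}x^{20n^{2}}$ and using $(3,20)=1$ one checks directly that $U_{3}(G^{4})=V_{3}(G^{4})=G^{4}(x^{3})$, and likewise $U_{3}(G^{16})=V_{3}(G^{16})=G^{16}(x^{3})$. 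Now expand $T_{3}(G^{16}u)+G^{16}T_{3}(u)+G^{4}T_{3}(G^{4}u)$ via $T_{3}=U_{3}+V_{3}$ and the multiplicativity of $V_{3}$; collecting the terms carrying a factor $V_{3}(u)$ and substituting $U_{3}(G^{4})=V_{3}(G^{4})$, $U_{3}(G^{16})=V_{3}(G^{16})$, this sum becomes
\[
\bigl(V_{3}(G^{4})^{4}+G^{16}+G^{4}V_{3}(G^{4})\bigr)\,V_{3}(u)\;+\;\bigl(U_{3}(G^{16}u)+G^{16}U_{3}(u)+G^{4}U_{3}(G^{4}u)\bigr).
\]
So the lemma reduces to two identities: (A) $V_{3}(G^{4})^{4}+G^{16}+G^{4}V_{3}(G^{4})=0$, and (B) $U_{3}(G^{16}u)+G^{16}U_{3}(u)+G^{4}U_{3}(G^{4}u)=0$ for all $u$. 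Identity (A) unwinds (expand the fourth power by Frobenius, noting $(G^{4}(x^{3}))^{4}=G^{16}(x^{3})$) to $(G^{4}+G^{4}(x^{3}))^{4}=G^{4}\cdot G^{4}(x^{3})$, which I would obtain by substituting $x\mapsto x^{20}$ in the mod~$2$ level-$3$ modular equation $(F(x)+F(x^{3}))^{4}=F(x)F(x^{3})$ — an analogue of Theorem~\ref{theorem1.12}, proved in the same way or quoted from \cite{1} — and using $F(x^{20})=G^{4}$, $F(x^{60})=G^{4}(x^{3})$.

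For (B), split $u=u^{(0)}+u^{(1)}+u^{(2)}$ according to the residue of the exponent modulo~$3$. Since $G^{4}$ and $G^{16}$ have no monomials of exponent $\equiv 1\mod{3}$, and $(G^{4})^{(0)}=V_{3}(U_{3}(G^{4}))=V_{3}(V_{3}(G^{4}))$, the relation $U_{3}(V_{3}(a)b)=aU_{3}(b)$ gives $U_{3}(G^{4}u)=V_{3}(G^{4})U_{3}(u)+U_{3}((G^{4})^{(2)}u^{(1)})$, and likewise for $G^{16}$, where one also notes $(G^{16})^{(2)}=((G^{4})^{(2)})^{4}$. Feeding these into (B) and using (A) to cancel the $U_{3}(u)$-terms, (B) collapses to the requirement that $U_{3}\bigl(((G^{4})^{(2)})^{4}v\bigr)=G^{4}\,U_{3}\bigl((G^{4})^{(2)}v\bigr)$ for every $v$ whose exponents are all $\equiv 1\mod{3}$. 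Writing $(G^{4})^{(2)}=x^{2}V_{3}(\beta)$ and $v=xV_{3}(\bar v)$ — legitimate by the exponent conditions — and applying $U_{3}\circ V_{3}=\mathrm{id}$ to each side, the requirement reads $x^{3}\beta^{4}\bar v=G^{4}x\beta\bar v$, i.e.\ $x^{2}\beta^{3}=G^{4}$; applying $V_{3}$ and recalling $V_{3}(\beta)=(G^{4})^{(2)}/x^{2}$, it is equivalent to $\bigl((G^{4})^{(2)}\bigr)^{3}=G^{4}(x^{3})$.

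The crux — and the step I expect to be the main obstacle — is this last identity, $\bigl((G^{4})^{(2)}\bigr)^{3}=G^{4}(x^{3})$, equivalently $\bigl(\sum_{n\ \mathrm{odd},\ 3\nmid n}x^{n^{2}}\bigr)^{3}=F(x^{3})$. Since cubing in characteristic $2$ is convolution against the Frobenius twist, this says that for each $M>0$ the number of pairs $(n,m)$ with $n,m>0$ odd, $3\nmid nm$, and $n^{2}+2m^{2}=M$ is odd exactly when $M$ is $3$ times an odd square. I would prove this by a finite group action on that set of pairs, in the spirit of Lemma~\ref{lemma1.5}: using that the form $x^{2}+2y^{2}$ has class number one (i.e.\ $Z[\sqrt{-2}]$ is a principal ideal domain), one finds that every orbit has size $4$ except for a single orbit of size $2$, present precisely when $M$ is $3$ times an odd square. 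The level-$3$ modular equation feeding (A) is routine; it is the binary-form count where care is needed.
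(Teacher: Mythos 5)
Your reduction is correct --- I checked the algebra: splitting $T_{3}=U_{3}+V_{3}$, the $V_{3}(u)$-terms collect into your identity (A), which is the fourth power of the modular equation $G(x^{3})^{4}+G^{4}+G(x^{3})G=0$, and the chain of manipulations with exponent classes mod $3$ correctly reduces (B) to $\bigl((G^{4})^{(2)}\bigr)^{3}=G^{4}(x^{3})$. This is a genuinely different mechanism from the paper, which avoids the mod-$3$ case analysis entirely: it writes $T_{3}(f)=\sum_{k=1}^{4}\varphi_{k}(f)$ for the four embeddings $\varphi_{1}:f\mapsto f(x^{3})$ and $\varphi_{k}:f\mapsto f(\lambda x^{1/3})$ ($\lambda^{3}=1$), notes that each $\varphi_{k}(G)$ satisfies $A^{4}+G^{4}+AG=0$ by the symmetry of the modular equation, raises this to the fourth power to get $\varphi_{k}(G^{16})=G^{16}+G^{4}\varphi_{k}(G^{4})$, multiplies by $\varphi_{k}(u)$ and sums over $k$. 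That route treats your $U_{3}$ and $V_{3}$ pieces uniformly and needs no input beyond the modular equation itself.

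The one place your plan is not yet a proof is the step you flag as the crux, and there you are making it much harder than it needs to be. As written, the quadratic-form count is only a sketch: on the set of pairs with $n,m>0$ you have not exhibited an order-$4$ group action, and the "single orbit of size $2$" claim would need the involutions spelled out (the sign action alone already gives orbits of size $4$ on the signed pairs, so the parity of the positive count is a mod-$8$ statement about the full count). But none of this is necessary: after the substitution $y=x^{20}$ your crux identity reads $\bigl(F(y)+F(y^{9})\bigr)^{3}=F(y^{3})$, and this follows in two lines from the very modular equation you already use for (A). Indeed $(F(y)+F(y^{3}))^{4}=F(y)F(y^{3})$ and, replacing $y$ by $y^{3}$, $(F(y^{3})+F(y^{9}))^{4}=F(y^{3})F(y^{9})$; adding gives $(F(y)+F(y^{9}))^{4}=F(y^{3})\bigl(F(y)+F(y^{9})\bigr)$, and cancelling the nonzero factor $F(y)+F(y^{9})$ in the domain $Z/2[[y]]$ gives the claim. (This is exactly the level-$3$ analogue of Lemma \ref{lemma2.4} and Corollary \ref{corollary2.5}, and appears in \cite{1}.) With that substitution your argument is complete and correct, if longer than the paper's.
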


\begin{proof}
Let $u$ be the $2$-variable polynomial $A^{4}+B^{4}+AB$. We have the ``level 3 modular equation for $F$,'' $U(F(x^{3}),F(x))=0$. Replacing $x$ by $x^{5}$ shows that $U(G(x^{3}),G(x))=0$. Now proceed as in the proof of Lemma 2.19 of \cite{1} (though now we have 4 imbeddings $\varphi_{i}$, the first taking $f(x)$ to $f(x^{3})$, and the others taking $f(x)$ to $f(\lambda x^{1/3})$ where $\lambda$ runs over the cube roots of $1$ in an algebraic closure of $Z/2$).
\qed
\end{proof}

\begin{theorem}
\label{theorem2.26}
The $T_{3}(D_{n})$. $n\equiv 1,3,7,9\mod{20}$ satisfy the recursion $T_{3}(D_{n+80})=G^{16}T_{3}(D_{n})+G^{4}T_{3}(D_{n+20})$. And $T_{3}$ takes 
\vspace{-1ex}
\[
D_{1},D_{3},D_{7},D_{9},D_{21},D_{23},D_{27},D_{29},D_{41},D_{43},D_{47},D_{49},D_{61},D_{63},D_{67},D_{69}
\]
\vspace{-4ex}
to:
\vspace{-4ex}
\begin{eqnarray*}
0,D_{1},0,D_{3},D_{7},D_{21},D_{9},D_{23},0,D_{41},D_{21},D_{43}+D_{27},\\
D_{47}+D_{23},D_{61}+D_{29}+D_{21},D_{49}+D_{41},D_{63}+D_{47}+D_{23}.
\end{eqnarray*}
\vspace{-4ex}
\end{theorem}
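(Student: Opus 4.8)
The plan is to split Theorem \ref{theorem2.26} into two parts: the recursion, and the sixteen explicit initial values. For the recursion, I would lift to $N2/N1$. Since $pr\colon N2/N1\to W$ is a $T_p$-equivariant bijection (Theorem \ref{theorem2.7} and the remark after Corollary \ref{corollary2.20}), it suffices to prove $T_3(J_{n+80}) = G^{16}T_3(J_n) + G^4 T_3(J_{n+20})$ in $N2/N1$ for the relevant $n$. But $J_{n+80} = G^{16}J_n$ by the definition $J_{k+10}=G^2 J_k$, and $J_{n+20} = G^4 J_n$; so the desired identity is precisely Lemma \ref{lemma2.25} applied with $u = J_n$ (or rather with $u$ a representative, noting everything respects $N1$ since multiplication by $G$ preserves $N2$ and $N1$). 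Then apply $pr$, which is $Z/2[G^2]$-linear and intertwines $T_3$, to land in $W$. This part is essentially immediate once Lemma \ref{lemma2.25} is in hand.

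The substance is the table of sixteen values. The key constraint is Corollary \ref{corollary2.20}: $T_3(D_n)$ is a sum of distinct $D_k$ with each $k\equiv 3n$ or $27n \pmod{40}$, hence with $\chi(k)=\chi(3)\chi(n)=-\chi(n)$ (since $\chi(3)=1$ actually — wait, $\chi(3)=1$, so $\chi(k)=\chi(n)$; the point is $k$ is determined mod $40$ up to two residues). Combined with Theorem \ref{theorem2.24} ($p=3\equiv 3 \bmod{10}$), each such $k$ satisfies $k<n$ and lies in one of two residue classes mod $40$. For small $n$ this already pins down $T_3(D_n)$ to a very short list of candidates. I would proceed by direct computation: for each $n$ in the list, compute the $q$-expansion of $D_n$ (using $D_1=D$, $D_3=D^8/G$, $D_7=D^2G$, $D_9=D^4G$, and $D_{k+10}=G^2D_k$, with $D = \sum_{(m,10)=1}x^{m^2}$, $G=\sum_{m\ \mathrm{odd}}x^{5m^2}$) out to enough terms, apply $T_3$ term by term, and match against the (finite, by degree/congruence constraints) possible sums of $D_k$'s. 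For instance $T_3(D_1)=T_3(D)=0$ is already in Theorem \ref{theorem2.24}; $T_3(D_3)$ must be a sum of $D_k$ with $k<3$, $\chi(k)=\chi(3)=1$, forcing $k=1$, and then one checks the leading coefficient to get $T_3(D_3)=D_1$; similarly $T_3(D_7)$ allows only $k=1$ or $k=3$ with $k<7$ and $\chi(k)=1$, etc.

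The main obstacle is bookkeeping rather than conceptual: as $n$ grows (up to $n=69$), the number of admissible $D_k$ grows and the $q$-expansions must be carried far enough that the identification is unambiguous, which in characteristic $2$ requires care since cancellation is easy. A cleaner route for the larger $n$ is to use the recursion itself together with the smaller values: e.g. $T_3(D_{81})$-type relations let one bootstrap, but within the stated range one mostly needs $T_3(D_{41}),\dots,T_3(D_{69})$ computed directly or via $T_3(G^2 D_k)$. Here I would invoke Lemma \ref{lemma2.25} in the intermediate form $T_3(G^4 u)$ is related to $T_3(u)$ — actually the recursion only jumps by $80$, so the block $D_{41},\dots,D_{49}$ and $D_{61},\dots,D_{69}$ must be done by hand (or by the analogous degree-$\le 12m+10$ argument of Lemma \ref{lemma2.22}, which shows $T_3(D_n)$ is supported on $D_k$ with $k$ bounded). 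I would therefore: (i) establish the recursion from Lemma \ref{lemma2.25} and $pr$-equivariance; (ii) for each of the sixteen $n$, list the admissible $k$ using Corollaries \ref{corollary2.20}, Theorem \ref{theorem2.24}, and Lemmas \ref{lemma2.22}–\ref{lemma2.23}; (iii) pin down the coefficients by comparing enough terms of the explicit $q$-expansions. Step (iii) is the laborious one and is where an error would most likely hide, so I would double-check each entry against the recursion where the indices overlap (e.g. consistency of the $n=41,43,47,49$ row with $G^{16}\cdot(\text{the }n=1,3,7,9\text{ row being }0)$... — note $T_3(D_{81})=G^{16}\cdot 0 + G^4 T_3(D_{21})$, so $T_3(D_{81})=G^4 T_3(D_{21}) = G^4 D_1\cdot$ hmm, this gives cross-checks on rows beyond the table and on $T_3(G^4 D_k)$ via Lemma \ref{lemma2.25} with $u=D_k/G^{12}$-type manipulations, which I would use to validate the harder entries).
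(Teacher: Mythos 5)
Your proposal is correct and follows essentially the same route as the paper: the recursion comes from Lemma \ref{lemma2.25} (the paper applies it directly with $u=D_{n}$, making your lift to $N2/N1$ an unnecessary detour, since $D_{n+80}=G^{16}D_{n}$ and $D_{n+20}=G^{4}D_{n}$ already hold in $W$), and the sixteen initial values are obtained exactly as you describe, by using Corollary \ref{corollary2.20} and Theorem \ref{theorem2.24} to cut the candidates down to a short list and then comparing a few coefficients of the explicit expansions. The paper likewise treats this as a proof sketch, carrying out only the case $T_{3}(D_{47})$ in detail.
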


\begin{proofsketch}
Taking $u=D_{n}$ in Lemma \ref{lemma2.25}, we ge the recursion. I'll calculate $T_{3}(D_{47})$ explicitly, the other 15 initial values being derived in a similar way. By Corollary \ref{corollary2.20} and Theorem \ref{theorem2.24}, $T_{3}(D_{47})$ is a $Z/2$-linear combination of $D_{21}$ and $D_{29}$. Now $D_{47}=G^{8}(D^{2}G)=(x^{40}+x^{360}+\cdots)(x^{2}+x^{18}+x^{98}+\cdots)(x^{5}+x^{45}+\cdots)$. So the coefficients of $x^{7}$, $x^{63}$ and $x^{87}$ in $D_{47}$ are $0$, $1$ and $1$. It follows that the coefficients of $x^{21}$ and $x^{29}$ in $T_{3}(D_{47})$ are each $1$. But $D_{21}=G^{4}D=x^{21}+x^{29}+\cdots$ while $D_{29}=x^{29}+\cdots$. It follows that $T_{3}(D_{47})=D_{21}$. 
\qed
\end{proofsketch}

Now let $w$ be an indeterminate over $Z/2$.

\begin{definition}
\label{def2.27}
$V^{\prime}\subset Z/2[w]$ is the space spanned by the $w^{k}$ with $k\equiv 1,3,7,9\mod{20}$.
\end{definition}

There is a $Z/2$-linear identification of $W_{a}$ with $V^{\prime}$ taking $D_{k}$ to $w^{k}$. Then $T_{3}:W_{a}\rightarrow W_{a}$ goes over to a map $V^{\prime}\rightarrow V^{\prime}$ which we'll still call $T_{3}$. Now in section 5 of \cite{2} (see Theorem 5.1, the paragraphs preceding it, and Remark 5.2) we defined certain $P_{k}$, $k\equiv 1,3,7,9\mod{20}$ in $V^{\prime}$.

\begin{theorem}
\label{theorem2.28}
$T_{3}:V^{\prime}\rightarrow V^{\prime}$ takes $w^{k}$ to the $P_{k}$ of \cite{2} section 5.
\end{theorem}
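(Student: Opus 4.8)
The plan is to compare the two sequences $T_3(D_k)$ and $P_k$, for $k\equiv 1,3,7,9\mod{20}$, by showing that both satisfy the same linear recursion and agree on enough initial terms. From Theorem \ref{theorem2.26} we know $T_3(D_{n+80})=G^{16}T_3(D_n)+G^4 T_3(D_{n+20})$; transporting this across the identification $W_a\cong V'$, $D_k\mapsto w^k$, and noting that multiplication by $G^{2}$ on $W_a$ corresponds to multiplication by $w^{20}$ on $V'$ (since $D_{k+10}=G^2 D_k$ and $D_{k+20}=G^4D_k$), the recursion becomes $T_3(w^{k+80})=w^{80}T_3(w^k)+w^{20}T_3(w^{k+20})$. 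On the other hand, the $P_k$ of \cite{2} section 5 are, by their construction there (Theorem 5.1 and Remark 5.2), defined precisely by $P_{k+80}=w^{80}P_k+w^{20}P_{k+20}$ together with specified values of $P_1,P_3,P_7,P_9,P_{21},\dots,P_{69}$. So it suffices to check that the sixteen initial values $T_3(w^1),\dots,T_3(w^{69})$ listed in Theorem \ref{theorem2.26} coincide, term by term, with the sixteen initial values $P_1,\dots,P_{69}$ defined in \cite{2}.

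First I would spell out the recursion correspondence carefully: the map $V'\to V'$ induced by $T_3$ is $Z/2$-linear, and since $T_3$ is $Z/2[G^{4}]$-linear... (it is not linear over $Z/2[G^2]$, only the $G^{16}$--$G^4$ relation of Lemma \ref{lemma2.25} holds), I must use exactly the eighty/twenty shifted recursion rather than claiming outright $Z/2[w^{20}]$-linearity. Then I would transcribe the sixteen images given in Theorem \ref{theorem2.26} under $D_k\mapsto w^k$:
\begin{align*}
&0,\ w,\ 0,\ w^3,\ w^7,\ w^{21},\ w^9,\ w^{23},\\
&0,\ w^{41},\ w^{21},\ w^{43}+w^{27},\ w^{47}+w^{23},\ w^{61}+w^{29}+w^{21},\ w^{49}+w^{41},\ w^{63}+w^{47}+w^{23},
\end{align*}
and match these against the list of $P_k$ in \cite{2}. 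Since \cite{2} is the author's own companion paper, this matching is essentially a matter of citing the relevant table there; if the normalizations differ (e.g.\ an index shift or a different convention for which residues are "$a$-type"), I would reconcile them explicitly at this point.

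The genuine content has already been done: the hard work was establishing the recursion (Lemma \ref{lemma2.25} and Theorem \ref{theorem2.26}) and the sixteen base cases (via Corollary \ref{corollary2.20} and the degree bound of Theorem \ref{theorem2.24}, as illustrated by the $T_3(D_{47})$ computation). So the main obstacle in writing this proof is purely bookkeeping: making sure the indexing of the $P_k$ in \cite{2} lines up with the indexing of the $D_k$ here, and that "the $P_k$ of section 5" are indeed characterized by the same recursion and the same initial segment. Once that dictionary is fixed, the theorem follows by a trivial induction on $k$: any $w^k$ with $k\equiv 1,3,7,9\mod{20}$ and $k\ge 81$ is handled by the recursion given that $T_3(w^{k-80})$ and $T_3(w^{k-60})$ — wait, $T_3(w^{k-80})$ and $T_3(w^{k-60})$ — are known; more precisely, writing $k=80+k'$ with $k'\equiv 1,3,7,9\mod{20}$ and $1\le k'$, we reduce $T_3(w^k)$ to $T_3(w^{k'})$ and $T_3(w^{k'+20})$, both of smaller index, and likewise $P_k$ reduces to $P_{k'}$ and $P_{k'+20}$; by the inductive hypothesis these agree, so $T_3(w^k)=P_k$. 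Since the sixteen indices $1,3,7,9,21,\dots,69$ exhaust all residues below $80$, the induction is complete.
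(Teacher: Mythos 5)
Your proposal is correct and follows exactly the paper's own argument: transport the recursion of Theorem \ref{theorem2.26} to $A_{k+80}=w^{80}A_{k}+w^{20}A_{k+20}$, observe that Theorem 5.1 of \cite{2} gives the identical recursion $P_{k+80}=w^{80}P_{k}+w^{20}P_{k+20}$, match the sixteen initial values for $k<80$, and conclude by induction. The only difference is that you spell out the bookkeeping (the $D_k\mapsto w^k$ dictionary and the exhaustion of residues below $80$) more explicitly than the paper does.
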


\begin{proof}
Let $A_{k}=T_{3}(w^{k})$. The recursion of Theorem \ref{theorem2.26} tells us that $A_{k+80}=w^{80}A_{k}+w^{20}A_{k+20}$, while Theorem 5.1 of \cite{2} tells us that $P_{k+80}=w^{80}P_{k}+w^{20}P_{k+20}$. Putting the initial values in Theorem \ref{theorem2.26} together with Theorem 5.1 of \cite{2} we see that $P_{k}=A_{k}$ whenever $k<80$, $k\equiv 1,3,7,9\mod{20}$. The recursions then show that $P_{k}=A_{k}$ whenever $k\equiv 1,3,7,9\mod{20}$.
\qed
\end{proof}

\section{Coding a basis of \bm{$W_{a}$}. The effect of \bm{$T_{3}$} on the code}
\label{section3}

Define a total ordering on $N\times N$ as follows.  $(c,d)$ ``precedes'' (or ``is earlier than'') $(a,b)$ if $c+d<a+b$ or $c+d=a+b$ and $d<b$.

Consider the space $W_{a}$ with its basis $\{D_{k}, k\equiv 1,3,7,9\mod{20}\}$. In this section we ``code the basis'' by identifying it with $N\times N$; $(c,d)^{*}$ will denote the particular $D_{k}$ corresponding to $(c,d)$. Under this identification, the total ordering on $N\times N$ given in the last paragraph goes over to a total ordering on our basis, and we use the ``precedence'' language of the last paragraph and show:

\begin{enumerate}
\item[(1)] $T_{3}(a,b)^{*}$ is a sum of $(c,d)^{*}$ with $c+d<a+b$.
\item[(2)] If $a>0$, $T_{3}(a,b)^{*}=(a-1,b)^{*}+$ a sum of earlier $D_{j}$.
\end{enumerate}

The hard work in establishing (1) and (2) has already been done in Lemma 5.5 of \cite{2} and Theorem \ref{theorem2.28} of the last section, so most of this section is a summary of results from \cite{2}.

\begin{definition}
\label{def3.1}
$g:N\rightarrow N$ is the function with $g(2n)=4g(n)$ and $g(2n+1)=g(2n)+1$.
\end{definition}

\begin{definition}
\label{def3.2}
$V\subset Z/2[t]$ is spanned by the $t^{k}$, $k$ odd. If $(a,b)$ is in $N\times N$, $[a,b]$ in $V$ is $t^{1+2g(a)+4g(b)}$.
\end{definition}

$(a,b)\rightarrow [a,b]$ sets up a 1--1 correspondence between $N\times N$ and the monomial basis of $V$. The total ordering of $N\times N$ goes over to a total ordering of the basis. Once again we use the language of ``precedence.''  We now pass from $V$ to $V^{\prime}$. 

\begin{definition}
\label{def3.3}
If $V^{\prime}$ is as in Definition \ref{def2.27}, $\varphi:V\rightarrow V^{\prime}$ is the $Z/2$-linear bijection with:

\vspace{-3ex}
\begin{eqnarray*}
&&\varphi(t^{16m+1}) = w^{40m+1},\quad \varphi(t^{16m+3}) = w^{40m+3}\\
&&\varphi(t^{16m+5}) = w^{40m+7},\quad \varphi(t^{16m+5}) = w^{40m+21}\\
&&\varphi(t^{16m+9}) = w^{40m+9},\quad \varphi(t^{16m+11}) = w^{40m+27}\\
&&\varphi(t^{16m+13}) = w^{40m+23},\quad \varphi(t^{16m+15}) = w^{40m+29}
\end{eqnarray*}
\end{definition}

\begin{definition}
\label{def3.4}
$\langle a,b\rangle$ in $V^{\prime}$ is $\varphi[a,b]$. $(a,b)^{*}$ in $W_{a}$ is the image of $\langle a,b\rangle$ under the identification of $V^{\prime}$ with $W_{a}$ taking $w^{k}$ to $D_{k}$.
\end{definition}

Note that $\varphi:V\rightarrow V^{\prime}$ identifies the monomial basis (in $t$) of $V$ with the monomial basis (in $w$) of $V^{\prime}$. So Definition \ref{def3.4} results in a coding of the basis $\{D_{k}\}$ of $W_{a}$.

\begin{theorem}
\label{theorem3.5}
\hspace{2em}\\
\vspace{-4ex}
\begin{enumerate}
\item[(1)] $T_{3}(a,b)^{*}$ is a sum of $(c,d)^{*}$ with $c+d<a+b$.
\item[(2)] If $a>0$, $T_{3}(a,b)^{*}=(a-1,b)^{*}+$ a sum of earlier $D_{j}$.
\end{enumerate}
\end{theorem}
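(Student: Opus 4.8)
The plan is to transport the entire statement from $W_a$ to the polynomial ring $Z/2[w]$, and then down to $V$ via $\varphi$, so that Theorem~\ref{theorem3.5} becomes a statement purely about the polynomials $P_k$ and the operator on $V$ that $T_3$ induces. By Theorem~\ref{theorem2.28}, $T_3$ acts on $V'$ by $w^k\mapsto P_k$, and by Definition~\ref{def3.4} the element $(a,b)^*$ corresponds to $\langle a,b\rangle=\varphi[a,b]$, where $[a,b]=t^{1+2g(a)+4g(b)}$. So what must be shown is: (1) $\varphi^{-1}T_3\varphi([a,b])$ is a sum of basis monomials $[c,d]$ with $c+d<a+b$; and (2) if $a>0$, that sum contains $[a-1,b]$ and all other terms are earlier in the ordering. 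Since $\varphi$ matches monomial bases and respects the precedence ordering on those bases (by construction — the ordering on $N\times N$ is pulled back along $(a,b)\mapsto[a,b]$ and again along $(a,b)\mapsto\langle a,b\rangle$), assertions (1) and (2) are literally the content of Lemma~5.5 of~\cite{2}, rephrased in the $[a,b]$-notation.

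Concretely, I would proceed as follows. First, recall from Theorem~\ref{theorem2.28} that under the identification $W_a\cong V'$ sending $D_k$ to $w^k$, the operator $T_3$ is exactly $w^k\mapsto P_k$, the polynomials from~\cite{2}, Section~5. Second, pull this back through the bijection $\varphi:V\to V'$ of Definition~\ref{def3.3}: this produces an operator $\tau:V\to V$ with $\tau=\varphi^{-1}T_3\varphi$, and by Definition~\ref{def3.4} we have $T_3(a,b)^*=$ (image under $V'\cong W_a$ of) $\varphi(\tau[a,b])$, so (1) and (2) for $T_3$ on $W_a$ are equivalent to the corresponding statements for $\tau$ on $V$ written in the $[a,b]$ basis. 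Third, invoke Lemma~5.5 of~\cite{2}: that lemma describes $\tau[a,b]$ (it is phrased there in terms of the $g$-coding, i.e.\ in terms of $t$-exponents $1+2g(a)+4g(b)$) and says precisely that $\tau[a,b]$ is a sum of $[c,d]$ with $c+d<a+b$, and that when $a>0$ the ``leading'' (latest) surviving term is $[a-1,b]$. Finally, translate the precedence statement: since $\varphi$ carries the monomial basis of $V$ to the monomial basis of $V'$ in an order-preserving way (both orderings are by definition the pushforward of the ordering on $N\times N$), ``earlier'' and ``$c+d<a+b$'' are preserved verbatim, and one obtains the two claimed assertions for $(a,b)^*$ in $W_a$.

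The main obstacle — and really the only nontrivial point — is that everything rests on Lemma~5.5 of~\cite{2}, whose proof is the genuine combinatorial work (analysing the recursion $P_{k+80}=w^{80}P_k+w^{20}P_{k+20}$ together with the base cases, and tracking how the $g$-coding interacts with it). In this paper that work is not redone; the task here is purely bookkeeping: one must check that the three changes of notation line up correctly, namely (a) $T_3$ versus $w^k\mapsto P_k$ (Theorem~\ref{theorem2.28}), (b) the exponent dictionary of $\varphi$ in Definition~\ref{def3.3} matching the $g$-coding used in~\cite{2}, and (c) the precedence orderings on the three bases agreeing. The risk is entirely in an off-by-one or a mismatched case in the eight-line definition of $\varphi$; once those are verified against~\cite{2}, the proof is a one-line appeal to Lemma~5.5. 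I would write the proof as exactly that appeal, after a sentence recalling Theorem~\ref{theorem2.28} and the fact that $\varphi$ identifies the two monomial bases compatibly with precedence.

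\begin{proof}
By Theorem~\ref{theorem2.28}, under the identification of $W_a$ with $V'$ sending $D_k$ to $w^k$, the operator $T_3$ becomes $w^k\mapsto P_k$, the polynomials of~\cite{2}, Section~5. The map $\varphi:V\to V'$ of Definition~\ref{def3.3} identifies the monomial basis $\{t^j:j\text{ odd}\}$ of $V$ with the monomial basis $\{w^k:k\equiv 1,3,7,9\bmod 20\}$ of $V'$, and by construction it carries the ordering of the former (the pushforward along $(a,b)\mapsto[a,b]$ of the ordering on $N\times N$) to the ordering of the latter; thus via Definition~\ref{def3.4} the element $(a,b)^*$ is the image of $[a,b]=t^{1+2g(a)+4g(b)}$, and ``earlier'' and the inequality $c+d<a+b$ transport verbatim through $\varphi$. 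Hence assertions (1) and (2) for $T_3$ on the coded basis of $W_a$ are exactly the assertions, in the $g$-coded notation, of Lemma~5.5 of~\cite{2}: that $T_3[a,b]$ is a sum of $[c,d]$ with $c+d<a+b$, and that for $a>0$ the term $[a-1,b]=(a-1,b)^*$ occurs and all other terms precede it. This gives (1) and (2).
\qed
\end{proof}
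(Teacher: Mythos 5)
Your proposal is correct and follows essentially the same route as the paper: identify $W_a$ with $V'$, use Theorem~\ref{theorem2.28} to recognize $T_3\langle a,b\rangle$ as the $P_k$ of \cite{2}, and then quote Lemma~5.5 of \cite{2} verbatim. The extra pullback through $\varphi$ to $V$ is harmless bookkeeping that the paper leaves implicit, since the coding $\langle a,b\rangle$ is defined precisely so that the precedence ordering transports.
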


\begin{proof}
We saw in the last section that $T_{3}$ stabilizes $W_{a}$. In view of the identification of $W_{a}$ with $V^{\prime}$ it suffices to show that $T_{3}\langle a,b \rangle$ is a sum of $\langle c,d\rangle$ with $c+d<a+b$, and that if $a>0$, $T_{3}\langle a,b \rangle =\langle a-1,b \rangle +$ a sum of earlier monomials. Suppose then that $\langle a,b \rangle =w^{k}$. By Theorem \ref{theorem2.28}, $T_{3}\langle a,b \rangle$ is the $P_{k}$ of \cite{2}, Theorem 5.1.  So our result is precisely Lemma 5.5 of \cite{2}.
\qed
\end{proof}

We will need one further property of our code.

\begin{theorem}
\label{theorem3.6}
If $D_{i}=(c,d)^{*}$ precedes $D_{j}=(0,b)^{*}$, then $i<j$.
\end{theorem}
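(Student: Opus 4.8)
The plan is to move the whole statement over to $V^{\prime}$ and make it an arithmetic assertion about the bijection $\varphi$ of Definition \ref{def3.3}. Write $e(a,b)=1+2g(a)+4g(b)$ for the exponent of $t$ in $[a,b]$; as noted after Definition \ref{def3.2}, $(a,b)\mapsto e(a,b)$ is a bijection of $N\times N$ onto the odd positive integers. By the construction of the code, ``$D_{i}=(c,d)^{*}$'' and ``$D_{j}=(0,b)^{*}$'' mean $w^{i}=\varphi(t^{e(c,d)})$ and $w^{j}=\varphi(t^{e(0,b)})$, so it suffices to prove: if $(c,d)$ ``precedes'' $(0,b)$ then $i<j$. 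Reading Definition \ref{def3.3}: if $e=16\mu+s$ with $s$ in $\{1,3,5,7,9,11,13,15\}$, then $\varphi(t^{e})=w^{40\mu+\sigma(s)}$, where $\sigma$ carries $1,3,5,7,9,11,13,15$ to $1,3,7,21,9,27,23,29$. So if $\varphi(t^{e})=w^{k}$ then $2k=5e+\lambda(s)$, where $\lambda(s):=2\sigma(s)-5s$ takes the values $-3,-9,-11,7,-27,-1,-19,-17$ on $s=1,3,5,7,9,11,13,15$; in particular $\lambda(s)\le 7$ for all $s$. Also $e(0,b)=1+4g(b)$, and since $g(b)\equiv b_{0}\mod{4}$ (with $b_{0}$ the last binary digit of $b$) we get $e(0,b)\equiv 1+4b_{0}\mod{16}$, so $e(0,b)$ is $1$ or $5\mod{16}$ and the corresponding value of $\lambda$ is $-3$ or $-11$.

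The first substantive step will be the estimate $e(c,d)\le e(0,b)-2$. ``Precedence'' of $(c,d)$ before $(0,b)$ forces $c+d\le b$ (so in particular $c\le b$). I would invoke two facts about $g$, both following by an easy induction from the recursion of Definition \ref{def3.1}: $g$ is strictly increasing, and $g$ is super-additive in the sense that $g(m+n)\ge g(m)+g(n)$. From $d\le b-c$ one has $g(d)\le g(b-c)$, and super-additivity gives $g(b)\ge g(c)+g(b-c)$; hence $2g(c)+4g(d)\le 2g(c)+4g(b-c)\le 4g(c)+4g(b-c)\le 4g(b)$, i.e.\ $e(c,d)\le e(0,b)$. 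Since $e$ is injective and $(c,d)\ne(0,b)$, this inequality is strict, and as both sides are odd, $e(c,d)\le e(0,b)-2$.

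To finish, set $s_{1}=e(c,d)\bmod 16$ and $s_{2}=e(0,b)\bmod 16$, so $s_{2}\in\{1,5\}$ and $\lambda(s_{2})\ge -11$. Then $2(j-i)=5\bigl(e(0,b)-e(c,d)\bigr)+\lambda(s_{2})-\lambda(s_{1})\ge 5\bigl(e(0,b)-e(c,d)\bigr)-18$, using $\lambda(s_{1})\le 7$. If $e(0,b)-e(c,d)\ge 4$ the right side is $\ge 2>0$. The only remaining possibility is $e(c,d)=e(0,b)-2$; then $s_{1}\equiv s_{2}-2\mod{16}$, so $(s_{2},s_{1})=(1,15)$ or $(5,3)$, and $2(j-i)=10+\lambda(s_{2})-\lambda(s_{1})$ equals $10+(-3)-(-17)=24$ or $10+(-11)-(-9)=8$ --- positive in either case. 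Hence $2(j-i)>0$, i.e.\ $i<j$, as wanted.

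I expect the conceptual content --- rather than any computation --- to be the obstacle. Since $\varphi$ is not monotone on exponents (already $\sigma(7)=21>9=\sigma(9)$), ``precedence'' does \emph{not} in general imply a smaller $D$-index; the theorem genuinely uses that the later term has the special form $(0,b)$. The mechanism to identify is that $e(0,b)=1+4g(b)$ is confined to residues $1,5\mod{16}$, which both keeps $\lambda(s_{2})$ from being too negative and, in the tight case $e(0,b)-e(c,d)=2$, forces $s_{1}$ into a residue where $\lambda(s_{1})$ is itself small; that is precisely what makes the boundary case come out right. The auxiliary bound $e(c,d)\le e(0,b)$ is routine once super-additivity of $g$ is in hand.
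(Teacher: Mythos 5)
Your proof is correct and follows essentially the same route as the paper's: reduce to the $t$-exponent inequality $1+2g(c)+4g(d)\le 1+4g(b)$ (which the paper quotes as Lemma 4.1 of \cite{1} and you re-derive from monotonicity and super-additivity of $g$), then exploit the fact that $e(0,b)\equiv 1$ or $5\bmod{16}$ to push the strict inequality through $\varphi$ --- your $\lambda$-bookkeeping, including the tight case $e(0,b)-e(c,d)=2$, is a repackaging of the paper's case analysis on $b$ even/odd and $m'$ versus $m$. (You also correctly read the second $\varphi(t^{16m+5})$ line of Definition \ref{def3.3} as $\varphi(t^{16m+7})=w^{40m+21}$, a typo in the source.)
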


\begin{proof}
In view of our identification of $W_{a}$ with $V^{\prime}$, it's enough to show that if $w^{i}=\langle c,d \rangle$ precedes $w^{j}=\langle 0,b\rangle$, then $i<j$.

Now there is the following corresponding result for $V$. If $t^{i}=[c,d]$ precedes $t^{j}=[0,b]$ then $i<j$. For $c+d\le b$, so by Lemma 4.1 of \cite{1}, $4g(c)+4g(d)\le 4g(b)$. Then $i=1+2g(c)+4g(d)\le 1+4g(b)=j$, and since $i\ne j$, $i<j$.

We'll deduce Theorem 3.6 from this result for $V$. Suppose first that $b$ is even, and let $m=g(b/2)$. Then $1+4g(b)=16m+1$, and $[0,b]=t^{16m+1}$. Write $[c,d]$ as $t^{16m^{\prime}+r}$ with $r$ in $\{1,3,5,7,9,11,13,15\}$. Since $\langle c,d \rangle$ precedes $\langle 0,b \rangle$, $[c,d]$ precedes $[0,b]$. By the result of the last paragraph, $16m^{\prime}+r < 16m+1$ and so $m^{\prime}<m$. Now $\langle 0,b \rangle = w^{40m+1}$ so that $j=40m+1$. Similarly, $i\le 40m^{\prime}+29\le 40m-11$, and this is $<j$. The argument when $b$ is odd is similar. Let $m=g((b-1)/2)$, so that $g(b)=4m+1$, and $[0,b]=t^{16m+5}$. Let $m^{\prime}$ and $r$ be as in the case of even $b$. Arguing as in the case of even $b$ we find that $16m^{\prime}+r<16m+5$. If $m^{\prime}<m$ we proceed as in the case of even $b$. If $m^{\prime}=m$ then $r$ must be $1$ or $3$. So $\langle c,d \rangle$ is either $w^{40m+1}$ or $w^{40m+3}$, while $\langle 0,b \rangle=\varphi(t^{16m+5})=w^{40m+7}$, and once again $i<j$.
\qed
\end{proof}

\section{Type \bm{$a$} ideals of \bm{$Z[\sqrt{-10}]$} and Gauss-classes}
\label{section4}

This section is the counterpart to section 3 of \cite{1}. Fix a power, $q$, of 2. We shall (essentially) use binary quadratic forms of discriminant $-640q^{2}$ and their associated theta-series to construct a subspace $DI(q)$ of $W_{a}$ of dimension $2q$, stable under the $T_{p}$ with $\chi(p)=1$, and annihilated by $T_{3}$. and we'll give a simple description of the action of $T_{7}$ on $DI(q)$ involving Gaussian composition of forms. We will not use all primitive positive forms of discriminant $-640q^{2}$; the Dirichlet character $\chi$ of Definition \ref{def2.8} may be thought of as a genus character, and we'll only consider forms on which this character takes the value $1$. We'll see that the $SL_{2}(Z)$-classes of such forms make up a cyclic group of order $4q$, and that the class of a form representing $7$ is a generator.

As in section 3 of \cite{1}, we'll avoid the explicit language of binary forms. Instead we'll consider ideals $I$ in $Z[\sqrt{-10}]$ for which $\chi(\mathrm{norm}(I))= 1$. We'll say that such an ideal is ``of type $a$.''  We fix a power $q$ of $2$, and introduce an equivalence relation, depending on $q$, on the type $a$ ideals. We will call our equivalence relation ``Gauss-equivalence,'' and the equivalence classes under it ``Gauss-classes.''

The class number of $Z[\sqrt{-10}]$ is $2$, with an ideal of norm $7$ representing the non-principal class. We begin by defining Gauss-equivalence for principal ideals $(b+c\sqrt{-10})$ of type $a$.

Since the norm of $(b+c\sqrt{-10})$ is $b^{2}+10c^{2}$, $(b+c\sqrt{-10})$ is of type $a$ precisely when $(b,10)=1$ and $c$ is even. Note also that the generator of a principal ideal is defined up to multiplication by $\pm 1$.

\begin{definition}
\label{def4.1}
Principal type $a$ ideals $(\alpha)$ and $(\beta)$ are equivalent if there is an integer $N$ with $(N,10)=1$ such that $N\alpha\equiv\beta\bmod{4q}$ in the ring $Z[\sqrt{-10}]$.
\end{definition}

Evidently this does not depend on the choices of generators for the ideals, and is an equivalence relation. Also ideal multiplication makes the set of equivalence classes into a semigroup. Since $(\alpha)(\bar{\alpha})=(\mathrm{norm}(\alpha))$ which is equivalent to $(1)$, the semigroup is a group.

\begin{lemma}
\label{lemma4.2}\hspace{2em}\\
\vspace{-4ex}
\begin{enumerate}
\item[(1)] Any principal type $a$ ideal $(\alpha)$ is equivalent to $(1+2d\sqrt{-10})$ for some $d$.
\item[(2)] $(1+2c\sqrt{-10})$ and $(1+2d\sqrt{-10})$ are equivalent if and only if $c\equiv d\bmod{2q}$. So there are $2q$ equivalence classes.
\end{enumerate}
\end{lemma}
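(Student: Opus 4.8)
The plan is to work directly in the ring $Z[\sqrt{-10}]$, using the fact that a generator of a principal type $a$ ideal is an element $b+c\sqrt{-10}$ with $(b,10)=1$ and $c$ even, unique up to sign. For part (1), given such a generator $\alpha = b + c\sqrt{-10}$ with $c = 2e$, I would first choose an integer $N$ with $(N,10)=1$ and $Nb \equiv 1 \bmod{4q}$; such $N$ exists since $b$ is a unit mod $4q$ (it is odd and prime to $5$, and $4q$ is a power of $2$ times nothing else, so $b$ is invertible mod $4q$). Then $N\alpha \equiv 1 + 2Ne\sqrt{-10} \bmod{4q}$, and since $N\alpha$ generates the same class as $\alpha$, we are reduced to the case $\alpha = 1 + 2d\sqrt{-10}$ with $d = Ne$. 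Note one must check the reduction is consistent: replacing $d$ by $d \bmod{2q}$ only changes $2d\sqrt{-10}$ by a multiple of $4q\sqrt{-10}$, which is $0 \bmod{4q}$, so really only $d \bmod 2q$ matters for the representative — this already suggests the count in part (2).

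For part (2), I would argue both directions. The ``if'' direction is immediate: if $c \equiv d \bmod{2q}$ then $1 + 2c\sqrt{-10} \equiv 1 + 2d\sqrt{-10} \bmod{4q}$, so taking $N=1$ shows the ideals are equivalent. For the ``only if'' direction, suppose $(1+2c\sqrt{-10})$ and $(1+2d\sqrt{-10})$ are equivalent, so there is $N$ with $(N,10)=1$ and $N(1+2c\sqrt{-10}) \equiv \pm(1 + 2d\sqrt{-10}) \bmod{4q}$ — here I must be careful to allow the sign ambiguity in the choice of generator. Comparing the ``rational'' parts gives $N \equiv \pm 1 \bmod{4q}$, and comparing the $\sqrt{-10}$-parts gives $2Nc \equiv \pm 2d \bmod{4q}$, i.e. $Nc \equiv \pm d \bmod{2q}$ with the same sign. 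If the sign is $+$, then $N \equiv 1$ so $c \equiv d \bmod{2q}$. If the sign is $-$, then $N \equiv -1 \bmod{4q}$, hence $N \equiv -1 \bmod{2q}$, so $-c \equiv -d \bmod{2q}$, again $c \equiv d \bmod{2q}$. Either way $c \equiv d \bmod{2q}$, as claimed. Combined with part (1), the classes are represented by $(1 + 2d\sqrt{-10})$ with $d$ ranging over $Z/2q$, and these are pairwise inequivalent, so there are exactly $2q$ equivalence classes.

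The main obstacle I anticipate is bookkeeping the sign ambiguity in ``$\pm$'' correctly: one has to remember that a principal ideal's generator is only well-defined up to $\pm 1$, so the congruence $N\alpha \equiv \beta \bmod{4q}$ in Definition \ref{def4.1} should really be read as $N\alpha \equiv \pm\beta$, or else one fixes generators and checks the definition is symmetric. As long as one tracks that the sign on $N$ and the sign on the $\sqrt{-10}$-component must match (both come from the same equation), the argument goes through cleanly. A secondary minor point is verifying that $b$ is genuinely invertible mod $4q$: this is where $(b,10)=1$ is used — $b$ is odd, and $4q$ is a power of $2$, so $\gcd(b, 4q) = 1$. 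The condition $(b,5)=1$ is not needed for this part but is part of the type $a$ hypothesis. Everything else is routine manipulation of congruences in $Z[\sqrt{-10}]$.
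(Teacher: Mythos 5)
Your proof is correct and follows essentially the same route as the paper: normalize the generator by choosing $N$ with $Nb\equiv 1\bmod{4q}$ for part (1), and for part (2) compare rational and $\sqrt{-10}$-components of the congruence. Your extra bookkeeping of the $\pm$ sign is a reasonable precaution, but the paper disposes of it earlier by noting that Definition \ref{def4.1} is independent of the choice of generators, so its proof simply fixes the generators $1+2c\sqrt{-10}$ and $1+2d\sqrt{-10}$ and reads off $N\equiv 1\bmod{4q}$.
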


\begin{proof}
Write $\alpha$ as $(b+2c\sqrt{-10})$. Then $(b,10)=1$ and we can choose $N$ with $(N,10)=1$ so that $Nb\equiv 1\mod{4q}$. Then $(Na)$ is of type $a$, and $N\alpha\equiv 1+2Nc\sqrt{-10}\bmod{4q}$, proving (1). Turning to (2), if $c\equiv d\bmod{2q}$ we may take $N=1$. Conversely if $N(1+2c\sqrt{-10})\equiv 1+2d\sqrt{-10}\bmod{4q}$, then $N\equiv 1\mod{4q}$. So mod $4q$, $2d\equiv 2Nc\equiv 2c$.
\qed
\end{proof}

\begin{theorem}
\label{theorem4.3}
The order $2q$ group of classes of principal type $a$ ideals is cyclic, generated by the class of any $(c+2d\sqrt{-10})$ with $(c,10)=1$ and $d$ odd.
\end{theorem}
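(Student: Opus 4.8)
The plan is to combine Lemma~\ref{lemma4.2} with a short piece of $2$-group theory. By Lemma~\ref{lemma4.2} the classes of principal type~$a$ ideals are exactly the classes of $(1+2d\sqrt{-10})$ for $d=0,1,\ldots,2q-1$, these $2q$ classes being distinct, with $d=0$ giving the class of $(1)$; write $[d]$ for the class of $(1+2d\sqrt{-10})$, so $[c]=[d]$ iff $c\equiv d\mod{2q}$. First I would record the group law in these coordinates. From $(1+2c\sqrt{-10})(1+2d\sqrt{-10})=(1-40cd)+2(c+d)\sqrt{-10}$, together with the fact that $1-40cd$ is odd (hence coprime to $10$, hence a unit mod $4q$), the normalization in the argument proving Lemma~\ref{lemma4.2}(1) gives $[c]\cdot[d]=[w]$ with $w\equiv(1-40cd)^{-1}(c+d)\mod{2q}$. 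In particular $[d]^{-1}=[-d]$.

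Next I would show the class group is cyclic. Its order is $2q$, a power of $2$, so it suffices to produce a unique element of order dividing $2$. But $[d]\cdot[d]=[0]$ says $(1-40d^2)^{-1}(2d)\equiv 0\mod{2q}$, and since $1-40d^2$ is odd this is just $2d\equiv 0\mod{2q}$, i.e.\ $d\equiv 0\mod{q}$; so $[d]=[0]$ or $[d]=[q]$. Thus $[q]$ is the only element of order $2$, and a finite abelian $2$-group with a single subgroup of order $2$ is cyclic.

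To identify the generators I would use that in a cyclic $2$-group the maximal subgroup, of index $2$, is the image of the squaring map. Here $[d]\cdot[d]=[(1-40d^2)^{-1}(2d)]$, and since $(1-40d^2)^{-1}$ is odd while $2d$ is even, every such square is the class $[e]$ of an \emph{even} $e$. As there are exactly $q$ classes $[e]$ with $e$ even and the index-$2$ subgroup also has order $q$, that subgroup is precisely the set of $[e]$ with $e$ even. Hence $[d]$ generates the class group exactly when $d$ is odd. Finally, given $(c+2d\sqrt{-10})$ with $(c,10)=1$, the integer $c$ is odd and so a unit mod $4q$; choosing $N$ coprime to $10$ with $Nc\equiv 1\mod{4q}$ gives $(c+2d\sqrt{-10})\equiv 1+2(Nd)\sqrt{-10}\mod{4q}$, so this ideal has class $[Nd]$, which is a generator iff $Nd$ is odd iff $d$ is odd (as $N$ is odd). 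That is the statement of the theorem.

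The one place demanding care is the bookkeeping between the two moduli — the ideal equivalence of Definition~\ref{def4.1} is controlled mod $4q$, whereas the classes, and the coordinate $d$, are governed mod $2q$ — together with checking that $1-40cd$ and $1-40d^2$ are genuinely units wherever an inverse is written (which holds since they are odd). Apart from that the only real ingredient is the familiar fact that a finite abelian $2$-group is cyclic iff it has a unique involution, so I do not foresee a genuine obstacle.
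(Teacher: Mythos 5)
Your proof is correct. The paper itself gives no details here --- it simply refers to the last few sentences of the proof of Theorem 3.2 of \cite{1}, where (in the level $3$ setting) the generator is exhibited by directly computing the order of the class of a suitable $(c+2d\sqrt{-6})$ via iterated squaring. Your route is slightly different in packaging but rests on the same essential computation: from the multiplication rule $[c]\cdot[d]=\left[(1-40cd)^{-1}(c+d)\right]$ you extract that $[q]$ is the unique involution (so the $2$-group is cyclic) and that the squares are exactly the classes $[e]$ with $e$ even (so the generators are exactly the $[d]$ with $d$ odd), whereas the cited argument computes the exact order of a single candidate generator. What your version buys is a self-contained proof that simultaneously identifies \emph{all} generators, which is what the theorem actually asserts; the bookkeeping you flag (inverses taken mod $4q$ versus classes indexed mod $2q$, and the oddness of $1-40cd$) is handled correctly, so there is no gap.
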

\pagebreak

\begin{proof}
One argues as in the last few sentences of the proof of Theorem 3.2 of \cite{1}.
\qed
\end{proof}

Now fix a type $a$ non-principal ideal $L$; for example $P=(7,2-\sqrt{-10})$. If $I$ is another such ideal, $IL$ is principal of type $a$.

\begin{definition}
\label{def4.4}
Suppose $I$ and $J$ are type $a$ non-principal. $I$ and $J$ are Gauss equivalent if $IL$ and $JL$ are equivalent in the sense of Definition \ref{def4.1}.
\end{definition}

The definition appears to depend on the choice of $L$. But if one replaces $L$ by $\gamma L$ where $\chi(\mathrm{norm}(\gamma))=1$, one finds that the notion of Gauss-equivalence is unchanged. It follows at once that the dependence on $L$ is illusory. One sees further that the Gauss-classes of all type $a$ ideals form a group of order $4q$ under ideal multiplication and that the inverse of $I$ is $\bar{I}$.

\begin{theorem}
\label{theorem4.5}
The group of Gauss-classes of type $a$ ideals is cyclic of order $4q$; any class consisting of non-principal ideals is a generator.
\end{theorem}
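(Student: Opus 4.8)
The plan is to leverage Theorem \ref{theorem4.3} and the structure of the exact sequence relating the principal Gauss-classes to all Gauss-classes. First I would observe that the subgroup of Gauss-classes consisting of classes of \emph{principal} type $a$ ideals is, by Theorem \ref{theorem4.3}, cyclic of order $2q$. Since the full group of Gauss-classes has order $4q$ and the principal classes form an index-$2$ subgroup, the full group is an extension of $\bbz/2$ by $\bbz/(2q)$; to prove it is cyclic of order $4q$ it suffices to exhibit a single element of order $4q$, and the natural candidate is the Gauss-class of a non-principal ideal such as $P=(7,2-\sqrt{-10})$.

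The key computation is then to determine the order of the class $[P]$. Let $n$ be this order. Since $P^2$ is a principal type $a$ ideal, $[P]^2=[P^2]$ lies in the order-$2q$ cyclic subgroup of principal classes, so $n \mid 4q$ and $n/\gcd(n,2)$ is the order of $[P^2]$ inside that subgroup. So I must show $[P^2]$ generates the full order-$2q$ group of principal classes. Now $P^2$ is a principal ideal; writing $P^2=(\alpha)$, by Lemma \ref{lemma4.2}(1) we can normalize $\alpha \equiv 1+2d\sqrt{-10}\bmod{4q}$, and by Theorem \ref{theorem4.3} the class $[(\alpha)]$ is a generator precisely when $d$ is odd. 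So the heart of the matter is an explicit ideal-theoretic computation: compute a generator of $P^2=(7,2-\sqrt{-10})^2$ and check that, after multiplying by a suitable $N$ prime to $10$, it has the form $b+2c\sqrt{-10}$ with $c$ odd. One checks $(2-\sqrt{-10})^2 = 4 - 4\sqrt{-10} + (-10) = -6 - 4\sqrt{-10}$, and $P^2$ is generated by $\gcd$-type data; in fact since $\mathrm{norm}(2-\sqrt{-10})=14=2\cdot 7$ one finds $P^2 = (2-\sqrt{-10})\cdot(\text{unit-normalized principal ideal})$ — more carefully, $P\bar P = (7)$ and $P^2 \bar P^2 = (49)$, so $P^2$ is the principal ideal generated by whichever factor of $(2-\sqrt{-10})^2/(\text{stuff})$ has odd norm-contribution; the upshot is that $P^2$ has a generator whose $\sqrt{-10}$-coefficient is $\equiv 2 \bmod 4$, i.e.\ of the form $b+2c\sqrt{-10}$ with $c$ odd.

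The main obstacle is precisely this last explicit identification of a generator of $P^2$ and verification of the parity condition on $c$; everything else is soft group theory. I would organize it so that this parity check is the only real content: once we know $[P^2]$ generates the principal subgroup, $[P]$ has order either $2q$ or $4q$, and it cannot be $2q$ because then $[P]$ would lie in the principal subgroup (the unique subgroup of that order in a cyclic group — but here we don't yet know cyclicity, so instead: if $[P]$ had order $2q$ it would generate a subgroup of order $2q$ mapping isomorphically onto the principal subgroup under squaring-composed-with-inclusion, forcing $[P]$ itself principal, contradicting that $P$ is non-principal and that principal/non-principal is a well-defined invariant of Gauss-classes). Hence $[P]$ has order $4q$, the group is cyclic, and since any non-principal class $[I]$ satisfies $[I]=[P]\cdot(\text{principal class})$, it differs from $[P]$ by an element of the index-$2$ subgroup and therefore also has order $4q$, i.e.\ is a generator. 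This gives both assertions of Theorem \ref{theorem4.5}.
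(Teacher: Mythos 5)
Your approach is the same as the paper's: show that $P^{2}$ is a principal ideal whose class generates the order-$2q$ subgroup of Theorem \ref{theorem4.3}, conclude that $[P]$ has order $4q$, and then observe that every non-principal class is an odd power of $[P]$ and hence (since $4q$ is a power of $2$) a generator. Two local points need repair. First, the computation you yourself identify as ``the only real content'' is left garbled: the clean route is $(2-\sqrt{-10})=\mathfrak{p}_{2}P$ with $\mathfrak{p}_{2}=(2,\sqrt{-10})$ the ramified prime above $2$, so $(2-\sqrt{-10})^{2}=(2)\,P^{2}$ and hence $P^{2}=\bigl((-6-4\sqrt{-10})/2\bigr)=(3+2\sqrt{-10})$, which has $d=1$ odd as required by Theorem \ref{theorem4.3}; the paper simply records $P^{2}=(3+2\sqrt{-10})$. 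Second, your argument ruling out order $2q$ for $[P]$ is wrong as stated: squaring on a cyclic $2$-group of order $2q>1$ has kernel of order $2$, so $\langle[P]\rangle$ cannot map ``isomorphically onto the principal subgroup under squaring.'' The correct (and shorter) argument is that if $[P]$ had order $2q$ then $[P^{2}]^{q}=[P]^{2q}=e$, so $[P^{2}]$ would have order dividing $q$, contradicting the fact that it has order $2q$. With these two fixes your proof is complete and coincides with the paper's.
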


\begin{proof}
Let $P=(7,2-\sqrt{-10})$. $P$ has norm $7$, and is of type $a$. Also, $P^{2}=(3+2\sqrt{-10})$. By Theorem \ref{theorem4.3}, $P^{2}$ has order $2q$. So $P$ has order $4q$, and the result follows.
\qed
\end{proof}

\begin{definition}
\label{def4.6}
If $R$ is a Gauss-class, $\theta(R)$ in $Z[[x]]$ is $\sum x^{\mathrm{norm}(I)}$, where $I$ runs over the ideals in $R$.
\end{definition}

\begin{definition}
\label{def4.7}\hspace{2em}\\
\vspace{-4ex}
\begin{enumerate}
\item[(1)] $e$ is the Gauss-class of (1).
\item[(2)] $\mathit{AMB}$ is the Gauss-class of order 2; that is to say the Gauss-class of $(1+2q\sqrt{-10})$.
\end{enumerate}
\end{definition}

\begin{lemma}
\label{lemma4.8}\hspace{2em}\\
\vspace{-4ex}
\begin{enumerate}
\item[(1)] The mod~2 reduction of $\theta(e)$ is $D$.
\item[(2)] $\theta(\mathit{AMB})$ is in $2Z[[x]]$, and the mod~2 reduction of $\frac{1}{2}\theta(\mathit{AMB})$ is $D_{40q^{2}+1}$.
\end{enumerate}
\end{lemma}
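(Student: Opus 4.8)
The plan is to compute both theta-series directly from the definition and match coefficients against the explicit power series $D$ and $D_{40q^{2}+1}$. For part (1), the Gauss-class $e$ consists of the principal type $a$ ideals equivalent to $(1)$ in the sense of Definition \ref{def4.1}; by Lemma \ref{lemma4.2}(1) these are exactly the ideals $(\alpha)$ with $\alpha\equiv 1\bmod{4q}$ (after adjusting the generator by a unit $N$), i.e.\ the ideals $(b+2c\sqrt{-10})$ with $b\equiv 1$ and $c\equiv 0\bmod{2q}$. The coefficient of $x^{n}$ in $\theta(e)$ is then the number of such ideals of norm $n$, equivalently (since a generator is determined up to sign) one-half the number of $(b,c)$ with $b^{2}+10c^{2}=n$, $b\equiv 1\bmod{4q}$, $c\equiv 0\bmod{2q}$, counted with the $\pm$ ambiguity. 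First I would observe that since a nonzero principal ideal has exactly two generators $\pm\alpha$ and exactly one of $\pm b$ is $\equiv 1\bmod{4q}$ when $(b,10)=1$ and $4q$ is a power of $2$ times nothing odd — wait, one must be slightly careful: $b$ and $-b$ are distinct mod $4q$ since $b$ is odd, so exactly one lies in the residue class $1\bmod{4q}$ only if that class is hit, but by choosing $N$ we can always arrange $Nb\equiv 1$; the cleaner statement is that $\theta(e)$ counts ideals, and each ideal in $e$ of norm $n$ contributes $1$. Reducing mod $2$, I expect the count to collapse: pairs $(b,c)$ and $(b,-c)$ cancel unless $c=0$, leaving the contribution of $b^{2}=n$ with $b\equiv 1\bmod{4q}$, i.e.\ odd $b$ prime to $5$ — but we also need the constraint $c\equiv 0 \bmod 2q$ which is automatic when $c = 0$. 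So $\bar\theta(e)=\sum_{(b,10)=1} x^{b^{2}}=D$, using $D=\sum_{(n,10)=1,n>0}x^{n^{2}}$.

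For part (2), I would argue analogously. The class $\mathit{AMB}$ is the class of $(1+2q\sqrt{-10})$, which by Lemma \ref{lemma4.2}(2) has order $2$ (it is the class of $(1+2d\sqrt{-10})$ with $d=q$, and $q\equiv -q\not\equiv q\bmod{2q}$ would be needed for order $>2$, but $2q\mid 2q$ gives $(1+2q\sqrt{-10})^{2}\sim(1)$ after checking $1\cdot(1+2q\sqrt{-10})^2 = 1 - 40q^2 + 4q\sqrt{-10} \equiv 1 \bmod 4q$). The ideals in $\mathit{AMB}$ are the $(\alpha)$ with $\alpha\equiv 1+2q\sqrt{-10}\bmod{4q}$ up to the unit $N$ and sign, i.e.\ $(b+2c\sqrt{-10})$ with $b\equiv 1\bmod{4q}$ (after normalizing) and $c\equiv q\bmod{2q}$, so $c$ is an odd multiple of $q$. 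The coefficient of $x^{n}$ in $\theta(\mathit{AMB})$ is the number of such ideals of norm $n$. Now the involution $(b,c)\mapsto(b,-c)$ sends the condition $c\equiv q\bmod{2q}$ to $c\equiv -q\equiv q\bmod{2q}$, so it acts on this set and has no fixed points (as $c\ne 0$, being an odd multiple of $q$). Hence the count of $(b,c)$-pairs is even, so $\theta(\mathit{AMB})\in 2Z[[x]]$; dividing by $2$ keeps one representative from each $\pm c$ pair. Reducing mod $2$, the coefficient of $x^{n}$ in $\frac12\theta(\mathit{AMB})$ is the parity of the number of ideals in $\mathit{AMB}$ of norm $n$; I would identify this with the number of $(b,c)$ with $b^{2}+10c^{2}=n$, $b>0$ odd prime to $5$, $c$ a positive odd multiple of $q$, modulo $2$.

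The remaining work is to show this equals $D_{40q^{2}+1}$. Here I would use the explicit description: $D_{40q^{2}+1}=G^{8q^{2}}D_{1}=G^{8q^{2}}D$ (since $40q^{2}=10\cdot 4q^{2}$ and $D_{k+10}=G^{2}D_{k}$, iterating $4q^{2}$ times), where $G=F(x^{5})=\sum_{m\ \mathrm{odd},m>0}x^{5m^{2}}$. Thus $G^{8q^{2}}$ contributes, in characteristic $2$, $\sum x^{5\cdot(2q\cdot(\text{odd}))^{2}}=\sum x^{20q^{2} m^{2}}$ over odd $m>0$ (using the Frobenius $(\sum x^{5m^2})^{8q^2} = \sum x^{40 q^2 m^2}$ — I need to recheck the exponent bookkeeping here: $G^{8q^2}$ as a power series is $\sum_{m \text{ odd}} x^{5 m^2 \cdot 8 q^2}$ only after Frobenius if $8q^2$ is a power of $2$, which it is, giving $G^{8q^2} = \sum_{m\text{ odd}} x^{40 q^2 m^2}$ plus cross terms... actually Frobenius gives $(\sum a_i)^{2^k} = \sum a_i^{2^k}$, so $G^{8q^2} = \sum_{m\text{ odd}, m>0} x^{40 q^2 m^2}$ exactly). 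Then $D_{40q^2+1} = G^{8q^2} D = \sum_{m\text{ odd}>0}\sum_{(b,10)=1, b>0} x^{40q^2 m^2 + b^2}$, and the coefficient of $x^n$ is the parity of the number of ways to write $n = b^2 + 40 q^2 m^2$ with $b$ odd prime to $5$ and $m$ odd positive. Setting $c = qm$ (so $c$ a positive odd multiple of $q$), $40 q^2 m^2 = 40 c^2$, hmm, but the norm form is $b^2 + 10 c^2$, so I need $n = b^2 + 10 c^2$ with $10 c^2 = 40 q^2 m^2$, i.e. $c^2 = 4 q^2 m^2$, $c = 2qm$. So $c$ is a positive even multiple of $2q$ with $c/(2q) = m$ odd, i.e. $c \equiv 2q \bmod 4q$. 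The hard part will be reconciling the parametrization coming out of the ideal-class description ($c$ an odd multiple of $q$, from $c\equiv q\bmod{2q}$) with the one from $D_{40q^2+1}$ ($c$ of the form $2qm$, $m$ odd); these differ by a factor of $2$ in $c$, so I must have mis-normalized the generator somewhere — the resolution is that the generators of ideals in $\mathit{AMB}$ should be taken with $c\equiv q \bmod 2q$ only after the unit adjustment, OR the factor of $\frac12$ and the $\pm$ identification interact with which representative survives. I would resolve this by carefully tracking: $\frac12\theta(\mathit{AMB})$ counts pairs $(b,c)$ with $b^2+10c^2=n$, $(b,10)=1$, $c\equiv q\bmod{2q}$, $c>0$, modulo $2$; and then noting that writing such $c$ as $c=q\cdot(\text{odd})$ and changing variables appropriately (possibly using that $q$ itself is a power of $2$, so $10 c^2 = 10 q^2 (\text{odd})^2$ and this must match $40 q^2 m^2$ forcing a relation between the odd parts), together with a sign/counting involution to kill the even-count part, yields exactly $\sum_{m\text{ odd}>0, (b,10)=1, b>0} x^{b^2 + 40 q^2 m^2} = D_{40q^2+1}$. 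I expect this bookkeeping — precisely matching the $2$-adic normalization of generators against the $G^{8q^2}$ expansion — to be the one genuinely fiddly point; everything else is the standard cancellation-in-pairs argument already used in Lemmas \ref{lemma1.5}--\ref{lemma1.7}.
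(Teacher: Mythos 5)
Your overall strategy is the paper's: pair each ideal $I$ with its conjugate $\bar I$ (equivalently $(b,c)\mapsto(b,-c)$), note that for the class $e$ the fixed points are exactly the ideals $(N)$ with $N>0$ and $(N,10)=1$, giving $\bar\theta(e)=D$, and that for $\mathit{AMB}$ the involution is fixed-point-free, giving $\theta(\mathit{AMB})\in 2Z[[x]]$ with $\tfrac12\theta(\mathit{AMB})$ summed over one representative of each $\pm$ pair. Part (1) is fine (though note $N$ in Definition \ref{def4.1} is not a unit of $Z[\sqrt{-10}]$ — the units are $\pm1$ — so "adjusting the generator by a unit $N$" is the wrong framing; your fallback "count ideals" framing is the right one).

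The "genuinely fiddly point" you flag in part (2) — the apparent factor-of-$2$ mismatch between $c\equiv q\pmod{2q}$ on the ideal side and $c=2qm$ on the $D_{40q^2+1}$ side — is not fiddly at all: it is created entirely by a notational slip in your own write-up. Following Lemma \ref{lemma4.2}, a generator of a type $a$ principal ideal is $b+2c\sqrt{-10}$, whose norm is $b^2+10(2c)^2=b^2+40c^2$, not $b^2+10c^2$ as you wrote when passing to the coefficient count. The ideals in $\mathit{AMB}$ are the $(b+2c\sqrt{-10})$ with $(b,10)=1$ and $c\equiv q\pmod{2q}$, i.e.\ $c=qm$ with $m$ odd, so their norms are $b^2+40q^2m^2$; hence $\tfrac12\theta(\mathit{AMB})=\sum x^{b^2+40q^2m^2}$ over $b>0$ prime to $10$ and $m>0$ odd, which is exactly $D\cdot G^{8q^2}=D_{40q^2+1}$ mod $2$ by the Frobenius computation you already carried out correctly. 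With that one line repaired there is nothing left to reconcile, and your argument coincides with the paper's proof.
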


\begin{proof}
$I$ is in $e$ if and only if $\bar{I}$ is in $e$; also they each make the same contribution to $\theta(e)$. So in calculating $\theta(e)\bmod{2}$ we only need consider $I$ in $e$ with $\bar{I}=I$. These are just the $(N)$ with $N$ in $Z$, $(N,10)=1$ and $N>0$, and we get (1). $\mathit{AMB}$ consists of principal ideals. These are of the form $(b+2cq\sqrt{-10})$ with $(b,10)=1$, $b>0$ and $c$ odd. Since $(b+2cq\sqrt{-10})$ and $(b-2cq\sqrt{-10})$ make the same contribution to $\theta(\mathit{AMB})$, $\theta(\mathit{AMB})$ is in $2Z[[x]]$. Also, $\frac{1}{2}\theta(\mathit{AMB})$ is $\sum x^{b^{2}+40q^{2}c^{2}}$, the sum running over all positive $b$ and $c$ with $(b,10)=1$ and $c$ odd. Reducing mod~2 we get $DG^{8q^{2}}=D_{40q^{2}+1}$.
\qed
\end{proof}

We next define a Hecke operator $T_{p}:Z[[x]]\rightarrow Z[[x]]$ for each $p$ with $\chi(p)=1$.

\begin{definition}
\label{def4.9}
If $\chi(p)=1$, $T_{p}$ is the map $\sum c_{n}x^{n}\rightarrow \sum c_{pn}x^{n}+\left(\frac{-10}{p}\right)\sum c_{n}x^{pn}$.
\end{definition}

Note that the mod~2 reduction of $T_{p}$ is our usual mod~2 Hecke operator $T_{p}:Z/2[[x]]\rightarrow Z/2[[x]]$.

\begin{remark*}{Remark}
The motivation for Definition \ref{def4.9} is the following. It can be shown that each $\theta(R)$ is the expansion at infinity of a modular form of weight 1 for some $\Gamma_{1}(N)$, with character $n\rightarrow \left(\frac{-10}{n}\right)$. And Definition \ref{def4.9} is the standard definition of the Hecke action on such expansions. (But in what follows we won't use the connection of the $\theta(R)$ with weight 1 modular forms.)
\end{remark*}

\begin{theorem}
\label{theorem4.10}
Suppose $\chi(p)=1$ and $p$ is inert in $Z[\sqrt{-10}]$. Then $T_{p}$ annihilates each $\theta(R)$ in $Z[[x]]$.
\end{theorem}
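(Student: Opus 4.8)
The plan is to work directly with the ideal-theoretic description of $\theta(R)$ and analyze how multiplication by $x^{p\cdot(\text{stuff})}$ interacts with the coefficient-extraction in Definition \ref{def4.9}. Since $p$ is inert in $Z[\sqrt{-10}]$, the prime $p$ is not a norm of any element or ideal, and more importantly the ideal $(p)$ is prime. The key observation is that for a Gauss-class $R$, the coefficient of $x^{n}$ in $\theta(R)$ counts (with the ordinary integer multiplicity, which is what matters here since we are in $Z[[x]]$, not $Z/2[[x]]$) the ideals $I\in R$ with $\mathrm{norm}(I)=n$. So I would first unwind $T_p(\theta(R)) = \sum_n a_n x^n$ where $a_n = \#\{I\in R : \mathrm{norm}(I)=pn\} + \left(\frac{-10}{p}\right)\#\{I\in R : \mathrm{norm}(I) = n/p\}$ (the second term understood as $0$ unless $p\mid n$).

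The heart of the argument is a bijection between ideals of norm $pn$ in $R$ and ideals of norm $n/p$ in a suitable related class, set up via multiplication by the prime ideal $(p)$ of norm $p^2$. Concretely: if $p$ is inert, then $\left(\frac{-10}{p}\right) = -1$, so the second term is $-\#\{I \in R: \mathrm{norm}(I) = n/p\}$, and we want $a_n = 0$ for all $n$. When $p^2 \nmid n$, there are no ideals of norm $n/p$ (that term is empty) and I must show there are also no ideals $I\in R$ of norm $pn$: indeed if $\mathrm{norm}(I) = pn$ with $p^2\nmid pn$, i.e. $p \| pn$, then $p$ appears to odd multiplicity in $\mathrm{norm}(I)$, impossible since $(p)$ is inert (every prime of $Z[\sqrt{-10}]$ above $p$ has norm $p^2$). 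When $p^2 \mid n$, I claim the map $I \mapsto I\cdot(p)^{-1}$ — well-defined on ideals divisible by $(p)$ — is a bijection from $\{I\in R : \mathrm{norm}(I) = pn,\ (p)\mid I\}$ onto $\{J : \mathrm{norm}(J) = n/p,\ J \text{ in the Gauss-class } R\cdot[(p)]^{-1}\}$, and that every ideal of norm $pn$ is automatically divisible by $(p)$ (again since $p$ is inert and $p\mid pn$). Finally I need the Gauss-class identity $R\cdot[(p)]^{-1} = R$, i.e. that the principal ideal $(p)$ lies in the trivial Gauss-class $e$; this follows because $p$ is an integer with $(p,10)=1$ (as $p\ne 2, 5$ — note $p$ inert forces $p \ne 2,5$), so by the proof of Lemma \ref{lemma4.8}(1), or directly, $(p)\sim(1)$ in the sense of Definition \ref{def4.1}. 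Combining, the two counts in $a_n$ agree, and with the sign $\left(\frac{-10}{p}\right)=-1$ they cancel, so $a_n = 0$.

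I would organize the write-up as: (i) record $\left(\frac{-10}{p}\right) = -1$ and that $(p)$ is a prime ideal of norm $p^2$ lying in the Gauss-class $e$; (ii) the divisibility lemma — any ideal whose norm is divisible by $p$ (to the first power or more, with $p$ inert) is divisible by $(p)$, and its norm is divisible by $p^2$; (iii) the bijection $I\leftrightarrow I(p)^{-1}$ between norm-$pn$ ideals of $R$ and norm-$(n/p)$ ideals of $R$; (iv) conclude $a_n=0$ for every $n$.

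The main obstacle I anticipate is purely bookkeeping: making sure the multiplicities match on the nose in $Z[[x]]$ (not just mod $2$), and verifying that $I\mapsto I(p)^{-1}$ genuinely respects Gauss-equivalence — i.e. that multiplying by the fixed principal ideal $(p)\in e$ sends a Gauss-class to itself. That last point reduces to checking $(p)$ is equivalent to $(1)$ under Definition \ref{def4.1}, for which one takes $N$ with $Np\equiv 1\bmod 4q$; this $N$ exists since $(p,10)=1$ and one needs $(N,10)=1$, which is automatic. Nothing here is deep, but the case split on whether $p^2\mid n$ and the empty-set arguments need to be stated carefully.
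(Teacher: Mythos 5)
Your overall strategy is exactly the paper's (which disposes of this in three lines): inertness gives $\left(\tfrac{-10}{p}\right)=-1$ and forces the $p$-adic valuation of every ideal norm to be even, the principal ideal $(p)$ is the unique prime above $p$ and lies in the trivial Gauss-class, and multiplication by $(p)$ matches the two counts in the coefficient of $x^{n}$ so that the sign $-1$ cancels them. Items (i)--(iv) of your outline, and the verification that $(p)\sim(1)$ under Definition \ref{def4.1}, are all correct.

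However, your case split is on the wrong quantity, and in the overlooked subcase both of your assertions are false. The natural dichotomy is $p\nmid n$ versus $p\mid n$; you split on $p^{2}\nmid n$ versus $p^{2}\mid n$, and in the first case you claim that there are no ideals of norm $n/p$ and no ideals of norm $pn$. When $p\parallel n$ this is wrong twice over: $n/p$ is then an integer prime to $p$ and may perfectly well be a norm, and $pn=p^{2}\cdot(n/p)$ has \emph{even} $p$-valuation, so it too can be a norm. Concretely, take $p=3$ (inert, $\chi(3)=1$) and $n=21$: then $n/p=7$ is the norm of $(7,2-\sqrt{-10})$, and $pn=63$ is the norm of $(3)\cdot(7,2-\sqrt{-10})$, so neither set is empty; the coefficient vanishes because the two counts are \emph{equal} (via your bijection $I\mapsto I(p)^{-1}$, applied with $pn=p^{2}(n/p)$), not because both are zero. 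Your argument "if $\mathrm{norm}(I)=pn$ with $p^{2}\nmid pn$" silently assumes $p\nmid n$, which $p^{2}\nmid n$ does not give. The repair is immediate: for $p\nmid n$ the second term is absent and $c_{pn}=0$ since $p\parallel pn$; for $p\mid n$ the bijection gives $c_{pn}=c_{n/p}$ and the sign kills the coefficient. With that reorganization your proof coincides with the paper's.
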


\begin{proof}
$I\rightarrow pI$ sets up a 1--1 correspondence between ideals of norm $n$ in $R$ and ideals of norm $p^{2}n$ in $R$. Also, if $(n,p)=1$ there are no ideals of norm $pn$ in $R$. Since $\left(\frac{-10}{n}\right)=-1$, the result follows directly from Definition \ref{def4.9}.
\qed
\end{proof}

\begin{theorem}
\label{theorem4.11}
Suppose $\chi(p)=1$ and $p$ splits in $Z[\sqrt{-10}]$, so that $(p)=P\cdot\bar{P}$ with $P\ne \bar{P}$. Then if $R$ is a Gauss-class, $T_{p}(\theta(R))=\theta(PR)+\theta(\bar{P}R)$. (Since $\chi(p)=1$, $P$ and $\bar{P}$ are type $a$.)
\end{theorem}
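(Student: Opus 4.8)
The plan is to match the coefficient of $x^m$ on the two sides for each $m\ge 1$. Since $p$ splits we have $\left(\frac{-10}{p}\right)=1$, so by Definition~\ref{def4.9}, writing $\theta(R)=\sum c_n x^n$ with $c_n$ the number of ideals of norm $n$ in the Gauss-class $R$, the coefficient of $x^m$ in $T_p(\theta(R))$ is $c_{pm}+c_{m/p}$, where $c_{m/p}:=0$ unless $p\mid m$. On the other side the coefficient of $x^m$ in $\theta(PR)+\theta(\bar PR)$ is the number of norm-$m$ ideals in the Gauss-class $PR$ plus the number of norm-$m$ ideals in $\bar PR$. So everything reduces to a purely ideal-theoretic count in the Dedekind domain $Z[\sqrt{-10}]$.

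The two inputs I would isolate first are: $(p)=P\bar P$ with $P\ne\bar P$ (the split hypothesis), and the fact that the principal ideal $(p)$ represents the trivial Gauss-class $e$ — this holds because $p$ is odd and prime to $q$, hence invertible mod $4q$, so $(p)$ is equivalent to $(1)$ in the sense of Definition~\ref{def4.1}. Granting these, and using that the Gauss-classes form a group under ideal multiplication with $\bar I$ inverse to $I$, multiplication by $\bar P$ sends a norm-$m$ ideal $K$ of class $PR$ to the norm-$pm$ ideal $\bar PK$ of class $\bar P\cdot PR=(p)R=R$; this map is injective by ideal cancellation, and its image is exactly the set of norm-$pm$ ideals $J\in R$ with $\bar P\mid J$. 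Symmetrically, multiplication by $P$ identifies the norm-$m$ ideals of $\bar PR$ with the norm-$pm$ ideals $J\in R$ having $P\mid J$.

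Then I would finish by inclusion–exclusion. The desired right-hand count equals (number of norm-$pm$ ideals $J\in R$ with $P\mid J$ or $\bar P\mid J$) $+$ (number with $P\mid J$ and $\bar P\mid J$). For the first term: every norm-$pm$ ideal $J\in R$ is divisible by $P$ or $\bar P$, because $p\mid\mathrm{norm}(J)$ forces some prime factor of $J$ to lie over $p$, and $P,\bar P$ are the only primes over $p$; hence the first term is $c_{pm}$. For the second term: since $P\ne\bar P$, having $P\mid J$ and $\bar P\mid J$ means $(p)=P\bar P\mid J$, i.e.\ $J=(p)J'$; as $(p)\in e$ we get $J'\in R$ of norm $m/p$, and $J'\mapsto(p)J'$ is a bijection, so the second term is $c_{m/p}$. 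Adding gives $c_{pm}+c_{m/p}$, as required.

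The routine parts — norm multiplicativity, ideal cancellation, the residue-degree argument pinning down the primes over $p$ — are standard, and the whole computation is the classical Brandt-matrix relation for weight-one theta series. The one place to be careful, and the main obstacle, is the Gauss-class bookkeeping: verifying that $(p)\in e$ for the refined equivalence relation of Definition~\ref{def4.1} and hence that the maps ``multiply by $P$'' and ``multiply by $\bar P$'' land in the Gauss-classes asserted. Everything else is formal.
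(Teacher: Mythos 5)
Your proof is correct and is essentially the paper's own argument: the paper simply defers to the proof of Theorem 3.6 of \cite{1}, citing ``the multiplicativity of the norm and unique factorization at the ideal level'' as the essential points, and your coefficient-by-coefficient count via multiplication by $P$ and $\bar P$ plus inclusion--exclusion is exactly that argument written out in full (the only micro-gap, easily repaired by CRT, is that the $N$ witnessing $(p)\sim(1)$ must also be chosen prime to $5$, not just invertible mod $4q$).
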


\begin{proof}
The argument follows that in the proof of Theorem 3.6 of \cite{1}, the essential points being the multiplicativity of the norm and unique factorization at the ideal level in $Z[\sqrt{-10}]$.
\qed
\end{proof}

\begin{definition}
\label{def4.12}
$\alpha(R)$ is the mod~2 reduction of $\theta(R)$. $DI(q)\subset Z/2[[x]]$ is the space spanned by the $\alpha(R)$, as $R$ runs over the $4q$ Gauss-classes of type $a$ ideals.
\end{definition}

Now let $C$ be a Gauss-class containing one of the ideals of norm 7. By Theorem \ref{theorem4.5}, $C$ generates the group of Gauss-classes of type $a$ ideals.

\begin{theorem}
\label{theorem4.13}
Let $\alpha_{i}=\alpha(C^{i})$. Then $\alpha_{0}=D$, $\alpha_{2q}=0$, and the $\alpha_{i}$, $0\le i <2q$ span $DI(q)$.
\end{theorem}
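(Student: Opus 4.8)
The plan is to exploit the group structure of the Gauss-classes together with the two facts already in hand: $\alpha(e) = D$ (Lemma~\ref{lemma4.8}(1)) and $\alpha(\mathit{AMB}) = D_{40q^{2}+1}$ (Lemma~\ref{lemma4.8}(2)), where $\mathit{AMB}$ is the unique element of order $2$. Since $C$ generates the cyclic group of order $4q$ (Theorem~\ref{theorem4.5}), the Gauss-classes are exactly $C^{0}, C^{1}, \ldots, C^{4q-1}$, and $DI(q)$ is by definition spanned by $\alpha_{0}, \ldots, \alpha_{4q-1}$. So the content of the theorem is: $\alpha_{0} = D$; $\alpha_{2q} = 0$; and the remaining $\alpha_{i}$ with $0 \le i < 2q$ already span everything, i.e.\ the ``second half'' $\alpha_{2q+1}, \ldots, \alpha_{4q-1}$ contributes nothing new.

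First, $\alpha_{0} = \alpha(C^{0}) = \alpha(e) = D$ is immediate from Lemma~\ref{lemma4.8}(1). Next, for $\alpha_{2q} = 0$: the class $C^{2q}$ has order exactly $2$ in the cyclic group of order $4q$, hence $C^{2q} = \mathit{AMB}$, so $\theta(C^{2q}) = \theta(\mathit{AMB})$ lies in $2Z[[x]]$ by Lemma~\ref{lemma4.8}(2); therefore its mod~$2$ reduction $\alpha_{2q}$ is $0$. (The conceptual reason $\theta(\mathit{AMB}) \in 2Z[[x]]$ is that complex conjugation $I \mapsto \bar I$ acts freely on the ideals in a class $R$ with $R \ne \bar R$ — here $\bar{R} = R^{-1} = \mathit{AMB}$ but no ideal $I$ with $(I) = \mathit{AMB}$ satisfies $I = \bar I$, since such an $I$ would be of the form $(N)$ with $N \in Z$, landing it in $e$, not in $\mathit{AMB}$ — so norms are attained an even number of times.)

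For the spanning statement I would show the stronger relation $\alpha(R) = \alpha(\mathit{AMB} \cdot R)$ for every Gauss-class $R$; equivalently $\alpha_{i} = \alpha_{i + 2q}$ for all $i$ (indices mod $4q$). Granting this, $\alpha_{2q+1}, \ldots, \alpha_{4q-1}$ duplicate $\alpha_{1}, \ldots, \alpha_{2q-1}$, and combined with $\alpha_{2q} = 0$ we get that $\alpha_{0}, \ldots, \alpha_{2q-1}$ span $DI(q)$, as claimed. To prove $\alpha(R) = \alpha(\mathit{AMB}\cdot R)$: pick any ideal $I$ in $R$ and any ideal $M$ with $(M) = \mathit{AMB}$, say $M = (1 + 2q\sqrt{-10})$ in the principal case; then $\theta(R) \pmod 2$ counts, for each $n$, the parity of the number of ideals of norm $n$ in $R$, and likewise $\theta(\mathit{AMB}\cdot R)$ for the class $\mathit{AMB}\cdot R$. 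The map $I \mapsto MI$ (after adjusting by the conjugation symmetry) should match these counts modulo $2$; concretely, working with the principal-ideal description from Lemma~\ref{lemma4.2}, an ideal $(b + 2c\sqrt{-10})$ of type $a$ lies in the class of $(1 + 2d\sqrt{-10})$ with $d \equiv$ (something) $\bmod 2q$, and multiplying by $(1+2q\sqrt{-10})$ shifts $d$ by $q$, i.e.\ toggles the residue of $d$ mod $2q$ by the order-$2$ element — but the \emph{norm} $b^{2} + 40c^{2}$ is unaffected modulo the pairing $c \leftrightarrow -c$, so the mod~$2$ theta-series is unchanged. The main obstacle is making this last matching of norm-multiplicities precise and conjugation-symmetric in the non-principal classes as well; the clean way is to note that $I$ and $\bar I$ contribute equally mod~$2$, so it suffices to check the identity on a set of representatives closed under conjugation, where the bijection $I \mapsto MI$ (with $M$ chosen so that $M\bar M = (\mathrm{norm})$ with an \emph{odd} norm, e.g.\ $\mathrm{norm}(M) = 1 + 40q^{2}$) transports norms in a way visible modulo~$2$.
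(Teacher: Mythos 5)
Your identification of $\alpha_{0}=D$ and $\alpha_{2q}=0$ is correct and matches the paper: $C^{2q}$ is the order-$2$ class $\mathit{AMB}$, and Lemma~\ref{lemma4.8} does the rest. But the spanning argument has a genuine error. The relation you propose, $\alpha(R)=\alpha(\mathit{AMB}\cdot R)$, i.e.\ $\alpha_{i}=\alpha_{i+2q}$, is false. It already fails at $i=0$, where it would assert $D=\alpha_{0}=\alpha_{2q}=0$; and for $q\ge 2$ it would force $\alpha_{1}=\alpha_{1+2q}$, which together with the true relation $\alpha_{i}=\alpha_{4q-i}$ gives $\alpha_{1}=\alpha_{2q-1}$ --- impossible, since the $\alpha_{i}$ with $0\le i<2q$ turn out to be a basis of the $2q$-dimensional space $DI(q)$ (Theorem~\ref{theorem4.18}). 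The source of the error is in your final paragraph: multiplication by an ideal $M$ of class $\mathit{AMB}$ is not norm-preserving (it multiplies norms by $\mathrm{norm}(M)$, e.g.\ $1+40q^{2}$), so it cannot transport $\theta(R)$ onto $\theta(\mathit{AMB}\cdot R)$; and the involution $c\leftrightarrow -c$ you invoke is conjugation, which sends the class of $(1+2d\sqrt{-10})$ to that of $(1-2d\sqrt{-10})$, i.e.\ it \emph{inverts} the Gauss-class rather than translating it by the order-$2$ element.

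The repair is exactly that conjugation map, and it is what the paper uses: $I\mapsto\bar{I}$ is a norm-preserving bijection from the ideals in $C^{i}$ to the ideals in $\overline{C^{i}}=C^{-i}=C^{4q-i}$ (recall that the inverse of a Gauss-class $R$ is $\bar{R}$, as noted after Definition~\ref{def4.4}). Hence $\theta(C^{i})=\theta(C^{4q-i})$ and $\alpha_{i}=\alpha_{4q-i}$. This makes $\alpha_{2q+1},\ldots,\alpha_{4q-1}$ coincide with $\alpha_{2q-1},\ldots,\alpha_{1}$ in reverse order, and together with $\alpha_{2q}=0$ it shows that $\alpha_{0},\ldots,\alpha_{2q-1}$ span $DI(q)$.
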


\begin{proof}
Since $C$ is a generator, $\alpha_{0}, \ldots, \alpha_{4q-1}$ span $DI(q)$. By Lemma \ref{lemma4.8}, $\alpha_{0}=D$ and $\alpha_{2q}=0$. Finally $I\rightarrow\bar{I}$ sets up a 1--1 norm preserving correspondence between the ideals in $C^{i}$ and the ideals in $C^{4q-i}$, so $\alpha_{i}=\alpha_{4q-i}$.
\qed
\end{proof}

\begin{theorem}
\label{theorem4.14}
\hspace{2em}\\
\vspace{-4ex}
\begin{enumerate}
\item[(1)] $T_{7}(\alpha_{i})=\alpha_{i-1}+\alpha_{i+1}$ if $0<i<2q$. So $T_{7}$ stabilizes $DI(q)$.
\item[(2)] $T_{7}(D_{40q^{2}+1})=\alpha_{2q-1}$.
\end{enumerate}
\end{theorem}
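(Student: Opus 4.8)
The plan is to reduce everything to the integer-coefficient Hecke operator $T_7$ of Definition \ref{def4.9} and the ideal-theoretic formula of Theorem \ref{theorem4.11}, and then pass to mod~2 reductions. First I would record that $\chi(7)=1$ (Definition \ref{def2.8}) and that $7$ splits in $Z[\sqrt{-10}]$; the latter is the Legendre-symbol computation $\left(\frac{-10}{7}\right)=\left(\frac{-1}{7}\right)\left(\frac{2}{7}\right)\left(\frac{5}{7}\right)=(-1)(1)(-1)=1$, so $(7)=P\bar{P}$ with $P\ne\bar{P}$. Since $C$ is the Gauss-class of an ideal of norm $7$, we may take $C$ to be the class of $P$; then the class of $\bar{P}$ is $C^{-1}=C^{4q-1}$, using that the inverse of a Gauss-class is the class of the conjugate ideal, and that by Theorem \ref{theorem4.5} the group of Gauss-classes is cyclic of order $4q$ generated by $C$. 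Throughout I will use that $\theta(R)=\theta(R^{-1})$ (since $I\mapsto\bar{I}$ is a norm-preserving bijection of the ideals in $R$ onto those in $R^{-1}$) and that the mod~2 reduction of the integer $T_7$ is the usual mod~2 operator $T_7$.

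For part (1), I would apply Theorem \ref{theorem4.11} with $R=C^{i}$ to get $T_7(\theta(C^{i}))=\theta(PC^{i})+\theta(\bar{P}C^{i})=\theta(C^{i+1})+\theta(C^{i-1})$ in $Z[[x]]$, and reduce mod~2 to obtain $T_7(\alpha_i)=\alpha_{i-1}+\alpha_{i+1}$ for every $i$. For $0<i<2q$ the indices $i\pm1$ lie in $\{0,\ldots,2q\}$, so (using $\alpha_{2q}=0$ from Theorem \ref{theorem4.13}) this is the asserted identity and it shows $T_7(\alpha_i)\in DI(q)$; combining this with $T_7(\alpha_0)=\alpha_{1}+\alpha_{4q-1}=\alpha_1+\alpha_1=0$ (using $\alpha_j=\alpha_{4q-j}$) and the fact that $\alpha_0,\ldots,\alpha_{2q-1}$ span $DI(q)$, it follows that $T_7$ stabilizes $DI(q)$.

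For part (2), Lemma \ref{lemma4.8}(2) identifies $D_{40q^{2}+1}$ with the mod~2 reduction of $\frac{1}{2}\theta(\mathit{AMB})$, where $\mathit{AMB}$ is the unique Gauss-class of order $2$; in the cyclic group of order $4q$ generated by $C$ this class is $C^{2q}$. Applying Theorem \ref{theorem4.11} with $R=C^{2q}$ gives $T_7(\theta(C^{2q}))=\theta(C^{2q+1})+\theta(C^{2q-1})$. The crucial observation is that $C^{2q+1}$ and $C^{2q-1}$ are inverse, hence conjugate, classes, since $-(2q+1)\equiv 2q-1 \bmod 4q$; so the two theta-series coincide and $T_7(\theta(\mathit{AMB}))=2\theta(C^{2q-1})$. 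As $T_7$ is $Z$-linear and $Z[[x]]$ is torsion-free, $T_7\bigl(\frac{1}{2}\theta(\mathit{AMB})\bigr)=\theta(C^{2q-1})$; taking mod~2 reductions and using that reduction intertwines the integer and mod~2 operators, $T_7(D_{40q^{2}+1})=\alpha_{2q-1}$. The whole computation is short; the only delicate point — and the one I would watch most carefully — is the bookkeeping in the Gauss-class group (that $\mathit{AMB}=C^{2q}$ and that $C^{2q+1}$, $C^{2q-1}$ are conjugate), which produces the factor of $2$ in (2) that exactly cancels the $\frac{1}{2}$ coming from Lemma \ref{lemma4.8}.
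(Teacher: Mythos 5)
Your proof is correct and follows essentially the same route as the paper: apply the integral Hecke identity of Theorem \ref{theorem4.11} for the split prime $7$ to get $T_{7}(\theta(C^{i}))=\theta(C^{i-1})+\theta(C^{i+1})$ and reduce mod~$2$ for (1), then apply it to $\mathit{AMB}=C^{2q}$, use $\theta(C^{2q+1})=\theta(C^{2q-1})$ to produce the factor of $2$ cancelling the $\frac{1}{2}$ of Lemma \ref{lemma4.8}, and reduce mod~$2$ for (2). The only difference is that you spell out the verification that $7$ splits and the Gauss-class bookkeeping, which the paper leaves implicit (the splitting is already visible from $P=(7,2-\sqrt{-10})$ in the proof of Theorem \ref{theorem4.5}).
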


\begin{proof}
By Theorem \ref{theorem4.11}, $T_{7}(\theta_{i})=\theta_{i-1}+\theta_{i+1}$; reducing mod~2 we get (1). Also $T_{7}(\theta(\mathit{AMB}))=\theta_{2q-1}+\theta_{2q+1}=2\theta_{2q-1}$. Dividing by 2, reducing mod~2 and using Lemma \ref{lemma4.8} we get (2).
\qed
\end{proof}

\begin{definition}
\label{def4.15}
$U_{n}$ is the element of $Z/2[t]$ with $U_{n}(t+t^{-1})=t^{n}+t^{-n}$. Note that $U_{0}=0$, $U_{1}(Y)=Y$, and that $U_{n+2}(Y)=YU_{n+1}(Y)+U_{n}(Y)$. Furthermore $U_{2n}=U_{n}^{2}$, and it follows that $U_{q}(Y)=Y^{q}$. Finally $Y$ divides each $U_{n}(Y)$.
\end{definition}

\begin{lemma}
\label{lemma4.16}
Let $Y$ be the operator $T_{7}:Z/2[[x]]\rightarrow Z/2[[x]]$. Then for $0\le i \le 2q$, $\alpha_{2q-i}=U_{i}(Y)\cdot (D_{40q^{2}+1})$. 
\end{lemma}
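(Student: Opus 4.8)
The plan is to prove the identity $\alpha_{2q-i}=U_i(Y)\cdot(D_{40q^2+1})$ for $0\le i\le 2q$ by induction on $i$, using the three-term recursion for the Chebyshev-like polynomials $U_n$ from Definition \ref{def4.15} together with the three-term recursion for the $\alpha_j$ coming from $T_7$ in Theorem \ref{theorem4.14}. First I would dispose of the base cases: for $i=0$ we have $U_0=0$, so the right side is $0$, and the left side is $\alpha_{2q}$, which is $0$ by Theorem \ref{theorem4.13}; for $i=1$ we have $U_1(Y)=Y$, so the right side is $Y\cdot(D_{40q^2+1})=T_7(D_{40q^2+1})$, which equals $\alpha_{2q-1}$ by part (2) of Theorem \ref{theorem4.14}, matching the left side.

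For the inductive step, suppose the formula holds for indices $i$ and $i+1$, where $0\le i$ and $i+1\le 2q-1$ (so that $2q-(i+1)>0$, keeping us in the range where part (1) of Theorem \ref{theorem4.14} applies at the index $2q-i-1$). Apply the recursion $U_{i+2}(Y)=Y\,U_{i+1}(Y)+U_i(Y)$ to the operator $Y=T_7$ acting on $D_{40q^2+1}$:
\[
U_{i+2}(Y)\cdot(D_{40q^2+1}) = Y\bigl(U_{i+1}(Y)\cdot(D_{40q^2+1})\bigr) + U_i(Y)\cdot(D_{40q^2+1}).
\]
By the inductive hypothesis this equals $T_7(\alpha_{2q-i-1}) + \alpha_{2q-i}$. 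Now since $0<2q-i-1<2q$, part (1) of Theorem \ref{theorem4.14} gives $T_7(\alpha_{2q-i-1}) = \alpha_{2q-i-2} + \alpha_{2q-i}$. Adding $\alpha_{2q-i}$ (in characteristic $2$) cancels the repeated term, leaving $\alpha_{2q-i-2} = \alpha_{2q-(i+2)}$, which is exactly the claimed formula at index $i+2$. This completes the induction.

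The only point requiring care—the potential main obstacle—is the boundary of the range: part (1) of Theorem \ref{theorem4.14} is only valid for indices strictly between $0$ and $2q$, so the induction as set up produces the identity for $i+2$ only when $2q-i-1>0$, i.e.\ $i+2\le 2q$. That is precisely the range claimed in the lemma, so the induction terminates exactly where it should and no separate argument at the top end is needed; one just has to check that the recursion is never applied at the forbidden index $2q$ itself. Everything else is the formal manipulation of the two matching three-term recursions over $Z/2$, which is routine.
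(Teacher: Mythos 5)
Your proof is correct and follows essentially the same route as the paper's: induction on $i$ using the matched three-term recursions for $U_n$ and for the $\alpha_j$ under $T_7$, anchored at the base cases $\alpha_{2q}=0$ and $\alpha_{2q-1}=Y\cdot D_{40q^2+1}$. Your explicit attention to where Theorem \ref{theorem4.14}(1) is applicable is a welcome (if minor) clarification of the paper's terser argument.
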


\begin{proof}
If $i\le 2q-2$, Theorem \ref{theorem4.14} and the recursion in Definition \ref{def4.15} give:
\begin{enumerate}
\item[(1)] $\alpha_{2q-i-2}=Y(\alpha_{2q-i-1})+\alpha_{2q-i}$.
\item[(2)] $U_{i+2}(Y)=Y\cdot U_{i+1}(Y)+U_{i}(Y)$.
\end{enumerate}
So, by induction on $i$, it suffices to show that $\alpha_{2q}$ and $\alpha_{2q-1}$ are $U_{0}(Y)(D_{40q^{2}+1})$ and $U_{1}(Y)(D_{40q^{2}+1})$.  But Lemma \ref{lemma4.8} and Theorem \ref{theorem4.14} show that $\alpha_{2q}=0$ and $\alpha_{2q-1}=Y\cdot D_{40q^{2}+1}$. 
\qed
\end{proof}

\begin{theorem}
\label{theorem4.17}
As $Z/2[Y]$-module, $DI(q)$ is cyclic with generator $\alpha_{2q-1}=Y\cdot D_{40q^{2}+1}$.
\end{theorem}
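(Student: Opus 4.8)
The plan is to show directly that $DI(q)$, which by Theorem \ref{theorem4.13} is spanned by $\alpha_{0},\dots,\alpha_{2q-1}$, is also spanned by the elements $Y^{j}\cdot D_{40q^{2}+1}$ for $j=0,1,\dots,2q-1$, and that these lie in the $Z/2[Y]$-submodule generated by $\alpha_{2q-1}=Y\cdot D_{40q^{2}+1}$. The main input is Lemma \ref{lemma4.16}, which already expresses $\alpha_{2q-i}=U_{i}(Y)\cdot(D_{40q^{2}+1})$ for $0\le i\le 2q$; so the span of the $\alpha_{i}$ equals the span of the $U_{i}(Y)(D_{40q^{2}+1})$, $0\le i\le 2q$. (Note $\alpha_{0}$ itself is $\alpha(C^{0})$, but $C^{0}=e$, and $\alpha(e)=D=D_{1}$; since $D=D_{1}=\alpha_{0}$ is a separate generator we must make sure it is accounted for — but in fact $\alpha_{0}$ corresponds to $i=2q$ in Lemma \ref{lemma4.16}, where $U_{2q}(Y)=U_{q}(Y)^{2}=Y^{2q}$, so $\alpha_{0}=Y^{2q}\cdot D_{40q^{2}+1}$ automatically lies in the cyclic module generated by $Y\cdot D_{40q^{2}+1}$.)

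First I would record that by Definition \ref{def4.15} each $U_{i}(Y)$ is divisible by $Y$, so every $U_{i}(Y)(D_{40q^{2}+1})$ lies in $Y\cdot Z/2[Y]\cdot D_{40q^{2}+1}=Z/2[Y]\cdot(Y\cdot D_{40q^{2}+1})=Z/2[Y]\cdot\alpha_{2q-1}$. Hence $DI(q)$, being spanned by these elements, is contained in the $Z/2[Y]$-submodule generated by $\alpha_{2q-1}$. Conversely, $\alpha_{2q-1}=Y\cdot D_{40q^{2}+1}$ is itself in $DI(q)$ (it equals $\alpha_{2q-1}$), and since $DI(q)$ is stable under $Y=T_{7}$ by Theorem \ref{theorem4.14}(1), the whole submodule $Z/2[Y]\cdot\alpha_{2q-1}$ lies in $DI(q)$. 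This gives equality, so $DI(q)$ is the cyclic $Z/2[Y]$-module generated by $\alpha_{2q-1}$, which is the assertion.

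The one point that needs a little care — and the step I expect to be the (mild) obstacle — is making sure the generating set is complete, i.e.\ that the elements $U_{i}(Y)(D_{40q^{2}+1})$ for $0\le i\le 2q$ really do span all of $DI(q)$ and not just a proper subspace. This is where one invokes Theorem \ref{theorem4.13}: $\alpha_{0},\dots,\alpha_{2q-1}$ span $DI(q)$, and Lemma \ref{lemma4.16} rewrites each $\alpha_{i}$ (for $1\le i\le 2q$, hence in particular $1\le i\le 2q-1$, plus $\alpha_{0}$ via $i=2q$) as $U_{2q-i}(Y)(D_{40q^{2}+1})$. Since the index $2q-i$ ranges over $\{1,\dots,2q\}$ as $i$ ranges over $\{0,\dots,2q-1\}$, the spanning set $\{U_{j}(Y)(D_{40q^{2}+1}):1\le j\le 2q\}$ coincides with a spanning set of $DI(q)$. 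As each such element is a $Z/2[Y]$-multiple of $Y\cdot D_{40q^{2}+1}$ (using $Y\mid U_{j}$), the submodule they span — namely $DI(q)$ — is contained in, hence equal to, $Z/2[Y]\cdot(Y\cdot D_{40q^{2}+1})$. This completes the argument; no further computation is required.
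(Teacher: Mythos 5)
Your proof is correct and follows essentially the same route as the paper: both rely on Lemma \ref{lemma4.16} to write $\alpha_{2q-i}=U_{i}(Y)\cdot D_{40q^{2}+1}$ and then use the fact that $Y$ divides each $U_{i}(Y)$. Your write-up merely makes explicit the index bookkeeping and the (easy) reverse inclusion via $Y$-stability of $DI(q)$, which the paper leaves implicit.
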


\begin{proof}
$\alpha_{2q-i}=U_{i}(Y)\cdot D_{40q^{2}+1}$ for $1\le i\le 2q$. Since $Y$ divides each $U_{i}(Y)$ we're done.
\qed
\end{proof}

\begin{theorem}
\label{theorem4.18}
$Y^{2q-1}(\alpha_{2q-1})=D$, while $Y^{2q}(\alpha_{2q-1})=0$. Also, $DI(q)$ has dimension $2q$ over $Z/2$ and is isomorphic as $Z/2[Y]$-module with $Z/2[Y]/Y^{2q}$.
\end{theorem}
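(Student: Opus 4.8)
The plan is to combine the cyclicity result of Theorem \ref{theorem4.17} with the polynomial identities for the $U_n$ from Definition \ref{def4.15}, and then count dimensions. First I would record the two explicit evaluations. Taking $i=2q$ in Lemma \ref{lemma4.16} gives $\alpha_0 = U_{2q}(Y)\cdot D_{40q^2+1}$; since $U_{2q}=U_q^2$ and $U_q(Y)=Y^q$, this says $D=\alpha_0 = Y^{2q}\cdot D_{40q^2+1}$. On the other hand $\alpha_{2q-1}=Y\cdot D_{40q^2+1}$ by Theorem \ref{theorem4.14}(2) (or Lemma \ref{lemma4.16} with $i=1$), so $Y^{2q-1}(\alpha_{2q-1}) = Y^{2q}(D_{40q^2+1}) = D$, which is nonzero. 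For the vanishing statement, I would invoke Lemma \ref{lemma2.4}: from $D^{15}+G^4D^3+G^3=0$ one sees $D$ has degree $15$ over $Z/2(G)$, hence is transcendental over $Z/2$, so $D\ne 0$ in $Z/2[[x]]$ --- but more directly $D=x+\cdots$ is visibly nonzero. Then $Y^{2q}(\alpha_{2q-1}) = Y^{2q}\cdot(Y\cdot D_{40q^2+1}) = Y\cdot(Y^{2q}D_{40q^2+1}) = Y(D) = T_7(D)$. Now $T_7(D)=T_7(D_1)=0$: indeed $D_1 = pr(J_1)$ and $J_1 = F$, and the space spanned by the $F^k$ ($k$ odd) is stabilized by all $T_p$, $p\ne 2$, with $T_7(F)$ having no $x^1$ term, so $T_7(D)=0$ (this is recorded in the remark after Theorem \ref{theorem1.18} and reappears as the last line of Theorem \ref{theorem2.24}). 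Hence $Y^{2q}(\alpha_{2q-1})=0$.

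Next I would pin down the module structure. By Theorem \ref{theorem4.17}, $DI(q)=Z/2[Y]\cdot \alpha_{2q-1}$, so there is a surjection $Z/2[Y]\to DI(q)$, $P(Y)\mapsto P(Y)(\alpha_{2q-1})$. Since $Y^{2q}(\alpha_{2q-1})=0$, this factors through $Z/2[Y]/Y^{2q}$, giving a surjection $Z/2[Y]/Y^{2q}\twoheadrightarrow DI(q)$; in particular $\dim_{Z/2}DI(q)\le 2q$. For the reverse inequality I would show that $\alpha_{2q-1}, Y\alpha_{2q-1},\dots,Y^{2q-1}\alpha_{2q-1}$ are linearly independent over $Z/2$ --- equivalently that the annihilator of $\alpha_{2q-1}$ in $Z/2[Y]$ is exactly $(Y^{2q})$. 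Suppose $P(Y)(\alpha_{2q-1})=0$ with $P\ne 0$ of degree $<2q$; write $P(Y)=Y^e Q(Y)$ with $Q(0)=1$ and $e+\deg Q<2q$, so $e\le 2q-1$. Then $0 = Y^{2q-1-e}\cdot P(Y)(\alpha_{2q-1}) = Y^{2q-1}Q(Y)(\alpha_{2q-1})$. Using Theorem \ref{theorem4.18}'s own first identity $Y^{2q-1}\alpha_{2q-1}=D$ and $Y^{2q}\alpha_{2q-1}=0$, every term of $Y^{2q-1}Q(Y)\alpha_{2q-1}$ of $Y$-degree $\ge 2q$ dies, leaving only the constant term of $Q$, so $Y^{2q-1}Q(Y)(\alpha_{2q-1}) = Q(0)\cdot D = D\ne 0$, a contradiction. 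Hence $\dim_{Z/2}DI(q)=2q$ and the surjection $Z/2[Y]/Y^{2q}\twoheadrightarrow DI(q)$ is an isomorphism of $Z/2[Y]$-modules.

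The only genuinely delicate point is the claim $T_7(D)=0$, i.e.\ the identification $Y(D)=0$; everything else is bookkeeping with the recursion $U_{n+2}=YU_{n+1}+U_n$, the doubling formula $U_{2n}=U_n^2$ hence $U_q(Y)=Y^q$, and Theorems \ref{theorem4.14} and \ref{theorem4.17}. The fact $T_7(D)=0$ follows because $D = F + H$ with $H = F(x^{25})$ and $T_7$ kills $H$'s contribution at low degree, or more cleanly because $D = pr(J_1) = pr(F)$ with $T_7$ stabilizing the span of the $F^k$ and $T_7(F)$ beginning in degree $>1$ --- I would cite the remark following Theorem \ref{theorem1.18} and Theorem \ref{theorem2.24} rather than recompute. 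One small alternative worth noting: the vanishing $Y^{2q}\alpha_{2q-1}=0$ also drops straight out of $\alpha_0=Y^{2q}D_{40q^2+1}$ together with $Y(\alpha_0)=Y(D)=T_7(D)=0$, so the whole theorem reduces to (a) the arithmetic of $U_{2q}=U_q^2=Y^{2q}$, (b) the already-proved $\alpha_{2q-1}=YD_{40q^2+1}$ and $\alpha_{2q}=0$, (c) $T_7(D)=0$, and (d) $\dim DI(q)\le 2q$ from cyclicity plus $Y^{2q}$-torsion and $\dim DI(q)\ge 2q$ from the independence argument above.
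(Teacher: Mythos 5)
Your proposal is correct and follows essentially the same route as the paper: evaluate $Y^{2q-1}\alpha_{2q-1}=Y^{2q}D_{40q^2+1}=U_{2q}(Y)\cdot D_{40q^2+1}=\alpha_0=D$ via Lemma \ref{lemma4.16} and $U_{2q}=U_q^2=Y^{2q}$, note $Y^{2q}\alpha_{2q-1}=T_7(D)=0$, conclude the annihilator of $\alpha_{2q-1}$ is exactly $(Y^{2q})$, and invoke Theorem \ref{theorem4.17}. You merely spell out two steps the paper leaves implicit (the justification of $T_7(D)=0$ via Theorem \ref{theorem2.24}, and the verification that the annihilator is no larger than forced), both of which are fine.
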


\begin{proof}
$Y^{2q-1}(\alpha_{2q-1})= Y^{2q}(D_{40q^{2}+1})=U_{2q}(Y)\cdot D_{40q^{2}+1}$. By Lemma \ref{lemma4.16} this is $\alpha_{0}=D$. So $Y^{2q}(\alpha_{2q-1})= T_{7}(D)=0$. It follows that the annihilator of $\alpha_{2q-1}$ in $Z/2[Y]$ is the ideal $(Y^{2q})$. Theorem \ref{theorem4.17} then gives the final assertions.
\qed
\end{proof}

\begin{theorem}
\label{theorem4.19}
If $p$ is as in Theorem \ref{theorem4.10} then $T_{p}$ annihilates $DI(q)$. In particular, $X=T_{3}$ annihilates $DI(q)$.
\end{theorem}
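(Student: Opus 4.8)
The plan is to reduce the statement to the two cases covered by the earlier theorems on the action of Hecke operators on $\theta$-series. Theorem \ref{theorem4.19} concerns primes $p$ "as in Theorem \ref{theorem4.10}", i.e.\ primes with $\chi(p)=1$ that are inert in $Z[\sqrt{-10}]$. For such a $p$, Theorem \ref{theorem4.10} already tells us that the integral Hecke operator $T_{p}:Z[[x]]\rightarrow Z[[x]]$ of Definition \ref{def4.9} annihilates every $\theta(R)$. First I would reduce mod~2: by the remark following Definition \ref{def4.9}, the mod~2 reduction of this integral $T_{p}$ is the usual formal mod~2 Hecke operator $T_{p}:Z/2[[x]]\rightarrow Z/2[[x]]$, and the mod~2 reduction of $\theta(R)$ is by definition $\alpha(R)$. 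Hence $T_{p}(\alpha(R))=0$ for every Gauss-class $R$. Since the $\alpha(R)$ span $DI(q)$ (Definition \ref{def4.12}), $T_{p}$ annihilates all of $DI(q)$.

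For the final assertion, that $X=T_{3}$ annihilates $DI(q)$, I would check that $3$ is indeed a prime of the type considered in Theorem \ref{theorem4.10}: one has $\chi(3)=1$ by Definition \ref{def2.8}, and $3$ is inert in $Z[\sqrt{-10}]$ because $-10$ is not a square mod~$3$ (indeed $-10\equiv 2\bmod 3$, which is a non-residue). So the preceding paragraph applies with $p=3$, giving $T_{3}(DI(q))=0$.

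I do not expect any real obstacle here: all the substantive work — the correspondence $I\mapsto pI$ between ideals of norm $n$ and norm $p^{2}n$ in a fixed Gauss-class, the absence of ideals of norm $pn$ when $(n,p)=1$, and the sign $\left(\frac{-10}{p}\right)=-1$ that makes the two terms of $T_{p}$ cancel — is already packaged in Theorem \ref{theorem4.10}. The only points to verify are the arithmetic facts that $\chi(3)=1$ and that $3$ is inert, both of which are immediate. One could alternatively note that since $DI(q)$ is cyclic as $Z/2[Y]$-module with $Y=T_{7}$ (Theorem \ref{theorem4.17}) and $T_{3}$ commutes with $T_{7}$, it would suffice to know $T_{3}$ kills the single generator $\alpha_{2q-1}$; but the direct argument via $\theta$-series is cleaner and needs no commutation argument.
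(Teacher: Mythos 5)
Your proof is correct and follows the same route as the paper, which simply declares the result immediate from Theorem \ref{theorem4.10}; you have merely spelled out the mod~2 reduction step and verified that $3$ is inert (since $-10\equiv 2\bmod 3$ is a non-residue) with $\chi(3)=1$. No issues.
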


\begin{proof}
This is immediate from Theorem \ref{theorem4.10}
\qed
\end{proof}

\section{The action of \bm{$T_{3}$} and \bm{$T_{7}$} on \bm{$W_{a}$}}
\label{section5}

This section is the counterpart to section 4 of \cite{1}. In that paper we defined certain subspaces $W1$ and $W5$ of $Z/2[[x]]$; see section \ref{section1} of the present paper for a summary.  In section 4 of \cite{1} we made $W5$ into a $Z/2[X,Y]$-module with $X$ and $Y$ acting by $T_{7}$ and $T_{13}$, and we showed the existence of an ``adapted basis'' $m_{i,j}$ where $D=\sum_{n>0,\ (n,6)=1}x^{n^{2}}$ and $m_{0,0}=D^{5}$. Here we'll derive a similar result with $W5$, $T_{7}$ and $T_{13}$ replaced by $W_{a}$, $T_{3}$ and $T_{7}$.

Since $\chi(3)=\chi(7)=1$, the commuting operators $T_{3}$ and $T_{7}:Z/2[[x]]\rightarrow Z/2[[x]]$ stabilize $W_{a}$; see Corollary \ref{corollary2.20}. So one can make $W_{a}$ into a $Z/2[X,Y]$-module with $X$ and $Y$ acting by  $T_{3}$ and $T_{7}$. We shall filter $W_{a}$ by finite-dimensional $Z/2[X,Y]$-stable subspaces, $W_{a}(q)$, $q$ running over the powers of $2$.

In the paragraph following Definition \ref{def2.27} we constructed an isomorphism between $W_{a}$ and a certain subspace $V^{\prime}$ of $Z/2[w]$. Using this identification we may speak of the ``$w$-degree'' of an element of $W_{a}$. Theorem \ref{theorem2.24} shows that the maps $X$ and $Y:W_{a}\rightarrow W_{a}$ lower the $w$-degree.

\begin{definition}
\label{def5.1}
$W_{a}(q)$ is the subspace of $W_{a}$, of $Z/2$-dimension $8q^{2}$, consisting of elements of $w$-degree $<40q^{2}$. (Since $X$ and $Y$ lower the $w$-degree they stabilize $W_{a}(q)$.)
\end{definition}

\begin{theorem}
\label{theorem5.2}
The space $DI(q)$ of Definition \ref{def4.12}, of $Z/2$-dimension $2q$, is a $Z/2[X,Y]$-submodule of $W_{a}(q)$ annihilated by $X$.
\end{theorem}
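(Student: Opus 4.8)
The plan is to read this off from Sections \ref{section2} and \ref{section4} essentially for free. Writing $X=T_{3}$ and $Y=T_{7}$ as in Definition \ref{def5.1}, there are exactly three things to verify: that $DI(q)$ is contained in $W_{a}(q)$, that it is stable under $Y$, and that $X$ kills it. The last two need no new work. Theorem \ref{theorem4.14}(1) already says $Y$ maps $DI(q)$ into itself, and Theorem \ref{theorem4.19} says $X$ annihilates $DI(q)$; together with the automatic closure under addition and under the scalars $Z/2$, these make $DI(q)$ a $Z/2[X,Y]$-submodule of $Z/2[[x]]$ on which $X$ acts as $0$, and its $Z/2$-dimension is $2q$ by Theorem \ref{theorem4.18}. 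So the only real content is the inclusion $DI(q)\subseteq W_{a}(q)$, i.e.\ that the $\alpha(R)$ lie not merely in $Z/2[[x]]$ but in $W_{a}$, with every $D_{k}$ occurring having $w$-degree (index) $<40q^{2}$.

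First I would pin down the generator. Since $40q^{2}$ is divisible by $20$, the index $40q^{2}+1$ is $\equiv 1\mod{20}$, so $D_{40q^{2}+1}$ is one of the basis vectors of $W_{a}$ (its $w$-degree is $40q^{2}+1$, so it is not itself in $W_{a}(q)$, but that is harmless). Now $\chi(7)=1$, so by Corollary \ref{corollary2.20} the operator $Y=T_{7}$ carries $W_{a}$ to $W_{a}$; and since $7\equiv 7\mod{10}$, Theorem \ref{theorem2.24} shows $T_{7}$ strictly lowers the $w$-degree of each basis vector, hence of every element of $W_{a}$. By Theorem \ref{theorem4.14}(2), $\alpha_{2q-1}=T_{7}(D_{40q^{2}+1})$; therefore $\alpha_{2q-1}\in W_{a}$ and its $w$-degree is $\le 40q^{2}$. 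But a nonzero element of $W_{a}$ has $w$-degree $\equiv 1,3,7,9\mod{20}$, while $40q^{2}$ is divisible by $20$, so the $w$-degree of $\alpha_{2q-1}$ is in fact $<40q^{2}$, i.e.\ $\alpha_{2q-1}\in W_{a}(q)$.

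Next, Theorem \ref{theorem4.17} says $DI(q)$ is the cyclic $Z/2[Y]$-module generated by $\alpha_{2q-1}$, so it is spanned over $Z/2$ by the iterates $Y^{j}\alpha_{2q-1}$. Since $Y$ maps $W_{a}(q)$ into itself (Definition \ref{def5.1}, or again Corollary \ref{corollary2.20} together with Theorem \ref{theorem2.24}), each $Y^{j}\alpha_{2q-1}$ lies in $W_{a}(q)$, whence $DI(q)\subseteq W_{a}(q)$. Combining this with the two observations of the first paragraph --- $Y$-stability (Theorem \ref{theorem4.14}(1)) and annihilation by $X$ (Theorem \ref{theorem4.19}) --- and with the dimension count $2q$ (Theorem \ref{theorem4.18}) gives the theorem.

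I do not expect a genuine obstacle: the statement is a repackaging of Sections \ref{section2} and \ref{section4}. The one spot that deserves a moment's care is the passage from ``$w$-degree $\le 40q^{2}$'' to ``$w$-degree $<40q^{2}$'' for $\alpha_{2q-1}$, which hinges exactly on the fact that $40q^{2}$ is not $\equiv 1,3,7,9\mod{20}$ and so cannot be the $w$-degree of any nonzero element of $W_{a}$; everything else is bookkeeping.
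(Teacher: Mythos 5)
Your proof is correct and follows essentially the same route as the paper's: reduce everything to showing the cyclic $Z/2[Y]$-generator $\alpha_{2q-1}=Y\cdot D_{40q^{2}+1}$ lies in $W_{a}(q)$, which follows because $Y$ lowers the $w$-degree, and then quote Theorems \ref{theorem4.17}, \ref{theorem4.18} and \ref{theorem4.19} for the cyclicity, dimension and annihilation by $X$. Your extra remark on passing from $w$-degree $\le 40q^{2}$ to $<40q^{2}$ via the congruence condition on indices is a point the paper leaves implicit, but it is the same argument.
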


\begin{proof}
By Theorems \ref{theorem4.17}, \ref{theorem4.18} and \ref{theorem4.19}, $DI(q)$ has $Z/2$-dimension $2q$, is annihilated by $X$ and as $Z/2[Y]$-module is cyclic generated by $\alpha_{2q-1}=Y\cdot D_{40q^{2}+1}$. So it suffices to show that $\alpha_{2q-1}$ is in $W_{a}(q)$. But $Y$ lowers the $w$-degree.
\qed
\end{proof}

We'll use the results of section \ref{section3} to show that the kernel of $X:W_{a}(q)\rightarrow W_{a}(q)$ is precisely $DI(q)$. Note that $g(2q)=4q^{2}$. So in the language of section \ref{section3}, $\langle 0, 2q\rangle=w^{40q^{2}+1}$, and so the $w$-degree of $(0,2q)^{*}$ is $40q^{2}+1$.

\begin{lemma}
\label{lemma5.3}
Suppose $f\ne 0$ is in $W_{a}(q)$, with $Xf = 0$. Write $f$ as $(a,b)^{*}+$ a sum of earlier $D_{i}$, as in section \ref{section3}. Then $a=0$, and $0\le b< 2q$.
\end{lemma}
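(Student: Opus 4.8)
The plan is to use the coding of the basis $\{D_k\}$ of $W_a$ developed in section~\ref{section3} together with the two ``lowering'' properties of $T_3$ recorded in Theorem~\ref{theorem3.5}. Write the nonzero element $f\in W_a(q)$ with $Xf=0$ in the form $f = (a,b)^* + (\text{a sum of earlier } D_i)$, where $(a,b)^*$ is the \emph{latest} basis element occurring in $f$ (this is well defined since our ordering is total). The goal is to force $a=0$ and then $b<2q$.

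First I would prove $a=0$. Suppose $a>0$. By part~(2) of Theorem~\ref{theorem3.5}, $T_3(a,b)^* = (a-1,b)^* + (\text{earlier } D_j)$, and by part~(1) every basis element $(c,d)^*$ occurring in $T_3$ applied to any of the ``earlier'' $D_i$ in $f$ satisfies $c+d$ strictly less than the $(c+d)$-value of that $D_i$, hence strictly less than $a+b$. Since $(a-1,b)^*$ has $(c+d)$-value $a-1+b$, it is strictly later than every basis element that can arise from those correction terms (all of which have $c+d \le a-1+b$, and among those with $c+d=a-1+b$ we must check the ordering — but those come from $D_i$ strictly earlier than $(a,b)^*$ with $c+d\le a+b$, and $T_3$ strictly lowers $c+d$ on a basis vector unless... here I would invoke (1) directly: every term from the corrections has $c+d<a+b$, and if $c+d=a-1+b$ there is still the possibility of cancellation, but $(a-1,b)^*$ is the unique term of its shape produced by the leading term, so it cannot be cancelled). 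Hence $(a-1,b)^*$ survives in $Xf$, contradicting $Xf=0$. Therefore $a=0$.

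It remains to show $0\le b<2q$. Since $f\in W_a(q)$, every $D_i$ occurring in $f$ has $w$-degree $<40q^2$; in particular the latest one, $(0,b)^*$, has $w$-degree $<40q^2$. By the computation preceding Lemma~\ref{lemma5.3} (using $g(2q)=4q^2$), the element $(0,2q)^*$ equals $\langle 0,2q\rangle = w^{40q^2+1}$, i.e.\ it has $w$-degree $40q^2+1$. By Theorem~\ref{theorem3.6}, if $(0,b)^*=(c,d)^*$ precedes $(0,2q)^*$ then the corresponding $w$-exponents satisfy the same inequality, so $w$-degree of $(0,b)^*$ is $<40q^2+1$; conversely if $(0,2q)^*$ precedes or equals $(0,b)^*$ then — since among elements $(0,\cdot)^*$ the ordering is just by the second coordinate — we would have $b\ge 2q$, and monotonicity (Theorem~\ref{theorem3.6} again, or directly the fact that $\langle 0,b\rangle = w^{40m+1}$ or $w^{40m+5}$-type formula with $m=g$ of something increasing in $b$) would force the $w$-degree of $(0,b)^*$ to be $\ge 40q^2+1$, contradicting membership in $W_a(q)$. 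Hence $b<2q$, as claimed.

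The main obstacle is the bookkeeping in the first step: ruling out cancellation of the leading term $(a-1,b)^*$ of $Xf$. The point to nail down is that \emph{no other} basis vector produced by $T_3$ applied to $f$ — neither from the leading $(a,b)^*$ itself (parts (1)–(2) say the other terms there are strictly earlier) nor from the correction terms (part (1) forces $c+d<a+b$, and one checks that among those with $c+d = a+b-1$ none can coincide with $(a-1,b)^*$, because such a term would have to come from a $D_i$ strictly earlier than $(a,b)^*$ with $c+d\le a+b$, and $T_3$ does not raise $c+d$) — can equal $(a-1,b)^*$. Once that is clean, the argument is just the total-ordering/degree manipulation of section~\ref{section3}.
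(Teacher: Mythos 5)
Your proof is correct and follows the same route as the paper: Theorem~\ref{theorem3.5}(2) forces the leading term $(a-1,b)^{*}$ to survive in $Xf$ when $a>0$, and Theorem~\ref{theorem3.6} together with the $w$-degree $40q^{2}+1$ of $(0,2q)^{*}$ forces $b<2q$. The one loose end you flag --- ruling out cancellation of $(a-1,b)^{*}$ by a correction term of index-sum $a+b-1$ --- is settled by applying Theorem~\ref{theorem3.5}(2) also to the earlier terms $(c,d)^{*}$ with $c+d=a+b$ (these have $d<b$, hence $c>a>0$, and their leading terms $(c-1,d)^{*}$ precede $(a-1,b)^{*}$); the paper leaves exactly this bookkeeping implicit as well.
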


\begin{proof}
If $a>0$, then by Theorem \ref{theorem3.5}, $Xf =(a-1,b)^{*}+$ a sum of earlier $D_{i}$, and so cannot be $0$. So $f=(0,b)^{*}+$ a sum of earlier $D_{i}$. By Theorem \ref{theorem3.6}, the $w$-degree of $f$ is the $w$-degree of $(0,b)^{*}$. If $b\ge 2q$, then this last $w$-degree is $\ge$ the $w$-degree, $40q^{2}+1$, of $(0,2q)^{*}$. So $f$ cannot be in $W_{a}(q)$.
\qed
\end{proof}

\begin{theorem}
\label{theorem5.4}
The kernel of $X:W_{a}(q)\rightarrow W_{a}(q)$ is $DI(q)$.
\end{theorem}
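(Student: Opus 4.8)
The plan is to prove that $\ker(X:W_a(q)\to W_a(q)) = DI(q)$ by a dimension-counting sandwich. By Theorem \ref{theorem5.2} we already know $DI(q)\subseteq W_a(q)$ and $X$ annihilates $DI(q)$, so $DI(q)\subseteq \ker(X:W_a(q)\to W_a(q))$; by Theorem \ref{theorem4.18} the space $DI(q)$ has dimension exactly $2q$ over $Z/2$. Hence it suffices to show that $\dim_{Z/2}\ker(X:W_a(q)\to W_a(q)) \le 2q$, and equality of dimensions forces the two spaces to coincide.

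To bound the kernel, I would use the ``leading term'' structure from section \ref{section3}. Every nonzero $f\in W_a$ can be written uniquely as $(a,b)^*$ plus a sum of $D_i$ earlier in the precedence order; call $(a,b)^*$ the \emph{leading term} of $f$. By Lemma \ref{lemma5.3}, if $f\ne 0$ lies in $W_a(q)$ and $Xf=0$, then the leading term of $f$ is $(0,b)^*$ with $0\le b<2q$. Now I claim that a nonzero subspace of $W_a(q)$ annihilated by $X$ can have dimension at most the number of admissible leading terms, namely the $2q$ values $(0,b)^*$ with $0\le b<2q$. This is the standard echelon argument: if $Z\subseteq\ker X$ were a subspace of dimension $>2q$, pick a basis of $Z$ and row-reduce with respect to the precedence order on the basis $\{D_i\}$ of $W_a$; the reduced basis vectors have distinct leading terms, each of which must (by Lemma \ref{lemma5.3}) be one of the $2q$ elements $(0,b)^*$, $0\le b<2q$ --- a contradiction. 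Therefore $\dim_{Z/2}\ker(X:W_a(q)\to W_a(q))\le 2q$.

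Combining the two inequalities: $2q = \dim DI(q) \le \dim\ker(X:W_a(q)\to W_a(q)) \le 2q$, so all inequalities are equalities and $DI(q) = \ker(X:W_a(q)\to W_a(q))$, as desired.

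The only delicate point is the echelon/row-reduction argument, and even that is routine once one is careful that the precedence order on $N\times N$ (equivalently on the monomial basis $\{D_i\}$ of $W_a$) is a genuine total order, so that every nonzero element has a well-defined unique leading term and Gaussian elimination with respect to it behaves as expected. I expect no real obstacle here --- Lemma \ref{lemma5.3} has already isolated exactly which leading terms are possible for elements of the kernel, and counting them gives the bound $2q$ that matches $\dim DI(q)$ on the nose.
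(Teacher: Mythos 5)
Your proof is correct and is essentially the paper's own argument: the paper likewise combines Lemma \ref{lemma5.3} (giving the bound $\dim\ker \le 2q$ via the leading-term count) with Theorem \ref{theorem5.2} (giving the $2q$-dimensional subspace $DI(q)$ inside the kernel). You have merely spelled out the echelon step that the paper leaves implicit.
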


\begin{proof}
By Lemma \ref{lemma5.3} this kernel has dimension $\le 2q$, and we use Theorem \ref{theorem5.2}.
\qed
\end{proof}

\begin{corollary}
\label{corollary5.5}
$DI(1)\subset DI(2)\subset DI(4)\subset \ldots$, and the kernel of $X:W_{a}\rightarrow W_{a}$ is the union, $DI$ of the $DI(q)$.
\end{corollary}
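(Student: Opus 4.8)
The plan is to deduce Corollary \ref{corollary5.5} directly from Theorem \ref{theorem5.4}, using the facts about the spaces $DI(q)$ and $W_{a}(q)$ already in hand. First I would establish the chain of inclusions $DI(1)\subset DI(2)\subset\cdots$. Fix a power $q$ of $2$. By Theorem \ref{theorem5.2}, $DI(q)$ is contained in $W_{a}(q)$, and since $W_{a}(q)\subset W_{a}(2q)$ (an element of $w$-degree $<40q^{2}$ certainly has $w$-degree $<40(2q)^{2}$), we get $DI(q)\subset W_{a}(2q)$. By Theorem \ref{theorem5.4} applied with $2q$ in place of $q$, the kernel of $X:W_{a}(2q)\to W_{a}(2q)$ is exactly $DI(2q)$. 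But every element of $DI(q)$ is annihilated by $X$ (Theorem \ref{theorem4.19}, or the $X$-annihilation clause of Theorem \ref{theorem5.2}), so $DI(q)$ lies in that kernel, i.e.\ $DI(q)\subset DI(2q)$. Iterating over all powers of $2$ gives the ascending chain.

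Next I would identify the union $DI:=\bigcup_{q}DI(q)$ with $\ker(X:W_{a}\to W_{a})$. For the inclusion $DI\subset\ker X$: each $DI(q)$ is annihilated by $X$, and $X$ on $W_{a}$ restricts to $X$ on each $W_{a}(q)$, so every element of the union is killed by $X$. For the reverse inclusion, suppose $f\in W_{a}$ with $Xf=0$. Since $W_{a}=\bigcup_{q}W_{a}(q)$ — every element of $W_{a}$ is a finite $Z/2$-combination of the $D_{k}$, hence has some finite $w$-degree, hence lies in $W_{a}(q)$ for $q$ large enough — we may pick $q$ with $f\in W_{a}(q)$. Then $f\in\ker(X:W_{a}(q)\to W_{a}(q))$, which by Theorem \ref{theorem5.4} equals $DI(q)$, so $f\in DI(q)\subset DI$. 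This gives $\ker(X:W_{a}\to W_{a})=DI$, as claimed.

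There is really no serious obstacle here; the corollary is a routine passage-to-the-limit consequence of Theorem \ref{theorem5.4} together with the $X$-annihilation of $DI(q)$. The only point requiring a word of care is the compatibility of the various truncations: one must note that $X$ commutes with the inclusions $W_{a}(q)\hookrightarrow W_{a}(2q)\hookrightarrow W_{a}$ (immediate, since $X=T_{3}$ is a single fixed operator on $Z/2[[x]]$ and the $W_{a}(q)$ are genuine subspaces stabilized by it, by Definition \ref{def5.1}), and that $W_{a}$ is the increasing union of the $W_{a}(q)$. Given those observations the proof is a two-line diagram chase.

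\begin{proof}
We first show $DI(q)\subset DI(2q)$ for every power $q$ of $2$. By Theorem \ref{theorem5.2}, $DI(q)\subset W_{a}(q)\subset W_{a}(2q)$, and by Theorem \ref{theorem4.19} every element of $DI(q)$ is annihilated by $X$. Hence $DI(q)$ is contained in the kernel of $X:W_{a}(2q)\to W_{a}(2q)$, which by Theorem \ref{theorem5.4} equals $DI(2q)$. This gives the ascending chain $DI(1)\subset DI(2)\subset DI(4)\subset\cdots$; let $DI$ denote its union.

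Since each $DI(q)$ is annihilated by $X$ and $X:W_{a}\to W_{a}$ restricts to $X$ on each $W_{a}(q)\supset DI(q)$, we have $DI\subset\ker(X:W_{a}\to W_{a})$. Conversely, if $f\in W_{a}$ with $Xf=0$, then $f$ is a finite $Z/2$-linear combination of the $D_{k}$, so $f$ has finite $w$-degree and therefore lies in $W_{a}(q)$ for $q$ sufficiently large. Then $f\in\ker(X:W_{a}(q)\to W_{a}(q))=DI(q)\subset DI$ by Theorem \ref{theorem5.4}. Hence $\ker(X:W_{a}\to W_{a})=DI$.
\qed
\end{proof}
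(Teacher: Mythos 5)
Your proof is correct and is exactly the argument the paper intends: the corollary is stated without proof as an immediate consequence of Theorem \ref{theorem5.4} together with the $X$-annihilation of $DI(q)$ and the fact that $W_{a}$ is the increasing union of the $W_{a}(q)$. Your write-up simply makes these routine details explicit.
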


\begin{theorem}
\label{theorem5.6}
The only elements of $W_{a}$ annihilated by $X$ and $Y$ are $0$ and $D$.
\end{theorem}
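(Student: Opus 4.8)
The plan is to combine the description of $\ker(X:W_a\to W_a)$ given by Corollary \ref{corollary5.5} with the structure of $DI(q)$ as a $Z/2[Y]$-module from Theorem \ref{theorem4.18}. Suppose $f$ in $W_a$ satisfies $Xf=0$ and $Yf=0$. By Corollary \ref{corollary5.5}, $f$ lies in some $DI(q)$. Now Theorem \ref{theorem4.18} says that $DI(q)$ is isomorphic as $Z/2[Y]$-module to $Z/2[Y]/(Y^{2q})$, with $\alpha_{2q-1}$ a cyclic generator; equivalently, every element of $DI(q)$ is $P(Y)\cdot\alpha_{2q-1}$ for a unique polynomial $P$ of degree $<2q$. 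The elements of $DI(q)$ killed by $Y$ are then exactly the scalar multiples of $Y^{2q-1}\alpha_{2q-1}$, which by the first assertion of Theorem \ref{theorem4.18} equals $D$. Hence $f$ is $0$ or $D$.

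First I would note that $D$ itself does lie in $W_a$ (it is $D_1$) and satisfies $Xf=Yf=0$: indeed $T_3(D)=T_3(D_1)=0$ by Theorem \ref{theorem2.26}, and $T_7(D)=T_7(D_1)=0$ — this last vanishing is recorded in the course of the proof of Theorem \ref{theorem4.18} ($Y^{2q}(\alpha_{2q-1})=T_7(D)=0$). So the statement is really about showing there is nothing else. For that, take $f\ne 0$ with $Xf=Yf=0$; by Corollary \ref{corollary5.5} choose $q$ with $f\in DI(q)$. Write $f=P(Y)\,\alpha_{2q-1}$ with $\deg P<2q$ using the cyclicity from Theorem \ref{theorem4.17}. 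From $Yf=0$ and the fact that the annihilator of $\alpha_{2q-1}$ in $Z/2[Y]$ is $(Y^{2q})$ (Theorem \ref{theorem4.18}), we get $Y\,P(Y)\in(Y^{2q})$, forcing $P(Y)=cY^{2q-1}$ for some $c\in Z/2$. Since $f\ne 0$, $c=1$, so $f=Y^{2q-1}\alpha_{2q-1}=D$ by Theorem \ref{theorem4.18}.

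There is essentially no obstacle here: all the work has been front-loaded into Section \ref{section4} (the $Z/2[Y]$-module structure of $DI(q)$) and into Theorem \ref{theorem5.4}/Corollary \ref{corollary5.5} (the identification $\ker X=DI=\bigcup_q DI(q)$). The only point requiring a word of care is that the answer should not depend on $q$: one should observe that the chain $DI(1)\subset DI(2)\subset\cdots$ is compatible with the cyclic generators, so that in each $DI(q)$ the unique nonzero element killed by $Y$ is the same element $D$. This is immediate from $Y^{2q-1}\alpha_{2q-1}=D$ holding for every $q$.
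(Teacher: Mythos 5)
Your proof is correct and follows the same route as the paper: reduce to some $DI(q)$ via Corollary \ref{corollary5.5}, then use the $Z/2[Y]$-module isomorphism $DI(q)\cong Z/2[Y]/(Y^{2q})$ from Theorem \ref{theorem4.18} to see that the kernel of $Y$ there is one-dimensional, spanned by $Y^{2q-1}\alpha_{2q-1}=D$. The paper's own proof is just a terser version of the same argument.
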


\begin{proof}
If $(X,Y)f=0$, $f$ is in $DI$ by Corollary \ref{corollary5.5}, and so is in some $DI(q)$. But $DI(q)$, as $Z/2[Y]$-module, is isomorphic to $Z/2[Y]/(Y^{2q})$. It follows that the kernel of $Y:DI(q)\rightarrow DI(q)$ has dimension $1$ over $Z/2$. 
\qed
\end{proof}

\begin{definition}
\label{def5.7}
$S_{m}$ is the subspace of $W_{a}$. of dimension $m(m+1)/2$, spanned over $Z/2$ by the $(a,b)^{*}$ with $a+b<m$.
\end{definition}

Note that $S_{0}=(0)$ while $S_{1}$ is spanned by $(0,0)^{*}=D$. So $X\cdot S_{1}=Y\cdot S_{1} = S_{0}$.

\begin{lemma}
\label{lemma5.8}
$X:W_{a}\rightarrow W_{a}$ is onto. In fact $X$ maps $S_{m+1}$ onto $S_{m}$.
\end{lemma}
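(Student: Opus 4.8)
The plan is to exploit the code $(a,b)^{*}$ of section \ref{section3} together with the precise "leading term" information in Theorem \ref{theorem3.5}(2). Concretely, part (2) of that theorem says that when $a>0$, $X(a,b)^{*}=(a-1,b)^{*}+\text{(earlier }D_j\text{)}$, so $X$ already behaves like a "shift down in the first coordinate" modulo earlier basis vectors. This is exactly the kind of triangularity that lets one prove surjectivity onto each $S_m$ by an induction on the precedence ordering. Since $S_{m+1}$ is spanned by those $(a,b)^{*}$ with $a+b<m+1$, and $X$ sends such a $(a,b)^{*}$ (with $a>0$) to $(a-1,b)^{*}+$ a sum of $(c,d)^{*}$ with $c+d<a+b\le m$, every term on the right lies in $S_m$; so $X(S_{m+1})\subseteq S_m$ is automatic, and the content is the reverse inclusion.

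**Key steps.** First I would record the trivial case: $X\cdot S_1=S_0=(0)$ since $X\cdot D=T_3(D)=0$ (this is noted already after Definition \ref{def5.7}), so the claim holds for $m=0$. Next, fix $m\ge 1$ and show every basis vector $(c,d)^{*}$ of $S_m$ (so $c+d<m$) lies in the image $X(S_{m+1})$, proceeding by induction on the position of $(c,d)$ in the precedence ordering. Given $(c,d)$ with $c+d<m$, the element $(c+1,d)^{*}$ satisfies $(c+1)+d<m+1$, hence lies in $S_{m+1}$, and by Theorem \ref{theorem3.5}(2), $X(c+1,d)^{*}=(c,d)^{*}+\sum (c',d')^{*}$ where each $(c',d')$ precedes $(c,d)$; in particular each such $(c',d')$ has $c'+d'\le c+d<m$, so each $(c',d')^{*}\in S_m$ and, being strictly earlier than $(c,d)$, lies in $X(S_{m+1})$ by the inductive hypothesis. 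Therefore $(c,d)^{*}=X(c+1,d)^{*}-\sum(c',d')^{*}\in X(S_{m+1})$, completing the induction. Thus $X(S_{m+1})=S_m$ for all $m$; letting $m\to\infty$ and using $W_a=\bigcup_m S_m$ (every $(a,b)^{*}$ lies in $S_{a+b+1}$) gives that $X:W_a\to W_a$ is onto.

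**Main obstacle.** There is no real analytic difficulty here — the argument is a formal triangularity induction — so the only thing to be careful about is the base case of the nested induction and the precedence bookkeeping: one must make sure that the "earlier $D_j$" appearing in $X(c+1,d)^{*}$ are genuinely earlier than $(c,d)$ itself (not merely earlier than $(c+1,d)$), which is exactly what Theorem \ref{theorem3.5}(2) delivers, and that the earliest basis vector of $S_m$, namely $(0,0)^{*}=D$, is handled: $D=X(1,0)^{*}+(\text{strictly earlier terms})$, but there are no basis vectors of $W_a$ earlier than $(0,0)$, so in fact $X(1,0)^{*}=D$ exactly, seeding the induction. Once this is in place, the statement follows immediately.
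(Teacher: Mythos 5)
Your proof is correct, but it follows a genuinely different route from the paper's. Both arguments start from the containment $X\cdot S_{m+1}\subset S_{m}$, which is immediate from Theorem \ref{theorem3.5}(1). The paper then argues by a dimension count: since $\dim S_{m+1}-\dim S_{m}=m+1$, it suffices to show the kernel of $X:S_{m+1}\rightarrow S_{m}$ has dimension at most $m+1$, and this is done by showing (via the argument of Lemma \ref{lemma5.3} together with Theorem \ref{theorem3.6}) that any nonzero kernel element is $(0,b)^{*}+{}$ a sum of earlier $D_{i}$ with $0\le b\le m$, so that at most $m+1$ such elements can be linearly independent. You instead run a direct triangularity induction along the precedence order, exhibiting an explicit preimage of each basis vector $(c,d)^{*}$ of $S_{m}$ from the relation $X(c+1,d)^{*}=(c,d)^{*}+{}$ earlier terms of Theorem \ref{theorem3.5}(2). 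Your version is more elementary and constructive: it uses only Theorem \ref{theorem3.5} and dispenses with Theorem \ref{theorem3.6} and the leading-term analysis of Lemma \ref{lemma5.3}. What the paper's version buys in exchange is the simultaneous control of the kernel of $X$ on $S_{m+1}$ (every nonzero kernel element has leading term $(0,b)^{*}$ with $b\le m$), which is exactly the information reused in Lemmas \ref{lemma5.10} and \ref{lemma5.11} and in the construction of the adapted basis; with your proof that would have to be extracted separately. The only point worth making explicit in your write-up is that the precedence order is a well-order on $N\times N$ (there are finitely many pairs with each fixed value of $c+d$), so the induction ``on the position of $(c,d)$'' is legitimate; you also handle the base case $(0,0)^{*}=D=X(1,0)^{*}$ correctly.
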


\begin{proof}
By Theorem \ref{theorem3.5}, $X\cdot S_{m+1}\subset S_{m}$, so it suffices to show that the kernel of $X: S_{m+1}\rightarrow S_{m}$ has dimension at most $m+1$. Suppose $f\ne 0$ is in this kernel. The proof of Lemma \ref{lemma5.3} shows that $f=(0,b)^{*}+$ a sum of earlier $D_{i}$, and that the $w$-degree of $f$ is the $w$-degree of $(0,b)^{*}$. But Theorem \ref{theorem3.6} tells us that every element of $S_{m+1}$ has $w$-degree $\le$ the $w$-degree of $(0,m)^{*}$. So $0\le b \le m$, and the result follows.
\qed
\end{proof}

\begin{theorem}
\label{theorem5.9}
$Y\cdot S_{m+1}\subset S_{m}$.
\end{theorem}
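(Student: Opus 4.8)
The plan is to prove $Y\cdot S_{m+1}\subseteq S_m$ by induction on $m$, exploiting that $X:=T_3$ and $Y:=T_7$ commute, that both strictly lower the $w$-degree (Theorem \ref{theorem2.24}), and that the coding of Section \ref{section3} is compatible with the $w$-degree (Theorem \ref{theorem3.6}). The base case $m=0$ is immediate: $S_1$ is spanned by $(0,0)^*=D$, and $Y\cdot D=T_7(D)=0$ by Theorem \ref{theorem5.6}, so $Y\cdot S_1=(0)=S_0$.

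For the inductive step, assume $Y\cdot S_m\subseteq S_{m-1}$, fix $f\in S_{m+1}$, and put $g=Yf$; the goal is $g\in S_m$. If $g=0$ there is nothing to prove, so assume $g\ne 0$ and let $(a,b)^*$ be the latest basis element occurring in $g$, so $g=(a,b)^*+(\text{a sum of earlier }D_i)$. Two inputs constrain $g$. First, $Xg=XYf=YXf$; by Theorem \ref{theorem3.5}(1) we have $Xf\in X\cdot S_{m+1}\subseteq S_m$, so the induction hypothesis gives $Xg\in S_{m-1}$, and hence the latest basis element in $Xg$ sits at level $\le m-2$. Second, by Theorem \ref{theorem2.24} the operator $Y$ strictly lowers the $w$-degree, while every element of $S_{m+1}$ has $w$-degree at most that of $(0,m)^*$ by Theorem \ref{theorem3.6}; so the $w$-degree of $g$ is strictly less than that of $(0,m)^*$.

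Now distinguish two cases according to the sign of $a$. If $a=0$, then by Theorem \ref{theorem3.6} every earlier term of $g$ has strictly smaller $w$-degree than $(0,b)^*$, so the $w$-degree of $g$ equals that of $(0,b)^*$; combined with the bound above and the strict monotonicity of $b\mapsto(\text{$w$-degree of }(0,b)^*)$ (again Theorem \ref{theorem3.6}, since $(0,b')$ precedes $(0,b)$ for $b'<b$), this forces $b<m$, so every $(c,d)^*$ in $g$ has $c+d\le b\le m-1$ and $g\in S_m$. If $a>0$, I claim the latest basis element of $Xg$ is exactly $(a-1,b)^*$: applying $X$ term by term and using Theorem \ref{theorem3.5}, the leading term $(a,b)^*$ contributes $(a-1,b)^*$ together with strictly earlier terms, while each earlier term $(c,d)^*$ of $g$ --- which satisfies either $c+d<a+b$, or $c+d=a+b$ with $d<b$ (whence $c>0$ automatically) --- contributes under $X$ only terms strictly preceding $(a-1,b)$; hence $(a-1,b)^*$ is not cancelled and is the latest. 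Since $Xg\in S_{m-1}$, its latest basis element has level $\le m-2$, so $(a-1)+b\le m-2$, i.e.\ $a+b\le m-1$, and again $g\in S_m$. This completes the induction.

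The only step requiring genuine care is the precedence-order bookkeeping in the case $a>0$: one must verify that none of the lower-order terms of $g$ produces, under $X$, a basis element at level $(a-1)+b$ with second coordinate $\ge b$, so that the leading term $(a-1,b)^*$ of $Xg$ survives. I expect this to be the main (though routine) obstacle; everything else is a direct appeal to Theorems \ref{theorem2.24}, \ref{theorem3.5} and \ref{theorem3.6} together with the inductive hypothesis.
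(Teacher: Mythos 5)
Your argument is correct, and it reaches the conclusion by a somewhat different route than the paper. Both proofs run the same induction and both begin from the observation that $X(Yf)=Y(Xf)\in S_{m-1}$ (Theorem \ref{theorem3.5} plus the inductive hypothesis), and both invoke the $w$-degree bound coming from Theorems \ref{theorem2.24} and \ref{theorem3.6}. The difference lies in how the constraint on $X(Yf)$ is converted into membership of $Yf$ in $S_{m}$: the paper uses the surjectivity of $X:S_{m+1}\rightarrow S_{m}$ (Lemma \ref{lemma5.8}) to produce $h\in S_{m}$ with $Xh=XYf$, so that $h+Yf$ lies in the kernel of $X$ and hence has leading term of the form $(0,b)^{*}$, after which a single $w$-degree comparison forces $b<m$; you instead work with $g=Yf$ directly, splitting on whether its leading term $(a,b)^{*}$ has $a=0$ (where the $w$-degree comparison applies verbatim) or $a>0$ (where you check that $(a-1,b)^{*}$ survives uncancelled as the leading term of $Xg$, so that $Xg\in S_{m-1}$ forces $a+b\le m-1$). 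Your non-cancellation bookkeeping is right: an earlier term $(c,d)^{*}$ with $c+d<a+b$ contributes under $X$ only terms at level $\le a+b-2$, while one with $c+d=a+b$ and $d<b$ has $c>0$ and contributes $(c-1,d)^{*}$ plus still earlier terms, all preceding $(a-1,b)$. What your version buys is independence from Lemma \ref{lemma5.8}; what the paper's version buys is that the case analysis is absorbed into the already-established structure of $\ker X$. One small citation slip: $T_{7}(D)=0$ is not what Theorem \ref{theorem5.6} asserts (that theorem goes in the other direction); it is the case $p=7$, $n=1$ of Theorem \ref{theorem2.24}, and is recorded in the remark following Definition \ref{def5.7}.
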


\begin{proof}
We argue by induction on $m$, $m=0$ being clear. Suppose $f$ is in $S_{m+1}$ with $m>0$. Then $Xf$ is in $S_{m}$, so by the induction hypothesis, $X(Yf)=Y(Xf)$ is in $S_{m-1}$. By Lemma \ref{lemma5.8}, there is an $h$ in $S_{m}$ with $Xh=XYf$. Then $h+Yf$, being in the kernel of $X$, is $(0,b)^{*}+$ a sum of earlier $D_{i}$ for some $b$. Since $h+Yf$ is in $S_{m+1}$, $b\le m$, and it will suffice to show that $b\ne m$. Suppose on the contrary that $h+Yf=(0,m)^{*}+$ a sum of earlier $D_{i}$. Then since $f$ is in $S_{m+1}$ and $Y$ lowers the $w$-degree, the $w$-degree of the left hand side is $<$ the $w$-degree of $(0,m)^{*}$. But that of the right hand side equals the $w$-degree of $(0,m)^{*}$, giving a contradiction.
\qed
\end{proof}

\begin{lemma}
\label{lemma5.10}
For each $m$ there is an element of $DI$ of the form $(0,m)^{*}+$ a sum of earlier $D_{i}$.
\end{lemma}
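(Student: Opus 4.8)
The goal is to produce, for every $m$, an element of $DI=\cup DI(q)$ whose leading term in the precedence ordering is $(0,m)^{*}$. The natural approach is to induct on $m$ and exploit the cyclic $Z/2[Y]$-module structure of the spaces $DI(q)$ coming from Theorem \ref{theorem4.17} and Lemma \ref{lemma4.16}. Recall that for $0\le i\le 2q$ we have $\alpha_{2q-i}=U_{i}(Y)\cdot D_{40q^{2}+1}$, so the elements $\alpha_{2q-i}$, $1\le i\le 2q$, form a $Z/2$-basis of $DI(q)$, and $\alpha_{2q-1}=Y\cdot D_{40q^{2}+1}$ has $w$-degree strictly less than $40q^{2}+1$, i.e.\ its leading term is $(0,b)^{*}$ for some $b<2q$ (this is exactly the argument already used in Lemma \ref{lemma5.3}, since $Y=T_{7}$ lowers $w$-degree and, being in the kernel of $X$, any element of $DI(q)$ has leading term of the form $(0,b)^{*}$). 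So every nonzero element of $DI(q)$ has a leading term $(0,b)^{*}$ with $0\le b<2q$.

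The plan is: first choose $q$ large enough that $2q>m$; then show that among the $2q$ basis elements $\alpha_{2q-i}$ of $DI(q)$ the leading terms $(0,b)^{*}$ run over exactly the values $b=0,1,\dots,2q-1$ (each occurring once). Given this, the element with leading term $(0,m)^{*}$ is the desired one. To prove the claim about leading terms, I would argue as follows. By Theorem \ref{theorem5.4} the map $X\colon W_{a}(q)\to W_{a}(q)$ has kernel $DI(q)$, and by Lemma \ref{lemma5.8} (applied inside $W_{a}(q)$, or directly by the dimension count there) the elements of $DI(q)$ are distinguished by their leading terms, which all have the shape $(0,b)^{*}$. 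Since $\dim DI(q)=2q$ and the possible leading terms $(0,b)^{*}$ with $(0,b)^{*}$ of $w$-degree $<40q^{2}+1$ are precisely those with $0\le b<2q$ — that is, exactly $2q$ of them — a triangularity/linear-algebra argument forces the $2q$ leading terms of a basis of $DI(q)$ to be all of $(0,0)^{*},\dots,(0,2q-1)^{*}$. (Concretely: order a basis of $DI(q)$ by precedence of leading terms; the leading terms are distinct elements of the $2q$-element set $\{(0,b)^{*}:0\le b<2q\}$, hence exhaust it.)

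An alternative, more computational route avoids the counting argument: show directly by downward induction on $i$ that $\alpha_{2q-i}=U_{i}(Y)\cdot D_{40q^{2}+1}$ has leading term $(0, 2q-i)^{*}$. The base cases $i=0$ ($\alpha_{2q}=0$, vacuous) and the identification of the top one $D_{40q^{2}+1}=(0,2q)^{*}$ via $g(2q)=4q^{2}$ and Definition \ref{def3.4} are immediate; then one uses $\alpha_{2q-i}=Y(\alpha_{2q-i-1})+\alpha_{2q-i+1}$ from Theorem \ref{theorem4.14}(1) together with the fact (Theorem \ref{theorem3.5}(2)) that $Y=T_{7}$ acting on $(c,d)^{*}$ yields $(c-1,d)^{*}+\cdots$ when $c>0$, so that $Y$ applied to something with leading term $(0,b)^{*}$ must drop the $a$-coordinate-sum; one checks the resulting leading term of $Y(\alpha_{2q-i-1})$ is strictly earlier than that of $\alpha_{2q-i+1}$ is not — some care is needed here, which is why I prefer the clean dimension-count version above.

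The main obstacle is verifying that the $2q$ leading terms really are distinct and therefore exhaust $\{(0,b)^{*}:0\le b<2q\}$: this rests on the triangularity of the situation (Theorems \ref{theorem3.5}, \ref{theorem3.6}) and on the exact dimension count $\dim DI(q)=2q$ from Theorem \ref{theorem4.18}, combined with the observation that elements of $\ker X$ inside $W_{a}(q)$ can only have leading terms $(0,b)^{*}$ with $b<2q$. Once that is in hand, the lemma follows by picking $q$ with $2q>m$ and taking the basis element of $DI(q)$ whose leading term is $(0,m)^{*}$; since $DI(q)\subset DI$, this element lies in $DI$, as required. \qed
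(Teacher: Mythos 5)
Your preferred route is exactly the paper's proof: fix $q$ with $2q>m$, observe (via the argument of Lemma \ref{lemma5.3}) that every nonzero element of $DI(q)$ has leading term $(0,b)^{*}$ with $0\le b<2q$, and conclude from $\dim DI(q)=2q$ that all $2q$ leading terms occur, in particular $(0,m)^{*}$. The proposal is correct and matches the paper's argument.
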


\begin{proof}
Fix $q>m$. Each $f\ne 0$ in $DI(q)$ can be written as $(0,b)^{*}+$ a sum of earlier $D_{i}$ for some $b$ with $0\le b <2q$. Since there are only $2q$ choices for $b$, and $DI(q)$ has dimension $2q$, the result follows.
\end{proof}

\begin{lemma}
\label{lemma5.11}
$DI\cap S_{m}$ has dimension $m$. Furthermore, $Y$ maps $DI\cap S_{m+1}$ onto $DI\cap S_{m}$.
\end{lemma}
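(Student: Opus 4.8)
The plan is to prove the two assertions of Lemma~\ref{lemma5.11} in sequence, the first by an explicit exhibition of a basis of $DI\cap S_m$ using Lemma~\ref{lemma5.10}, and the second by a dimension count together with Theorem~\ref{theorem5.9}. For the first assertion, observe that $DI$ is spanned by elements that are ``triangular'' with respect to the code: by Lemma~\ref{lemma5.10}, for each $m$ there is an element $e_m$ of $DI$ of the form $(0,m)^{*}+$ a sum of earlier $D_i$. By Theorem~\ref{theorem3.6}, such an $e_m$ lies in $S_{m+1}$ but not in $S_m$ (its $w$-degree equals that of $(0,m)^{*}$, and Theorem~\ref{theorem3.6} shows every element of $S_m$ has $w$-degree $\le$ that of $(0,m-1)^{*}$, which is strictly smaller). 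So $e_0,\dots,e_{m-1}$ are $m$ elements of $DI\cap S_m$ which are linearly independent (their leading terms $(0,0)^{*},\dots,(0,m-1)^{*}$ are distinct basis vectors). Conversely any nonzero $f$ in $DI\cap S_m$ can, as in the proof of Lemma~\ref{lemma5.3}, be written $(0,b)^{*}+$ a sum of earlier $D_i$; since $f\in S_m$, Theorem~\ref{theorem3.6} forces $b\le m-1$. Subtracting the appropriate multiple of $e_b$ and inducting downward shows $f$ lies in the span of $e_0,\dots,e_{m-1}$. Hence $DI\cap S_m$ has dimension exactly $m$.

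For the second assertion, first note $Y(DI\cap S_{m+1})\subset DI$: indeed $Y=T_7$ commutes with $X=T_3$, so $Y$ preserves $\ker X=DI$ (Corollary~\ref{corollary5.5}). And $Y(S_{m+1})\subset S_m$ by Theorem~\ref{theorem5.9}. Therefore $Y$ does map $DI\cap S_{m+1}$ into $DI\cap S_m$; it remains to show this map is onto. Since $\dim(DI\cap S_{m+1})=m+1$ and $\dim(DI\cap S_m)=m$ by the first part, it suffices to show the kernel of $Y:DI\cap S_{m+1}\to DI\cap S_m$ has dimension $\le 1$. But $DI\cap S_{m+1}\subset DI$, and $DI$ is the increasing union of the $DI(q)$ (Corollary~\ref{corollary5.5}), each of which is, as $Z/2[Y]$-module, isomorphic to $Z/2[Y]/(Y^{2q})$ by Theorem~\ref{theorem4.18}. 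The kernel of multiplication by $Y$ on $Z/2[Y]/(Y^{2q})$ is one-dimensional (spanned by $Y^{2q-1}$, which corresponds to $D$ by Theorem~\ref{theorem4.18}). So the kernel of $Y$ on all of $DI$ is one-dimensional, spanned by $D$, and a fortiori the kernel of $Y$ on $DI\cap S_{m+1}$ is at most one-dimensional. The rank-nullity count then gives $\dim Y(DI\cap S_{m+1})\ge (m+1)-1=m=\dim(DI\cap S_m)$, so $Y$ is onto.

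The main obstacle is the first assertion, and specifically making precise that the elements furnished by Lemma~\ref{lemma5.10} really do span $DI\cap S_m$ — i.e.\ that one cannot have an element of $DI\cap S_m$ whose leading code-index $(0,b)^{*}$ has $b=m$ or larger. This is exactly where Theorem~\ref{theorem3.6} (which controls $w$-degrees of the coded basis) is indispensable: it pins down that membership in $S_m$ genuinely caps the leading index at $b\le m-1$. Once that triangularity is in hand the rest is bookkeeping. A minor point to be careful about is that one should confirm $m\le 2q$ when choosing $q$ (so that the element $(0,m)^{*}$ actually has small enough $w$-degree to lie in $W_a(q)$, hence $e_m\in DI(q)\subset DI$); taking $q$ a power of $2$ with $q>m$ as in Lemma~\ref{lemma5.10} suffices.

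\begin{proof}
We first show $DI\cap S_m$ has dimension $m$. By Lemma~\ref{lemma5.10}, for each $j$ there is an element $e_j$ of $DI$ of the form $(0,j)^{*}+$ a sum of earlier $D_i$. By Theorem~\ref{theorem3.6} every element of $S_j$ has $w$-degree at most the $w$-degree of $(0,j-1)^{*}$, which is strictly smaller than that of $(0,j)^{*}$; so $e_j$ lies in $S_{j+1}$ but not in $S_j$. Thus $e_0,\dots,e_{m-1}$ lie in $DI\cap S_m$, and they are linearly independent since their leading terms are the distinct basis vectors $(0,0)^{*},\dots,(0,m-1)^{*}$. Conversely, let $f\ne 0$ lie in $DI\cap S_m$. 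As in the proof of Lemma~\ref{lemma5.3}, $f=(0,b)^{*}+$ a sum of earlier $D_i$ for some $b$; since $f\in S_m$, Theorem~\ref{theorem3.6} gives $b\le m-1$. Then $f+e_b$ lies in $DI\cap S_m$ and is either $0$ or of the form $(0,b')^{*}+$ a sum of earlier $D_i$ with $b'<b$. Descending induction on $b$ shows $f$ is a $Z/2$-linear combination of $e_0,\dots,e_{m-1}$. Hence these span $DI\cap S_m$, and $\dim(DI\cap S_m)=m$.

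Now consider $Y:DI\cap S_{m+1}\rightarrow Z/2[[x]]$. Since $Y=T_7$ commutes with $X=T_3$, $Y$ preserves $\ker X=DI$ (Corollary~\ref{corollary5.5}); and by Theorem~\ref{theorem5.9}, $Y(S_{m+1})\subset S_m$. So $Y$ maps $DI\cap S_{m+1}$ into $DI\cap S_m$. By the previous paragraph these spaces have dimensions $m+1$ and $m$. The kernel of $Y$ on $DI$ is contained in the kernel of $Y$ on $DI(q)$ for all large $q$, which by Theorem~\ref{theorem4.18} is isomorphic as $Z/2[Y]$-module to $Z/2[Y]/(Y^{2q})$ and hence has a one-dimensional kernel under multiplication by $Y$. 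So the kernel of $Y:DI\cap S_{m+1}\rightarrow DI\cap S_m$ has dimension at most $1$, and therefore the image has dimension at least $(m+1)-1=m=\dim(DI\cap S_m)$. Thus $Y$ maps $DI\cap S_{m+1}$ onto $DI\cap S_m$.
\qed
\end{proof}
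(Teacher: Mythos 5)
Your proof is correct, and it uses the same essential ingredients as the paper (Lemma \ref{lemma5.10}, Theorem \ref{theorem5.9}, and the one-dimensionality of the kernel of $Y$ on $DI$), but it is organized differently. The paper runs a single induction on $m$: Lemma \ref{lemma5.10} gives the strict inclusion $DI\cap S_{m}\subsetneq DI\cap S_{m+1}$, Theorem \ref{theorem5.6} caps the kernel of $Y:DI\cap S_{m+1}\rightarrow DI\cap S_{m}$ at $\{0,D\}$, and these two facts force the dimension to jump by exactly $1$ at each step and the map to be onto simultaneously; the dimension formula then falls out with no need to exhibit a basis. You instead prove the dimension count first, by building an explicit triangular basis $e_{0},\dots,e_{m-1}$ of $DI\cap S_{m}$ from Lemma \ref{lemma5.10} and verifying spanning via a descending induction on the leading index, and only then deduce surjectivity by rank--nullity; you also bound the kernel by going back to the $Z/2[Y]/(Y^{2q})$ structure of Theorem \ref{theorem4.18} rather than quoting Theorem \ref{theorem5.6}, which amounts to the same thing since that is how Theorem \ref{theorem5.6} is proved. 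Your route costs a little more bookkeeping (the spanning argument, where you should note that the bound $b\le m-1$ already follows from the definition of $S_{m}$ without appeal to Theorem \ref{theorem3.6}) but buys a concrete triangular basis of $DI\cap S_{m}$, which the paper's argument never produces; the paper's version is shorter because the surjectivity and the dimension count reinforce each other inside one induction.
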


\begin{proof}
By Lemma \ref{lemma5.10}, $DI\cap S_{m+1} \ne DI\cap S_{m}$. Now $Y$ maps $DI\cap S_{m+1}$ into $DI\cap S_{m}$ by Theorem \ref{theorem5.9}; Theorem \ref{theorem5.6} shows that the kernel of this map is contained in $\{0,D\}$. So the map is onto, and the dimensions of $DI\cap S_{m+1}$ and $DI\cap S_{m}$ differ by $1$.
\qed
\end{proof}

The machinery is now in place to establish the results analogous to those of section 4 of \cite{1}. The arguments are exactly the same as those that were used to derive Theorems 4.12, 4.14, 4.15, 4.16, 4.17 and Corollary 4.13 of \cite{1}.

\begin{theorem}
\label{theorem5.12}
Let $f$ and $h$ be in $S_{m}$ with $Yf=Xh$. Then there is an $e$ in $S_{m+1}$ with $Xe=f$ and $Ye=h$.
\end{theorem}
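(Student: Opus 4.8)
The plan is a short diagram-chase built on the surjectivity of $X$ along the filtration together with the surjectivity statement in Lemma \ref{lemma5.11}. First I would invoke Lemma \ref{lemma5.8} to choose some $e_{0}$ in $S_{m+1}$ with $Xe_{0}=f$. This $e_{0}$ already handles half the requirement; the remaining job is to correct it by an element of $\ker X$, staying inside $S_{m+1}$, so that the corrected element also maps to $h$ under $Y$.

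To locate the correction, consider $Ye_{0}+h$ (we are in characteristic $2$, so subtraction and addition coincide). Since $e_{0}\in S_{m+1}$, Theorem \ref{theorem5.9} gives $Ye_{0}\in S_{m}$, and $h\in S_{m}$ by hypothesis, so $Ye_{0}+h\in S_{m}$. Using commutativity of $X$ and $Y$ together with the hypothesis $Yf=Xh$, we get $X(Ye_{0}+h)=Y(Xe_{0})+Xh=Yf+Xh=0$. Hence $Ye_{0}+h$ lies in $\ker(X)\cap S_{m}$, which by Corollary \ref{corollary5.5} equals $DI\cap S_{m}$.

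Now apply the second assertion of Lemma \ref{lemma5.11}: $Y$ maps $DI\cap S_{m+1}$ onto $DI\cap S_{m}$. So there is $k\in DI\cap S_{m+1}$ with $Yk=Ye_{0}+h$. Set $e=e_{0}+k$. Then $e\in S_{m+1}$; since $k\in DI=\ker X$ we have $Xe=Xe_{0}=f$; and $Ye=Ye_{0}+Yk=Ye_{0}+(Ye_{0}+h)=h$. This $e$ is as required.

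I do not expect a real obstacle here: all the substantive work is already packaged into Lemmas \ref{lemma5.8}--\ref{lemma5.11} (which themselves rest on the code of section \ref{section3} and the ideal-theoretic description of $DI$ in section \ref{section4}). The only point needing care is the bookkeeping with the filtration — ensuring that \emph{both} $e_{0}$ and the correction term $k$ can be taken in $S_{m+1}$ rather than merely in $W_{a}$ — and this is precisely why Lemma \ref{lemma5.8} is stated as a surjection $S_{m+1}\to S_{m}$ and Lemma \ref{lemma5.11} as a surjection $DI\cap S_{m+1}\to DI\cap S_{m}$.
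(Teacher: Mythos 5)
Your argument is correct and is essentially the same as the paper's, which simply refers to the identical diagram-chase used for Theorem 4.12 of \cite{1}: lift $f$ via the surjection $X:S_{m+1}\to S_{m}$, observe that the discrepancy $Ye_{0}+h$ lies in $\ker(X)\cap S_{m}=DI\cap S_{m}$, and correct using the surjection $Y:DI\cap S_{m+1}\to DI\cap S_{m}$ of Lemma \ref{lemma5.11}. No gaps.
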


\begin{corollary}
\label{corollary5.13}
There are $m_{a,b}$ in $S_{a+b+1}$ such that:
\vspace{-2ex}
\begin{enumerate}
\item[(1)] $m_{0,0}=D$.
\item[(2)] $X\cdot m_{a,b}= m_{a-1,b}$ or $0$ according as $a>0$ or $a=0$.
\item[(3)] $Y\cdot m_{a,b}= m_{a,b-1}$ or $0$ according as $b>0$ or $b=0$.
\item[(4)] The $m_{a,b}$ are a $Z/2$-basis of $W_{a}$.
\end{enumerate}
\end{corollary}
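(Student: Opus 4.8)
\textbf{Proof proposal for Corollary \ref{corollary5.13}.}

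The plan is to construct the $m_{a,b}$ by induction on $a+b$, using Theorem \ref{theorem5.12} as the engine at each stage. The base case is forced: set $m_{0,0}=D$, which lies in $S_1$, and note that $X\cdot m_{0,0}=Y\cdot m_{0,0}=0$ since $S_1$ is one-dimensional and $X\cdot S_1 = Y\cdot S_1 = S_0 = (0)$. This gives (1), and the $a=0$, $b=0$ cases of (2) and (3).

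For the inductive step, suppose the $m_{c,d}$ have been constructed for all $(c,d)$ with $c+d<n$, satisfying (1)--(3) with indices in that range, and satisfying in addition the ``compatibility'' property that for each such $(c,d)$ the element $m_{c,d}$ lies in $S_{c+d+1}$ and equals $(c,d)^{*}+$ a sum of earlier $D_i$ (so that the $m_{c,d}$ with $c+d\le k$ form a basis of $S_{k+1}$). To build $m_{a,b}$ with $a+b=n>0$: if $a>0$ and $b>0$, apply Theorem \ref{theorem5.12} to $f=m_{a-1,b}$ and $h=m_{a,b-1}$, both in $S_n$; the hypothesis $Yf=Xh$ is exactly $Y\cdot m_{a-1,b}=m_{a-1,b-1}=X\cdot m_{a,b-1}$, which holds by the inductive version of (2) and (3). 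The theorem produces $e\in S_{n+1}$ with $Xe=m_{a-1,b}$, $Ye=m_{a,b-1}$; set $m_{a,b}=e$. If $b=0$ (so $a=n>0$), apply Theorem \ref{theorem5.12} with $f=m_{a-1,0}$ and $h=0$: one checks $Yf = Y\cdot m_{a-1,0}=0$ by (3), so the hypothesis $Yf=Xh=0$ holds, and we get $e\in S_{n+1}$ with $Xe=m_{a-1,0}$, $Ye=0$; set $m_{a,0}=e$. The case $a=0$, $b=n>0$ is symmetric, using Lemma \ref{lemma5.8} (surjectivity of $X$) in place of the $h=0$ trick if needed, or more directly: we need $e\in S_{n+1}$ with $Xe=0$, $Ye=m_{0,n-1}$; such an $e$ exists because by Lemma \ref{lemma5.11} the map $Y:DI\cap S_{n+1}\to DI\cap S_{n}$ is onto, and $m_{0,n-1}\in DI\cap S_n$ since it is killed by $X$ (inductive (2) at $a=0$) and lies in $S_n$. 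This gives (2) and (3) for all indices.

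It remains to prove (4), that the $m_{a,b}$ form a $Z/2$-basis of $W_a$. The key point is that each $m_{a,b}$, being in $S_{a+b+1}$ but not in $S_{a+b}$ (else iterating $X$ and $Y$ down to $m_{0,0}$ would force $D\in S_0=(0)$), must be $(a,b)^{*}+$ a sum of strictly earlier $D_i$ in the precedence order of section \ref{section3} --- I would verify by induction that the ``leading term'' of $m_{a,b}$ is exactly $(a,b)^{*}$, using that $X$ and $Y$ send $(a,b)^{*}$ to $(a-1,b)^{*}$ and $(a,b-1)^{*}$ plus earlier terms (Theorem \ref{theorem3.5}) and that applying $X$ or $Y$ to a sum of $D_i$ earlier than $(a,b)^{*}$ yields a sum of $D_i$ earlier than $(a-1,b)^{*}$ resp.\ $(a,b-1)^{*}$. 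Since the $(a,b)^{*}$ exhaust the basis $\{D_k\}$ of $W_a$ and the change-of-basis matrix from $\{(a,b)^{*}\}$ to $\{m_{a,b}\}$ is triangular with $1$'s on the diagonal with respect to the precedence order, the $m_{a,b}$ are a basis.

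The main obstacle I anticipate is the bookkeeping in the $a=0$, $b>0$ case: unlike the case $a>0$, there is no automatic ``$h=0$'' available, and one must produce an $X$-killed element mapping under $Y$ to the previously-constructed $m_{0,b-1}$. The honest way to do this is the $DI$ machinery --- Lemma \ref{lemma5.11} guarantees $Y:DI\cap S_{b+1}\twoheadrightarrow DI\cap S_b$, and one must know $m_{0,b-1}\in DI$, i.e.\ that everything killed by $X$ lands in $DI$ (Corollary \ref{corollary5.5}). So the induction really has to carry the extra invariant ``$m_{0,b}\in DI$'' alongside (1)--(3); with that invariant in place the step is routine. Everything else is a direct transcription of the arguments for Theorems 4.12--4.17 and Corollary 4.13 of \cite{1}.
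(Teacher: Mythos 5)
Your construction of the $m_{a,b}$ by induction on $a+b$ via Theorem \ref{theorem5.12} is exactly the intended argument (the paper defers to Corollary 4.13 of \cite{1}, which proceeds this way). Two small remarks on that part: the $a=0$ case needs no separate machinery --- Theorem \ref{theorem5.12} applies directly with $f=0$ and $h=m_{0,n-1}$, since the hypothesis $Yf=Xh$ reads $0=X\cdot m_{0,n-1}$, which is the inductive instance of (2); your alternative route through Corollary \ref{corollary5.5} and Lemma \ref{lemma5.11} is also valid, just longer.

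There is, however, a genuine gap in your proof of (4). You justify the claim that $m_{a,b}=(a,b)^{*}+{}$a sum of earlier $D_i$ by saying that ``$X$ \emph{and} $Y$ send $(a,b)^{*}$ to $(a-1,b)^{*}$ and $(a,b-1)^{*}$ plus earlier terms (Theorem \ref{theorem3.5}).'' Theorem \ref{theorem3.5} says this only for $X=T_{3}$; no such explicit description of the action of $Y=T_{7}$ on the coded basis exists in the paper --- all that is known about $Y$ is that it lowers $a+b$ (Theorem \ref{theorem5.9}), and the absence of finer control over $T_{7}$ is precisely why the $DI(q)$ machinery is needed. Consequently your induction cannot propagate the leading-term claim (e.g.\ for $a>0$ the $X$-information alone does not exclude a leading term $(0,a+b)^{*}$), and the triangularity of the change of basis is unproved. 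The standard repair avoids leading terms entirely: given a nontrivial relation $\sum c_{a,b}m_{a,b}=0$, pick $(a,b)$ with $c_{a,b}=1$ and $a+b$ maximal among such, and apply $X^{a}Y^{b}$; every other term dies (either $a'+b'>a+b$ forces $c_{a',b'}=0$, or else $a'<a$ or $b'<b$ and properties (2),(3) kill it), leaving $m_{0,0}=D=0$, a contradiction. Thus the $m_{a,b}$ are linearly independent, and since the $\binom{m+1}{2}$ elements $m_{a,b}$ with $a+b<m$ all lie in $S_{m}$, which has dimension $m(m+1)/2$ by Definition \ref{def5.7}, they form a basis of $S_{m}$; taking the union over $m$ gives (4).
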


\begin{theorem}
\label{theorem5.14}
Make $W_{a}$ into a $Z/2[[X,Y]]$-module with $X$ and $Y$ acting by $T_{3}$ and $T_{7}$. (This is possible since $T_{3}$ and $T_{7}$ lower the $w$-degree.) Then the action of $Z/2[[X,Y]]$ on $W_{a}$ is faithful.
\end{theorem}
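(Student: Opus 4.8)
The plan is to deduce faithfulness directly from the adapted basis $\{m_{a,b}\}$ constructed in Corollary \ref{corollary5.13}. First I would recall why the $Z/2[[X,Y]]$-action even makes sense. By Theorem \ref{theorem2.24} both $X=T_{3}$ and $Y=T_{7}$ strictly lower the $w$-degree; hence for a fixed $f\in W_{a}$ the elements $X^{i}Y^{j}f$ vanish once $i+j$ exceeds the $w$-degree of $f$, so for any $\Phi=\sum_{i,j}c_{ij}X^{i}Y^{j}$ in $Z/2[[X,Y]]$ the expression $\Phi\cdot f=\sum_{i,j}c_{ij}X^{i}Y^{j}f$ is a finite sum and again lies in $W_{a}$. (It suffices to check this on each $m_{a,b}$, since every element of $W_{a}$ is a finite $Z/2$-combination of the $m_{a,b}$.) This is exactly the parenthetical remark in the statement of the theorem.

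Next I would compute the action of a general $\Phi$ on a single basis vector. Parts (2) and (3) of Corollary \ref{corollary5.13} give $X^{i}Y^{j}\cdot m_{a,b}=m_{a-i,b-j}$ when $i\le a$ and $j\le b$, and $X^{i}Y^{j}\cdot m_{a,b}=0$ otherwise; therefore
\[
\Phi\cdot m_{a,b}=\sum_{i\le a,\ j\le b}c_{ij}\,m_{a-i,b-j}.
\]
The indices $(a-i,b-j)$ occurring on the right are pairwise distinct, and by part (4) of Corollary \ref{corollary5.13} the $m_{k,l}$ are $Z/2$-linearly independent. Hence $\Phi\cdot m_{a,b}=0$ if and only if $c_{ij}=0$ for every pair $(i,j)$ with $i\le a$ and $j\le b$.

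To finish, suppose $\Phi$ annihilates all of $W_{a}$. Then $\Phi\cdot m_{a,b}=0$ for every $a,b\ge 0$, so by the previous step $c_{ij}=0$ whenever $i\le a$ and $j\le b$; letting $a$ and $b$ tend to infinity yields $c_{ij}=0$ for all $i,j$, i.e.\ $\Phi=0$. Thus the action of $Z/2[[X,Y]]$ on $W_{a}$ is faithful.

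I do not expect a genuine obstacle here: all the real work was the construction of the adapted basis in Corollary \ref{corollary5.13}, and once that is in hand faithfulness is purely formal, with the argument working verbatim for $Z/2[X,Y]$ and its completion alike. The only point deserving a sentence is the passage to the completed ring $Z/2[[X,Y]]$, which is legitimate precisely because of the strict $w$-degree lowering of $T_{3}$ and $T_{7}$ in Theorem \ref{theorem2.24}; without that, an infinite power series would not act at all.
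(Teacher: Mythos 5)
Your proof is correct, and it is essentially the argument the paper intends: the paper simply defers to the corresponding result in \cite{1}, where faithfulness is deduced in exactly this way from the adapted basis $m_{a,b}$ of Corollary \ref{corollary5.13} (apply $\Phi$ to $m_{a,b}$, use linear independence of the $m_{a-i,b-j}$, and let $a,b\to\infty$). Your added remark on why the completed ring $Z/2[[X,Y]]$ acts at all, via the $w$-degree lowering of Theorem \ref{theorem2.24}, is also the justification the paper gives parenthetically.
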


\begin{theorem}
\label{theorem5.15}
If $\chi(p)=1$, $T_{p}:W_{a}\rightarrow W_{a}$ is multiplication by some $u$ in the ideal $(X,Y)$ of $Z/2[[X,Y]]$. In other words, $T_{p}$ in its action on $W_{a}$ is a power series with $0$ constant term in $T_{3}$ and $T_{7}$.
\end{theorem}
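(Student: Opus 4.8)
The plan is to reduce the statement to a property of the $Z/2[[X,Y]]$-module structure on $W_a$ already established in Theorem \ref{theorem5.14} and Corollary \ref{corollary5.13}. Since the $m_{a,b}$ are a $Z/2$-basis of $W_a$ with $m_{0,0}=D$, and since $T_p:W_a\to W_a$ commutes with $X=T_3$ and $Y=T_7$, the operator $T_p$ is a $Z/2[[X,Y]]$-module endomorphism of $W_a$. Because the module is faithful and (one checks from the adapted basis) free of rank one — indeed $m_{a,b}=X^a Y^b m_{0,0}$ in the completed module, so $W_a\cong Z/2[[X,Y]]$ with $1\mapsto D$ — every module endomorphism is multiplication by a unique element $u\in Z/2[[X,Y]]$, namely the element with $T_p(D)=u\cdot D$. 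So the whole statement comes down to showing $u$ has zero constant term, equivalently $T_p(D)\in (X,Y)W_a$, equivalently $T_p(D)$ is not a unit multiple of $D$, i.e. $D$ does not appear in $T_p(D)$ with a $\langle$degree-zero$\rangle$... more precisely that $u\notin Z/2[[X,Y]]^\times$, which since $Z/2[[X,Y]]$ is local with maximal ideal $(X,Y)$ just means $u\equiv 0$, i.e. the constant term $u_0$ satisfies $T_p(D)=u_0 D + (\text{higher } w\text{-degree terms})$ with $u_0=0$.

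First I would make the identification $W_a\cong Z/2[[X,Y]]$ explicit: define $\Phi:Z/2[[X,Y]]\to W_a$ by $X^aY^b\mapsto m_{a,b}$, extended by continuity with respect to the $w$-degree filtration (the $m_{a,b}$ with $a+b\ge m$ span a subspace complementary to $S_m$, and $S_m$ is cofinite, so infinite sums converge). Corollary \ref{corollary5.13}(2),(3) say exactly that $\Phi$ intertwines multiplication by $X,Y$ on the source with $T_3,T_7$ on the target, and (1),(4) say $\Phi$ is a bijection sending $1$ to $D$. Hence $\Phi$ is an isomorphism of $Z/2[[X,Y]]$-modules, and $W_a$ is free of rank one on the generator $D$. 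Then for any $p$ with $\chi(p)=1$, the endomorphism $T_p$ corresponds under $\Phi$ to multiplication by $u:=\Phi^{-1}(T_p(D))$, and faithfulness (Theorem \ref{theorem5.14}) forces this $u$ to be the unique such element.

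Next I would show $u\in(X,Y)$. Write $u=u_0+u_1$ with $u_0\in Z/2$ and $u_1\in(X,Y)$. Then $T_p(D)=u_0 D + \Phi(u_1)$, and $\Phi(u_1)$ lies in the image of $(X,Y)$, which by Corollary \ref{corollary5.13}(2),(3) is the span of all $m_{a,b}$ with $a+b\ge 1$; in particular $\Phi(u_1)$ involves only basis vectors $m_{a,b}$ of $w$-degree strictly larger than that of $m_{0,0}=D$ — more carefully, since each $m_{a,b}\in S_{a+b+1}$ one has $\Phi(u_1)$ has no component along $D$ in the $\{m_{a,b}\}$-basis. So $u_0$ is precisely the coefficient of $m_{0,0}=D$ in the $m_{a,b}$-expansion of $T_p(D)$. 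Now I claim this coefficient is $0$. Indeed $D=D_1$, and by Corollary \ref{corollary2.20}, $T_p(D_1)$ is a sum of distinct $D_k$ with each $k\equiv p$ or $9p\bmod{40}$; since $\chi(p)=1$ we have $p\not\equiv 1\bmod{40}$ is not forced, but in any case $p>1$, and the only way $D_1$ could occur is if $p\equiv 1$ or $9p\equiv 1\bmod{40}$, which for $p$ prime $\ne 2,5$ would require $p\equiv 1\bmod{40}$ or $p\equiv 9\bmod{40}$... this needs a cleaner argument, so instead I use $T_p(D)=T_p(D_1)$ and note that the coefficient of $x^1$ in $T_p(\sum c_nx^n)$ is $c_p + (\text{coeff of }x^{1/p})$, and $D=x+x^9+x^{49}+\cdots$ has $c_p=0$ for any prime $p\ne 2,5$ (as $p$ is not of the form $n^2$ with $(n,10)=1$) and no $x^{1/p}$ term; hence the coefficient of $x$ in $T_p(D)$ is $0$. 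Since $D$ is the unique $m_{a,b}$ (indeed the unique $D_k$, $k\equiv1,3,7,9\bmod{20}$) whose expansion begins at $x^1$, and all other $m_{a,b}$, being in $W_a(q)\setminus S_1$-type pieces, have lowest-degree term of the form $x^k$ with $k>1$... the cleanest route: $D=x+\cdots$ is the unique element of the $\{m_{a,b}\}$-basis with a nonzero $x$-coefficient only if we know the leading exponents are distinct, which follows since $D_k=x^k+\cdots$ and each $m_{a,b}$ is a sum of $D_k$'s with a well-defined minimal $k$; comparing $x$-coefficients then shows $u_0 = (\text{coeff of } x \text{ in } T_p(D)) = 0$.

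The main obstacle is the last step — cleanly extracting the constant term $u_0$ of $u$ from $T_p(D)$. The module-theoretic part (freeness of rank one, existence and uniqueness of $u$) is immediate from Corollary \ref{corollary5.13} and Theorem \ref{theorem5.14}. What requires care is justifying that the coefficient of the distinguished basis vector $D=m_{0,0}$ in the expansion of $T_p(D)$ equals $u_0$ and then that this coefficient vanishes. Both reduce to bookkeeping with the $w$-degree filtration of Section \ref{section3}: $D$ has the smallest $w$-degree ($w$-degree $1$) among all $D_k$ with $\chi(k)=1$, so if $T_p(D)=u_0 D+(\text{terms of } w\text{-degree}>1)$ then $u_0$ is read off by looking at the $w$-degree-$1$ part; and by Corollary \ref{corollary2.20} (or directly, since the coefficient of $x^1$ in $T_p(D)$ is $0$ as $D$ has no $x^p$ term and no fractional term) this part is $0$. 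Once $u_0=0$ we conclude $u\in(X,Y)=$ the maximal ideal of the local ring $Z/2[[X,Y]]$, which is exactly the assertion that $T_p$ acts on $W_a$ as a power series in $T_3$ and $T_7$ with zero constant term.
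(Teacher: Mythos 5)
Your argument has a genuine error at its core: $W_a$ is \emph{not} free of rank one over $Z/2[[X,Y]]$ with generator $D=m_{0,0}$, and the identity $m_{a,b}=X^aY^bm_{0,0}$ is false. Corollary \ref{corollary5.13}(2),(3) say that $X$ and $Y$ \emph{lower} the indices of the adapted basis ($X\cdot m_{a,b}=m_{a-1,b}$, $Y\cdot m_{a,b}=m_{a,b-1}$) and in particular annihilate $m_{0,0}$; so $D$ generates only the one-dimensional submodule $\{0,D\}$, and your map $\Phi$ is not $Z/2[[X,Y]]$-linear (it sends $X\cdot X^aY^b=X^{a+1}Y^b$ to $m_{a+1,b}$, whereas $X\cdot\Phi(X^aY^b)=m_{a-1,b}$). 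The module $W_a$ is the continuous (Matlis) dual of $Z/2[[X,Y]]$ — a divisible torsion module, every element killed by a power of $(X,Y)$ — not a free module. This also invalidates your second paragraph: the image of $(X,Y)$ in $W_a$ is not ``the span of the $m_{a,b}$ with $a+b\ge 1$''; since $X:W_a\rightarrow W_a$ is onto (Lemma \ref{lemma5.8}), one has $(X,Y)W_a=W_a$, so no Nakayama-style reading of the constant term is available. Faithfulness (Theorem \ref{theorem5.14}) gives you \emph{uniqueness} of $u$, but the \emph{existence} of $u$ — the real content — cannot be obtained from freeness, because there is no freeness.

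The intended argument (the paper defers to the proofs of Theorems 4.14--4.17 of \cite{1}, i.e.\ the Nicolas--Serre mechanism) runs in the dual direction: set $c_{i,j}$ equal to the coefficient of $m_{0,0}$ in the expansion of $T_p(m_{i,j})$ in the adapted basis, and put $u=\sum c_{i,j}X^iY^j$. Since $T_p$ commutes with $X$ and $Y$, the coefficient of $m_{i,j}$ in $T_p(m_{a,b})$ equals the coefficient of $m_{0,0}$ in $X^iY^jT_p(m_{a,b})=T_p(m_{a-i,b-j})$, i.e.\ equals $c_{a-i,b-j}$; hence $T_p(m_{a,b})=u\cdot m_{a,b}$ (the action of the power series $u$ makes sense because $m_{a,b}$ is killed by $(X,Y)^{a+b+1}$). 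Finally $u\cdot m_{0,0}=c_{0,0}D$ because $X$ and $Y$ kill $m_{0,0}$, so the constant term of $u$ is the coefficient of $D$ in $T_p(D)$; this vanishes because $T_p(D)=T_p(F)+T_p(F(x^{25}))=0$ for $p\ne 2,5$ (or, as you observe, because the coefficient of $x$ in $T_p(D)$ is $c_p=0$ and $D_1$ is the unique $D_k$ with a nonzero $x$-coefficient). Your closing computation is therefore salvageable, but only after the module-theoretic skeleton is replaced.
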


We next prove an analogue to Theorem 4.18 of \cite{1}.  Since $\chi(11)=-1$, $T_{11}(W_{a})\subset W_{b}$, $T_{11}(W_{b})\subset W_{a}$, and $T_{11}^{2}$ stabilizes $W_{a}$. We now decompose $W_{a}$ into a direct sum of 4 summands. The first is spanned by the $D_{k}$ with $k\equiv 1$ or $9\mod{40}$. For the second, $k\equiv 3$ or $27$, for the third, $k\equiv 7$ or $23$, and for the fourth, $k\equiv 21$ or $29$. By Lemma \ref{lemma2.10}, the exponents appearing in elements of the first summand are $1$ or $9\bmod{40}$, and corresponding results hold for the other summands.

\begin{theorem}
\label{theorem5.16}
$T_{11}^{2}:W_{a}\rightarrow W_{a}$ is multiplication by $\lambda^{2}$ for some $\lambda$ in the ideal $(X,Y)$ of $Z/2[[X,Y]]$. 
\end{theorem}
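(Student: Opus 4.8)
The plan is to run the same machinery used in Theorem \ref{theorem5.14} and Theorem \ref{theorem5.15}, but now for the operator $T_{11}^{2}$, and to extract a square root. Since $T_{11}^{2}$ commutes with $T_{3}=X$ and $T_{7}=Y$ and stabilizes $W_{a}$ (because $\chi(11)=-1$ forces $T_{11}(W_{a})\subset W_{b}$, $T_{11}(W_{b})\subset W_{a}$), and since $T_{11}^{2}$ lowers the $w$-degree (each application of $T_{11}$ does, by Corollary \ref{corollary2.20} applied with $p=11$, $n$ suitably large, together with the degree bounds of Lemmas \ref{lemma2.22} and \ref{lemma2.23} — or simply by the final section's promise that $k<n$ also for $p\equiv1,9$; in any case $T_{11}^2$ preserves each $W_a(q)$), the faithful $Z/2[[X,Y]]$-module structure on $W_{a}$ from Theorem \ref{theorem5.14} shows $T_{11}^{2}$ is multiplication by a unique $\mu\in Z/2[[X,Y]]$. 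Arguing exactly as in Theorem \ref{theorem5.15} — using the adapted basis $m_{a,b}$ of Corollary \ref{corollary5.13} and the fact that $T_{11}^{2}(D)=T_{11}^{2}(m_{0,0})$ lies in $S_{?}$ with no constant term because $T_{11}(D)$ lies in $W_{b}$ and has positive $w$-degree in $W_b$, hence $T_{11}^{2}(D)$ has positive $w$-degree in $W_a$, so it is a sum of earlier $m_{a,b}$ and in particular is not a nonzero multiple of $m_{0,0}=D$ alone — we get $\mu\in(X,Y)$. So it remains to show $\mu$ is a square in $Z/2[[X,Y]]$.

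For that I would use the decomposition of $W_{a}$ into the four summands described just before the theorem, together with the parallel decomposition of $W_{b}$ into the four summands spanned by the $D_{k}$ with $k\equiv 11,19$; $k\equiv 13,17$; $k\equiv 37,33$; wait — more precisely by Lemma \ref{lemma2.10} and the residues $11,13,17,19 \bmod 20$, $W_b$ splits into summands indexed by $k\bmod 40$ in the obvious way. Since $\chi(11)=-1$, $T_{11}$ carries each summand of $W_{a}$ into a definite summand of $W_{b}$ (the exponents appearing get multiplied by $11$ or $99\equiv 19\bmod 40$), and $T_{11}$ carries that summand of $W_{b}$ back into a definite summand of $W_{a}$. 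The composite of these two index maps is a permutation of the four-element index set for $W_a$; I expect (and this is the one genuinely combinatorial point to check, by tracking residues mod $40$ under multiplication by $11$) that this permutation has even order — in fact I expect it to be either the identity or a product of two transpositions, equivalently that $11^{2}\equiv 81\equiv 1\bmod 40$ acts trivially. Indeed $11^{2}=121\equiv 1\bmod 40$, so multiplication by $11^2$ fixes every residue mod $40$, hence $T_{11}^{2}$ maps each of the four summands of $W_{a}$ into itself.

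Granting this, write $T_{11}$ as a ``matrix'' with respect to the summand decompositions; the point is that there is an operator $\sigma$ (an identification of the relevant summand of $W_{b}$ with a summand of $W_{a}$, built from the obvious exponent-shift bijections, which is $Z/2[[X,Y]]$-linear because the $m_{a,b}$ respect the summand decomposition) such that $T_{11}^{2}$ restricted to each summand of $W_{a}$ equals the square of a single $Z/2[[X,Y]]$-linear endomorphism $\tau$ of that summand — namely $\tau = \sigma\circ T_{11}$. By faithfulness $\tau$ is multiplication by some $\lambda\in Z/2[[X,Y]]$ (the same $\lambda$ on all four summands, since they are all cyclic generated by the obvious shift of $D$ and the action is determined on the generator), and then $T_{11}^{2}$ is multiplication by $\lambda^{2}$, so $\mu=\lambda^{2}$. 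Finally $\lambda\in(X,Y)$: otherwise $\lambda$ is a unit, so $T_{11}^{2}$ is an automorphism of $W_{a}$; but $T_{11}^{2}(D)$ has positive $w$-degree in $W_a$ as noted above and cannot equal $D$ up to lower-degree terms, forcing $\lambda$ into the maximal ideal. The main obstacle I anticipate is making the identification $\sigma$ and the endomorphism $\tau=\sigma T_{11}$ genuinely $Z/2[[X,Y]]$-linear and checking that $\tau^{2}=T_{11}^{2}$ on the nose rather than just up to the permutation of summands — i.e.\ verifying that the exponent-shift bijections intertwine $T_{3},T_{7}$ correctly and that $11^{2}\equiv1\bmod{40}$ really does collapse the permutation to the identity on each of the four pieces; everything else is a routine rerun of the arguments behind Theorems \ref{theorem5.14} and \ref{theorem5.15}.
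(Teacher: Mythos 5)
Your first half --- that $T_{11}^{2}$ is a $Z/2[[X,Y]]$-linear endomorphism of $W_{a}$, hence by the faithfulness/adapted-basis machinery is multiplication by some $\mu$, and that $\mu$ has zero constant term --- is sound and is how the paper's proof begins. You also correctly identify the four-summand decomposition of $W_{a}$ by residues mod $40$ and the fact that $11^{2}\equiv 1\bmod{40}$, so that $T_{11}^{2}$ preserves each summand. The gap is in how you extract the square. You propose to manufacture an operator square root $\tau=\sigma\circ T_{11}$ from an ``exponent-shift'' identification $\sigma$ of a summand of $W_{b}$ with a summand of $W_{a}$, and you yourself flag that you cannot verify $\tau^{2}=T_{11}^{2}$ or the $Z/2[[X,Y]]$-linearity of $\sigma$. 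These are not minor loose ends: there is no reason an exponent-shift bijection of the $D_{k}$ should intertwine $T_{3}$ and $T_{7}$, and $\sigma T_{11}\sigma T_{11}=T_{11}^{2}$ would require $\sigma$ to essentially commute with $T_{11}$, which nothing supplies. Moreover your fallback claim that each summand is a cyclic $Z/2[[X,Y]]$-module generated by a shift of $D$ is false: $D=m_{0,0}$ is killed by both $X$ and $Y$, so it generates only $\{0,D\}$ ($W_{a}$ is co-cyclic, not cyclic), so ``the action is determined on the generator'' does not apply, and faithfulness on all of $W_{a}$ does not by itself make an endomorphism of a single summand into multiplication by a ring element.

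The paper avoids constructing any square root of the operator. Instead it writes $\mu=a+bX+cY+dXY$ with $a,b,c,d\in Z/2[[X^{2},Y^{2}]]$; in characteristic $2$ the subring $Z/2[[X^{2},Y^{2}]]$ is exactly the set of squares, so $a=\lambda^{2}$ for some $\lambda$. Now $X$, $Y$ and $XY$ permute the four summands by three distinct fixed-point-free involutions (multiplication by $3$, $7$, $21$ on the classes $\{1,9\}$, $\{3,27\}$, $\{7,23\}$, $\{21,29\}$ mod $40$), while $a$, $b$, $c$, $d$, being power series in $X^{2}$ and $Y^{2}$, preserve each summand, as does $T_{11}^{2}$. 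Hence for $h$ in a fixed summand the terms $ah$, $(bX)h$, $(cY)h$, $(dXY)h$ land in four distinct summands; equating their sum with $T_{11}^{2}h$, which lies in the same summand as $h$, forces $(bX)h=(cY)h=(dXY)h=0$ and $T_{11}^{2}h=ah=\lambda^{2}h$. Finally $\lambda\in(X,Y)$ since $\mu$ has no constant term. I recommend replacing your $\sigma$/$\tau$ construction with this argument; the mod-$40$ bookkeeping you already did is precisely what it needs.
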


\begin{proof}
As in the proofs of Theorem 4.16 and 4.17 of \cite{1} we use the $Z/2[[X,Y]]$-linearity of our map to show that it is multiplication by some $u$ in $(X,Y)$ and we write $u$ as $a+bX+cY+dXY$ with $a,b,c,d$ in $Z/2[[X^{2},Y^{2}]]$. Let $a=\lambda^{2}$. We'll show that for each $h$ in $W_{a}$, $T_{11}^{2}(h)=\lambda^{2}h$. We may assume that $h$ is in one of the 4 subspaces of the above direct sum decomposition. Suppose for example that it is in the second. Then $T_{11}^{2}(h)$ and $ah$ are sums of $D_{k}$, $k\equiv 3$ or $27\mod{40}$, $(bX)h$ is a sum of $D_{k}$, $k\equiv 1$ or $9\mod{40}$, $(cY)h$ is a sum of $D_{k}$, $k\equiv 21$ or $29\mod{40}$ and $(dXY)h$ is a sum of $D_{k}$, $k\equiv 7$ or $23\mod{40}$.  Since $T_{11}^{2}(h) = uh$ is the sum of $\lambda^{2}h$, $(bX)h$, $(cY)h$ and $(dXY)h$, and the decomposition is direct, the result follows.
\qed
\end{proof}

\begin{lemma}
\label{lemma5.17}
Write the $\lambda$ of Theorem \ref{theorem5.16} as $cX+dY+\cdots$, with $c$ and $d$ in $\{0,1\}$. Then $c=d=1$.
\end{lemma}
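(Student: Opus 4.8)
The plan is to work in the $Z/2[[X,Y]]$-module picture of $W_a$ from section \ref{section5} (with $X=T_3$, $Y=T_7$) and reduce the determination of $c$ and $d$ to a couple of explicit power-series coefficient computations. Write $\lambda=cX+dY+\mu$ with $\mu$ in $(X,Y)^2$. In characteristic $2$ the cross-terms of $\lambda^2$ all disappear and $c^2=c$, $d^2=d$, so $\lambda^2=cX^2+dY^2+\mu^2$ with $\mu^2$ in $(X,Y)^4$; by Theorem \ref{theorem5.16} this $\lambda^2$ is the operator $T_{11}^2$ on $W_a$. Note that $\lambda^2$, hence $T_{11}^2$, already kills $S_2$ — by Lemma \ref{lemma5.8} and Theorem \ref{theorem5.9} each of $X$, $Y$ sends $S_{m+1}$ into $S_m$, and $S_0=(0)$, so $X^2$, $Y^2$ and $\mu^2$ all annihilate $S_2$ — which is why one has to go up to $S_3$.

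On $S_3$ the same estimates give: $\mu^2$ (lying in $(X,Y)^4\subset(X,Y)^3$) annihilates $S_3$, while $X^2$, $Y^2$ send $S_3$ into $S_1$. Hence for $f$ in $S_3$,
$$T_{11}^2 f \;=\; c\,T_3^2 f \;+\; d\,T_7^2 f .$$
I would apply this to $f=D_9$ and $f=D_{41}$; these are the basis vectors $(2,0)^{*}$ and $(0,2)^{*}$ of the coding of section \ref{section3}, hence lie in $S_3$. From Theorem \ref{theorem2.26}, $T_3 D_9=D_3$, $T_3 D_3=D$ and $T_3 D_{41}=0$, so $T_3^2 D_9=D$ and $T_3^2 D_{41}=0$. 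Lemma \ref{lemma4.16} (with $q=1$) and Theorem \ref{theorem4.13} give $T_7^2 D_{41}=\alpha_0=D$. Finally, Corollary \ref{corollary2.20} and Theorem \ref{theorem2.24} force $T_7 D_9$ to be a multiple of $D_7$ and $T_7 D_7$ to be a multiple of $D_1$; reading off that the coefficient of $x^7$ in $D_7=D^2 G$ and that of $x^{49}$ in $D_9=D^4 G$ are both $1$ then shows $T_7 D_7=D$ and $T_7 D_9=D_7$, so $T_7^2 D_9=D$. Substituting, $T_{11}^2 D_{41}=dD$ and $T_{11}^2 D_9=(c+d)D$.

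It remains to compute directly the coefficient of $x$ in $T_{11}^2 D_{41}$ and in $T_{11}^2 D_9$. Writing $T_p=U_p+V_p$ with $U_p(\sum c_n x^n)=\sum c_{pn}x^n$ and $V_p(\sum c_n x^n)=\sum c_n x^{pn}$, one has $T_{11}^2=U_{121}+V_{121}+V_{11}U_{11}+\mathrm{id}$, so the coefficient of $x$ in $T_{11}^2 g$ is the coefficient of $x^{121}$ in $g$ plus the coefficient of $x$ in $g$. Now $D_{41}=G^8 D=\sum x^{40m^2+k^2}$ (over $m$ odd, $(k,10)=1$) has no term in $x$ and exactly one term in $x^{121}$, coming from $121=40\cdot 1^2+9^2$; hence the coefficient of $x$ in $T_{11}^2 D_{41}$ is $1$, and since $T_{11}^2 D_{41}=dD$ with $D=x+\cdots$, we get $d=1$. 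Likewise $D_9=D^4 G=\sum x^{4k^2+5n^2}$ (over $(k,10)=1$, $n$ odd) has no term in $x$ and no term in $x^{121}$ (there is no such representation of $121$), so the coefficient of $x$ in $T_{11}^2 D_9$ is $0$; comparing with $(c+d)D$ gives $c+d=0$, hence $c=1$ as well.

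The substance is all in the first two paragraphs — the reduction $T_{11}^2=cT_3^2+dT_7^2$ on $S_3$, and the choice of $D_9$, $D_{41}$ as test vectors. After that the only honest computations are four coefficient evaluations (of $x^7$ in $D^2 G$, of $x^{49}$ in $D^4 G$, and of $x^{121}$ in each of $G^8 D$ and $D^4 G$), all immediate from the product expansions; I do not anticipate any real obstacle beyond keeping straight which $D_k$ the coding assigns to $(2,0)^{*}$ and $(0,2)^{*}$.
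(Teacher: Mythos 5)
Your proposal is correct and is essentially the paper's own argument: the paper likewise writes $T_{11}^{2}=\lambda^{2}=cX^{2}+dY^{2}+\cdots$ on $W_{a}$ and evaluates both sides on exactly the test vectors $D_{9}$ and $D_{41}$, obtaining $0=cD_{1}+dD_{1}$ and $D_{1}=dD_{1}$. You have merely supplied the details the paper leaves implicit (why the higher-order terms vanish on $S_{3}$, the values $X^{2}D_{9}=Y^{2}D_{9}=Y^{2}D_{41}=D$, $X^{2}D_{41}=0$, and the direct coefficient computation of $T_{11}^{2}$ on these two elements), and all of those details check out.
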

\pagebreak

\begin{proof}
On $W_{a}$, $T_{11}^{2}=\lambda^{2}=cX^{2}+dY^{2}+\cdots$. Applying both sides to $D_{9}$ we find that $0=cD_{1}+dD_{1}$, while applying both sides to $D_{41}$ shows that $D_{1}=dD_{1}$.
\qed
\end{proof}

\begin{theorem}
\label{theorem5.18}
$T_{11}^{2}$ maps $S_{m+2}$ onto $S_{m}$ for all $m$. So $T_{11}^{2}:W_{a}\rightarrow W_{a}$ is onto.
\end{theorem}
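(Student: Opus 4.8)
The plan is to lean on the identity $T_{11}^{2}=\lambda^{2}$ of Theorem~\ref{theorem5.16} and the normalization $\lambda=X+Y+\cdots$ of Lemma~\ref{lemma5.17}, working throughout with the $Z/2[[X,Y]]$-module structure on $W_{a}$ of Theorem~\ref{theorem5.14}. Squaring in characteristic $2$ gives $T_{11}^{2}=X^{2}+Y^{2}+R$, where $R:=T_{11}^{2}-X^{2}-Y^{2}$ lies in $(X,Y)^{3}$ (indeed $\lambda\equiv X+Y\bmod(X,Y)^{2}$, so $\lambda^{2}\equiv(X+Y)^{2}=X^{2}+Y^{2}\bmod(X,Y)^{3}$). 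Since $X$ and $Y$ each carry $S_{k+1}$ into $S_{k}$ (Lemma~\ref{lemma5.8} and Theorem~\ref{theorem5.9}), the operators $X^{2}$ and $Y^{2}$ carry $S_{k+2}$ into $S_{k}$ while $R$ carries $S_{k}$ into $S_{k-3}$; hence $T_{11}^{2}(S_{m+2})\subseteq S_{m}$, and the real content is the reverse inclusion.

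For the reverse inclusion I would pass to the adapted basis $\{m_{a,b}\}$ of Corollary~\ref{corollary5.13}, noting that $S_{n}$ then has $Z/2$-basis $\{m_{a,b}:a+b<n\}$ and that, with the convention $m_{i,j}=0$ when $i<0$ or $j<0$,
\[
T_{11}^{2}(m_{a,b})=m_{a-2,b}+m_{a,b-2}+\bigl(\mbox{a sum of }m_{c,d}\mbox{ with }c+d\le a+b-3\bigr).
\]
The crux is the following linear-algebra lemma: for each $n\ge2$ the ``leading term'' map $\langle m_{a,b}:a+b=n\rangle\to\langle m_{c,d}:c+d=n-2\rangle$, $m_{a,b}\mapsto m_{a-2,b}+m_{a,b-2}$, is surjective. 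Writing $e_{k}=m_{n-2-k,k}$ one checks $m_{n,0}\mapsto e_{0}$, $m_{n-1,1}\mapsto e_{1}$ (for $n\ge3$), and $m_{n-k,k}\mapsto e_{k}+e_{k-2}$ for $2\le k\le n-2$; an induction on $k$ then shows every $e_{k}$ with $0\le k\le n-2$ lies in the image. I expect this lemma to be the only place demanding care: one must check that the degree-preserving cross-term $m_{a,b-2}$ does not obstruct surjectivity, and carry out the small bookkeeping near $k=0,1,n-1,n$.

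Granting the lemma, I would prove $T_{11}^{2}(S_{m+2})=S_{m}$ by induction on $m$, the base case $m=0$ being the inclusion already established (as $S_{0}=0$). For the inductive step, given $g\in S_{m}$ let $g_{m-1}$ be its component in $\langle m_{a,b}:a+b=m-1\rangle$; by the lemma pick $h$ of pure degree $m+1$ (so $h\in S_{m+2}$) whose leading-term image equals $g_{m-1}$, so that $T_{11}^{2}h=g_{m-1}+(\mbox{terms of degree}\le m-2)$ and therefore $g-T_{11}^{2}h\in S_{m-1}$. The induction hypothesis $S_{m-1}=T_{11}^{2}(S_{m+1})$ yields $k\in S_{m+1}\subseteq S_{m+2}$ with $g-T_{11}^{2}h=T_{11}^{2}k$, whence $g=T_{11}^{2}(h+k)\in T_{11}^{2}(S_{m+2})$. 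Finally $W_{a}=\bigcup_{m}S_{m}=\bigcup_{m}T_{11}^{2}(S_{m+2})$, so $T_{11}^{2}:W_{a}\to W_{a}$ is onto, which is the last assertion of the theorem.
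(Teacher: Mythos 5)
Your proposal is correct and follows essentially the same route as the paper: the paper also writes $T_{11}^{2}=X^{2}+Y^{2}+\cdots$, observes via the adapted basis of Corollary \ref{corollary5.13} that $X^{2}+Y^{2}$ maps $S_{m+2}$ onto $S_{m}$, and concludes by noting that $T_{11}^{2}$ and $X^{2}+Y^{2}$ induce the same map on the graded quotients $S_{m+3}/S_{m+2}\rightarrow S_{m+1}/S_{m}$ followed by an induction. Your leading-term surjectivity lemma and the degree-peeling induction are exactly the details the paper's three-sentence proof leaves implicit, and they check out.
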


\begin{proof}
Using the explicit description of the action of $T_{3}$ and $T_{7}$ on $W_{a}$ provided by Corollary \ref{corollary5.13} we see that $X^{2}+Y^{2}$ maps $S_{m+2}$ onto $S_{m}$. Since $T_{11}^{2}=X^{2}+Y^{2}+\cdots$, $T_{11}^{2}$ and $X^{2}+Y^{2}$ induce the same map $S_{m+3}/S_{m+2}\rightarrow S_{m+1}/S_{m}$. So this map is onto for all $m$, and an induction gives the result.
\qed
\end{proof}

\section{\bm{$T_{11}:W_{b}\rightarrow W_{a}$} is bijective}
\label{section6}
Since $\chi(11)=-1$, $T_{11}$ maps $W_{a}$ to $W_{b}$ and $W_{b}$ to $W_{a}$. In this section we show that $T_{11}(D_{k})$ is a sum of $D_{i}$, $i<k$, and that $T_{11}:W_{b}\rightarrow W_{a}$ is bijective. Let $C_{k}$, $(k,10)=1$, be $T_{11}(D_{k})$.

\begin{lemma}
\label{lemma6.1}
If $k<120$ then:
\vspace{-2ex}
\begin{enumerate}
\item[(a)] When $k\equiv 1,3,7$ or $9\mod{20}$, $C_{k}$ is a sum of $D_{i}$, $i<k$.
\item[(b)] When $k\equiv 11\ (\mbox{resp.}\ 19,13,17) \bmod{20}$, $C_{k}=D_{j}+$ a sum of $D_{i}$, $i<j$, where $j=k-10\ (\mbox{resp.}\ k-10,k-6,k-14)$.
\end{enumerate}
\end{lemma}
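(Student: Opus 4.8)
The plan is to verify Lemma \ref{lemma6.1} by an essentially computational argument, exploiting the fact that for $k<120$ there are only finitely many $D_k$ involved, together with the constraints already established on which $D_i$ can appear in $C_k=T_{11}(D_k)$. The key tool is Corollary \ref{corollary2.20}: since $\chi(11)=-1$, the $D_i$ appearing in $T_{11}(D_k)$ satisfy $i\equiv 11k$ or $99k\equiv 11k$ or $99k\pmod{40}$; concretely, $\chi(i)=-\chi(k)$ and $i$ runs over a residue class mod~$40$ determined by $11k$ (or $99k$). Combined with Lemma \ref{lemma2.10}, which says that the exponents $x^n$ appearing in $D_k$ have $n\equiv k$ or $9k\pmod{40}$, this pins down, for each $k<120$, a short explicit list of candidate indices $i$ with $i<k$ (for part (a)) or $i\le j$ with $j$ as specified (for part (b)).

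The main steps I would carry out are as follows. First I would record, for each residue class $k\bmod{20}$ in $\{1,3,7,9\}$, the corresponding class of $i=T_{11}$-indices mod~$40$: $11\cdot1=11$, so $i\equiv11$ or $19$; $11\cdot3=33$, so $i\equiv33\equiv13$ or $i\equiv 9\cdot33=297\equiv17$, giving $i\equiv13$ or $17$; $11\cdot7=77\equiv37$, so $i\equiv37$ or $i\equiv9\cdot77\equiv693\equiv13$, giving $i\equiv17$ wait --- more carefully $37$ and $9\cdot7=63\cdot11$... I would tabulate these precisely; similarly for $k\equiv9$. Then for part (b) I would observe that $C_k$ cannot be $0$: indeed $T_{11}:W_b\to W_a$ will be shown bijective, but at this stage it suffices to compute the leading coefficient directly from the power series expansion $D_k=x^k+\cdots$, i.e.\ to show that the coefficient of $x^{j}$ in $C_k$ is $1$ for the stated $j$ and that no $D_i$ with $i>j$ and $i$ in the allowed residue class lies in $W_b$ (resp.\ $W_a$) below the relevant degree bound. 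The degree/leading-term bookkeeping uses $D_1=D$, $D_3=D^8/G$, $D_7=D^2G$, $D_9=D^4G$, and $D_{k+10}=G^2D_k$, so that $D_k=x^k+\cdots$ and $G^2=x^{10}+\cdots$; expanding $D$ and $G$ far enough to read off the first few coefficients of $T_{11}(D_k)$ for $k<120$ is a finite check. For part (a) I would note that for $k\equiv1,3,7,9\pmod{20}$ the allowed residue class mod~$40$ forces every candidate index $i<k+\text{something}$ automatically to satisfy $i<k$ once one rules out the one or two borderline candidates by a leading-coefficient computation; in particular $C_1=T_{11}(D)=T_{11}(F+H)$ and one checks $T_{11}(F)$ lies in the span of $F^i$ of lower degree while $T_{11}(H)=T_{11}(F(x^{25}))$, etc., or more simply one just computes coefficients.

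A cleaner route, which I would prefer and present, is to reduce everything to a single uniform bound as in Theorem \ref{theorem2.24}: establish first that $T_{11}(D_{10m+3})$ and $T_{11}(D_{10m+7})$ are sums of $D_i$ with $i\le$ (some explicit function of $m$), paralleling Lemmas \ref{lemma2.22} and \ref{lemma2.23}, by lifting to $\modd/N1$, writing a representative as a $Z/2[G^2]$-combination of the $v_i$ of Definition \ref{def2.21}, applying Theorem \ref{theorem1.14} (the $T_{11}$-stability of the span of the $r^i$), and tracking $r$-degrees. The subtlety is that $\chi(11)=-1$ so $T_{11}$ swaps $W_a$ and $W_b$; but the $r$-degree argument is insensitive to this, and combining the degree bound with the residue-class constraint of Corollary \ref{corollary2.20} (now giving $i$ in a class mod~$40$ disjoint from $k$'s class) forces $i<k$, respectively $i\le j$ with the claimed $j$. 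For $k<120$ this uniform bound plus the finitely many leading-coefficient checks (to confirm the coefficient of $x^j$ in $C_k$ is nonzero and that no larger allowed index contributes) completes the proof.

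The main obstacle I anticipate is part (b): proving that $C_k$ is \emph{exactly} $D_j+(\text{lower})$ with the specified $j$, not merely a sum of $D_i$ with $i\le j$. This requires showing the coefficient of $x^{j}$ in $T_{11}(D_k)$ is $1$ and that no $D_i$ with $j<i$ (in the allowed residue class, below the degree bound) occurs --- the former is a direct but slightly delicate power-series computation using $D=x+x^9+x^{49}+\cdots$ and $G=x^5+x^{45}+\cdots$, and the value of $j$ ($k-10$, $k-10$, $k-6$, or $k-14$ according to $k\bmod 20$) comes precisely from which small exponent $n$ with $11\mid$ (exponent appearing in $D_k$) produces the smallest output $n/11$. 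Organizing this so that the four cases $k\equiv 11,13,17,19$ are handled uniformly, rather than by sixteen-ish separate numerical checks, is the part that needs care; everything else is routine bookkeeping with the congruences of Lemma \ref{lemma2.10} and Corollary \ref{corollary2.20}.
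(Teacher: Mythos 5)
Your approach is essentially the paper's: Corollary \ref{corollary2.20} together with the degree bounds of Lemmas \ref{lemma2.22} and \ref{lemma2.23} (which are already stated for all $p\ne 2,5$, so need not be re-derived for $p=11$) cut the candidate indices down to a short explicit list for each $k<120$, and what remains is the finite collection of coefficient computations you describe --- exactly how the paper's proof sketch proceeds, tabulating the six values $C_k$ for $k\equiv 9\pmod{20}$ and the twenty-four values for $\chi(k)=-1$. The only caveat is that the congruence-plus-degree constraints do not by themselves force $i\le j$ in part (b) (e.g.\ for $k\equiv 11\pmod{40}$ the index $i=k-2\equiv 9\pmod{40}$ survives both constraints), so the borderline leading-coefficient checks you mention are genuinely unavoidable, as you correctly anticipate.
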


\begin{proofsketch}
Using Corollary \ref{corollary2.20} and Lemmas \ref{lemma2.22} and \ref{lemma2.23} we see that it suffices to prove (a) when $k\equiv 9\mod{20}$, i.e.\ when $k$ is in $\{9,29,49,69,89,109\}$. In fact the corresponding $C_{k}$ are $0$, $0$, $D_{19}+D_{11}$, $D_{39}+D_{31}$, $D_{11}$ and $D_{39}$. To calculate $C_{109}$, for example, we use Corollary \ref{corollary2.20} and Lemma \ref{lemma2.23} to see that it is a sum of $D_{i}$ with each $i$ in $\{31,39,71,79,111\}$. Examining the expansions of $D_{109}$ and the $D_{i}$ and arguing as in the proof of Theorem \ref{theorem2.26} we find that $C_{109}=D_{39}$. (b) follows from the more precise results:
\begin{enumerate}
\addtolength{\itemindent}{.2em}
\item[(b1)] $C_{11}$, $C_{31}$, $C_{51}$, $C_{71}$, $C_{91}$ and $C_{111}$ are $D_{1}$, $D_{21}$, $D_{41}+D_{9}$, $D_{61}+D_{29}$, $D_{81}+D_{41}+D_{9}$, $D_{101}+D_{61}+D_{21}$.

\item[(b2)] $C_{19}$, $C_{39}$, $C_{59}$, $C_{79}$, $C_{99}$ and $C_{119}$ are $D_{9}$, $D_{29}$, $D_{49}+D_{9}$, $D_{69}+D_{29}$, $D_{89}+D_{49}+D_{9}$, $D_{109}+D_{69}+D_{29}+D_{21}$.

\item[(b3)] $C_{13}$, $C_{33}$, $C_{53}$, $C_{73}$, $C_{93}$ and $C_{113}$ are $D_{7}$, $D_{27}+D_{3}$, $D_{47}$, $D_{67}+D_{43}+D_{27}$, $D_{87}+D_{47}$, $D_{107}+D_{83}+D_{67}+D_{43}+D_{27}$.

\item[(b4)] $C_{17}$, $C_{37}$, $C_{57}$, $C_{77}$, $C_{97}$ and $C_{117}$ are $D_{3}$, $D_{23}+D_{7}$, $D_{43}$, $D_{63}+D_{47}+D_{23}+D_{7}$, $D_{83}+D_{43}$, $D_{103}+D_{87}+D_{63}+D_{47}+D_{23}$.
\end{enumerate}
\vspace{-4ex}
\hspace{2em}\\
To calculate $C_{93}$, for example, we use Corollary \ref{corollary2.20} and Lemma \ref{lemma2.22} to see that it is a sum of $D_{i}$ where each $i$ is in $\{7,23,47,63,87\}$. Examining the expansions of $D_{93}$ and the $D_{i}$ and arguing as in the proof of Theorem \ref{theorem2.26} we find that $C_{93}=D_{87}+D_{47}$.
\qed
\end{proofsketch}

\begin{lemma}
\label{lemma6.2}
For $u$ in $Z/2[[x]]$, $T_{11}(uG^{24})$ is the sum of $G^{2i}T_{11}(uG^{2j})$ where $(i,j)$ runs over the 9 pairs $(12,0)$, $(8,4)$, $(4,8)$, $(6,2)$, $(2,6)$, $(9,1)$, $(1,9)$, $(3,3)$, $(1,1)$.
\end{lemma}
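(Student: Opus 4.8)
The plan is to imitate the proof of Lemma \ref{lemma2.25}, using a modular equation for $F$ at the prime $11$ instead of at the prime $3$. Just as Lemma \ref{lemma2.25} came from the ``level $3$ modular equation for $F$,'' $U(F(x^3),F(x))=0$ with $U = A^4 + B^4 + AB$, there is a ``level $11$ modular equation for $F$,'' a polynomial identity $V(F(x^{11}),F(x))=0$ over $Z/2$; since $\Delta(z)$ and $\Delta(11z)$ are the relevant weight $12$ forms and $F$, $G$ reduce from $\Delta(z)$, $\Delta(5z)$, this is the classical modular relation mod $2$. I would first write this $V$ down explicitly (from the literature, or by the direct computation that $F(x^{11})$ is integral over $Z/2[F]$ of degree $12$), and record that the $12$ roots of $V(\,\cdot\,,F) = 0$ in a suitable algebraic closure are $F(x^{11})$ together with the $11$ ``conjugates'' $F(\lambda x^{1/11})$, $\lambda^{11}=1$.

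Next I would set up the $12$ embeddings $\varphi_0,\dots,\varphi_{11}$ of $Z/2((x))$ (or rather a Galois-type formalism for the degree-$12$ extension), exactly as in Lemma \ref{lemma2.25}'s parenthetical remark but with $12$ maps: $\varphi_0\colon f(x)\mapsto f(x^{11})$ and $\varphi_\ell\colon f(x)\mapsto f(\lambda_\ell x^{1/11})$ for the eleven $11$th roots of unity $\lambda_\ell$. Replacing $x$ by $x^5$ in $V(F(x^{11}),F(x))=0$ gives $V(G(x^{11}),G(x))=0$, so $G$ also satisfies the level-$11$ modular equation, and $G^{24}$, $G^{2j}$ behave under these embeddings the way the symmetric functions of the roots dictate. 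The operator $T_{11}\colon Z/2[[x]]\to Z/2[[x]]$ is, up to the usual bookkeeping, the trace-type combination $u \mapsto \varphi_0(u) + \sum_\ell \varphi_\ell(u)$ composed with the inverse of $f(x)\mapsto f(x^{11})$; so $T_{11}(uG^{24})$ is obtained by multiplying $u$ by the appropriate symmetric function of the images of $G^{24}$ and summing. The point is that $G^{24}$, being a power of a root-quantity of a degree-$12$ equation, can be rewritten modulo the modular equation as a $Z/2[G^2]$-linear combination of lower powers $G^{2j}$, and when one pushes this rewriting through $T_{11}$ one gets precisely $\sum G^{2i}T_{11}(uG^{2j})$ over the nine listed pairs $(i,j)$.

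The bulk of the work — and the main obstacle — is the explicit identification of the nine pairs $(12,0),(8,4),(4,8),(6,2),(2,6),(9,1),(1,9),(3,3),(1,1)$: these must come out of reducing $G^{24}$ modulo the level-$11$ modular equation for $G$ and tracking which monomials $G^{2j}$ survive with nonzero coefficient in the relevant symmetric function. Concretely, $G$ has degree $12$ over $Z/2(G(x^{11}))$ (equivalently the modular polynomial $V$ has $w$-degree $12$ in each variable), so $G^{24} = G^{12}\cdot G^{12}$ already forces a reduction, and iterating the recursion $V=0$ expresses $G^{24}$ in the span of $1, G^2, G^4,\dots, G^{22}$; the claim is that after applying $T_{11}$ only the nine stated exponents appear. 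I expect this to reduce, as in \cite{1}, to a finite check: the mod-$11$ modular equation has a sparse, highly structured shape mod $2$, and the exponents $2i$ that appear are governed by which $G^{2k}$ occur in $V$ itself together with the ``balancing'' constraint from Lemma \ref{lemma2.10} (exponents in $T_{11}(uG^{2j})$ lie in a fixed residue class mod $40$, forcing cancellation of the off-pattern terms just as in Corollary \ref{corollary2.20}). So the strategy is: establish the degree-$12$ modular relation for $G$, push $T_{11}$ through it via the $12$-embedding formalism exactly as in Lemma \ref{lemma2.25}, and then verify by the finite sparse computation that the surviving pairs are the nine claimed; the congruence/parity constraints of Lemma \ref{lemma2.10} do the job of killing everything else.
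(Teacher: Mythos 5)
Your overall strategy is the paper's: invoke the level-$11$ modular equation for $F$, specialize $x\mapsto x^{5}$ to get the same relation for $G$, set up the $12$ embeddings $\varphi_{k}$ (one sending $f(x)$ to $f(x^{11})$, the others $f(x)\mapsto f(\lambda x^{1/11})$), use $T_{11}(f)=\sum_{k}\varphi_{k}(f)$, and multiply the resulting identities by $\varphi_{k}(u)$ and sum. That skeleton is correct. But there are two genuine gaps. First, you never produce the modular equation, and it is the entire content of the lemma: the nine pairs are read off from its explicit coefficients. The paper uses $U(A,B)=(A+B)^{12}+A^{6}B^{2}+A^{2}B^{6}+A^{9}B+AB^{9}+A^{3}B^{3}+AB$ with $U(F(x^{11}),F(x))=0$; without this polynomial in hand your proof cannot be completed or checked.

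Second, your proposed mechanism for extracting the nine pairs is wrong. You suggest iterating the degree-$12$ reduction to express $G^{24}$ in terms of $1,G^{2},\dots,G^{22}$ and then using the congruence constraints of Lemma \ref{lemma2.10} (as in Corollary \ref{corollary2.20}) to cancel the unwanted terms. But Lemma \ref{lemma2.10} concerns exponents appearing in the $D_{k}$, and Lemma \ref{lemma6.2} is asserted for an \emph{arbitrary} $u$ in $Z/2[[x]]$, so no such congruence or cancellation argument is available; the identity must be a purely algebraic consequence of the modular equation. And indeed it is, in one step rather than by iteration: squaring $U(\varphi_{k}(G),G)=0$ and using $(A+B)^{24}=A^{24}+A^{16}B^{8}+A^{8}B^{16}+B^{24}$ in characteristic $2$ gives
\[
\varphi_{k}(G)^{24}=B^{24}+A^{16}B^{8}+A^{8}B^{16}+A^{12}B^{4}+A^{4}B^{12}+A^{18}B^{2}+A^{2}B^{18}+A^{6}B^{6}+A^{2}B^{2}
\]
with $A=\varphi_{k}(G)$, $B=G$, whose nine monomials $G^{2i}\varphi_{k}(G^{2j})$ are exactly the nine listed pairs $(i,j)$. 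So the "finite sparse computation" you defer is a single squaring of an explicit degree-$12$ polynomial, and the appeal to Lemma \ref{lemma2.10} should be dropped.
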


\begin{proof}
We argue as in the proof of Lemma \ref{lemma2.25}. Let $U$ be the 2 variable polynomial $(A+B)^{12}+A^{6}B^{2}+A^{2}B^{6}+A^{9}B+AB^{9}+A^{3}B^{3}+AB$. Then $U(F(x^{11}), F(x))=0$; this is the level 11 modular equation for $F$. Replacing $x$ by $x^{5}$ we find that $U(G(x^{11}), G(x))=0$. Now let $L$ be an algebraic closure of $Z/2$. We have 12 imbeddings $\varphi_{k}:Z/2[[x]]\rightarrow L[[x^{1/11}]]$, the first of which takes $f$ to $f(x^{11})$, while each of the others takes $f$ to $f(\lambda x^{1/11})$ for some $\lambda$ in $L$ with $\lambda^{11}=1$. Replacing $x$ by $\lambda x^{1/11}$ in the identity $U(G(x^{11}), G(x))=0$ and using the symmetry of $U$ we find that each $U(\varphi_{k}(G),G)$ is $0$. Squaring and expanding we find that $\varphi_{k}(G^{24})$ is the sum of the $G^{2i}\varphi_{k}(G^{2j})$ where $(i,j)$ runs over the 9 pairs above. Now the definition of $T_{11}$ shows that if $f$ is in $Z/2[[x]]$, $T_{11}(f)$ is the sum of the $\varphi_{k}(f)$. Multiplying the $k$\textsuperscript{th} of our identities by $\varphi_{k}(u)$ and summing we get the result.
\qed
\end{proof}

\begin{lemma}
\label{lemma6.3}
The conclusions (a) and (b) of Lemma \ref{lemma6.1} hold for all $k$ with $(k,10)=1$.
\end{lemma}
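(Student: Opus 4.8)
The strategy is an induction on $k$, using Lemma \ref{lemma6.2} to reduce the case of large $k$ to the already-established base cases $k<120$ of Lemma \ref{lemma6.1}. The recursion provided by Lemma \ref{lemma6.2} reads $T_{11}(uG^{24}) = \sum G^{2i}T_{11}(uG^{2j})$ over the nine listed pairs $(i,j)$; taking $u=D_{k}$ (so that $uG^{2j}$ has the form $D_{k}G^{2j}$, hence is a $Z/2[G^{2}]$-multiple of a $D_{m}$ by the rule $D_{m+10}=G^{2}D_{m}$) and noting $D_{k}G^{24}=D_{k+120}$, this becomes a linear recursion expressing $C_{k+120}$ in terms of $C_{k+2j}$ for the various $j$ appearing among the nine pairs — namely $j\in\{0,1,2,3,4,6,8,9\}$, so $C_{k+120}$ is a $Z/2[G^{2}]$-combination of $C_{k}, C_{k+2}, C_{k+4}, C_{k+6}, C_{k+8}, C_{k+12}, C_{k+16}, C_{k+18}$. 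Since all these indices are $<k+120$ (indeed $\le k+18$), an induction on $k$ in steps that respect the congruence class mod $20$ is set up: having verified (a) and (b) for all $k<120$ in Lemma \ref{lemma6.1}, one propagates to all larger $k$.

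**The inductive step.** Fix $k\ge 120$ with $(k,10)=1$, and write $k = k_{0}+120$ with $k_{0}\ge 0$, $(k_{0},10)=1$. By the recursion, $C_{k}$ is a $Z/2[G^{2}]$-combination of the eight terms $C_{k_{0}+2j}$. By induction each such term is a sum of $D_{i}$ with $i$ bounded as in (a) or (b); multiplying by a power $G^{2\ell}$ sends $D_{i}$ to $D_{i+10\ell}$, so each summand of $G^{2i}C_{k_{0}+2j}$ is some $D_{i'}$ with $i' \le (k_{0}+2j) + 10i$. One must check that in every one of the nine cases $i' < k$ (or, for part (b), that the leading $D_{j}$ with $j$ as specified appears with coefficient $1$ and everything else is earlier). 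Since $2j+10i\le 2\cdot 9 + 10\cdot 12 = 138$ and the worst pair is $(12,0)$ contributing the term $G^{24}C_{k_{0}} = G^{24}C_{k-120}$, whose summands are $D_{i+240}$ with $i < k-120$ by the inductive hypothesis on (a)/(b) — giving $i+240 < k+120$; but one needs $< k$, so this crude bound is not enough and the congruence-class bookkeeping from Corollary \ref{corollary2.20} must be invoked to kill the would-be-too-large terms, exactly as in the proofs of Lemmas \ref{lemma2.22} and \ref{lemma2.23} and of Theorem \ref{theorem2.24}. Concretely: $C_{k}=T_{11}(D_{k})$ is by Corollary \ref{corollary2.20} a sum of $D_{i}$ with $i\equiv 11k$ or $9\cdot 11k \equiv 99k\bmod 40$; combined with the explicit upper bound coming from Lemma \ref{lemma6.2} and the (a)/(b) bounds on each $C_{k_{0}+2j}$, the allowed $i$ are pinned down to lie in the asserted range, and one reads off the leading term for (b) from the leading terms of the $C_{k_{0}+2j}$ just as $C_{109}$, $C_{93}$, etc. were computed in the proof of Lemma \ref{lemma6.1}.

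**Main obstacle.** The delicate point is not the existence of the recursion but the sharp degree tracking: the raw bound from Lemma \ref{lemma6.2} only places $C_{k}$ among $D_{i}$ with $i$ possibly as large as roughly $k+18$, which is \emph{larger} than $k$, so one genuinely needs the congruence restrictions of Corollary \ref{corollary2.20} (i.e.\ $i\bmod 40$ is forced) together with Lemmas \ref{lemma2.22}–\ref{lemma2.23} (which already bound $T_{11}(D_{n})$ — here $p=11$, $\chi(11)=-1$ — in terms of nearby $D_{k}$) to squeeze the bound down below $k$ and to identify the precise leading index $j$ in each residue class. Organizing the bookkeeping over the four residue classes $1,3,7,9\bmod{20}$ of $k_{0}$ and the nine pairs $(i,j)$, and checking that the leading terms combine with coefficient $1$ (no cancellation) rather than $0$, is the real work; it is a finite but somewhat intricate case analysis, entirely parallel to the verification already carried out for $k<120$ in Lemma \ref{lemma6.1} and for $T_{3}$ in Theorem \ref{theorem2.26}. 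Once this is in place, the induction closes and conclusions (a) and (b) hold for all $k$ with $(k,10)=1$. \qed
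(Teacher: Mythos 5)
Your overall strategy — induct on $k$ with step $120$, using the nine-term recursion of Lemma \ref{lemma6.2} and the base cases $k<120$ from Lemma \ref{lemma6.1} — is exactly the paper's. But the index arithmetic underlying your inductive step is wrong, and this sends you down a wrong path. Since $D_{m+10}=G^{2}D_{m}$, we have $D_{k}G^{2j}=D_{k+10j}$, not $D_{k+2j}$; so the recursion expresses $C_{k+120}$ as $\sum G^{2i}C_{k+10j}$, with inner indices $k, k+10, \dots, k+90$ (not ``$\le k+18$''). Likewise $G^{24}=G^{2\cdot 12}$ shifts indices by $120$, not $240$. With the correct arithmetic, part (a) follows \emph{immediately} from the inductive hypothesis: each summand of $G^{2i}C_{k+10j}$ is a $D_{m}$ with $m<(k+10j)+10i\le k+120$, i.e.\ $m<k+120$, which is what (a) asserts for the index $k+120$. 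No appeal to Corollary \ref{corollary2.20} or to the congruence bookkeeping of Lemmas \ref{lemma2.22}--\ref{lemma2.23} is needed in the inductive step (those enter only in establishing the base cases), and in any case a congruence mod $40$ can only restrict which residue classes occur — it cannot show that the bound ``squeezes below $k$,'' nor can it decide whether a leading coefficient is $1$ or $0$.

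That last point is the genuine gap in your treatment of (b). The paper's argument is: the six pairs with $i+j\le 10$ contribute only $D_{m}$ with $m<k+100$, while the \emph{three} pairs with $i+j=12$ (namely $(12,0)$, $(8,4)$, $(4,8)$) each contribute, by the inductive hypothesis, the \emph{same} leading term $D_{k+120-c}$ (with $c=10,10,6,14$ according as $k\equiv 11,19,13,17\bmod{20}$) plus strictly earlier $D_{m}$. Because there are an odd number (three) of these coinciding leading terms, they survive mod $2$, giving the asserted leading term for $C_{k+120}$. You correctly flag ``checking that the leading terms combine with coefficient $1$ (no cancellation)'' as the real work, but you neither carry it out nor identify the mechanism that makes it work; the route you propose (pinning down residues mod $40$ via Corollary \ref{corollary2.20}) cannot supply it. To repair the proof, drop the congruence detour, fix the index shifts to $10j$ and $10i$, and add the count-of-three argument for the $i+j=12$ pairs.
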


\begin{proof}
We argue by induction on $k$. For $k<120$, Lemma \ref{lemma6.1} applies. Suppose $k=n+120$ with $n>0$. By Lemma \ref{lemma6.2}, $C_{n+120}$ is a sum of 9 terms each corresponding to one of the 9 pairs $(i,j)$ of the lemma. These terms are $G^{24}C_{n}$, $G^{16}C_{n+40}$, $G^{8}C_{n+80}$, $G^{12}C_{n+20}$, $G^{4}C_{n+60}$, $G^{18}C_{n+10}$, $G^{2}C_{n+90}$, $G^{6}C_{n+30}$ and $G^{2}C_{n+10}$. The induction hypothesis shows that the term corresponding to $(i,j)$ is a sum of $D_{m}$ with $m<(n+10j)+10i$. Since $i+j\le 12$, each $m<n+120$, and in particular (a) holds for $k$. We turn to (b). For each of the last 6 terms in our sum, $i+j\le 10$. So the corresponding term is a sum of $D_{m}$ with $m<n+100$. Now consider the first 3 terms. Suppose for example that $n\equiv 17\mod{20}$. By the induction hypothesis, each of $G^{24}C_{n}$, $G^{16}C_{n+40}$ and $G^{8}C_{n+80}$ is $D_{n+106}+$ a sum of $D_{m}$ with $m<n+106$. It follows that $C_{n+120}$ itself is $D_{n+106}+$ a sum of $D_{m}$ with $m<n+106$. The proof of (b) when $n\equiv 11,19$ or $13\mod{20}$ is identical.
\qed
\end{proof}

\begin{theorem}
\label{theorem6.4}
If $(k,10)=1$, $T_{11}(D_{k})$ is a sum of $D_{i}$ with $i<k$. Furthermore, $T_{11}:W_{b}\rightarrow W_{a}$ is bijective.
\end{theorem}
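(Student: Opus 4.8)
The plan is to deduce Theorem \ref{theorem6.4} directly from Lemma \ref{lemma6.3}, which does all the combinatorial work. First I would observe that the statement ``$T_{11}(D_k)$ is a sum of $D_i$ with $i<k$'' is literally part (a) of Lemma \ref{lemma6.3} when $k\equiv 1,3,7,9\bmod 20$, while for the remaining residues it is part (b): there $C_k=D_j+\sum_{i<j}D_i$ with $j\in\{k-10,k-6,k-14\}<k$, so again every index appearing is $<k$. Hence the first assertion is immediate.

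For the bijectivity of $T_{11}:W_b\to W_a$, I would argue with the graded pieces of $W_b$ and $W_a$ by ``$w$-degree'' (or equivalently by the order on the basis $\{D_k\}$). Since $C_k=T_{11}(D_k)$ is, by part (b) of Lemma \ref{lemma6.3}, equal to $D_{j(k)}+(\text{earlier }D_i)$ with $j(k)<k$ and $j$ a bijection, the key point is that $k\mapsto j(k)$ is a bijection from the index set of the $W_b$-basis $\{D_k:k\equiv 11,13,17,19\bmod 20\}$ onto the index set of the $W_a$-basis $\{D_k:k\equiv 1,3,7,9\bmod 20\}$. Indeed, looking mod $40$: $k\equiv 11,31\mapsto k-10\equiv 1,21$; $k\equiv 19,39\mapsto k-10\equiv 9,29$; $k\equiv 13,33\mapsto k-6\equiv 7,27$; $k\equiv 17,37\mapsto k-14\equiv 3,23$. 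So the eight residue classes mod $40$ making up $W_b$ are carried bijectively onto the eight residue classes mod $40$ making up $W_a$, and within each class $k\mapsto k-c$ is an order-preserving bijection of the infinite arithmetic progression onto its target. Thus $j:\{k:\chi(k)=-1\}\to\{k:\chi(k)=1\}$ is a bijection.

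Given this, $T_{11}:W_b\to W_a$ is represented, with respect to the bases $\{D_k\}$ ordered by size, by an ``upper triangular'' matrix (after permuting columns by the bijection $j$) with $1$'s on the diagonal: writing $T_{11}(D_k)=D_{j(k)}+\sum_{i<j(k)}c_i D_i$, we can invert by downward induction on the index $j(k)$. Concretely, to show surjectivity it suffices to show each $D_\ell$ with $\chi(\ell)=1$ lies in the image; writing $\ell=j(k)$, we have $T_{11}(D_k)=D_\ell+(\text{sum of }D_i,\ i<\ell)$, and by induction on $\ell$ each such $D_i$ is already in the image, so $D_\ell$ is too. This shows $T_{11}:W_b\to W_a$ is onto. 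For injectivity, suppose $T_{11}(f)=0$ with $f\ne 0$, $f\in W_b$; let $D_k$ be the largest basis element appearing in $f$. Then $D_{j(k)}$ appears in $T_{11}(D_k)=C_k$, and because $j$ is a bijection and $j(k')<j(k)$ for every other $k'<k$ occurring in $f$ (here one uses that $j$ restricted to each residue class is order preserving, together with the fact that across classes the diagonal term $D_{j(k)}$ of the largest $C_k$ dominates — more simply, one notes that $T_{11}$ preserves the $w$-degree filtration up to the shift recorded above, so the leading term of $T_{11}(f)$ is $D_{j(k)}\ne 0$), contradicting $T_{11}(f)=0$. Hence $T_{11}:W_b\to W_a$ is bijective.

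The main obstacle I anticipate is making the ``leading term'' bookkeeping in the injectivity argument fully rigorous: one must check that the diagonal index $j(k)$ of the top basis element of $f$ is strictly larger than all indices $j(k')$, $k'<k$, and all the ``earlier'' indices $i<j(k')$ that can occur, so that no cancellation of $D_{j(k)}$ is possible. The cleanest way to handle this is probably to note that, by Lemma \ref{lemma6.3}(b), $T_{11}$ carries the filtration of $W_b$ by ``$D_i$ with $i\le k$'' into the filtration of $W_a$ by ``$D_i$ with $i\le j(k)$'', and induces on the associated graded an isomorphism (the identity, under the relabelling $D_k\leftrightarrow D_{j(k)}$); surjectivity and injectivity then both follow from the corresponding statements on graded pieces, which are trivially one-dimensional isomorphisms. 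Once that filtration bookkeeping is set up, the rest is routine. Theorem \ref{theorem6.4} also completes the proof of result (1) from section \ref{section1}, since together with Theorem \ref{theorem2.24} it shows $T_p(D_n)$ is a sum of $D_k$ with $k<n$ for all primes $p\ne 2,5$ — via expressing a general $p$'s behaviour in terms of $T_3,T_7,T_{11}$ and the established lowering properties.
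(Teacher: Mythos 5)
Your overall route is the paper's: both assertions are read off from Lemma \ref{lemma6.3}, the first being immediate, and the second following from the fact that $k\mapsto j(k)$ (with $j(k)=k-10,\ k-10,\ k-6,\ k-14$ according as $k\equiv 11,19,13,17\bmod{20}$) is a bijection from the index set of the basis of $W_{b}$ onto that of $W_{a}$, so that $T_{11}$ is unitriangular with $1$'s on the diagonal. Your surjectivity argument (induction on $\ell=j(k)$ over the well-ordered index set of $W_{a}$) is fine. But your injectivity step rests on a false claim: $j$ is \emph{not} order preserving across residue classes, so it is not true that $j(k')<j(k)$ for every $k'<k$ occurring in $f$. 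Concretely, $C_{13}=D_{7}$ and $C_{17}=D_{3}$, so $13<17$ while $j(13)=7>3=j(17)$; for $f=D_{13}+D_{17}$ the top index of $T_{11}(f)$ is $7=j(13)$, not $j(17)$. For the same reason your proposed repair --- that $T_{11}$ carries the filtration of $W_{b}$ by $\{D_{i}:i\le k\}$ into the filtration of $W_{a}$ by $\{D_{i}:i\le j(k)\}$ --- also fails: $D_{13}$ lies in the level $k=17$ of the source filtration, but $T_{11}(D_{13})=D_{7}$ does not lie in the span of $D_{1}$ and $D_{3}$.

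The fix is a one-line change: order (or filter) the basis of $W_{b}$ by $j(k)$ rather than by $k$, i.e.\ pull back the natural ordering of the $W_{a}$-indices through the bijection $j$. If $f\ne 0$ lies in $W_{b}$, let $k_{1}$ be the unique index occurring in $f$ that maximizes $j(k)$ (unique because $j$ is injective); then every term contributed by every $C_{k}$ with $k$ occurring in $f$, other than $D_{j(k_{1})}$ itself, has index $<j(k_{1})$, so $T_{11}(f)=D_{j(k_{1})}+(\mbox{lower})\ne 0$. Equivalently, $T_{11}$ maps the span of $\{D_{k}:\chi(k)=-1,\ j(k)\le\ell\}$ into the span of $\{D_{i}:\chi(i)=1,\ i\le\ell\}$ and induces the identity on the one-dimensional graded pieces, which yields bijectivity at once. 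With that correction your argument coincides with the paper's, which records the triangularity only implicitly by observing that the $D_{k}$ and the corresponding $D_{j(k)}$ are bases of $W_{b}$ and $W_{a}$ respectively.
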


\begin{proof}
Lemma \ref{lemma6.3} gives the first result. Also the $D_{k}$ where $k>0$ and $\equiv 11,19,13$ or $17\mod{20}$ form a basis of $W_{b}$, while the corresponding $D_{k-10}$, $D_{k-10}$, $D_{k-6}$ and $D_{k-14}$ form a basis of $W_{a}$. So Lemma \ref{lemma6.3} gives the second result as well.
\qed
\end{proof}

\begin{corollary}
\label{corollary6.5}
$T_{11}:W_{a}\rightarrow W_{b}$ is onto.
\end{corollary}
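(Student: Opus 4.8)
The plan is to deduce surjectivity of $T_{11}:W_{a}\rightarrow W_{b}$ from the two facts already in hand: that $T_{11}:W_{b}\rightarrow W_{a}$ is bijective (Theorem \ref{theorem6.4}) and that $T_{11}^{2}:W_{a}\rightarrow W_{a}$ is onto (Theorem \ref{theorem5.18}). The bridge is the trivial but essential observation that everything lives in characteristic $2$, so that ``$u+v=0$'' and ``$u=v$'' are the same statement, and that $T_{11}$ maps $W_{a}$ into $W_{b}$ and $W_{b}$ into $W_{a}$ (since $\chi(11)=-1$), so $T_{11}^{2}$ genuinely stabilizes each of $W_{a}$ and $W_{b}$.

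First I would fix an arbitrary $g$ in $W_{b}$ and seek $f$ in $W_{a}$ with $T_{11}(f)=g$. Since $T_{11}(W_{b})\subset W_{a}$, the element $T_{11}(g)$ lies in $W_{a}$, and by Theorem \ref{theorem5.18} there is some $f$ in $W_{a}$ with $T_{11}^{2}(f)=T_{11}(g)$. Then $T_{11}(f)+g$ lies in $W_{b}$ (both summands do), and $T_{11}\bigl(T_{11}(f)+g\bigr)=T_{11}^{2}(f)+T_{11}(g)=0$ in characteristic $2$. But $T_{11}:W_{b}\rightarrow W_{a}$ is injective by Theorem \ref{theorem6.4}, so $T_{11}(f)+g=0$, i.e.\ $T_{11}(f)=g$. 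Hence $g$ is in the image of $T_{11}:W_{a}\rightarrow W_{b}$, and since $g$ was arbitrary the map is onto.

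There is no real obstacle here: the whole content has been front-loaded into Theorems \ref{theorem5.18} and \ref{theorem6.4}, and the argument is a one-line diagram chase once one remembers that the kernel of $T_{11}$ on $W_{b}$ is trivial. The only point worth a moment's care is making sure the right restriction of $T_{11}$ is being invoked at each step (it alternates between sending $W_{a}\rightarrow W_{b}$ and $W_{b}\rightarrow W_{a}$), and that $f$ is chosen in $W_{a}$ rather than in all of $W$, which is exactly what Theorem \ref{theorem5.18} provides.
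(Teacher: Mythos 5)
Your argument is correct and is essentially the paper's own proof: both deduce the result from the surjectivity of $T_{11}^{2}$ on $W_{a}$ (Theorem \ref{theorem5.18}) together with the injectivity of $T_{11}:W_{b}\rightarrow W_{a}$ (Theorem \ref{theorem6.4}), the paper phrasing the diagram chase at the level of subspaces where you phrase it elementwise.
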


\begin{proof}
By Theorem \ref{theorem5.18}, $T_{11}$ maps the subspace $T_{11}(W_{a})$ of $W_{b}$ onto $W_{a}$, and we use the last theorem to see that $T_{11}(W_{a})$ is all of $W_{b}$.
\qed
\end{proof}

\begin{corollary}
\label{corollary6.6}
Let $\lambda$ be as in Theorem \ref{theorem5.16}. Then in its action on $W=W_{a}\oplus W_{b}$, $T_{11}^{2}$ is multiplication by $\lambda^{2}$. (This follows from Corollary \ref{corollary6.5} and Theorem \ref{theorem5.16}.)
\end{corollary}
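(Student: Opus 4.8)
The plan is to bootstrap from the action of $T_{11}^{2}$ on $W_{a}$, which is Theorem \ref{theorem5.16}, to its action on $W_{b}$, using the surjectivity of $T_{11}:W_{a}\rightarrow W_{b}$ from Corollary \ref{corollary6.5}. The statement is essentially immediate once one checks that $\lambda^{2}$ makes sense as an operator on all of $W$ and commutes with $T_{11}$.

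First I would observe that $W_{b}$, like $W_{a}$, is a $Z/2[[X,Y]]$-module with $X,Y$ acting by $T_{3},T_{7}$. Indeed $\chi(3)=\chi(7)=1$, so by Corollary \ref{corollary2.20} these operators stabilize $W_{b}$, and by Theorem \ref{theorem2.24} each of them, applied to a $D_{k}$, produces a sum of $D_{m}$ with $m<k$. Hence on any $D_{k}$ only finitely many monomials $T_{3}^{i}T_{7}^{j}D_{k}$ with $i+j\ge 1$ are nonzero, so any power series in $T_{3},T_{7}$ with no constant term, in particular $\lambda^{2}$ (note $\lambda\in(X,Y)$), acts on $W=W_{a}\oplus W_{b}$ and is given on each vector by a finite sum. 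In particular the assertion ``$T_{11}^{2}$ is multiplication by $\lambda^{2}$ on $W$'' is meaningful, and since all the formal Hecke operators commute, $T_{11}$ commutes with every monomial $T_{3}^{i}T_{7}^{j}$, hence (termwise, using that the relevant sums are finite and $T_{11}$ is linear) with $\lambda^{2}$ itself.

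The main computation is then short. Given $g\in W_{b}$, use Corollary \ref{corollary6.5} to write $g=T_{11}(f)$ with $f\in W_{a}$. Then
$$T_{11}^{2}(g)=T_{11}^{3}(f)=T_{11}\bigl(T_{11}^{2}f\bigr)=T_{11}\bigl(\lambda^{2}f\bigr)=\lambda^{2}\,T_{11}(f)=\lambda^{2}g,$$
where the third equality is Theorem \ref{theorem5.16} (since $f\in W_{a}$) and the fourth is the commutation of $T_{11}$ with $\lambda^{2}$. Together with Theorem \ref{theorem5.16} on $W_{a}$, this gives $T_{11}^{2}=\lambda^{2}$ on all of $W$. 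There is no real obstacle here; the only step needing a word of care is the bookkeeping in the first paragraph, verifying that $\lambda^{2}$ is a well-defined operator on $W_{b}$ and commutes with $T_{11}$.
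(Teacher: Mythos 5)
Your proof is correct and follows exactly the route the paper intends: the parenthetical in the statement says the corollary follows from Corollary \ref{corollary6.5} and Theorem \ref{theorem5.16}, and your computation $T_{11}^{2}(T_{11}f)=T_{11}(\lambda^{2}f)=\lambda^{2}T_{11}(f)$ for $f\in W_{a}$ is precisely that deduction. The extra care you take in checking that $\lambda^{2}$ acts on $W_{b}$ and commutes with $T_{11}$ is sound and harmless.
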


\section{The algebra \bm{$\newo$} acting on \bm{$W_{a}$} }
\label{section7}

Take $\lambda$ in $(X,Y)$ as in Theorem \ref{theorem5.16}, and let $U:W\rightarrow W$ be the map $h\rightarrow \lambda(X,Y)h+T_{11}(h)$. Since $U$ is $Z/2[[X,Y]]$-linear, and $U^{2}$ annihilates $W$ (by Corollary \ref{corollary6.6}), $W$ has the structure of $Z/2[[X,Y]][\varepsilon]$ module with $\varepsilon^{2}=0$, $\varepsilon$ acting by $U$, and $X$ and $Y$ by $T_{3}$ and $T_{7}$. Let $\newo$ be the local ring  $Z/2[[X,Y]][\varepsilon]$.

\begin{lemma}
\label{lemma7.1}
The only element of $W_{b}$ annihilated by $\varepsilon$ is $0$.
\end{lemma}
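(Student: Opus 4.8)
The plan is to unwind the definition of $\varepsilon$ and exploit the direct sum decomposition $W = W_a\oplus W_b$ together with the bijectivity of $T_{11}\colon W_b\to W_a$ from Theorem \ref{theorem6.4}. So I would start from an $h$ in $W_b$ with $\varepsilon h = 0$. By the definition of the $\newo$-module structure on $W$, this says precisely that $\lambda(T_3,T_7)(h) + T_{11}(h) = 0$, where $\lambda$ in $(X,Y)$ is the power series furnished by Theorem \ref{theorem5.16} and $X$, $Y$ act by $T_3$, $T_7$.

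The key observation is that the two summands on the left lie in complementary subspaces of $W$. Since $\lambda$ has zero constant term, $\lambda(T_3,T_7)$ is a power series in the operators $T_3$ and $T_7$, and as $\chi(3)=\chi(7)=1$, Corollary \ref{corollary2.20} shows that $T_3$ and $T_7$ stabilize $W_b$; hence $\lambda(T_3,T_7)(h)$ is in $W_b$. On the other hand $\chi(11)=-1$, so $T_{11}(h)$ is in $W_a$. Because $W = W_a\oplus W_b$, the relation $\lambda(T_3,T_7)(h) + T_{11}(h) = 0$ forces each summand to vanish; in particular $T_{11}(h)=0$. Theorem \ref{theorem6.4} now tells us that $T_{11}\colon W_b\to W_a$ is bijective, hence injective, and so $h=0$.

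I do not expect any real obstacle here: the argument is essentially a bookkeeping of which $\chi$-eigenspace each operator preserves or swaps, followed by one application of injectivity. The only point that warrants a word of justification is that $\lambda(T_3,T_7)(h)$ genuinely lies in $W_b$ (as opposed to some completion issue spoiling the splitting), but this is immediate since $T_3$ and $T_7$ map $W_b$ into itself and the $Z/2[[X,Y]]$-action on $W_b$ is legitimate — $T_{11}$ identifies $W_b$ with $W_a$ as $Z/2[[X,Y]]$-modules, and the action on $W_a$ was already shown to converge because $T_3$ and $T_7$ lower the $w$-degree.
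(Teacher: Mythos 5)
Your proof is correct and is essentially identical to the paper's: both rewrite $\varepsilon h=0$ as $T_{11}(h)=\lambda(T_3,T_7)h$, note that the two sides lie in the complementary summands $W_a$ and $W_b$ so each must vanish, and then invoke the injectivity of $T_{11}\colon W_b\to W_a$ from Theorem \ref{theorem6.4}. The extra remark about the legitimacy of the $Z/2[[X,Y]]$-action on $W_b$ is a reasonable bit of care but not a departure from the paper's argument.
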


\begin{proof}
Suppose $\varepsilon h_{b} =0$, with $h_{b}$ in $W_{b}$. Then $T_{11}(h_{b})=\lambda(T_{3},T_{7})h_{b}$. Since the first of these is in $W_{a}$ and the second in $W_{b}$, $T_{11}(h_{b})=0$. By Theorem \ref{theorem6.4}, $h_{b}=0$.
\qed
\end{proof}

In Corollary \ref{corollary5.13} we constructed a $Z/2$-basis $m_{i,j}$ of $W_{a}$ ``adapted to $X=T_{3}$ and $Y=T_{7}$.''  Since $T_{11}:W_{b}\rightarrow W_{a}$ is bijective, this basis pulls back under $T_{11}$ to a basis $n_{i,j}$ of $W_{b}$ ``adapted to $T_{3}$ and $T_{7}$.'' 

\begin{theorem}
\label{theorem7.2}\hspace{2em}\\
\vspace{-2ex}
\begin{enumerate}
\item[(1)] The $n_{i,j}$ and the $\varepsilon n_{i,j}$ form a $Z/2$-basis of $W$.
\item[(2)] $\newo$ acts faithfully on $W$.
\item[(3)] Each $T_{p}$, $p\ne 2$ or $5$, acts on $W$ by multiplication by some $r+t\varepsilon$ with $r$ and $t$ in $Z/2[[X,Y]]$.
\end{enumerate}
\end{theorem}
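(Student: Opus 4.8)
The plan is to prove the three parts in order, using the module structure already set up and the basis results from Corollary \ref{corollary5.13}.

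\medskip

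For (1), I would argue that the $n_{i,j}$ and $\varepsilon n_{i,j}$ are $Z/2$-independent and span $W$. Spanning is the easy direction: the $n_{i,j}$ are a basis of $W_b$ by construction (they are the $T_{11}$-preimages of the adapted basis $m_{i,j}$ of $W_a$, and $T_{11}:W_b\to W_a$ is bijective by Theorem \ref{theorem6.4}). Since $\varepsilon$ acts by $U(h)=\lambda(X,Y)h+T_{11}(h)$, for $h=n_{i,j}\in W_b$ we get $\varepsilon n_{i,j}=\lambda(T_3,T_7)n_{i,j}+T_{11}(n_{i,j})$; the first summand lies in $W_b$ and the second is $m_{i,j}\in W_a$. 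Thus $\varepsilon n_{i,j}\equiv m_{i,j}$ modulo $W_b$, so the $\varepsilon n_{i,j}$ together with the $n_{i,j}$ project onto the $m_{i,j}$ in $W_a=W/W_b$ and span $W_b$ respectively. Hence they span $W$. For independence: a relation $\sum a_{i,j}n_{i,j}+\sum b_{i,j}\varepsilon n_{i,j}=0$ projects to $W_a$ as $\sum b_{i,j}m_{i,j}=0$ (since the $\varepsilon n_{i,j}$ have $W_a$-component $m_{i,j}$ and the $n_{i,j}\in W_b$), so all $b_{i,j}=0$ by Corollary \ref{corollary5.13}(4); then $\sum a_{i,j}n_{i,j}=0$ forces all $a_{i,j}=0$ since the $n_{i,j}$ are a basis of $W_b$. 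This gives a $Z/2$-basis of $W$ of the required form.

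\medskip

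For (2), faithfulness of $\newo=Z/2[[X,Y]][\varepsilon]$ on $W$, write a general element of $\newo$ as $u+v\varepsilon$ with $u,v\in Z/2[[X,Y]]$ and suppose $(u+v\varepsilon)$ kills $W$. Applying it to $n_{i,j}\in W_b$ gives $un_{i,j}+v\varepsilon n_{i,j}=0$; the $W_a$-component of this is $v(T_3,T_7)m_{i,j}$ (using the identity $\varepsilon n_{i,j}\equiv m_{i,j}\bmod W_b$ and that $un_{i,j}\in W_b$), so $v$ annihilates every $m_{i,j}$, hence $v\cdot W_a=0$, hence $v=0$ by the faithfulness of $Z/2[[X,Y]]$ on $W_a$ (Theorem \ref{theorem5.14}). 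Now $u$ alone annihilates $W$, in particular $W_a$, so again $u=0$ by Theorem \ref{theorem5.14}. (One should also note that $U^2=0$ on all of $W$, so the module structure is genuinely over $\newo$; this is Corollary \ref{corollary6.6} together with the $Z/2[[X,Y]]$-linearity of $U$, as recorded in the paragraph preceding Lemma \ref{lemma7.1}.) I expect this to be the most delicate step, mainly in keeping the bookkeeping between the $W_a$ and $W_b$ components straight — but it is not deep.

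\medskip

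For (3), fix $p\ne 2,5$. If $\chi(p)=1$ then $T_p$ stabilizes $W_a$ and $W_b$ and is $Z/2[[X,Y]]$-linear; by Theorem \ref{theorem5.15} it acts on $W_a$ as multiplication by some $r\in(X,Y)\subset Z/2[[X,Y]]$, and pulling back through the isomorphism $T_{11}:W_b\to W_a$ (which is $Z/2[[X,Y]]$-linear and commutes with $T_p$ since all the $T$'s commute) shows $T_p$ acts on $W_b$ by the same $r$; hence $T_p$ acts on $W=W_a\oplus W_b$ by multiplication by $r$, which is of the form $r+0\cdot\varepsilon$. If $\chi(p)=-1$, then $T_p(W_a)\subset W_b$ and $T_p(W_b)\subset W_a$, so $T_p$ plays the same structural role as $T_{11}$; the argument of Theorem \ref{theorem5.16}, applied to $T_pT_{11}$ (which stabilizes $W_a$, is $Z/2[[X,Y]]$-linear, hence is multiplication by some $g\in Z/2[[X,Y]]$) together with the relation $T_{11}^2=\lambda^2$ on $W$ (Corollary \ref{corollary6.6}), shows that the map $h\mapsto T_p(h)+\lambda^{-1}?\dots$ — more cleanly: consider $U_p:=$ (the operator $h\mapsto (g/\lambda\text{-type correction})h+T_p h$) built exactly as $U$ was built from $T_{11}$, so that $T_p=t\varepsilon + (\text{a }Z/2[[X,Y]]\text{-multiple})$ for a suitable $t\in Z/2[[X,Y]]$; concretely, since $\varepsilon=\lambda+U^{-1}$-style, one writes $T_p$ in the basis from part (1) and reads off that its matrix is block-anti-diagonal over $Z/2[[X,Y]]$, which is precisely the condition of being a $Z/2[[X,Y]]$-multiple of $\varepsilon$. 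Thus $T_p=t\varepsilon$ with $t\in Z/2[[X,Y]]$ in this case, and in general $T_p=r+t\varepsilon$. The only subtlety is to check $t$ (and $r$) genuinely lie in $Z/2[[X,Y]]$ rather than the fraction field — this follows because $T_p$ preserves the lattice structure (each $T_p$ lowers $w$-degree and maps $S_{m+c}$ into $S_m$ for suitable $c$, by Theorems \ref{theorem2.24}, \ref{theorem5.9} and Lemma \ref{lemma6.3}), so its matrix entries in the adapted basis are power series, not formal Laurent series.
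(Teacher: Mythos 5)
Your parts (1) and (2) are correct and are essentially the intended argument (the paper itself only cites the analogous Theorems 5.2--5.4 of \cite{1}, whose proofs run exactly along your lines: $\varepsilon n_{i,j}\equiv m_{i,j}\bmod W_{b}$ gives the triangular basis, and projecting to the $W_{a}$-component kills $v$ and then $u$ via Theorem \ref{theorem5.14}). The case $\chi(p)=1$ of part (3) is also fine. The problem is the case $\chi(p)=-1$ of part (3), where your argument is not a proof: you invoke a ``$\lambda^{-1}$'' and a ``$g/\lambda$-type correction'' although $\lambda$ lies in the maximal ideal $(X,Y)$ and is not invertible (indeed $\lambda D=0$), and your closing criterion is false --- being block-anti-diagonal with respect to $W=W_{a}\oplus W_{b}$ is \emph{not} the condition for being a $Z/2[[X,Y]]$-multiple of $\varepsilon$, since $t\varepsilon=t\lambda+tT_{11}$ has the nonzero block-diagonal part $t\lambda$. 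Your route via $g:=T_{p}T_{11}$ on $W_{a}$ and $T_{11}^{2}=\lambda^{2}$ also stalls: it would require knowing that $\lambda^{2}$ divides $g$ in $Z/2[[X,Y]]$, which is exactly the division you cannot perform.

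The missing step can be repaired with the tools you already have. Since $T_{11}:W_{b}\rightarrow W_{a}$ is a $Z/2[[X,Y]]$-linear bijection (Theorem \ref{theorem6.4}), the composite $S:=T_{p}\circ (T_{11}|_{W_{b}})^{-1}$ is a $Z/2[[X,Y]]$-linear endomorphism of $W_{a}$, hence --- by the same adapted-basis argument that gives Theorem \ref{theorem5.15} --- is multiplication by some $t\in Z/2[[X,Y]]$. This yields $T_{p}=tT_{11}$ on $W_{b}$. For $h\in W_{a}$, both $T_{p}h$ and $tT_{11}h$ lie in $W_{b}$, and applying the injective map $T_{11}|_{W_{b}}$ to each gives $T_{p}T_{11}h=tT_{11}^{2}h=t\lambda^{2}h$ in both cases, whence $T_{p}=tT_{11}=t\lambda+t\varepsilon$ on all of $W$, which is of the required form $r+t\varepsilon$. (Your concluding worry about ``fraction fields'' is a red herring: $r$ and $t$ arise directly as elements of $Z/2[[X,Y]]$ from the adapted-basis construction; the only convergence issue is that $X$ and $Y$ lower degree, which you already noted.)
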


\begin{proof}
The arguments are just like those giving Theorems 5.2, 5.3 and 5.4 of \cite{1}, using the basis $n_{i,j}$ of $W_{b}$.
\qed
\end{proof}

\begin{theorem}
\label{theorem7.3}\hspace{2em}\\
\vspace{-2ex}
\begin{enumerate}
\item[(1)] If $\chi(p)=1$, $T_{p}:W\rightarrow W$ is multiplication by some $t$ in the maximal ideal $(X,Y)$ of $Z/2[[X,Y]]$.
\item[(2)] If $\chi(p)=-1$, $T_{p}:W\rightarrow W$ is the composition of $T_{11}$ with multiplication by some $t$ in the maximal ideal $(X,Y)$ of $Z/2[[X,Y]]$.
\end{enumerate}
\end{theorem}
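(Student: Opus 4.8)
The plan is to read the action of $T_p$ on $W$ directly off the $\newo$-module structure of Theorem \ref{theorem7.2}, using that $\chi(p)$ governs whether $T_p$ preserves or swaps $W_a$ and $W_b$. By Theorem \ref{theorem7.2}(3), $T_p$ acts on $W$ as $r+t\varepsilon$ for some $r,t$ in $Z/2[[X,Y]]$, while by construction $\varepsilon$ acts as $U$, $U(h)=\lambda(X,Y)h+T_{11}(h)$; in particular $T_{11}$ acts as $\lambda(X,Y)+\varepsilon$. Hence for every $h$ in $W$,
\[
T_p(h)=(r+t\lambda)(X,Y)\,h+t(X,Y)\,T_{11}(h).
\]
Since $X=T_3$ and $Y=T_7$ stabilize each of $W_a,W_b$ whereas $T_{11}$ interchanges them, the two summands land in complementary blocks of $W=W_a\oplus W_b$. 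I would first note that $Z/2[[X,Y]]$ acts faithfully on $W_b$ as well as on $W_a$ (Theorem \ref{theorem5.14}): the map $T_{11}\colon W_b\to W_a$ is bijective (Theorem \ref{theorem6.4}) and, commuting with $T_3$ and $T_7$, is $Z/2[[X,Y]]$-linear, hence a module isomorphism.

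For (1), assume $\chi(p)=1$, so $T_p(W_a)\subseteq W_a$. Applying the displayed identity to $h$ in $W_a$, the $W_b$-component $t(X,Y)\,T_{11}(h)$ must vanish for all such $h$. As $T_{11}\colon W_a\to W_b$ is onto (Corollary \ref{corollary6.5}), $t(X,Y)$ kills $W_b$, so $t=0$ by faithfulness. Thus $T_p$ is multiplication by $r$ on all of $W$; restricting to $W_a$ and using Theorem \ref{theorem5.15} identifies $r$ with an element of $(X,Y)$, which is (1).

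For (2), assume $\chi(p)=-1$, so $T_p(W_b)\subseteq W_a$. Applying the displayed identity to $h$ in $W_b$, the $W_b$-component $(r+t\lambda)(X,Y)h$ must vanish, and faithfulness on $W_b$ forces $r=t\lambda$. Then $T_p$ acts on $W$ as $t(X,Y)\bigl(\lambda(X,Y)+\varepsilon\bigr)$, which, since $t(X,Y)$ commutes with $T_{11}$, is exactly the composition of $T_{11}$ with multiplication by $t(X,Y)$. Since $\lambda$ lies in $(X,Y)$, so does $r=t\lambda$, so $T_p$ automatically belongs to the maximal ideal of $\newo$; what remains of (2) is that $t$ is not a unit.

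The hard part is precisely this. I would test the constant term of $t$ at the bottom of $W_a$: from $T_{11}(D_{11})=D$ (Lemma \ref{lemma6.1}(b1)) and $T_3(D)=T_7(D)=0$ (Corollary \ref{corollary5.13}) one gets $T_p(D_{11})=t(X,Y)\,T_{11}(D_{11})=t(0,0)\cdot D$, so $t\in(X,Y)$ if and only if $T_p(D_{11})=0$; by the defining formula for $T_p$ this says the coefficient of $x^{p}$ in $D_{11}=G^{2}D$ — the number of representations $p=10k^{2}+n^{2}$ with $k$ odd and $(n,10)=1$ — is even. The systematic way to get this, and the degree-lowering of $T_p$ on $W_a$ behind it, should be to run section \ref{section6} with $11$ replaced by $p$ for the primes $p\equiv 1,9\mod{10}$ not covered by Theorem \ref{theorem2.24}: write the level-$p$ modular equation for $F$, push it to $G$ by $x\mapsto x^{5}$, extract a recursion for the $T_p(D_k)$ as in Lemma \ref{lemma6.2}, and settle the finitely many initial values by inspecting $x$-expansions as in Lemma \ref{lemma6.1}. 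With that input, (1) and (2) follow from the block bookkeeping above.
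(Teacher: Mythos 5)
Your block decomposition is essentially the paper's own argument: the paper likewise starts from Theorem \ref{theorem7.2}(3), writes $T_{p}$ as multiplication by $r+t\varepsilon$, compares $W_{a}$- and $W_{b}$-components of $W=W_{a}\oplus W_{b}$, and concludes that $T_{p}=t$ with $t\in(X,Y)$ when $\chi(p)=1$ and that $T_{p}=tT_{11}$ with $t\in Z/2[[X,Y]]$ when $\chi(p)=-1$. Up to that point your proposal is correct (your detour through faithfulness on $W_{b}$ for part (1) is a harmless variant of the paper's appeal to Theorem \ref{theorem5.15} plus surjectivity of $T_{11}:W_{a}\rightarrow W_{b}$).

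The step you isolate as ``the hard part'' --- that in case (2) the factor $t$ lies in $(X,Y)$, equivalently $T_{p}(D_{11})=0$ --- would fail, because that claim is false. Take $p=19$: the coefficient of $x^{19}$ in $D_{11}=G^{2}D$ counts representations $19=10k^{2}+n^{2}$ with $k$ odd and $(n,10)=1$, and there is exactly one, $(k,n)=(1,3)$; hence $T_{19}(D_{11})$ contains $D_{1}=D$, and since $T_{19}(D_{11})=t(X,Y)\,T_{11}(D_{11})=t(0,0)D$, this forces $t(0,0)=1$. (Even more immediately, $p=11$ itself gives $t=1$.) So no level-$p$ modular equation computation in the style of section \ref{section6} can establish the evenness you are after. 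The defect is in the statement of Theorem \ref{theorem7.3}(2), not in your bookkeeping: ``the maximal ideal $(X,Y)$'' there should read ``$Z/2[[X,Y]]$,'' and that weaker assertion is exactly what the paper's proof (and yours) establishes. Nothing downstream needs more: $T_{11}$ acts as $\lambda+\varepsilon$, which already lies in the maximal ideal of $\newo$, so $T_{p}=tT_{11}$ lies in that maximal ideal whether or not $t$ is a unit, and both Corollary \ref{corollary7.4} and the identification of the completed Hecke algebra with $\newo$ go through unchanged. You should therefore delete your final paragraph and state (2) with $t\in Z/2[[X,Y]]$.
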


\begin{proof}
Suppose $\chi(p)=1$. By Theorem \ref{theorem5.15}, $T_{p}:W_{a}\rightarrow W_{a}$ is multiplication by some $t$ in the maximal ideal $(X,Y)$. Since $T_{11}:W_{a}\rightarrow W_{b}$ is onto, $T_{p}:W_{b}\rightarrow W_{b}$ is multiplication by the same $t$, and we get (1). Now $T_{11}$ is multiplication by $\lambda+\varepsilon$. Suppose $\chi(p)=-1$. By Theorem \ref{theorem7.2}, $T_{p}$ is multiplication by some $r+t\varepsilon$ with $r$ and $t$ in $Z/2[[X,Y]]$.  Then $T_{p}+tT_{11}$ is multiplication by $r+\lambda t$. Since $T_{p}+tT_{11}$ and multiplication by $r+\lambda t$ map $W_{b}$ into $W_{a}$ and $W_{b}$ respectively, $T_{p}+tT_{11}=0$, giving (2).
\qed
\end{proof}

\begin{corollary}
\label{corollary7.4}
If $p\ne 2$ or $5$ and $(k,10)=1$, then $T_{p}(D_{k})$ is a sum of $D_{i}$ with $i<k$.
\end{corollary}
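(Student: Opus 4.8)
The plan is to reduce everything to results already in hand: Theorem~\ref{theorem2.24} (applied to $p=3$ and $p=7$), Theorem~\ref{theorem6.4} (for $T_{11}$), and the structural Theorem~\ref{theorem7.3}. Since $p\ne 2$ or $5$, $p$ is prime to $20$, so $\chi(p)=1$ or $\chi(p)=-1$, and I would treat these two cases separately. In both, recall that $T_p(D_k)\in W$ by Theorem~\ref{theorem2.19}, hence is automatically a finite $Z/2$-combination of the $D_i$; so it suffices to show that only $D_i$ with $i<k$ can occur.

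Call an operator on $W$ ``index-lowering'' if it sends each $D_k$ to a $Z/2$-combination of $D_i$ with $i<k$. Theorem~\ref{theorem2.24}, applied to the primes $3$ and $7$, says precisely that $T_3$ and $T_7$ are index-lowering on all of $W$, and Theorem~\ref{theorem6.4} says $T_{11}$ is index-lowering. Index-lowering operators are closed under composition and under finite $Z/2$-linear combination. Now in the case $\chi(p)=1$, Theorem~\ref{theorem7.3}(1) says $T_p$ acts on $W$ as multiplication by some $t$ in the maximal ideal $(X,Y)$ of $Z/2[[X,Y]]$, with $X,Y$ acting by $T_3,T_7$. Expanding $t$ as a power series with zero constant term and applying it to a fixed $D_k$, only finitely many monomials $T_3^aT_7^b$ contribute --- as one sees from the adapted bases $m_{i,j}$ of $W_a$ of Corollary~\ref{corollary5.13} and $n_{i,j}$ of $W_b$ of Theorem~\ref{theorem7.2}, on which $T_3$ and $T_7$ act by coordinate shifts --- and each contributing monomial has $a+b\ge 1$, hence is index-lowering. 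Therefore $T_p(D_k)$ is a sum of $D_i$ with $i<k$.

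In the case $\chi(p)=-1$, Theorem~\ref{theorem7.3}(2) writes $T_p$ as the composition of $T_{11}$ with multiplication by some $t\in(X,Y)$. By the previous paragraph, multiplication by $t$ is index-lowering, so $t(T_3,T_7)(D_k)=\sum_{i<k}c_iD_i$; applying $T_{11}$ and invoking Theorem~\ref{theorem6.4} on each $T_{11}(D_i)$ shows that $T_p(D_k)=\sum_{i<k}c_iT_{11}(D_i)$ is a sum of $D_j$ with $j<k$, as desired. I do not expect a genuine obstacle here: the corollary is an assembly step, and the only point needing a line of care is the bookkeeping that the power-series operator $t(T_3,T_7)$ is well defined on $W$ and that the ``index-lowering'' property survives the compositions and sums involved --- both of which follow immediately from the adapted-basis description of the action of $T_3$ and $T_7$.
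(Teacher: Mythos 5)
Your proposal is correct and follows essentially the same route as the paper: the paper's proof is exactly the one-line assembly "the result holds for $p=3,7,11$; Theorem~\ref{theorem7.3} gives the rest," and your two cases via $\chi(p)=\pm 1$, with the observation that power series in $T_{3},T_{7}$ with zero constant term (and compositions with $T_{11}$) preserve the index-lowering property, is just that argument written out in full. The extra bookkeeping you supply — that only finitely many monomials of $t(T_{3},T_{7})$ act nontrivially on a fixed $D_{k}$ — is a point the paper leaves implicit (it is guaranteed by the degree-lowering that makes $W$ a $Z/2[[X,Y]]$-module), and is a welcome clarification rather than a divergence.
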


\begin{proof}
We have seen that this holds when $p=3$, $7$ or $11$; Theorem \ref{theorem7.3} then gives the general result.
\qed
\end{proof}

Theorems \ref{theorem7.2} (2) and \ref{theorem7.3} tell us that when we complete the Hecke algebra generated by the $T_{p}$ acting on $W$ with respect to the maximal ideal generated by the $T_{p}$, then the completed Hecke algebra we get is just the (non-reduced) local ring $\newo$. This is completely analogous to the results of \cite{1}; see Theorems 5.3 and 5.5 of that paper.




\begin{thebibliography}{00}




\bibitem{1} Monsky P. (2015), ``A Hecke algebra attached to mod~2 modular forms of level 3.'' arXiv:1508.07523 [math.NT].
\bibitem{2} Monsky P. (2016), ``Variations on a Lemma of Nicolas and Serre.'' arXiv:1604.02622 [math.NT].
\bibitem{3} Nicolas J.-L., Serre J.-P. (2012), ``Formes modulaires modulo~2: l'ordre de nilpotence des op\'{e}rateurs de Hecke.'' C. R. Acad.\ Sci.\ Paris, Ser.\ I 350 (7--8) pages 343--348.



\bibitem{4} Nicolas J.-L., Serre J.-P. (2012), ``Formes modulaires modulo~2: structure de l'alg\`{e}bre de Hecke.'' C. R. Acad.\ Sci.\ Paris, Ser.\ I 350 (9--10) pages 449--454.


\end{thebibliography}
\end{document}